\def\acts{\curvearrowright}
\newcommand{\subscript}[2]{$#1 _ #2$}
\numberwithin{equation}{subsection}
\newtheorem{theorem}{Theorem}
\newtheorem{lemma}[theorem]{Lemma}
\newtheorem{proposition}[theorem]{Proposition}
\newtheorem{corollary}[theorem]{Corollary}
\newtheorem{conj}[theorem]{Conjecture}
\newtheorem{definition}{Definition}
\theoremstyle{definition}
\newtheorem{construction}{Construction}
\newtheorem{defn}[theorem]{Definition}
\newtheorem{notation}[theorem]{Notation}
\newtheorem{ass}[theorem]{Assumption}
\theoremstyle{remark}
\newtheorem{remark}[theorem]{Remark}
\newtheorem*{claim}{Claim}
\newtheorem*{question}{Question}
\newtheorem{example}[theorem]{Example}
\numberwithin{theorem}{section}
\newcommand{\bA}{\mathbb{A}}
\newcommand{\bC}{\mathbb{C}}
\newcommand{\bD}{\mathbb{D}}
\newcommand{\bF}{\mathbb{F}}
\newcommand{\bG}{\mathbb{G}}
\newcommand{\bH}{\mathbb{H}}
\newcommand{\bK}{\mathbb{K}}
\newcommand{\bL}{\mathbb{L}}
\newcommand{\bM}{\mathbb{M}}
\newcommand{\bN}{\mathbb{N}}
\newcommand{\bP}{\mathbb{P}}
\newcommand{\bQ}{\mathbb{Q}}
\newcommand{\bR}{\mathbb{R}}
\newcommand{\bS}{\mathbb{S}}
\newcommand{\bZ}{\mathbb{Z}}
\newcommand{\bbA}{\mathbf{A}}
\newcommand{\bbB}{\mathbf{B}}
\newcommand{\bbI}{\mathbf{I}}
\newcommand{\bbJ}{\mathbf{J}}
\newcommand{\bbH}{\mathbf{H}}
\newcommand{\bbK}{\mathbf{K}}
\newcommand{\bbZ}{\mathbf{Z}}
\newcommand{\cA}{\mathscr{A}}
\newcommand{\cC}{\mathcal{C}}
\newcommand{\cD}{\mathcal{D}}
\newcommand{\cO}{\mathcal{O}}
\newcommand{\cP}{\mathcal{P}}
\newcommand{\cX}{\mathcal{X}}
\newcommand{\Sh}{\mathcal{S}h}
\newcommand{\fm}{\mathfrak{m}}
\newcommand{\fp}{\mathfrak{p}}
\newcommand{\et}{{\text{\'et}}}
\newcommand{\dR}{{\mathrm{dR}}}
\newcommand{\cris}{{\mathrm{cris}}}
\newcommand{\ord}{{\mathrm{ord}}}
\newcommand{\can}{{\mathrm{can}}}
\newcommand{\an}{{\mathrm{an}}}
\newcommand{\la}{\langle}
\newcommand{\ra}{\rangle}
\DeclareMathOperator{\FIsoc}{\textbf{F-Isoc}}
\newcommand{\Fpbar}{\bF}
\DeclareMathOperator{\GL}{GL}
\DeclareMathOperator{\SL}{SL}
\DeclareMathOperator{\GSp}{GSp}
\DeclareMathOperator{\Spf}{Spf}
\DeclareMathOperator{\Lv}{Lv}
\DeclareMathOperator{\Gal}{Gal}
\DeclareMathOperator{\End}{End}
\DeclareMathOperator{\Hom}{Hom}
\DeclareMathOperator{\Aut}{Aut}
\DeclareMathOperator{\Lie}{Lie}
\DeclareMathOperator{\Res}{Res}
\DeclareMathOperator{\Spec}{Spec}
\DeclareMathOperator{\ad}{ad}
\DeclareMathOperator{\Span}{Span}
\DeclareMathOperator{\der}{der}
\DeclareMathOperator{\Fil}{Fil}
\DeclareMathOperator{\rk}{rk}
\DeclareMathOperator{\Frob}{Frob}
\DeclareMathOperator{\gr}{gr}
\DeclareMathOperator{\im}{im}
\DeclareMathOperator{\Mod}{Mod}
\DeclareMathOperator{\Vect}{Vect}
\DeclareMathOperator{\ch}{ch}
\DeclareMathOperator{\loc}{loc}
\DeclareMathOperator{\iso}{iso}
\DeclareMathOperator{\Def}{Def}
\DeclareMathOperator{\MTT}{MT}
\begin{document}
\title{$p$-adic monodromy and mod $p$ unlikely intersections, II}
\author{Ruofan Jiang}
\address{Dept.\ of Mathematics, University of California, Berkeley}
\email{ruofanjiang@berkeley.edu}
\begin{abstract} We study ordinary abelian schemes in characteristic $p$ and their moduli spaces from the perspective of 
char $p$ Mumford--Tate, log Ax--Lindemann, and geometric André--Oort conjectures (abbreviated as $\MTT_p$, $\mathrm{logAL}_p$ and geoAO$_p$). In this paper, we achieve multiple goals: (\textbf{A}) establish the implication $\mathrm{MT}_p\Leftrightarrow  \mathrm{logAL}_p \Rightarrow \mathrm{geoAO_p}$, and show that they all follow from the Tate conjecture for abelian varieties. The equivalence $\mathrm{MT}_p\Leftrightarrow \mathrm{logAL}_p$ is exploited from both sides, which enables us to \noindent(\textbf{B})
develop a representation theory approach to  $\mathrm{logAL}_p$ and $\mathrm{geoAO_p}$ by first establishing many cases of MT$_p$ via classical techniques, and (\textbf{C}) develop an algebraization approach to $\MTT_p$ that transcends the limitation of classical methods. In particular, we introduce ``crystalline Hodge loci'', a rigid analytic geometric object that encodes the essential information needed for proving $\mathrm{logAL}_p$, while being very approachable via (integral and relative) $p$-adic Hodge theory. This enables us to prove $\mathrm{logAL}_p$ for compact Tate-linear curves with unramified $p$-adic monodromy. 
As an application, we establish $\MTT_p$ for many abelian fourfolds of $p$-adic Mumford type. 
\end{abstract}
\maketitle
\tableofcontents
\section{Introduction}
Abelian varieties are one of the central objects in arithmetic geometry. As compared to abelian varieties over number fields, abelian varieties over char $p$ function fields are less understood. For example, their char $p$ behavior, groups of $p$-torsions, structures of endomorphism algebra, and the geometry of the moduli spaces, are very different from their char $0$ counterparts. Some very powerful tools in char 0, like transcendental methods, are no longer effective. On the other hand, phenomena exclusive to char $p$, e.g., the existence of Frobenius, open up a gate for studying them in a new framework. 

This paper, being the second in a series, is devoted to the study of ordinary abelian varieties in characteristic $p$ and their moduli spaces, from the following two perspectives: (1) $l$-adic and $p$-adic monodromy and (2) ``Hodge theoretic behavior'' in the context of unlikely intersections on mod $p$ Shimura varieties, extending our previous work \cite{J23}. Problems of similar flavors (at least in their baby forms) are studies in the early works of Chai and Oort; cf. \cite{Chai-Hecke1,Chai-Heckeorbit2,Chaioort1,Oortfoliation,Ch03,Chai06} and so on. In recent years, there have also been works trying to understand the reduction behavior of abelian varieties over char $p$ global fields (cf. \cite{MST,MAT,Tay22,Jiang23,JSY}), which can be put in the framework of unlikely (or just likely) intersections. In a forthcoming work of Shankar and the author, we will explore the application of $p$-adic monodromy in Picard rank jumping and $S$-integrality problems.      


The concrete problems that we care about in this paper can be put into the following three categories:
\begin{enumerate}
    \item(Monodromy conjectures) char $p$ Mumford--Tate conjecture. 
    \item(Unlikely intersection conjectures) char $p$ log Ax--Lindemann conjecture, char $p$ geometric André--Oort conjecture, and Chai's Tate-linear conjecture.
    \item (Cycle conjectures) The Tate conjecture and the Hodge conjecture.  
 \end{enumerate}

Let's briefly recall the formulation of the conjectures in the first and second category.

We will use $\mathcal{A}_{g}$, $\mathscr{A}_{g}$, and $\mathscr{A}_{g,\Fpbar_q}^{\ord}$ to denote a Siegel modular variety (with sufficiently small hyperspecial level), its integral canonical model, and the ordinary locus of its mod $p$ reduction. Suppose that $X_0$ is a geometric connected smooth variety over $\mathbb{F}_q$, equipped with a morphism $f_0:X_0\rightarrow \mathscr{A}_{g,\mathbb{F}_q}^{\ord}$. Let $f$ be the base change of $f_0$ to $\Fpbar:=\overline{\Fpbar_p}$. Let $A$ be the pullback abelian scheme over $X_0$, and let $A_{\eta}$ be the pullback abelian variety over the generic point of $X_0$. 
\begin{defn}
The Mumford--Tate group of $A$, denoted by $G_{\mathrm{B}}(A)$ (or $G_{\mathrm{B}}(f)$), is the generic Mumford--Tate group of the smallest Shimura subvariety of $\mathcal{A}_{g}$ whose Zariski closure in $\mathscr{A}_{g}$ contains $\im(f)$.  The definition also makes sense for $A_\eta$ in an obvious manner: . 
\end{defn}

\begin{conj}[Char $p$ Mumford--Tate conjecture]\label{conj:MTforAg}
Let $f_0:X_0\rightarrow \mathscr{A}_{g,\mathbb{F}_q}^{\ord}$ be as above. Let $G_l(A)$ ($l\neq p$) and $G_p(A)$ be the $l$-adic étale and $p$-adic (overconvergent crystalline) monodromy of $A$. Then $G_l(A)^\circ=G_{\mathrm{B}}(A)_{\bQ_l}$, and $G_p(A)^\circ=G_{\mathrm{B}}(A)_{\bQ_p}$.\end{conj}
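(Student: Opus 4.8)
The plan is to prove the $l$-adic statement (for every $l\neq p$) and the overconvergent crystalline statement in parallel, in each case reducing to the two opposite inclusions between the monodromy group and $G_{\mathrm{B}}(A)$ base-changed to $\bQ_l$, respectively $\bQ_p$; the asserted equalities then follow since Mumford--Tate groups are connected. The more tractable inclusion, $G_l(A)^\circ\subseteq G_{\mathrm{B}}(A)_{\bQ_l}$ and its $p$-adic analogue, I would deduce from the definition of $G_{\mathrm{B}}(A)$: it is the stabilizer, inside $\GL$ of a fibre of $R^1f_*$, of the finite family of tensors cutting out the smallest Shimura subvariety $\overline{S}\supseteq\im(f)$ of $\mathscr{A}_g$. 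These tensors are parallel sections of the de Rham realization over the special subvariety $S$, and the compatibility of the de Rham, $l$-adic étale, and crystalline realizations over the integral canonical model --- together with the fact that $\overline{S}$ is the Zariski closure of the reduction of $S$ --- shows that, pulled back to $X_0$ along $f$, they are fixed by $\pi_1(X_0)$ and by the Tannakian monodromy of the overconvergent $F$-isocrystal attached to $A$. Hence both monodromy groups land in that stabilizer. In characteristic $p$ the role of ``Hodge cycle'' is thus played by flat tensors in the crystalline and étale realizations, which is part of the groundwork extending \cite{J23}.

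The reverse inclusion $G_{\mathrm{B}}(A)_{\bQ_l}\subseteq G_l(A)^\circ$, and its $p$-adic counterpart, is the heart of the matter. My plan is to reformulate it, via the Serre--Tate coordinates on the ordinary locus, as an assertion of $\mathrm{logAL}_p$ type: the formal germ of $\im(f)$ at an ordinary point is linear for the monodromy group, and one must show that such a germ --- being simultaneously linear and algebraic --- is the germ of the reduction of a Shimura subvariety whose generic Mumford--Tate group is exactly that monodromy group. Using the equivalence $\MTT_p\Leftrightarrow\mathrm{logAL}_p$, obtained by translating between the Shimura-datum tensors and the Serre--Tate-linear structure, this reduces to producing enough genuine algebraic structure inside $\im(f)$ --- extra endomorphisms, Tate classes, Tate-linear foliations --- to force the containing special subvariety to be no larger than dictated by $G_l$. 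That is precisely what the Tate conjecture for abelian varieties supplies, since it identifies monodromy-invariant tensors in $H^1(A)^{\otimes}$ with algebraic cycles; this is the route by which $\MTT_p$, and then $\mathrm{logAL}_p$ and $\mathrm{geoAO}_p$, follow from the Tate conjecture.

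To make many cases of $\MTT_p$ unconditional I would attack on two fronts. Classically: semisimplicity of the monodromy representation over the function field of $X_0$, Zarhin-- and Larsen--Pink-type constraints forcing a monodromy group generated by minuscule weights to be of Mumford--Tate shape, and the Crew/Chai description of the unit-root ($p$-adic) monodromy and its compatibility with the Hodge filtration, together yield the conjecture whenever $G_{\mathrm{B}}(A)$ is of a classically accessible type; these cases then drive a representation-theoretic treatment of $\mathrm{logAL}_p$ and $\mathrm{geoAO}_p$. Beyond the classical range --- for instance for abelian fourfolds of $p$-adic Mumford type, where the relevant group carries an exotic half-spin factor and no CM points are at hand --- I would introduce the \emph{crystalline Hodge locus}: the rigid-analytic subspace of a tube in the ordinary locus over which the integral, relative crystalline realization of $R^1f_*$ acquires the predicted extra tensor. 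Relative and integral $p$-adic Hodge theory should show this locus is a well-behaved rigid-analytic object, and for a compact Tate-linear curve with unramified $p$-adic monodromy an algebraization and specialization argument forces its intersection with the curve to be the whole curve, which yields $\mathrm{logAL}_p$ and hence $\MTT_p$ in these cases.

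The main obstacle is this reverse inclusion in its algebraization form. Characteristic $p$ offers no transcendence substitute for Deligne's ``Hodge $\Rightarrow$ absolutely Hodge'', so certifying that a formal-linear algebraic germ is globally the reduction of a Shimura subvariety seems to require either the Tate conjecture or substantial new $p$-adic input. Controlling the crystalline Hodge locus at present needs both compactness of the source curve --- to run the properness and specialization arguments --- and unramifiedness of the $p$-adic monodromy, to keep integral $p$-adic Hodge theory in hand; removing either hypothesis, or treating Tate-linear subvarieties of dimension greater than one, is where the remaining difficulty lies.
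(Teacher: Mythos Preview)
The statement you are attempting to prove is labeled a \emph{Conjecture} in the paper, and the paper does not claim an unconditional proof. What the paper actually establishes is: the inclusion $G_u(f)^\circ\subseteq G_{\mathrm{B}}(f)_{\bQ_u}$ for all $u$ (Theorem~\ref{thm:GincludeinMT}, quoted from \cite{J23}); the equivalence $\MTT_p\Leftrightarrow\mathrm{logAL}_p$ (Theorem~\ref{thm:logALimpliesMT}); the conditional deduction of $\MTT_p$ from the Tate conjecture for powers of $A_\eta$ and for CM abelian varieties (Proposition~\ref{prop:relationbetweencycleconjectures}); and a collection of unconditional special cases obtained by classical methods (\S\ref{Sec:classical}) and by the crystalline Hodge locus / Tate-linear locus approach for proper Tate-linear curves with unramified $p$-adic monodromy (\S\ref{sec:Newmethod}).

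Your proposal is not a proof of the conjecture; it is a summary of this program, and in that capacity it tracks the paper's structure accurately. The easy inclusion you describe is exactly Theorem~\ref{thm:GincludeinMT}; the reformulation via Serre--Tate coordinates and the equivalence with $\mathrm{logAL}_p$ is the content of \S\ref{sec:ProofMT}; your appeal to the Tate conjecture matches Proposition~\ref{prop:relationbetweencycleconjectures}(1); your ``two fronts'' correspond to \S\ref{Sec:classical} and \S\ref{sec:Newmethod}; and your final paragraph correctly names the genuine obstructions that keep the conjecture open. The gap is simply that you have packaged a conjectural program as a proof plan: the reverse inclusion $G_{\mathrm{B}}(A)_{\bQ_u}\subseteq G_u(A)^\circ$ is \emph{not} established in general, neither in the paper nor in your proposal, and you should not present the outline as if it closes that gap. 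If you intend this as a proof, it fails at exactly the point you yourself flag; if you intend it as a survey of the paper's strategy, it is essentially correct but should be labeled as such.
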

The conjecture also makes sense for $A_\eta$, and is equivalent to the version for $A$.  
 
The unlikely intersection conjectures, on the other hand, are assertions pertaining to the arithmetic and geometry of the moduli space $\mathscr{A}_{g,\Fpbar}$. In characteristic $p$, the space $\mathscr{A}_{g,\Fpbar}$ no longer admits the interpretation as a ``ball quotient''. The key structure we need here is the \textit{formal linearity} on ordinary mod $p$ Shimura varieties that arise from the theory of canonical coordinates: let $x\in \mathscr{A}_{g,\Fpbar}^{\ord}(\Fpbar)$, then $\mathscr{A}_{g,\Fpbar}^{/x}$ is equipped with a canonical structure of \textit{formal torus}.
\begin{defn}
  Suppose that $X\subseteq \mathscr{A}_{g,\Fpbar}$ is a generically ordinary (locally closed) subvariety.
    \begin{enumerate}
    \item We say that $X$ is \textbf{special}, if it is Zariski dense in an irreducible component of the mod $p$ reduction of a classical special subvariety in char $0$.
     \item We say that $X$ is \textbf{quasi-weakly special} (cf. \cite{J23}) if, roughly speaking, its ordinary locus $X^{\ord}$ is contained in a special subvariety $Y\subseteq \mathscr{A}_{g,{\Fpbar}}^{\ord}$ which splits as an almost product of two special subvarieties $Y_1$ and $Y_2$, such that $\dim Y_1>0$, and the Zariski closure of $X^{\ord}$ in $Y$ splits as an almost product of $Y_1$ with a subvariety of $Y_2$. 
     \item Let $x\in X^{\ord}(\Fpbar)$. We say that $X$ is \textbf{Tate-linear at $x$}, if $X^{/x}$ is a formal subtorus of $\mathscr{A}_{g,\Fpbar}^{/x}$. Chai proved that if $X$ is Tate-linear at $x$, then it is Tate-linear at any smooth ordinary point; cf. \cite[Proposition 5.3, Remark 5.3.1]{Ch03}. So we can just call $X$ \textbf{Tate-linear}\footnote{Our terminology is slightly different from \cite{Ch03}. Chai requires Tate-linear subvarieties to be closed and smooth (the closed and non-smooth ones are called ``weakly Tate-linear''), while our definition has no requirement on the closedness or smoothness of the variety.}. 
\end{enumerate} 
\end{defn}
Now let
$f:X\rightarrow \mathscr{A}_{g,\Fpbar}^{\ord}$ be a map from a smooth connected variety $X$ to the ordinary locus of $\mathscr{A}_{g,\Fpbar}$. Pick a point $x\in X(\Fpbar)$. By a slight abuse of notation, we use $\mathscr{A}_{g,\Fpbar}^{/x}$ to denote the formal neighborhood of $\mathscr{A}_{g,\Fpbar}$ at $f(x)$, and use $f^{/x}$ to denote the induced map $X^{/x}\rightarrow \mathscr{A}_{g,\Fpbar}^{/x}$.
\begin{conj}[Tate-linear conjecture]\label{conj:TTl} Suppose that $f$ is a locally closed immersion. If $X$ is Tate-linear at $x$, then $X$ is special.
\end{conj}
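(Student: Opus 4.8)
The plan is to deduce the Tate-linear conjecture from the char $p$ Mumford--Tate conjecture (Conjecture \ref{conj:MTforAg}), exploiting the equivalence $\mathrm{MT}_p\Leftrightarrow\mathrm{logAL}_p$ that the paper establishes. The first observation is that Tate-linearity is an intrinsically $p$-adic/formal condition that can be translated into a constraint on monodromy. Via the Serre--Tate canonical coordinates, $\mathscr{A}_{g,\Fpbar}^{/x}$ is a formal torus whose cocharacter lattice is canonically built from $T_pA_x$ and $T_pA_x^\vee$ (the symmetric part, for the principal polarization), and $X^{/x}$ being a formal subtorus means it is cut out by a saturated sublattice $\Lambda$. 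I would first show that this sublattice forces the $p$-adic (overconvergent crystalline) monodromy group $G_p(A)$ to preserve the $\bZ_p$-grading of the Dieudonné crystal that $\Lambda$ determines; equivalently $G_p(A)^\circ$ is contained in the $\bQ_p$-group $H_\Lambda$ attached to $\Lambda$, which one expects to be exactly $G_{\mathrm{B}}(A_\Lambda)_{\bQ_p}$ for the sub-Shimura datum $A_\Lambda$ that $\Lambda$ defines.

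The core step is then to prove $\mathrm{MT}_p$ in this situation, i.e. that the inclusion $G_p(A)^\circ\subseteq G_{\mathrm{B}}(A)_{\bQ_p}=H_\Lambda$ is an equality and not something strictly smaller. Here Tate-linearity is again decisive: it rigidifies the universal object so that the $F$-isocrystal of $A$ over $X$ is, up to isogeny, the pushout of the universal ordinary display along the cocharacter embedding determined by $\Lambda$, and in particular $G_p(A)$ is reductive with a minuscule-type weight structure. One can then pin down $G_p(A)^\circ$ by classical semisimplicity arguments in the spirit of Zarhin and Pink, applied to the crystalline realization rather than the $\ell$-adic one. Feeding this back through $\mathrm{MT}_p\Leftrightarrow\mathrm{logAL}_p$ gives $\mathrm{logAL}_p$ for $f$, hence the Zariski closure $\overline{X}$ is quasi-weakly special.

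It remains to upgrade ``quasi-weakly special'' to ``special''. If $\overline{X}^{\ord}\subseteq Y$ with $Y$ an almost product $Y_1\cdot Y_2$, $\dim Y_1>0$, and $\overline{X}^{\ord}$ the almost product of $Y_1$ with a subvariety $Z_2\subseteq Y_2$, then the Serre--Tate completion of $\overline{X}^{\ord}$ at a smooth ordinary point decomposes compatibly, and Tate-linearity forces the $Z_2$-factor to be itself Tate-linear of strictly smaller dimension than $X$; an induction on $\dim X$ (with base case a Tate-linear curve) then shows $Z_2$, hence $\overline{X}$, is special, and since $f$ is a locally closed immersion $X$ is Zariski dense in an irreducible component of $\overline{X}$. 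The passage from the formal-subtorus statement to this global algebraic statement is exactly where the ``crystalline Hodge locus'' does the work: one realizes $X^{/x}$ inside a rigid-analytic Hodge locus on the $p$-adic generic fibre of $\mathscr{A}_g$, shows via integral and relative $p$-adic Hodge theory that this locus is no larger than the mod $p$ reduction of the Shimura subvariety attached to $\Lambda$, and algebraizes.

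The main obstacle is the core step: controlling $G_p(A)$ when the $p$-adic monodromy is \emph{ramified}, and when $\dim X>1$. For compact Tate-linear curves with unramified $p$-adic monodromy the pushout description of the $F$-isocrystal and the algebraization via crystalline Hodge loci both go through, which is precisely the unconditional range the paper reaches; beyond it one either needs new input on ramified crystalline monodromy or must invoke the (conditional) implication from the Tate conjecture for abelian varieties. A secondary difficulty is the reduction to the curve case used in the last step: one wants to slice $X$ by Tate-linear curves through $x$ that realize enough of $\Lambda$ and stay locally closed immersions, in the style of Chai's Hecke-orbit arguments, and verifying that such slices exist requires care.
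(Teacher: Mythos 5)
The statement you are addressing is labeled a \emph{conjecture} in the paper (Conjecture~\ref{conj:TTl}): there is no complete proof of it anywhere in the text. The paper only establishes special cases via two routes, and your proposal correctly identifies both of them: (a) prove $\mathrm{MT}_p$ by classical Lie-theoretic methods and feed it through $\mathrm{MT}_p\Leftrightarrow\mathrm{logAL}_p$; (b) attack $\mathrm{Tl}_p$ directly for compact Tate-linear curves with unramified $p$-adic monodromy via crystalline Hodge loci and algebraization. You also honestly flag that the unconditional range is exactly (b). In that sense the high-level plan is aligned with the paper. But there are two concrete problems with the way you flesh it out.

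First, your final paragraph introduces a spurious extra step. Once $\mathrm{logAL}_p$ is known for $f$, there is nothing left to ``upgrade.'' If $X$ is Tate-linear at $x$, then $X^{/x}$ is itself a formal subtorus of $\mathscr{A}_{g,\Fpbar}^{/x}$, so the \emph{smallest} formal torus through which $f^{/x}$ factors, namely $\mathscr{T}_{f,x}$, is exactly $X^{/x}$. Conjecture~\ref{conj:AxSchanuel} then directly exhibits $X^{/x}$ as an irreducible component of the formal germ at $f(x)$ of a special subvariety $\mathcal{S}$; since $f$ is a locally closed immersion this forces $X\subseteq\mathscr{S}_{\Fpbar}$ locally at $x$, hence globally by irreducibility, and the dimension match $\dim X=\rk\mathscr{T}_{f,x}=\dim\mathcal{S}$ makes $X$ Zariski dense in a component. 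The paper records precisely this as the sentence ``Conjecture~\ref{conj:AxSchanuel} recovers Conjecture~\ref{conj:TTl} as a special case,'' with no intermediate stage. Your detour through ``quasi-weakly special'' conflates $\mathrm{logAL}_p$ with $\mathrm{geoAO}_p$ --- only the latter has a quasi-weakly-special alternative, and that alternative is one branch of a dichotomy, not an unconditional output --- and the ensuing induction on $\dim X$ via slicing by Tate-linear curves through $x$ is not something the paper needs or sets up, and is not obviously available.

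Second, the ``core step'' is stated too strongly. You assert that the Tate-linearity rigidification plus ``classical semisimplicity arguments in the spirit of Zarhin and Pink'' will pin down $G_p(A)^\circ$ and prove $\mathrm{MT}_p$. That is exactly what fails in general. The decisive counterexample class in the paper is abelian fourfolds of $p$-adic Mumford type: there the inclusion $G_p(A)^\circ\subseteq G_{\mathrm{B}}(A)_{\bQ_p}$ cannot be ruled out by Mumford--Tate-pair classification alone (both $\mathrm{GSp}_8$ and the triple tensor of $\SL_2$'s are consistent with the formal character), and the paper has to leave the representation-theoretic track entirely and invoke the crystalline Hodge locus algebraization (Theorems~\ref{demothm:Tconjfourfolds}--\ref{demothm:TTLpintegraldcs}). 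You do acknowledge the limitation at the very end, but you attribute it to ``ramified'' monodromy and $\dim X>1$, which misidentifies the obstruction: the hard Mumford-type case is already present for $\dim X=1$ with unramified monodromy and ordinary reduction, and that is the case the crystalline-Hodge-locus machinery is built for. So the proposal should be read as reproducing the paper's conditional reduction $\mathrm{Tl}_p\Leftarrow\mathrm{logAL}_p\Leftarrow\mathrm{MT}_p$ rather than as a proof of Conjecture~\ref{conj:TTl}.
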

\begin{conj}[Char $p$ log Ax--Lindemann conjecture]\label{conj:AxSchanuel} Let $\mathscr{T}_{f,x}\subseteq \mathscr{A}_{g,\Fpbar}^{/x}$ be the smallest  formal torus through which $f^{/x}$ factors. Then there exists a special subvariety of $\mathscr{A}_{g,\Fpbar}$ whose formal germ at $f(x)$ admits $\mathscr{T}_{f,x}$ as an irreducible component. 
\end{conj}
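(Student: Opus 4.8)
The plan is to reduce $\mathrm{logAL}_p$ (\Cref{conj:AxSchanuel}) to the char $p$ Mumford--Tate conjecture $\mathrm{MT}_p$ (\Cref{conj:MTforAg}) for the family $A/X$, and then to establish $\mathrm{MT}_p$ — in the cases where this is feasible — by an algebraization argument in $p$-adic Hodge theory. The mechanism linking the two is a ``canonical-coordinates dictionary'' in the style of Chai (\cite{Ch03}): by Serre--Tate theory $\mathscr{A}_{g,\Fpbar}^{/x}$ is a formal torus whose cocharacter lattice $M$ is built symmetrically from the physical Tate module of the ordinary abelian variety $A_x$; the formal map $f^{/x}$ is recorded by a formal subscheme of this torus, and, by a local rigidity/monodromy argument in the spirit of Chai, the directions appearing in $f^{/x}$ generate, as a saturated $\bZ_p$-submodule of $M$, exactly the sublattice cut out by the action of the overconvergent crystalline monodromy group $G_p(A)$. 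Thus $\mathscr{T}_{f,x}$ is the formal subtorus of $\mathscr{A}_{g,\Fpbar}^{/x}$ attached to $G_p(A)$ via this dictionary.

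Granting $\mathrm{MT}_p$ for $A/X$, we have $G_p(A)^\circ = G_{\mathrm{B}}(A)_{\bQ_p}$; moreover the universal family over the Shimura subvariety $\cS\subseteq\mathscr{A}_{g,\Fpbar}$ singled out by $G_{\mathrm{B}}(A)$ has $p$-adic monodromy $G_{\mathrm{B}}(A)_{\bQ_p}$ as well (classical for Shimura varieties, combined with $\mathrm{MT}_p$ there), so the sublattice of $M$ attached to $G_p(A)$ is the one attached to $G_{\mathrm{B}}(A)$. On the other hand the canonical-lift computation at $x$ shows that $\cS^{/x}$ is precisely the formal subtorus of $\mathscr{A}_{g,\Fpbar}^{/x}$ determined by that same sublattice (the Serre--Tate canonical lift of $x$, resp.\ the CM points of $\cS$, have Mumford--Tate group contained in $G_{\mathrm{B}}(A)$, cutting out that sublattice). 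Comparing lattices gives $\mathscr{T}_{f,x}=\cS^{/x}$ up to irreducible components, which is exactly the assertion of \Cref{conj:AxSchanuel}.

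It remains to prove $\mathrm{MT}_p$ for the family at hand; the inclusion $G_p(A)\subseteq G_{\mathrm{B}}(A)_{\bQ_p}$ is essentially formal, and the content is the reverse. For $l\neq p$ one runs the classical Mumford--Tate machinery. For $l = p$ I would introduce the ``crystalline Hodge locus'': inside the rigid-analytic generic fibre of a formal/integral deformation space along the Serre--Tate canonical lift, the locus where a chosen finite collection of crystalline Tate/Hodge classes of the relative crystalline cohomology of $A$ remains horizontal. Relative and integral $p$-adic Hodge theory make this locus explicit; the goals are (a) to identify its tangent data with the equations defining $\mathscr{T}_{f,x}$, so that it detects exactly which classes $G_p(A)$ fixes, and (b) to \textbf{algebraize} it — to show the formal/rigid-analytic locus is the completion of an honest subvariety $W\subseteq\mathscr{A}_{g,\Fpbar}$ — after which comparing $W$ with the special subvariety carrying the same crystalline classes yields $G_p(A)=G_{\mathrm{B}}(A)_{\bQ_p}$. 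The unramifiedness hypothesis on the $p$-adic monodromy is used to spread the crystalline classes out integrally and descend the resulting Barsotti--Tate group with extra structure to characteristic $p$; compactness of $X$ suppresses boundary contributions; and the restriction to curves keeps the relative $p$-adic Hodge theory tractable.

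The principal obstacle is step (b): upgrading the formal (or rigid-analytic) crystalline Hodge locus to an algebraic subvariety of the characteristic-$p$ moduli space. $p$-adic Hodge theory naturally outputs data over $p$-adic rings, so one must simultaneously descend to $\Fpbar$ and prove a finiteness/algebraicity statement for what is a priori only a formal germ; controlling this is why the argument is at present confined to compact Tate-linear curves with unramified monodromy, and why the unconditional form of $\mathrm{logAL}_p$ (and hence $\mathrm{geoAO}_p$) is obtained only as a consequence of the Tate conjecture for abelian varieties.
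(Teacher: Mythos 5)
The statement you were asked to prove is Conjecture~\ref{conj:AxSchanuel}, which the paper states but does \emph{not} prove — and cannot prove unconditionally, since it is shown (Theorem~\ref{thm:logALimpliesMT}) to be \emph{equivalent} to the char $p$ Mumford--Tate conjecture $\mathrm{MT}_p$, itself open.  Your proposal does not actually close this gap: it correctly reduces $\mathrm{logAL}_p$ to $\mathrm{MT}_p$ via the Serre--Tate/Noot dictionary (the ingredient you gesture at as ``a local rigidity/monodromy argument in the spirit of Chai'' is precisely \cite[Theorem 4.2]{J23}, quoted in the paper as Theorem~\ref{Thm:Tatelocal}, and relies on D'Addezio's proof of Crew's parabolicity conjecture — this is worth naming, since it is not elementary), and then proposes to prove $\mathrm{MT}_p$ by the crystalline-Hodge-locus algebraization strategy.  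But that algebraization is only carried out, in the paper and in your own closing paragraph, for proper Tate-linear curves with unramified $p$-adic monodromy (Theorem~\ref{demothm:TTLpintegraldcs}); for general $f$ the only thing available is the conditional implication from the Tate conjecture (Proposition~\ref{prop:relationbetweencycleconjectures}).  So what you have written is a faithful description of the paper's \emph{program} toward the conjecture, not a proof of it — and you acknowledge this.

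Two further remarks on accuracy.  First, the paper's contribution in Section~\ref{sec:ProofMT} is the \emph{converse} implication $\mathrm{logAL}_p\Rightarrow\mathrm{MT}_p$ (the direction you sketch, $\mathrm{MT}_p\Rightarrow\mathrm{logAL}_p$, was already in \cite{J23}); that converse uses Mumford--Tate pairs (Proposition~\ref{prop:MTpairs}, Lemma~\ref{lm:smaeunipimplyequal}) and Chebotarev/Galois-twisting arguments with weakly compatible systems, none of which appear in your sketch.  Second, when you say ``the sublattice cut out by the action of $G_p(A)$'', the precise statement is that $T_{f,x}$ is the Lie algebra of the opposite unipotent radical of the parabolic $G_p^-(f)\subseteq G_p(f)$ determined by the canonical Hodge cocharacter $\mu_x$; and the quasi-linearity of $\mathscr{S}^{/x}$ is Noot's Theorem~\ref{thm:noottheorem}, which furnishes the ``canonical-lift computation'' you invoke.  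With these ingredients spelled out, your reduction step is correct, but it only restates that the conjecture is equivalent to $\mathrm{MT}_p$ — which is where the real difficulty begins, not ends.
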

It is clear that Conjecture~\ref{conj:AxSchanuel} recovers Conjecture~\ref{conj:TTl} as a special case. 
\begin{conj}[Char $p$ geometric André--Oort conjecture]\label{Conj:AOforSV}
Suppose that $f$ is a locally closed immersion. Then exactly one of the following happens: \begin{enumerate}
    \item $X$ is quasi-weakly special,
    \item $X$ does not contain a Zariski dense collection of positive dimensional special subvarieties. 
\end{enumerate} 
\end{conj}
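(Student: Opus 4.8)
Following the logical skeleton of the paper, I would not prove $\mathrm{geoAO}_p$ outright but deduce it from $\mathrm{logAL}_p$ (equivalently, via the paper's comparison, from $\MTT_p$), transporting the classical implication ``Ax--Lindemann $\Rightarrow$ Andr\'e--Oort'' into the formal-torus picture furnished by canonical coordinates. So assume $\mathrm{logAL}_p$, let $f\colon X\hookrightarrow\mathscr{A}_{g,\Fpbar}^{\ord}$ be a locally closed immersion with $X$ smooth connected, and suppose $X$ contains a Zariski-dense collection $\{Z_i\}$ of positive-dimensional special subvarieties; the goal is to show $X$ is quasi-weakly special. Passing to the Zariski closure I may assume $X$ closed. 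Let $S\subseteq\mathscr{A}_{g,\Fpbar}$ be the smallest special subvariety containing $\im(f)$ (it exists, with generic Mumford--Tate group $G=G_{\mathrm{B}}(X)$). If $\dim S=\dim X$ then $X=S$ is special, hence quasi-weakly special (take $Y_1=X$ and $Y_2$ a point); so from now on $0<\dim X<\dim S$.

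\textbf{Step 1: a sweeping family of special leaves.} Every special subvariety is Tate-linear at its smooth ordinary points (Noot, Moonen), so choosing $x_i\in Z_i^{\ord}\cap X^{\ord}$, the germ $Z_i^{/x_i}$ is a formal subtorus of $\mathscr{A}_{g,\Fpbar}^{/x_i}$ contained in $X^{/x_i}$. By noetherianity, $Z_i$ lies in an inclusion-maximal Tate-linear subvariety $W_i$ of $X$; applying $\mathrm{logAL}_p$ (in fact already its special case, the Tate-linear conjecture) to $W_i\hookrightarrow\mathscr{A}_{g,\Fpbar}^{\ord}$ --- whose germ at an ordinary point is already a formal torus --- shows $W_i$ is a component of a special subvariety, hence is itself special, so that the maximal Tate-linear and the maximal special subvariety of $X$ through $Z_i$ coincide. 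Among the finitely many values of $\dim W_i$, one value $\delta$ is attained on a Zariski-dense subcollection; if $\delta=\dim X$ then $X=W_i$ is special (hence quasi-weakly special), so assume $0<\delta<\dim X$. Two distinct members $W_i\neq W_j$ of this subfamily, being maximal special and irreducible, meet in dimension $<\delta$, so $\{W_i\}$ is an almost-disjoint family of $\delta$-dimensional special subvarieties covering $X$.

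\textbf{Step 2: from the leaves to a product structure.} An almost-disjoint, equidimensional, covering family on $X$ organizes into a dominant rational map $\pi\colon X\dashrightarrow B$ with $\dim B=\dim X-\delta>0$ whose generic fibre is one of the $W_i$. Using Chai's rigidity theory of Tate-linear subvarieties --- a Tate-linear foliation of a piece of $\mathscr{A}_{g,\Fpbar}^{\ord}$ extends to, and is cut out by, a sub-Shimura datum of the one defining $S$ --- one realizes a special subvariety through $X$ as an almost product $Y_1\times Y_2$ of special subvarieties in which the $\pi$-fibres are the slices $Y_1\times\{b\}$; then $\bar X\cap(Y_1\times Y_2)=Y_1\times W$ with $W=\pi(X)\subseteq Y_2$, and $Y_1$ may be taken special of dimension $\delta>0$. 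This is precisely the definition of quasi-weakly special, which finishes the substantive direction. (If some $Z_j$ is not contained in a single $\pi$-fibre, one passes to the family of images $\pi(Z_j)\subseteq B$ and closes by induction on $\dim X$.) The reverse half of ``exactly one'' is routine: if $X$ is quasi-weakly special with $\bar X\cap(Y_1\times Y_2)=Y_1\times W$, then for $w$ in a Zariski-dense subset of $W$ the point $w$ is ordinary, hence special --- its Serre--Tate canonical lift is a CM abelian variety --- so the subvarieties $Y_1\times\{w\}\subseteq X$ give a Zariski-dense collection of positive-dimensional special subvarieties.

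\textbf{Main obstacle.} The crux is Step 2. In characteristic $0$ the analogous passage is powered by o-minimality and the Pila--Wilkie counting theorem, which convert ``many or large special subvarieties'' into positive-dimensional algebraic families of weakly special subvarieties and thence into a fibration; there is no counting substitute in characteristic $p$. One must instead organize $\{W_i\}$ purely algebraically, separating the bounded-degree case (Chow- or Hilbert-scheme arguments) from the unbounded-degree case --- the genuinely new point, to be attacked through $p$-adic monodromy, via a degree bound forced by $\MTT_p$ together with Chai's analytic continuation of the formal-torus structure from a single ordinary point --- and then verify that the base $B$ is again a generically ordinary subvariety of a mod $p$ Shimura variety of strictly smaller dimension, so that the induction is legitimate and the slices $Y_1\times\{b\}$ are genuinely special. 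A subsidiary but essential point, already visible in the reverse half, is that in characteristic $p$ \emph{every} ordinary closed point is special: this is precisely why $\mathrm{geoAO}_p$ cannot promise ``a finite union of special subvarieties'' as in characteristic $0$ --- a product $Y_1\times C$ with $Y_1$ special and $C$ an arbitrary curve already carries the dense special family $\{Y_1\times\{c\}\}_{c\in C}$ --- and why the quasi-weakly special exception is built into the statement.
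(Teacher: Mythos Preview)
Your proposal attempts the implication $\MTT_p\Rightarrow\mathrm{geoAO}_p$ (this is Theorem~\ref{thm:MTimpliesAO} in the paper, not the full conjecture), but your route is essentially orthogonal to the paper's, and the gap you yourself identify in Step~2 is real and unresolved.

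The paper does \emph{not} try to organize the special subvarieties $Z_i$ into a fibration or to mimic Pila--Zannier. Instead it passes, via a ``re-embedding trick'' from the prequel \cite{J23}, to the situation where $f$ \emph{separates factors}: $G_\mathrm{B}(f)^{\ad}=\prod_{i\in\bbI}\Res_{F_i/\bQ}\mathcal{G}_i$ and $f$ factors through a product $\prod_i\mathscr{X}_{f_i,\Fpbar}^{\ord}$. The engine is then the \emph{geometric squeeze theorem} (Theorem~\ref{thm:squeeze}, proved in \cite{J23}): possibly after shrinking $X$, there is a nonzero $\bZ_p$-lisse subsheaf $\mathcal{R}_f\subseteq\mathscr{F}_\bbI$ whose fibre $R_{f,x}$ is an $\ad_{G,\mu_x}$-subrepresentation of $T_{f,x}$ with $\mathcal{R}_{f,x}\otimes\bG_m^\wedge\subseteq X^{/x}$. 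The representation-theoretic heart is Lemma~\ref{lm:simplecomponentTtrep}: for a \emph{simple} Mumford--Tate triple $(G,\rho,\mu)$ the Serre--Tate representation $\ad_{G,\mu}$ is irreducible (checked case-by-case against Pink's table). Combined with $\MTT_p$, this forces $R_{f,x}$ to be a direct sum of full factors $T_{f_i,x}$; a separate lemma (Lemma~\ref{lm:Zprojpositivedim}) then uses one $Z\in\Xi$ to see which factors occur. The product structure on $\overline{X}$ drops out immediately from $\mathcal{R}_{f,x}\otimes\bG_m^\wedge\subseteq X^{/x}$. No Hilbert-scheme bookkeeping, no fibration, no induction.

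Your Step~1 is fine (and your observation that $\mathrm{Tl}_p$ makes ``maximal Tate-linear in $X$'' $=$ ``maximal special in $X$'' is correct). But Step~2 has two genuine problems beyond the Pila--Wilkie substitute you flag. First, the passage ``an almost-disjoint equidimensional covering family organizes into a dominant rational map $\pi$'' is not automatic: you would need the $W_i$ to lie in a single irreducible component of a Hilbert scheme, i.e.\ a uniform bound on their degree, and nothing in your argument supplies this. Second, your appeal to ``Chai's rigidity theory'' to produce a global product $Y_1\times Y_2$ from a Tate-linear foliation is not a theorem that exists in the cited literature; Chai's results give local (formal) structure, and promoting this to a global splitting of a special subvariety is exactly the kind of statement the squeeze theorem is designed to deliver. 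In short, the paper's method replaces your missing Step~2 entirely by a monodromy-theoretic argument that never touches the family $\{W_i\}$ as such.
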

To compactify the notation, the conjectures~\ref{conj:MTforAg}$\sim$\ref{Conj:AOforSV} will be denoted by $\MTT_p$, $\mathrm{Tl}_p$, $\mathrm{logAL}_p$ and $\mathrm{geoAO}_p$, respectively. We say that $\MTT_p$ holds for $f=f_{0,\Fpbar}$, if it holds for $f_0$. 

Recall that in \cite{J23}, we proved that the conjectures $\MTT_p$, $\mathrm{Tl}_p$, $\mathrm{logAL}_p$ and $\mathrm{geoAO}_p$ all hold, if the image of $f$ lies in the special fiber of a finite product of GSpin Shimua varieties. In this paper, we will consider the conjectures in more general settings. 
\subsection{Main results} All main results are put into three categories:\begin{enumerate}[label=(\alph*)]
    \item Establish $\mathrm{MT}_p\Leftrightarrow  \mathrm{logAL}_p \Rightarrow \mathrm{geoAO_p}$: Theorem~\ref{thm:connectionweb} and Corollary~\ref{cor:quickconseuq}. 
    \item Solve cases of the conjectures by first establishing MT$_p$ via classical techniques: Theorem~\ref{thm:modpCommelindemo}, Theorem~\ref{demo:MTcasesclassical}, Theorem~\ref{thm:XAndemo}, Theorem~\ref{thm:AOleq5}. 
    \item Solve cases of the conjectures by first establishing logAL$_p$ via a new algebraization approach: Theorem~\ref{demothm:Tconjfourfolds} and Theorem~\ref{demothm:TTLpintegraldcs}.
\end{enumerate}
The results that the author likes the most are the ones in category (c): the proof is (somewhat surprisingly) connected to recent development in (integral and relative) $p$-adic Hodge theory.  

\subsubsection{Main results in category $\mathrm{(a)}$}Let $X$ be a smooth connected variety over $\Fpbar$ with a morphism $f: X\rightarrow \mathscr{A}_{g,\Fpbar}^{\ord}$. Let $f_0: X_0\rightarrow \mathscr{A}_{g,\Fpbar_q}^{\ord}$ be a finite field model of $f$, and let $A_\eta$
 be the pullback abelian variety over the generic point of $X_0$.
 
 \begin{theorem}[$\subseteq$ Theorem~\ref{thm:logALimpliesMT}+Theorem~\ref{thm:MTimpliesAO}+Proposition~\ref{prop:relationbetweencycleconjectures}]\label{thm:connectionweb}
The following are true:
 \begin{enumerate}
     \item $\MTT_p$ holds for $f$ $\Leftrightarrow$ $\mathrm{logAL}_p$ holds for $f$. 
     \item Let $f$ be a locally closed immersion. Then $\MTT_p$ holds for $f$ $\Rightarrow$ $\mathrm{geoAO}_p$ holds for $f$. 
     \item Suppose that the Tate conjecture holds for powers of $A_\eta$ as well as CM abelian varieties over number fields, then $\MTT_p$ holds. 
     \item Suppose that the Hodge conjecture holds for abelian varieties of dimension $g$, and that $\MTT_p$ holds for $f$, then the Tate conjecture holds for $A_\eta$.  
 \end{enumerate}
\end{theorem}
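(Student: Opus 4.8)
The plan is to set up, and then repeatedly exploit, a dictionary between the group-theoretic data $G_B(A)$, $G_l(A)^\circ$, $G_p(A)^\circ$ and the formal-geometric data of subtori of $\mathscr{A}_{g,\Fpbar}^{/x}$, the translation being furnished by Serre--Tate (canonical) coordinates; I treat the four parts in turn. \textbf{Part (1).} I would first record two inputs. (i) By the theory of canonical coordinates (Chai, Noot), the formal germ at an ordinary point $f(x)$ of an irreducible component of the special fibre of a special subvariety $S\subseteq\mathcal{A}_g$ is a formal subtorus of $\mathscr{A}_{g,\Fpbar}^{/x}$, and under the Serre--Tate identification of the cocharacter lattice it is exactly the subtorus ``cut out by'' the generic Mumford--Tate group of $S$; conversely each sub-Mumford--Tate group of $G_B(A)$ comes from a sub-Shimura datum and hence contributes such a subtorus, so germ-components of special subvarieties inside $\overline{S}_{\Fpbar}^{/x}$ are ``the same thing as'' sub-Mumford--Tate groups of $G_B(A)$. (ii) I would prove that $\mathscr{T}_{f,x}$ is precisely the subtorus cut out by $G_p(A)^\circ$: the inclusion ``$\subseteq$'' is that $G_p(A)^\circ$-invariant crystalline tensors force $\im(f^{/x})$ into the corresponding subtorus (Kodaira--Spencer: the unit-root $F$-isocrystal of $A$ governs the canonical coordinates of $A/X$), while ``$\supseteq$'' is a ``formal Ax'' statement that the monodromy Lie algebra is generated by the derivatives of the canonical coordinate of $A/X$ at $x$. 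Granting (i)--(ii), $\mathrm{logAL}_p$ for $f$ --- that $\mathscr{T}_{f,x}$ is a germ-component of a special subvariety --- becomes, since $S$ is the \emph{smallest} special subvariety with $\im f\subseteq\overline{S}_{\Fpbar}$, exactly the equality $G_p(A)^\circ=G_B(A)_{\bQ_p}$, i.e.\ the $p$-adic half of $\MTT_p$; for the $l$-adic half I would invoke that for ordinary $A$ the $l$-adic étale and overconvergent crystalline monodromy groups have ``the same shape'' (both lie in $G_B(A)$ and share a common Frobenius torus, in the spirit of the independence/companions formalism), so that one equality forces the other --- this is \cref{thm:logALimpliesMT}. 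The converse implication just runs the dictionary backwards.

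\textbf{Part (2).} Assume $f$ is a locally closed immersion and $X$ contains a Zariski-dense family of positive-dimensional special subvarieties; by Part (1) I may use $\mathrm{logAL}_p$ for $X$. After the standard reductions (pass to an infinite subfamily $\{Z_i\}$ of bounded complexity, all Hecke-translates of a single special $Z$, and slide the base point into each $Z_i$), the germs $Z_i^{/x}$ accumulate Zariski-densely in $X^{/x}$, so their ``limit'' is a formal subtorus $\mathscr{T}\supseteq\mathscr{T}_{f,x}$ of $\mathscr{A}_{g,\Fpbar}^{/x}$ contained in $X^{/x}$. Applying $\mathrm{logAL}_p$, $\mathscr{T}$, and hence $\mathscr{T}_{f,x}$, is a germ-component of a special subvariety $S'$, so $\im f\subseteq\overline{S'}_{\Fpbar}$; since $X$ is ``foliated'' along the $Z$-direction, unwinding Chai's almost-product decomposition of $(S')^{\ord}$ along that direction produces the splitting $Y_1\times(\text{subvariety of }Y_2)$ with $\dim Y_1=\dim Z>0$ demanded by the definition of quasi-weakly special. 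This is \cref{thm:MTimpliesAO}.

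\textbf{Parts (3)--(4).} These are \cref{prop:relationbetweencycleconjectures}. I would first note the unconditional inclusions $G_l(A)^\circ\subseteq G_B(A)_{\bQ_l}$ and $G_p(A)^\circ\subseteq G_B(A)_{\bQ_p}$: the Hodge tensors cutting out the characteristic-$0$ avatar $S$ of the smallest special subvariety reduce --- via de Rham/crystalline comparison, Tate's theorem for $p$-divisible groups, and relative $p$-adic Hodge theory --- to étale resp.\ crystalline tensors fixed by the monodromy. For (3): the Tate conjecture for all powers of $A_\eta$ makes the $l$-adic representation semisimple with all invariant tensors algebraic, so $G_l(A)^\circ$ is the stabiliser in $\GL(H^1)$ of the algebraic cycle classes on powers of $A_\eta$; specializing at CM points (canonical Serre--Tate lifts, which are CM abelian varieties over number fields) and invoking the Tate conjecture for CM abelian varieties identifies those algebraic classes with the $G_B(A)$-fixed ones, giving $G_l(A)^\circ=G_B(A)_{\bQ_l}$, whence the $p$-adic equality by Part (1). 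For (4): assuming $\MTT_p$ for $f$, the $\Gal$-invariant classes on powers of $A_\eta$ coincide (after a finite extension and up to Tate twist) with the $G_B(A)$-invariants; I would then choose a characteristic-$0$ point of $\overline{S}$ specializing to $A_\eta$ (which exists by flatness of the integral model together with $\im f\subseteq\overline{S}_{\Fpbar}$), where these become Hodge classes on a $g$-dimensional abelian variety, hence algebraic by the Hodge conjecture, hence algebraic on $A_\eta$ since algebraic cycles specialize --- i.e.\ the Tate conjecture for $A_\eta$.

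I expect the principal obstacle to be input (ii) of Part (1), namely that $\mathscr{T}_{f,x}$ is \emph{exactly} cut out by $G_p(A)^\circ$ (and in particular the ``formal Ax'' inclusion), which is where Chai's global Serre--Tate theory must be married carefully to integral and relative $p$-adic Hodge theory; once Part (1) is in hand, Parts (2)--(4) follow the contours of the classical characteristic-$0$ implications Ax--Lindemann $\Rightarrow$ André--Oort, Tate $\Rightarrow$ Mumford--Tate, and Hodge $+$ Mumford--Tate $\Rightarrow$ Tate.
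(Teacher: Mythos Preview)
Your approach to Part~(1) contains a genuine gap. You assert that once the ``dictionary'' is in place, $\mathrm{logAL}_p$ \emph{is} the equality $G_p(A)^\circ=G_\mathrm{B}(A)_{\bQ_p}$. But the dictionary is not a bijection between subgroups and formal subtori: the formal subtorus attached to a reductive subgroup $G\subseteq\GL(H)$ containing $\mu_x$ only sees the \emph{opposite unipotent} $U_{G,\mu_x^{-1}}$. Indeed, input~(ii) --- which is essentially \cite[Theorem~4.2]{J23} (Theorem~\ref{Thm:Tatelocal} in the paper) --- says precisely that the cocharacter space of $\mathscr{T}_{f,x}$ is $\Lie U_{G_p(f),\mu_x^{-1}}$, and similarly the germ of the minimal special subvariety has cocharacter space $\Lie U_{G_\mathrm{B}(f)_{\bQ_p},\mu_x^{-1}}$. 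So $\mathrm{logAL}_p$ gives you exactly that $G_p(f)^\circ$ and $G_\mathrm{B}(f)_{\bQ_p}$ have the same opposite unipotent with respect to $\mu_x$ (this is Lemma~\ref{lm:MTGsameunip} in the paper). It does \emph{not} give you the groups themselves: two distinct reductive groups can share a unipotent --- think of a product with a compact factor centralised by $\mu_x$, or a diagonal in a product of isomorphic factors only one of which is ``Serre--Tate'' for $\mu_x$.

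The paper fills this gap in two nontrivial steps. First, it uses the theory of weak Mumford--Tate pairs (Pink) to show that for each \emph{simple} factor of $G_\mathrm{B}(f)^{\ad}_{\bC_p}$ through which $\mu_x$ passes nontrivially (the ``Serre--Tate factors''), equality of unipotents forces equality of groups; this is the Lie-theoretic Lemma~\ref{lm:smaeunipimplyequal}, checked against Pink's table. Second, to capture the remaining factors (those on which $\mu_x$ is trivial, hence invisible to the unipotent), it builds weakly compatible systems of coefficient objects out of the adjoint representation, and uses Galois twists plus Chebotarev to propagate the equality from the Serre--Tate place $p$ to all places and all factors (\S\ref{sub:simplecase}--\S\ref{subsub:MTgen}). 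Your sketch does not contain either ingredient; the phrase ``the $l$-adic étale and overconvergent crystalline monodromy groups have the same shape'' is where the real content hides. For Part~(2), your ``limit of germs'' mechanism is not the paper's: the paper instead invokes a \emph{geometric squeeze theorem} (Theorem~\ref{thm:squeeze}) producing a global lisse subsheaf $\mathcal{R}_f$ of the Serre--Tate sheaf, then uses irreducibility of the Serre--Tate representations of simple Mumford--Tate triples to force $\mathcal{R}_f$ to be ``big'' on each factor; your accumulation argument would need substantial work to be made rigorous in characteristic~$p$. Parts~(3)--(4) are in the right spirit; note that in (3) the paper's key technical input, beyond the CM lift, is Noot's theorem \cite[Theorem~2.8]{N96} on the maximal formal subscheme over which Tate cycles extend horizontally, which controls the dimension and forces $f$ to factor through the special subvariety.
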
 

\begin{center}

\tikzset{every picture/.style={line width=0.75pt}} 

\begin{tikzpicture}[x=0.75pt,y=0.75pt,yscale=-1,xscale=1]

\draw  [draw opacity=0] (258.53,211.75) .. controls (241.65,206.04) and (229.5,190.06) .. (229.5,171.25) .. controls (229.5,153.29) and (240.57,137.92) .. (256.26,131.59) -- (272.25,171.25) -- cycle ; \draw   (258.53,211.75) .. controls (241.65,206.04) and (229.5,190.06) .. (229.5,171.25) .. controls (229.5,153.29) and (240.57,137.92) .. (256.26,131.59) ;  
\draw  [draw opacity=0] (285.97,130.75) .. controls (302.85,136.46) and (315,152.44) .. (315,171.25) .. controls (315,189.21) and (303.93,204.58) .. (288.24,210.91) -- (272.25,171.25) -- cycle ; \draw   (285.97,130.75) .. controls (302.85,136.46) and (315,152.44) .. (315,171.25) .. controls (315,189.21) and (303.93,204.58) .. (288.24,210.91) ;  
\draw  [draw opacity=0] (260.35,220.32) .. controls (217.63,217.6) and (183.25,184.22) .. (178.95,141.95) -- (266,133) -- cycle ; \draw   (260.35,220.32) .. controls (217.63,217.6) and (183.25,184.22) .. (178.95,141.95) ;  
\draw  [draw opacity=0] (178.68,118.35) .. controls (181.4,75.63) and (214.78,41.25) .. (257.05,36.95) -- (266,124) -- cycle ; \draw   (178.68,118.35) .. controls (181.4,75.63) and (214.78,41.25) .. (257.05,36.95) ;  
\draw  [draw opacity=0] (365.29,139.39) .. controls (362.57,182.11) and (329.19,216.5) .. (286.92,220.8) -- (277.97,133.75) -- cycle ; \draw   (365.29,139.39) .. controls (362.57,182.11) and (329.19,216.5) .. (286.92,220.8) ;  
\draw  [draw opacity=0] (255.26,217.33) .. controls (216.45,211.1) and (186.2,180.67) .. (182,142.8) -- (270,133.75) -- cycle ; \draw   (255.26,217.33) .. controls (216.45,211.1) and (186.2,180.67) .. (182,142.8) ;  
\draw   (251.32,212.48) .. controls (253.65,214.95) and (256.05,216.51) .. (258.52,217.16) .. controls (255.98,217.42) and (253.38,218.61) .. (250.7,220.71) ;
\draw   (175.49,149.4) .. controls (177.22,146.48) and (178.07,143.74) .. (178.01,141.19) .. controls (178.96,143.56) and (180.82,145.74) .. (183.57,147.73) ;
\draw   (250.24,33.6) .. controls (253.16,35.33) and (255.9,36.18) .. (258.45,36.11) .. controls (256.09,37.07) and (253.91,38.93) .. (251.91,41.68) ;
\draw   (360.31,143.24) .. controls (362.82,140.96) and (364.43,138.59) .. (365.12,136.13) .. controls (365.34,138.67) and (366.47,141.3) .. (368.52,144.01) ;
\draw   (289.45,137.2) .. controls (288.14,134.07) and (286.44,131.76) .. (284.35,130.3) .. controls (286.82,130.94) and (289.68,130.74) .. (292.92,129.71) ;
\draw   (255.47,205.79) .. controls (257.14,208.74) and (259.1,210.83) .. (261.35,212.05) .. controls (258.82,211.7) and (256,212.23) .. (252.91,213.63) ;

\draw (286.24,214.31) node [anchor=north east] [inner sep=0.75pt]  [font=\scriptsize]  {$MT_{p}$};
\draw (161,122.4) node [anchor=north west][inner sep=0.75pt]  [font=\scriptsize]  {$\mathrm{logAL}_{p}$};
\draw (260,30.4) node [anchor=north west][inner sep=0.75pt]  [font=\scriptsize]  {$\mathrm{Tl}_{p}$};
\draw (348,125.4) node [anchor=north west][inner sep=0.75pt]  [font=\scriptsize]  {$\mathrm{geoAO}_{p}$};
\draw (259,119.4) node [anchor=north west][inner sep=0.75pt]  [font=\scriptsize]  {$\mathrm{Tate}$};
\draw (274,164.4) node [anchor=north west][inner sep=0.75pt]  [font=\scriptsize]  {$+\mathrm{Hodge}$};

\end{tikzpicture}


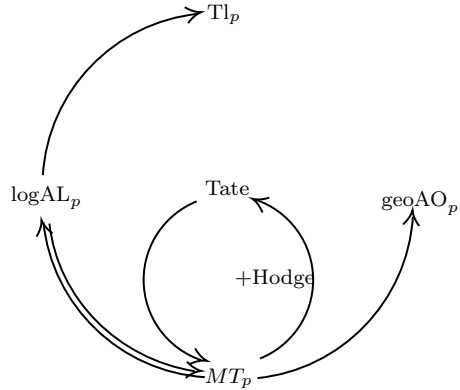
\captionof{figure}{A web of connections}\label{figure2}
\end{center}
We state a quick consequence of Theorem~\ref{thm:connectionweb}: 
\begin{corollary}\label{cor:quickconseuq}
    An irreducible subvariety $X\subseteq\mathscr{A}_{g,\Fpbar}^{\ord}$ is called \textbf{formally generic}, if there exists a smooth point $x\in X(\Fpbar)$, such that  $X^{/x}$ is not contained in any proper formal subtorus of $\mathscr{A}_{g,\Fpbar}^{/x}$. Then $\MTT_p$, $\mathrm{logAL}_p$, and $\mathrm{geoAO}_p$ hold for all formally generic $X$. 
\end{corollary}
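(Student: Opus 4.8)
The plan is to deduce all three statements from Theorem~\ref{thm:connectionweb}, once one observes that $\mathrm{logAL}_p$ is essentially vacuous for formally generic $X$. Let $f: X\hookrightarrow \mathscr{A}_{g,\Fpbar}^{\ord}$ be the inclusion, a locally closed immersion, and fix a smooth point $x\in X(\Fpbar)$ witnessing formal genericity, so that $X^{/x}$ is contained in no proper formal subtorus of $\mathscr{A}_{g,\Fpbar}^{/x}$.

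The first step is to identify $\mathscr{T}_{f,x}$. Because $f$ is a locally closed immersion, $\im(f^{/x})=X^{/x}$, so $f^{/x}$ factors through a formal subtorus $T\subseteq\mathscr{A}_{g,\Fpbar}^{/x}$ if and only if $X^{/x}\subseteq T$; by the formal genericity hypothesis the only such $T$ is $\mathscr{A}_{g,\Fpbar}^{/x}$ itself, hence $\mathscr{T}_{f,x}=\mathscr{A}_{g,\Fpbar}^{/x}$. Now let $\mathscr{A}$ be the irreducible component of $\mathscr{A}_{g,\Fpbar}$ through $x$. Since the ambient Shimura variety $\mathcal{A}_g$ is itself a classical special subvariety and $\mathscr{A}$ is an irreducible component of its mod $p$ reduction (in which $\mathscr{A}$ is trivially Zariski dense), $\mathscr{A}$ is a special subvariety in the sense of the paper; moreover, as $\mathscr{A}_{g,\Fpbar}$ is smooth, the formal germ of $\mathscr{A}$ at $x$ is irreducible and equals $\mathscr{A}_{g,\Fpbar}^{/x}=\mathscr{T}_{f,x}$. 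Thus $\mathscr{T}_{f,x}$ is an irreducible component of the formal germ at $f(x)$ of the special subvariety $\mathscr{A}$, which is exactly the assertion of $\mathrm{logAL}_p$ for $f$.

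It remains to apply Theorem~\ref{thm:connectionweb}. Choose a finite field model $f_0$ of $f$, which exists as $X$ and $f$ are of finite type over $\Fpbar$; part (1) then upgrades $\mathrm{logAL}_p$ for $f$ to $\MTT_p$ for $f$, and, since $f$ is a locally closed immersion, part (2) further yields $\mathrm{geoAO}_p$ for $f$. There is no real obstacle in this argument: the only points requiring (minor) care are that the ambient component $\mathscr{A}$ genuinely satisfies the paper's definition of a special subvariety, and that ``the smallest formal torus through which $f^{/x}$ factors'' coincides with ``the smallest formal subtorus containing $X^{/x}$''; both are immediate from the definitions. (Note that $\mathrm{Tl}_p$ is not claimed here, and typically fails for formally generic $X$, since such $X$ need not itself be Tate-linear.)
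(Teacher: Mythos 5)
Your proof is correct and matches the paper's approach, which disposes of the corollary in one line by noting that $\mathscr{T}_{f,x}$ is forced to be all of $\mathscr{A}_{g,\Fpbar}^{/x}$, so $\mathrm{logAL}_p$ holds trivially (with the witnessing special subvariety being the ambient component of $\mathscr{A}_{g,\Fpbar}$), and then invoking Theorem~\ref{thm:connectionweb}(1) and (2). Your fuller spelling-out of why the ambient component qualifies as a special subvariety, and why ``smallest formal torus through which $f^{/x}$ factors'' agrees with ``smallest formal subtorus containing $X^{/x}$'' for a locally closed immersion, is exactly the implicit content of the paper's one-sentence proof.
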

This is simply because $\mathrm{logAL}_p$ holds trivially for formally generic ones. As a consequence, $\MTT_p$, $\mathrm{logAL}_p$, and $\mathrm{geoAO}_p$ hold for ``almost all'' subvarieties of $\mathscr{A}_{g,\Fpbar}^{\ord}$.\footnote{Note that how restrictive it is for being non-formally generic: The tangent space $T_x\mathscr{A}_{g,\Fpbar}$ is canonically the tangent space of a formal torus, so the tangent space of any proper formal subtorus is contained in a union of $\# \bP^{g}(\Fpbar_p)=\frac{p^{g+1}-1}{p-1}$ fixed codimension 1 subspaces of $T_x\mathscr{A}_{g,\Fpbar}$. If $X$ is smooth at $x$ and is not formally generic, then $T_xX$ must be contained in one of the finitely many fixed subspaces. Using a Bertini type argument, one can easily construct an abundance of formally generic subvarieties, and show that almost all subvarieties of $\mathscr{A}_{g,\Fpbar}^{\ord}$ are formally generic.}

The proof of Theorem~\ref{thm:connectionweb} is a combination of the techniques from \cite{J23} and Mumford--Tate pairs \cite{P98}. 

The equivalence $\mathrm{logAL}_p\Leftrightarrow \MTT_p$ is a bridge between  problems of different nature: on one side, the problem is representation/Tannakian theoretic; on the other side, the problem lies in the realm of algebraization/unlikely intersection. The equivalence between two sides makes it possible to attack $\mathrm{logAL}_p$ and $\MTT_p$ using the techniques from both sides. To the author's best knowledge, no such connection exists in characteristic 0. 

The results in category (b) are obtained by exploiting $\mathrm{MT}_p\Rightarrow \mathrm{logAL}_p$.

The results in category (c) are obtained by exploiting $\mathrm{logAL}_p\Rightarrow\mathrm{MT}_p$.

 
\subsubsection{Main results in category $\mathrm{(b)}$}\label{subsub:exploitMT} Let's call the classical Mumford--Tate conjecture over number field $\MTT_0$. With some effort, many classical techniques for $\MTT_0$ can be adapted to char $p$. Our guideline is the following \\[5pt]
\noindent\textit{Expectation: whenever a case of $\MTT_0$ is known, a char $p$ variant can be proved.} 
\begin{theorem}[$=$ {Theorem~\ref{modpCommelin}}]\label{thm:modpCommelindemo}
Let $A_1,A_2$ be ordinary abelian varieties over a function field $k/\Fpbar_q$. Suppose that $\MTT_p$ holds for $A_1$ and $A_2$, then it holds for $A_1\times A_2$. 
\end{theorem}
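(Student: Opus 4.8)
The plan is to transplant to characteristic $p$ the strategy of Commelin's proof of the analogous statement over number fields, with the structure theory of Mumford--Tate pairs \cite{P98} as the key input. After passing to a common finite field model, let $f_i\colon X_0\to\mathscr{A}_{g_i,\Fpbar_q}^{\ord}$ be the moduli maps of $A_1,A_2$, let $f=f_1\times f_2\colon X_0\to\mathscr{A}_{g_1+g_2,\Fpbar_q}^{\ord}$ be the product, and write $G_\bullet$, for $\bullet\in\{\mathrm B,l,p\}$, for the Mumford--Tate group, the $l$-adic étale monodromy group, and the $p$-adic overconvergent crystalline monodromy group, the last two meaning the identity component. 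Each projection $A:=A_1\times A_2\to A_i$ is a morphism of abelian schemes, so functoriality of the three realizations gives surjections $G_\bullet(A)\twoheadrightarrow G_\bullet(A_i)$; on the Betti side this rests on the fact that the smallest Shimura subvariety through $\im(f)$ surjects onto those through $\im(f_1)$ and $\im(f_2)$, which I would deduce from the inclusion of $\overline{\im(f)}$ into the product of the two smaller Shimura subvarieties. Since all groups in sight are connected, Goursat's lemma presents
\begin{equation}
G_\bullet(A)\;=\;G_\bullet(A_1)\times_{H_\bullet}G_\bullet(A_2)
\end{equation}
for a group $H_\bullet$ that is a common quotient of $G_\bullet(A_1)$ and $G_\bullet(A_2)$. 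As $\MTT_p$ is assumed for $A_1$ and $A_2$, we have $G_l(A_i)=G_{\mathrm B}(A_i)_{\bQ_l}$ and $G_p(A_i)=G_{\mathrm B}(A_i)_{\bQ_p}$, so the theorem for $A$ becomes equivalent to the equalities $H_l=(H_{\mathrm B})_{\bQ_l}$ and $H_p=(H_{\mathrm B})_{\bQ_p}$ of common quotients; a priori there are only surjections $(H_{\mathrm B})_{\bQ_\bullet}\twoheadrightarrow H_\bullet$.

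Next I would split off the toric part of the common quotient. The image in $H_\bullet$ of the connected center of $G_\bullet(A_i)$ is a quotient torus governed by the CM factors of $A_i$, and for CM-type data $\MTT_p$ is already available (the relevant $l$-adic and $p$-adic monodromy tori being computed by class field theory and agreeing with the Mumford--Tate torus; cf.\ \cite{J23}), so the toric common quotients coincide after base change. It remains to handle the derived part $H_\bullet^{\der}$: via the fibre product presentation, each of its simple factors is simultaneously an adjoint quotient of a simple factor of $G_{\mathrm B}(A_1)^{\der}$ and of $G_{\mathrm B}(A_2)^{\der}$, and one must see that this ``diagonal'' gluing is forced on the $l$-adic and $p$-adic sides too. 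Passing to geometrically simple isogeny factors and projecting $H_\bullet^{\der}$ onto one simple factor $S$, the problem reduces to the following: if $A_1$ and $A_2$ are geometrically simple and $S$ is a common simple adjoint quotient of their derived Mumford--Tate groups, is the corresponding diagonal in $S\times S$ also realized by the monodromy groups?

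At this point the first cohomology makes each of $(G_{\mathrm B}(A_i),\rho_i)$, $(G_l(A_i),\rho_i)$ and $(G_p(A_i),\rho_i)$ into a weak Mumford--Tate pair in Pink's sense; for the $p$-adic monodromy this uses that, $A_i$ being ordinary, $G_p(A_i)$ is reductive and $\rho_i$ has the correct Hodge--Tate weights, as in \cite{J23}. Pink's classification of irreducible weak Mumford--Tate pairs is rigid enough that a subgroup of $G_\bullet(A_1)\times G_\bullet(A_2)$ surjecting onto each factor and gluing along $S$ is, for each realization $\bullet$, determined by the same combinatorial data (Dynkin type, highest weights, the minusculeness constraint on the standard representation). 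Since the Betti, $l$-adic and $p$-adic data for each $A_i$ agree by hypothesis, the glued groups agree as well; running this over all simple factors of $H_\bullet^{\der}$ and recombining with the toric part via the fibre product yields $G_l(A)=G_{\mathrm B}(A)_{\bQ_l}$ and $G_p(A)=G_{\mathrm B}(A)_{\bQ_p}$, i.e.\ $\MTT_p$ for $A$.

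I expect the main obstacle to be this last step: adapting Pink's Hodge-theoretic analysis of Mumford--Tate pairs to the characteristic $p$ monodromy setting --- in particular verifying reductivity and the weight/minusculeness properties of the $l$-adic and especially the $p$-adic overconvergent crystalline monodromy, so that the classification is applicable --- and then carrying out the resulting finite case check of which common simple factors are admissible, parallel to Commelin's analysis over number fields. A secondary point to pin down is the Betti-side behaviour of smallest Shimura subvarieties under products and projections, which is what makes Goursat's lemma available for $G_{\mathrm B}$ in the first place.
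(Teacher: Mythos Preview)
Your Goursat framework and the reduction of the toric part via the known equality of connected centers (Proposition~\ref{thm:Z0equal}) are fine, but the core step has a genuine gap. The claim that ``Pink's classification of irreducible weak Mumford--Tate pairs is rigid enough that [the gluing] is determined by the same combinatorial data'' is not provable by representation theory alone. The combinatorial data (Dynkin type, highest weights, minusculeness) tells you which simple factors of $G_\bullet(A_1)^{\der}$ and $G_\bullet(A_2)^{\der}$ \emph{could} be identified; it does not tell you \emph{whether} they are, nor along \emph{which} isomorphism. For instance, if $G_\mathrm{B}(A_1)^{\ad}\cong G_\mathrm{B}(A_2)^{\ad}$ is simple but the two standard representations are inequivalent, then the full product and the diagonal have the same endomorphism algebra on $V_1\oplus V_2$, so neither Proposition~\ref{thm: Endequal} nor Pink's tables separate them. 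Which one occurs is arithmetic, not combinatorial.

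This arithmetic input is precisely what the paper (following Commelin) supplies and what your sketch omits. After reducing to $G_\mathrm{B}(f_i)^{\ad}$ simple, one assumes an $\ell$-adic gluing and must produce a Betti gluing. The paper does so by (i) using the weak $E$-compatibility of the adjoint coefficient objects (Lemma~\ref{lm:Fcompatible}) to propagate the gluing across primes, (ii) matching the Deligne--Dynkin diagrams $(\Delta_1,\mu_1)\simeq(\Delta_2,\mu_2)$ via Hodge--Tate weights at canonical lifts, (iii) running Deligne's construction (\S\ref{subsub:deligneconstruction}--\ref{subsub:furthermodi}) to build auxiliary \emph{ordinary} Deligne abelian schemes $\widetilde{A}_1,\widetilde{A}_2$ over $X_0$ whose $\lambda$-adic representations coincide, and (iv) applying the Tate isogeny theorem to get an isogeny $\widetilde{A}_1\sim\widetilde{A}_2$, hence a Betti-side gluing. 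The characteristic~$p$ work lies in step (iii): one must choose the CM field and partial CM type so that the resulting Shimura variety has nonempty ordinary reduction (Lemma~\ref{lm:lmlm}). Your ``finite case check'' cannot replace this; without Deligne's construction and the isogeny theorem the argument does not close. (A minor slip: the a priori surjection goes the other way, $H_\bullet\twoheadrightarrow(H_\mathrm{B})_{\bQ_\bullet}$, since $G_\bullet(A)\subseteq G_\mathrm{B}(A)_{\bQ_\bullet}$ forces the $\bullet$-adic common quotient to be larger.)
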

This is a mod $p$ analogue of a theorem by Commelin; cf. \cite{JMC}. We've also made successful adaption of multiple results from  \cite{Tank83,MZ95,P98}:
\begin{theorem}[$=$Theorem~\ref{modpPink}+Theorem~\ref{modpTank}+Theorem~\ref{modpMoonen-Zarhin}]\label{demo:MTcasesclassical}
    Let $A$ be an ordinary abelian variety over a function field $k/\Fpbar_q$. Then $\MTT_p$ holds for $A$, if \begin{enumerate}
        \item $A$ is of Pink type, see  Definition~\ref{def:pinktype},
        \item $A$ is geometrically simple of prime dimension,
        \item\label{it:demo:MTcasesclassical3} $A$ is an abelian fourfold \textbf{not} of $p$-adic Mumford type, see Definition~\ref{def:cohomologicalMType}.
    \end{enumerate}
\end{theorem}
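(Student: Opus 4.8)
The plan is to follow the template behind the stated \emph{Expectation}. In all three cases one already has, from the general machinery developed earlier, the inclusions $G_l(A)^\circ\subseteq G_{\mathrm{B}}(A)_{\bQ_l}$ and $G_p(A)^\circ\subseteq G_{\mathrm{B}}(A)_{\bQ_p}$; what remains is the reverse inclusions, and for these I would transcribe the classical proof of the corresponding case of $\MTT_0$, substituting characteristic $p$ gadgets at each step. Two structural inputs make this systematic. First, $G_{\mathrm{B}}(A)$ is by construction the generic Mumford--Tate group of a Shimura subdatum of $(\GSp_{2g},\mathfrak{H}_g)$, hence connected reductive and equipped with a cocharacter whose weights on the tautological $2g$-dimensional representation lie in $\{0,1\}$ --- exactly the ``minuscule weights'' hypothesis that the classical arguments exploit. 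Second, over the finite-field model $X_0$ the characteristic polynomials of Frobenius at closed points are independent of $l$ and also govern the crystalline Frobenius; so, via a Frobenius-torus/Chebotarev argument in Serre's style together with the crystalline companion relation identifying $H^1_{\cris}(A)$ with the $F$-isocrystal companion of $H^1_{\et}(A)$, I expect it to suffice to compute a single realization --- I would work $l$-adically --- and then transport the conclusion to $G_p(A)^\circ$.

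For the case analysis I would argue as follows. \textbf{Pink type:} the pair $\bigl(G_l(A)^\circ, H^1_{\et}(A_{\bar\eta},\bQ_l)\bigr)$ is a weak Mumford--Tate pair in the sense of \cite{P98} --- a reductive group with a minuscule cocharacter and Frobenii in sufficiently general position --- as is $\bigl(G_{\mathrm{B}}(A),\mathrm{std}\bigr)$, and the Pink-type hypothesis (Definition~\ref{def:pinktype}) puts both inside the range where Pink's classification of such pairs is complete and rigid, which forces equality. \textbf{Geometrically simple of prime dimension:} here $D:=\End(A_{\bar\eta})\otimes\bQ$ is tightly constrained by Tankeev's argument ($\bQ$, a totally real field, a CM field, or a quaternion algebra, with the Rosati involution limiting the rest), so $G_{\mathrm{B}}(A)^{\der}$ lies on a short explicit list; one then reruns the classical case analysis --- in the CM case $A_\eta$ is a special point and $G_l(A)^\circ$ is a torus of the expected rank, and in the remaining cases simplicity plus minuscule weights forces equality, with Frobenius tori replacing the reductions at auxiliary primes used classically. \textbf{Abelian fourfolds not of $p$-adic Mumford type:} the Moonen--Zarhin classification gives a finite explicit list of possibilities for $G_{\mathrm{B}}(A)$ once the Mumford-type exception is excluded (Definition~\ref{def:cohomologicalMType}), and for each entry $\MTT_0$ is known by a direct argument that uses only inputs surviving in characteristic $p$ --- the structure of $\End(A_\eta)$, polarization/weight data now read off from $G_{\mathrm{B}}(A)$ and its tautological cocharacter, and Frobenius conjugacy classes --- yielding $G_l(A)^\circ = G_{\mathrm{B}}(A)_{\bQ_l}$ and hence $G_p(A)^\circ = G_{\mathrm{B}}(A)_{\bQ_p}$.

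The main obstacle, I expect, is not the case analysis but the \emph{transfer of inputs}, where three points need care. (i) Making the bridge between the $l$-adic and the overconvergent $p$-adic monodromy groups precise: one must identify the crystalline companion of $H^1_{\et}(A)$ with $H^1_{\cris}(A)$ as an overconvergent $F$-isocrystal and check that the two tannakian monodromy groups agree after extending coefficients, compatibly with Frobenius tori. (ii) Replacing the Hodge-theoretic positivity the classical proofs rely on --- polarizations, minusculeness of the Hodge cocharacter, Deligne's ``Hodge $=$ absolutely Hodge'' for abelian varieties --- by statements about $G_{\mathrm{B}}(A)$, which a priori is only a group attached to a Shimura subvariety rather than the Hodge group of a complex abelian variety. (iii) The subtlety that over a characteristic $p$ function field $\End(A_\eta)$ can be strictly larger than in the ``analogous'' characteristic $0$ situation, so one must confirm that the hypotheses of the three cases genuinely cut $G_{\mathrm{B}}(A)$ down to the classified lists; this last, essentially bookkeeping, point is where I expect the argument to be most delicate.
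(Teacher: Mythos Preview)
Your strategy is the right one and matches the paper's approach: adapt the classical arguments via Mumford--Tate pairs, Frobenius tori, and $l$-independence. However, two concrete technical ingredients are missing from your sketch, and without them the proof does not go through.

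First, your treatment of the Pink type case glosses over the distinction between \emph{weak} and \emph{strong} Mumford--Tate pairs. For condition~\ref{it:P2} ($A_{2s-1}$ or $B_r$) it is indeed enough to know that $(G_l(A)^\circ,\rho_l)$ is a weak Mumford--Tate pair over $\bQ_l$ and invoke Pink's rigidity result that such types admit no proper irreducible weak sub-pair (this is Theorem~\ref{thm:Pinkcase2}). But for condition~\ref{it:P1} the classical argument applies Pink's \cite[Proposition~4.7]{P98}, which requires a \emph{strong} Mumford--Tate pair over~$\bQ$. The group $G_l(A)^\circ$ is a priori only defined over~$\bQ_l$, so one must first run Pink's \emph{interpolation} machinery to produce a connected reductive $G\subseteq\GL_{2g,\bQ}$ with $G_{\bQ_u}$ conjugate to $G_u(A)^\circ$ for a density-one set of~$u$, and such that $(G,\rho)$ is itself a strong Mumford--Tate pair. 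Adapting this to characteristic~$p$ is the main technical work (Propositions~\ref{prop:Vgood}, \ref{prop:pinktranstive} and Theorem~\ref{thm:Pinkinterpolation}): it needs the Larsen--Pink formal-character theory and a characteristic-$p$ substitute for \cite[Proposition~6.10]{P98}, whose original proof uses the Hodge--Tate decomposition at auxiliary primes. Your ``Frobenius-torus/Chebotarev'' remark gestures at this but does not cover it. The same interpolation result is also what drives Lemma~\ref{prop:mtreduction1} in the fourfold case when $\End^0(A_{\overline{k}})=\bQ$.

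Second, your obstacle~(iii) is not an obstacle at all, and recognizing this is what makes the case analysis in parts~(2) and~(3) work. By Proposition~\ref{thm: Endequal} one has $\End^0(A_{\overline{k}})=\End_{G_\mathrm{B}(A)}(\rho_\mathrm{B})=\End^0(A_{\overline{\epsilon}})$, where $\epsilon$ is the generic point of a minimal special subvariety whose reduction contains the image of~$f$. So the endomorphism algebra of the characteristic-$p$ abelian variety \emph{exactly} matches that of the characteristic-$0$ Hodge-generic point, and the Albert classification (for prime dimension) and the Moonen--Zarhin table (for fourfolds) apply verbatim to constrain $G_\mathrm{B}(A)$. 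This equality is the bookkeeping you were worried about; once you have it, the case analysis is the same as the paper's: reduce non-simple fourfolds to lower dimension via Theorem~\ref{modpCommelin}, handle each endomorphism type by reading off the simple Mumford--Tate pair from \cite[Table~4.2]{P98}, and observe that the only type admitting a proper irreducible weak sub-pair of the same dimension is the $A_3$ case (Type~IV(1,1)(ii)), where the original Moonen--Zarhin argument transplants directly.
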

Besides all that has been said, the mod $p$ adaption truly shines when we apply it, via  Theorem~\ref{thm:connectionweb}, to the realm of unlikely intersections: 
\begin{theorem}[$=$ {Corollary~\ref{cor:Commelinconsequence}}]\label{thm:XAndemo}
    Suppose $f:X\hookrightarrow \mathscr{A}_{g_1,\Fpbar}\times \mathscr{A}_{g_2,\Fpbar}\times ...\times \mathscr{A}_{g_n,\Fpbar}$ is a locally closed immersion into the ordinary locus. 
   If $\mathrm{logAL}_p$ holds for each factor $f_i:X\rightarrow \mathscr{A}_{g_i,\Fpbar}$, then $\mathrm{logAL}_p$ and $\mathrm{geoAO}_p$ hold for $X$.  
\end{theorem}
Note that $\mathrm{logAL}_p$ holds trivially if $f_i$ dominates a special subvariety of $\mathscr{A}_{g_i,\Fpbar}$.
\begin{theorem}[$=$ {Corollary~\ref{cor:geoAOdim5}}]\label{thm:AOleq5}
 Suppose $f:X\hookrightarrow \mathscr{A}_{g,\Fpbar}$ is a locally closed immersion into the ordinary locus. If $g\leq 5$, then $\mathrm{geoAO}_p$ holds for $X$.
\end{theorem}
It is also possible to generalize the theorem to $g>5$, if one requires that $\dim X$ is not too small. 

Finally, we give the definition of abelian varieties of Pink type and Mumford type. Let $A$ be an abelian variety over a field $k$ finite generated over $\bQ$ or $\Fpbar_p$. If $\mathrm{char}\, k=p$, we further assume that $A$ is ordinary. 
\begin{defn}[Pink type]\label{def:pinktype}
We say that $A$ is of \textbf{Pink type}, if  $\End(A_{\overline{k}})=\bZ$, and one of the following two conditions hold: 
\begin{enumerate}[label=(\subscript{P}{{\arabic*}})]
\item\label{it:P1} 
$2\dim A$ is not a $r$-th power of an integer for $r>1$, nor of the form $\binom{2r}{r}$ for some odd $r>1$.
\item\label{it:P2} $G_\mathrm{B}(A)$ has Lie type $A_{2s-1}$ for $s\geq 1$ or $B_r$ for $r\geq 1$.
\end{enumerate}
\end{defn}
\begin{defn}[Mumford type]\label{def:cohomologicalMType}
     We say that an abelian fourfold $A$ is of \textbf{Mumford type}, if $(G_\mathrm{B}(A)_{{\bC}}^{\circ,\der},\rho_\mathrm{B})$ is the tensor product of three copies of standard representation of $\SL_{2,{\bC}}$. Similarly, it is of \textbf{$p$-adic Mumford type}, if 
$(G_p(A)_{\overline{\bQ}_p}^{\circ,\der},\rho_p)$ is the tensor product of three copies of standard representation of $\SL_{2,\overline{\bQ}_p}$. 
\end{defn}

\subsubsection{Main results in category $\mathrm{(c)}$}\label{subsub:exploitAL} The existing techniques for $\MTT_0$ have obvious limitation, and the conjecture is still open even for abelian fourfolds of $p$-adic Mumford type. If we only attack $\MTT_p$ by adapting char $0$ methods, we can go no further than our ancestors. The results we present in this section are built on new ideas. We will see that, in contrast to its char $0$ counterpart, $\MTT_p$ for $p$-adic Mumford type abelian fourfolds is indeed within reach. 

Let $A$ be an abelian fourfold over a function field $k/\Fpbar_q$. Without loss of generality, we can assume that $k$ is a global field. 
\begin{theorem}[$=$ Corollary~\ref{cor:coomologyMT}]\label{demothm:Tconjfourfolds} Suppose that $A$ is of $p$-adic Mumford type, has unramified $p$-adic monodromy $G_p(A)$ and everywhere potentially good reduction\footnote{Note that abelian fourfolds satisfying these conditions do exist: one can find an example by taking the reduction of a suitable Mumford curve. In addition, if $\MTT_p$ holds, then all $p$-adic Mumford type abelian fourfolds must have everywhere potentially good reduction (since Mumford curves are compact).}, then $\MTT_p$ holds for $A$.
\end{theorem}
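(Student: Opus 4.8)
\emph{Strategy.} I would derive the statement from the paper's main algebraization result, $\mathrm{logAL}_p$ for compact Tate-linear curves with unramified $p$-adic monodromy (Theorem~\ref{demothm:TTLpintegraldcs}), via the equivalence $\MTT_p\Leftrightarrow\mathrm{logAL}_p$ of Theorem~\ref{thm:connectionweb}(1). Since passing to a finite extension of $k$ affects neither the validity of $\MTT_p$ (a statement about identity components of monodromy groups) nor any of the three hypotheses, we may assume $A$ has everywhere good reduction; then $A$ extends to an abelian scheme over the smooth projective model of $k$, so the attached map $f_0\colon X_0\to\mathscr{A}_{4,\Fpbar_q}^{\ord}$ (with $X_0$ the open curve on which $A$ is ordinary) is \emph{compact} in the sense of Theorem~\ref{demothm:TTLpintegraldcs}, and by hypothesis its $p$-adic monodromy is unramified. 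Being of $p$-adic Mumford type forces $G_p(A)^{\circ,\der}$ to be non-abelian, so $f:=f_{0,\Fpbar}$ is non-constant and its image is a curve $Y\subseteq\mathscr{A}_{4,\Fpbar}$. By Theorem~\ref{thm:connectionweb}(1) it suffices to prove $\mathrm{logAL}_p$ for $f$.

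\emph{Step 1: $f$ is Tate-linear.} Fix an ordinary $x\in X_0(\Fpbar)$ and let $\cE=H^1_{\cris}(A/X_0)$ be the overconvergent $F$-isocrystal, of rank $8$. By the $p$-adic Mumford type hypothesis, $(G_p(A)^{\circ,\der}_{\overline{\bQ}_p},\rho_p)$ is $\SL_2^3$ acting through $V_1\otimes V_2\otimes V_3$ with each $V_i$ the standard $2$-dimensional representation, so $\cE$ carries a nontrivial system of horizontal crystalline Tate classes in tensor constructions of $\End\cE$ cutting out this structure. Restricting these to $x$ and comparing with the universal family, $f^{/x}\colon X^{/x}\to\mathscr{A}_{4,\Fpbar}^{/x}$ factors through the attached crystalline Hodge locus $\mathscr{H}_x\subseteq\mathscr{A}_{4,\Fpbar}^{/x}$. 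Two properties of $\mathscr{H}_x$ conclude this step: it is a formal subtorus --- crystalline Tate classes impose multiplicative conditions on Serre--Tate coordinates, a formal linearity of crystalline Hodge loci that is part of the apparatus behind Theorem~\ref{demothm:TTLpintegraldcs} --- and it is one-dimensional, being $\geq 1$-dimensional because the $1$-dimensional $f^{/x}$ factors through it, and $\leq 1$-dimensional because its tangent space is cut out by the same linear conditions as the corresponding characteristic $0$ Hodge locus, which is the $1$-dimensional Mumford Shimura curve. Hence $f^{/x}(X^{/x})=\mathscr{H}_x=\mathscr{T}_{f,x}$ and $Y$ is Tate-linear.

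\emph{Step 2: conclusion.} Now $f$ is a compact Tate-linear curve with unramified $p$-adic monodromy, so Theorem~\ref{demothm:TTLpintegraldcs} gives $\mathrm{logAL}_p$ for $f$: there is a special subvariety of $\mathscr{A}_{4,\Fpbar}$ whose formal germ at $f(x)$ admits $\mathscr{T}_{f,x}=\mathscr{H}_x$ as an irreducible component --- necessarily a special curve with generic Mumford--Tate group the Mumford group. Applying Theorem~\ref{thm:connectionweb}(1) once more, $\MTT_p$ holds for $f$, hence for $A$.

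\emph{Main obstacle.} Essentially all the content lies in Theorem~\ref{demothm:TTLpintegraldcs}, whose heart is the \emph{algebraization} step: the formal subtorus $\mathscr{T}_{f,x}=\mathscr{H}_x$ must be promoted to an honest algebraic special subvariety of $\mathscr{A}_{4,\Fpbar}$. In characteristic $0$ this is the role of o-minimality and the definability of the period uniformization, for which there is no substitute here; the replacement is to globalize $\mathscr{H}_x$ into a rigid-analytic crystalline Hodge locus over the $p$-adic tube of $Y$ and to control it via integral and relative $p$-adic Hodge theory --- and it is exactly the \emph{unramified} hypothesis on $G_p(A)$ that makes the requisite good integral (e.g.\ prismatic) comparison available. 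A secondary technical point, already in Step 1, is to verify that the crystalline Hodge locus attached to the $\SL_2^3$-structure is formally linear of the expected dimension $1$ and that this structure spreads out as horizontal Tate classes over the whole compact curve --- which is where everywhere good reduction is used a second time.
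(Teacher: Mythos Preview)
Your overall architecture matches the paper's: reduce to showing the image curve $C=\overline{\im f}\subseteq\mathscr{A}_{4,\Fpbar}$ is a proper Tate-linear curve, apply Theorem~\ref{demothm:TTLpintegraldcs} (= Theorem~\ref{thm:TTLpintegraldcs}), and deduce $\MTT_p$. Your Step~2 is fine, though the paper closes slightly differently: once $C$ is special it concludes $G_\mathrm{B}(A)_{\overline{\bQ}}\subsetneq\GSp_{8,\overline{\bQ}}$ and invokes Lemma~\ref{prop:mtreduction1}(2), rather than going through $\mathrm{logAL}_p\Leftrightarrow\MTT_p$. Both routes work.

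The real issue is Step~1. Your argument that $\mathscr{H}_x$ is one-dimensional is circular: you bound it above by comparison with ``the corresponding characteristic~$0$ Hodge locus, which is the $1$-dimensional Mumford Shimura curve,'' but the existence of a characteristic~$0$ Mumford curve whose reduction contains $\im f$ is exactly what we are trying to establish. At this stage we only know the $p$-adic monodromy has $\SL_2^3$-type, not the Mumford--Tate group. In addition, your $\mathscr{H}_x\subseteq\mathscr{A}_{4,\Fpbar}^{/x}$ is not clearly defined (the crystalline Hodge loci of Construction~\ref{const:ttloci} live in the rigid tube, not in the special fiber), and the formal-linearity claim you invoke is proved in \S\ref{sub:Toy} only \emph{after} one already knows $T$ is Tate-linear, so it cannot be used to establish Tate-linearity in the first place.

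The paper's replacement for Step~1 is Lemma~\ref{lm:multislopes}, a purely $p$-adic computation: factor the Newton cocharacter $\nu_x$ through each of the three $\SL_2$ factors. If $\nu_x$ were nontrivial on more than one factor, the slope filtration on $V_1\otimes V_2\otimes V_3$ would have more than two steps, contradicting ordinariness; hence $\nu_x$ is nontrivial on exactly one factor and $\dim U_{G_p(A)^\circ,\mu_x^{-1}}=1$. Theorem~\ref{Thm:Tatelocal}(3) then gives $\rk\mathscr{T}_{f,x}=1$, so $C$ is a Tate-linear curve. This avoids any reference to characteristic~$0$ and closes the gap.
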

To the author's best knowledge, no result analogues to this is known for $\MTT_0$.

For the proof strategy: Theorem~\ref{demothm:Tconjfourfolds} reduces to $\mathrm{logAL}_p$ and, (with some more effort) further reduces to the Tate-linear conjecture  for Tate-linear curves in $\mathscr{A}_{4,\Fpbar}^{\ord}$. This enables us to conclude Theorem~\ref{demothm:Tconjfourfolds} as the $g=4$ case of the following result on the Tate-linear conjecture: 
\begin{theorem}[$=$Theorem~\ref{thm:TTLpintegraldcs}]\label{demothm:TTLpintegraldcs}
  Suppose that $f:T\hookrightarrow \mathscr{A}_{g,\Fpbar}$ is a proper Tate-linear curve with unramified $p$-adic monodromy $G_p(f)$. Then $\mathrm{Tl}_{p}$ holds for $T$.
\end{theorem}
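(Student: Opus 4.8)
The plan is to extract from the Tate-linear curve $T$ a ``crystalline Hodge locus'' and show it forces $T$ to be special. Concretely, let $x\in T(\Fpbar)$ be a smooth ordinary point; since $T$ is Tate-linear, $T^{/x}$ is a formal subtorus of $\mathscr{A}_{g,\Fpbar}^{/x}$, which by Serre--Tate theory is the same as a $\bZ_p$-line $L$ in the (cocharacter lattice of the) Serre--Tate torus. The unramified hypothesis on $G_p(f)$ means that, after passing to a suitable finite field model $f_0:T_0\to\mathscr{A}_{g,\Fpbar_q}^{\ord}$ and a finite extension of $\Fpbar_q$, the overconvergent $F$-isocrystal attached to $A=f^*(\text{univ})$ has its Frobenius structure defined over an unramified base; this is exactly the setting in which (integral and relative) $p$-adic Hodge theory applies cleanly. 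First I would lift the situation $p$-adically: because $T$ is proper, a formal/rigid lift of $T_0$ together with the abelian scheme exists over a suitable $p$-adic ring, and the canonical (Serre--Tate) coordinates give the $p$-adic uniformization of $T^{\wedge}$ by (an open in) the torus $\hat{\bG}_m^{\,r}$ whose character lattice is cut out by $L$.

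The heart of the argument is to analyze the de Rham/crystalline realization along this $p$-adic uniformization and produce Hodge classes. I would argue as follows. The relative $p$-adic period map (or, equivalently, the Gauss--Manin connection together with the Frobenius structure of the unit-root $F$-isocrystal) associated to $A$ over the rigid curve $T^{\wedge}$, when pulled back to the torus uniformization, becomes extremely rigid: Tate-linearity says the Kodaira--Spencer image lands in the ``linear'' part, and the unramified Frobenius structure means the period matrix has entries that are (logarithms of) the canonical coordinates. One then computes the monodromy/Frobenius-invariant sub-objects of $H^1_{\dR}(A/T^{\wedge})^{\otimes}$: the condition that $T^{/x}$ is a formal \emph{sub}torus, combined with the Frobenius-equivariance, forces a large algebra of endomorphisms/Hodge cycles on the generic fibre to be crystalline, hence (invoking the comparison with étale cohomology over the henselian/global base, and $\MTT_p$-type input only for the $p$-adic monodromy group) forces $G_p(f)^\circ$ to be small — precisely, the $p$-adic monodromy is constrained to a group of the shape coming from a sub-Shimura datum. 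The algebraization step then promotes $\mathscr{T}_{f,x}$ to the formal germ of an honest special subvariety: here properness of $T$ is used to apply a rigid-analytic GAGA/algebraization (the ``crystalline Hodge locus'' is a closed rigid-analytic subvariety whose associated formal completion at $x$ contains $\mathscr{T}_{f,x}$, and compactness lets us algebraize it), and then one matches it with the mod $p$ reduction of a classical special subvariety using the description of special subvarieties via their tangent spaces being sub-Hodge structures.

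In more detail, the steps in order would be: (1) reduce to a global field model and replace $T$ by a finite cover so that the abelian scheme has good reduction and the $p$-adic monodromy is connected and unramified; (2) set up the Serre--Tate/canonical coordinates at $x$ and identify $T^{/x}$ with the sub-$p$-divisible-group structure forced by Tate-linearity; (3) construct the rigid-analytic lift $T^{\wedge}$ (using properness) and the abelian scheme over it, and write down the relative de Rham cohomology with its Gauss--Manin connection and Frobenius; (4) compute the Frobenius-and-connection-invariant tensors, i.e.\ the crystalline Hodge locus, showing it is cut out by the linearity constraint; (5) transfer this to a bound on $G_p(f)^\circ$ via the $p$-adic Hodge comparison, and invoke Theorem~\ref{thm:connectionweb}(1) (equivalence $\MTT_p\Leftrightarrow\mathrm{logAL}_p$) — or rather its proof mechanism — to see that this bound yields $\mathrm{logAL}_p$, hence $\mathrm{Tl}_p$, for $T$; (6) algebraize and identify the resulting special subvariety, using compactness for the rigid GAGA step.

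\textbf{Main obstacle.} The crux — and where I expect the real work to lie — is step (4)–(5): showing that the Frobenius structure on the relative de Rham cohomology over the torus-uniformized rigid curve is rigid enough that its invariants are exactly the ``linear'' ones, with no extra transcendental contributions. This is essentially a relative $p$-adic Hodge-theoretic statement (a $p$-adic Ax--Schanuel/Ax--Lindemann phenomenon for the canonical coordinates) and requires genuinely controlling the interaction between the Gauss--Manin connection, the Hodge filtration, and the unit-root Frobenius over a non-proper rigid base — precisely the reason the ``unramified $p$-adic monodromy'' and ``proper curve'' hypotheses are imposed, since they make the relevant $(\varphi,\nabla)$-module computations tractable. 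A secondary technical point is the algebraization in step (6): one must know the crystalline Hodge locus is a \emph{closed} rigid-analytic subvariety (not merely a formal germ) before compactness can be leveraged, which is where the construction of the ``crystalline Hodge locus'' as a bona fide rigid analytic object (promised in the introduction) does the heavy lifting.
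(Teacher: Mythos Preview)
Your outline has the right large-scale architecture --- build a rigid analytic ``crystalline Hodge locus'' in the tube $]T[$, show it is Zariski closed in $(\mathcal{A}_{g,K}^{\Sigma})^{\an}$, and algebraize via rigid GAGA --- but steps (2), (4)--(5) contain genuine gaps that the paper resolves quite differently.

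First, step (5) misidentifies the logical route. The paper does \emph{not} pass through the equivalence $\MTT_p\Leftrightarrow\mathrm{logAL}_p$. The crystalline tensors $\{s_\alpha\}$ are \emph{chosen} to cut out $G_p(f)^{\circ}$; computing connection-and-Frobenius invariants gives you nothing new about $G_p(f)^{\circ}$, so there is no ``bound on $G_p(f)^{\circ}$'' to extract. Instead, once the canonical Tate-linear locus $\mathfrak{T}_{\can}\subseteq]T[$ is shown to be the analytification of an algebraic subvariety $Z\subseteq\mathcal{A}_{g,\overline{K}}$, one concludes directly: $(\widetilde{T^\circ})^{\mathrm{rig}}$ contains a Zariski-dense set of CM points (canonical lifts), hence so does $Z$, and then Moonen's theorem (or Andr\'e--Oort) forces $Z$ to be special. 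No monodromy comparison is needed.

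Second, step (2) begs the question and step (4) misses the key mechanism. You cannot simply ``lift $T$'' --- that is precisely what $\mathrm{Tl}_p$ asserts. Chai's canonical lifting only produces $\widetilde{T^\circ}$ over the smooth ordinary locus; the whole difficulty is extending across non-ordinary (and singular) points of the proper curve $T$. The paper's solution is: (i) define $\mathfrak{T}_{\can}$ as the irreducible component of the locus $\{s_\alpha^{\an}\in\Fil^0\}$ in $]T[$ containing $(\widetilde{T^\circ})^{\mathrm{rig}}$ (this uses Crew's parabolicity conjecture, now D'Addezio's theorem, to match dimensions); (ii) transfer the crystalline tensors to $p$-adic \'etale tensors along $\mathfrak{T}_{\can}$ via \emph{relative} integral $p$-adic Hodge theory on a semistable formal model (this is where \cite{du2024logprismaticfcrystalspurity} enters, and requires normalizing $\mathfrak{T}_{\can}$, excising singularities, and alterating); (iii) show these \'etale tensors are $p$-\'etale integral at every point; (iv) feed them into Kisin's Breuil--Kisin criterion (Theorem~\ref{thm:BKcriterion}) to conclude that $\mathfrak{T}_{\can}\cap]z[$ is \emph{linear in the Lie theoretic coordinates} for every $z\in T(\Fpbar)$, ordinary or not. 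This linearity is what makes $\mathfrak{T}_{\can}$ sit inside tubes of uniformly smaller radius, hence Zariski closed in the compactification, so rigid GAGA applies. Your focus on Serre--Tate canonical coordinates is inadequate here: those exist only at ordinary points, whereas the Lie theoretic coordinates (\S\ref{subsub:lethcoor}) are what handle the non-ordinary fibers.

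Finally, the role of ``unramified'' is slightly different from what you suggest: it does not directly make the $(\varphi,\nabla)$-computations tractable. Rather, unramifiedness guarantees only that \emph{some} lattice in $\omega_x^{\iso}(\bH_{\cris})$ carries a reductive integral model for $G_p(f)^{\circ}$; one must then pass to a $p$-power Hecke translate of $T$ to arrange that this lattice is the \emph{given} lattice $\omega_x^{\iso}(\bbH_{\cris})$ (the $p$-integrality condition of Definition~\ref{def:cohomologyunramified}), which is what Kisin's criterion requires.
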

 We will discuss the strategy for Theorem~\ref{demothm:TTLpintegraldcs} in detail in the next section. The technical assumption ``everywhere potentially good reduction'' is likely not a serious one: it is possible to have it removed via log geometry. The condition ``unramified'' is crucial to our treatment. 
\begin{remark}
   It seems that $\MTT_p$ is more approachable than $\MTT_0$. It is reasonable to expect that the solution of $\MTT_0$ secretly relies on $\MTT_p$ -- similar to other well-known stories. 
\end{remark}

\begin{remark}
Markman has recently claimed the Hodge conjecture for abelian fourfolds; cf. \cite{Markman}. One can ask if the Tate conjecture for abelian fourfolds (over all characteristics) also follows. The answer is \textbf{yes}. It may be surprising that we don't need the Mumford--Tate conjecture for \textit{all} abelian fourfolds. If $A$ is of $p$-adic Mumford type, then the validity of the Tate conjecture can be checked by hand, cf. \cite[\S 4.1]{MZ95}. If $A$ is not of $p$-adic Mumford type, then we combine the Hodge conjecture with the Mumford--Tate result \cite{MZ95} in char 0, and with Theorem~\ref{demo:MTcasesclassical} in char $p$. 
\end{remark}

\subsection{Tate-linear subvarieties and the crystalline Hodge loci}
Our treatment of the Tate-linear conjecture (Theorem~\ref{demothm:TTLpintegraldcs}) involves the study of a new type of geometric objects, called the \textbf{Tate-linear loci}, which is a crystalline analogue of the Hodge loci (there is a relevant and more general construction that we termed as \textbf{crystalline Hodge loci}, see Remark~\ref{rmk:crystallineHodgeloci}). 

\subsubsection{Tate-linear loci} 
Let $\mathscr{A}^{\Sigma}_{g}$ be a fixed integral toroidal compactification. Suppose that $T\subseteq\mathscr{A}^{\Sigma}_{g,\Fpbar}$ is a closed, generically ordinary Tate-linear subvariety. Let $T^{\circ}\subseteq T$ be its smooth ordinary locus. Using global canonical coordinates, Chai proved that one can canonically lift $T^{\circ}$ to a formally linear formal subscheme $\widetilde{T^{\circ}}\subseteq (\mathscr{A}_{g,W}^{\Sigma})^{\wedge p}$ (\cite[Proposition 5.5]{Ch03}). Chai then asked if $\widetilde{T^{\circ}}$ can be extended to a closed formal subscheme of $(\mathscr{A}_{g,W}^{\Sigma})^{\wedge p}$ (\textit{loc.cit} Remark 7.2.1, we slightly generalize the setting here). If this is true, then $\widetilde{T^{\circ}}$ is algebraic by Grothendieck's GFGA, and $\mathrm{Tl}_p$ for $T$ follows from Moonen's result \cite[Theorem 4.5]{M98}.

Unfortunately, Chai's canonical lifting method does not extend to whole $T$: the variety $T$ itself may contain non-smooth, non-ordinary, and bad reduction points, where the theory of canonical coordinates may not be effective. This is a key obstruction to the approach of Conjecture~\ref{conj:TTl} via lifting techniques. Our construction of the Tate-linear loci is a partial remedy of this. In the following we give the construction in a special case. We will write $\bH_{\mathrm{dR}}$ for the first relative de Rham cohomology on $\mathscr{A}_{g}$ and write $\bH_{\cris}$ for the first relative crystalline cohomology on $\mathscr{A}_{g,\Fpbar_p}$.


\begin{construction}[Tate-linear loci]\label{const:ttloci}  
Let $T\subseteq\mathscr{A}_{g,\Fpbar}$ be a smooth but not necessarily proper  Tate-linear subvariety and let  ${\mathscr{A}}_{g,W}^{/T}$ be the formal completion of ${\mathscr{A}}_{g,W}$ along $T$. The tubular neighborhood of $T$ in ${\mathscr{A}}_{g,W}^{\wedge p}$, denoted by 
$]T[$, is the rigid generic fiber $({\mathscr{A}}_{g,W}^{/T})^{\mathrm{rig}}$. From the work of Berthelot and Ogus; cf. \cite{Berthlot86} and \cite{OgusII}, we can canonically identify the convergent $F$-isocrystal $\bH_{\cris,\mathscr{A}_{g,\Fpbar}}$ with the de Rham bundle $\bH_{\dR,(\mathscr{A}_{g,W}^{\wedge p})^{\mathrm{rig}}}$, together with a Frobenius structure. Note that $\bH_{\dR,(\mathscr{A}_{g,W}^{\wedge p})^{\mathrm{rig}}}$
is further equipped with a two-step descending Hodge filtration $\mathrm{Fil}^\bullet$. Via pullback to $]T[$, we can canonically identify the convergent $F$-isocrystal $\bH_{\cris,T}$ as the de Rham bundle $\bH_{\dR,]T[}$, together with a Frobenius structure. The bundle  $\bH_{\dR,]T[}$ is again equipped with the pullback Hodge filtration $\mathrm{Fil}^\bullet$. Now consider a finite collection of crystalline tensors $\{s_\alpha\}$ cutting out the overconvergent monodromy group $G(\bH_{\cris,T})^{\circ}$. Each $s_\alpha$ gives rise to a ``multi-valued horizontal section'' $s_\alpha^{\mathrm{an}}$ of an element $\bM_\alpha\in \bH_{\dR,]T[}^{\otimes}$ (which is a filtered analytic vector bundle with connection on $]T[$, with analytic subbundle $\Fil^0 \bM_\alpha$), which becomes an actual horizontal section of $\bM_\alpha$ over a finite étale cover $\pi:\mathfrak{V}\rightarrow ]T[$ \footnote{An explicit construction of the étale cover is as follows: Let $f:X\rightarrow T$ be a connected finite étale cover so that $G(\bH_{\cris,X})=G(\bH_{\cris,X})^{\circ}$. Since taking formal thickening has no effect on the étale site, there is a unique formal scheme étale over ${\mathscr{A}}_{g,W}^{/T}$, with special fiber $X$. We will denote this formal scheme by ${\mathscr{A}}_{g,W}^{/X}$. Let $]X[$ be the rigid generic fiber of $\mathscr{Z}$. Take $\pi$ to be $]X[\rightarrow ]T[$.}. Consider the locus 
$\mathfrak{I}_\alpha\subseteq ]T[$ over which $s_\alpha^{\mathrm{an}}\in  \Fil^0 \bM_\alpha$, and take $\mathfrak{T}=\bigcap_{\alpha} \mathfrak{I}_\alpha$. This is a locally Noetherian Zariski closed analytic subspace of $]T[$. The locus $\mathfrak{T}$ is independent of the choice made, and is called the \textbf{Tate-linear locus} associated to $T$. Any irreducible component $\mathfrak{T}_0\subseteq \mathfrak{T}$ in the sense of \cite{Conrad} is called an irreducible Tate-linear locus associated to $T$. 
\end{construction}
 The construction is the most interesting when $T$ contains non-ordinary points (e.g., $T$ is proper). However, even when $T$ lies completely in the ordinary stratum, the locus $\mathfrak{T}$ contains information about the non-ordinary and boundary loci of the closure $T^{\mathrm{cl}}\subseteq \mathscr{A}_{g,\Fpbar}^\Sigma$: indeed, $\mathfrak{T}$ is ``overconvergent'' in nature, and extends to a strict neighborhood of $]T[$ in $]T^{\mathrm{cl}}[$. One key fact (Corollary~\ref{cor:containTcan}) is that there is a unique irreducible Tate-linear locus $\mathfrak{T}_{\mathrm{can}}$ with the property that \begin{equation}\label{eq:Tcan}
\mathfrak{T}_{\mathrm{can}}\cap ]T^\circ[ \text{ contains }(\widetilde{T^\circ})^{\mathrm{rig}} \text{ as an irreducible component}.
\end{equation} 
 The proof of this fact essentially relies on Crew's parabolicity conjecture (now a theorem of D'Addezio), which is not available until recently. Thanks to this, we can extend Chai's Tate-linear canonical lifting towards non-ordinary loci, and we have\\

 \noindent\textit{Observation: If $\mathfrak{T}_{\mathrm{can}}$ is algebraizable (i.e., is a component of the restriction of the analytification of an algebraic subvariety of $\mathcal{A}_{g,K}$ to $]T[$), then $\mathrm{Tl}_p$ holds for $T$; cf. Lemma~\ref{lm:moonenlemma}.}

\begin{remark}\label{rmk:crystallineHodgeloci}
Construction~\ref{const:ttloci} immediately generalizes to the setting where $T$ is smooth but not Tate-linear. And we again call $\mathfrak{T}\subseteq ]T[$ that arises from the construction the \textbf{crystalline Hodge loci} associated to $T$. As compared to the classical Hodge loci, crystalline Hodge loci are less global: they only live in the tubular neighborhood of $T$. Nevertheless, we (conjecturally) expect them to be algebraizable: more precisely, an irreducible crystalline Hodge locus is (a component of) the intersection of $]T[$ with the analytification of a special subvariety.  Using an argument similar to Lemma~\ref{lm:moonenlemma}, it is not hard to see that the algebraicity conjecture for crystalline Hodge loci associated to $T$ is equivalent to $\mathrm{logAL}_p$ for $T$.
\end{remark}
\subsubsection{Geometry of the Tate-linear loci} The algebraization problem of a Tate-linear locus is closely related to its geometry.

How can one understand the geometry of $\mathfrak{T}_{\mathrm{can}}$?  Let's take a pointwise scrutiny: in the residue disk of an ordinary $\Fpbar$-point $x$, it is possible to show that $\mathfrak{T}_{\mathrm{can}}$ is a union of the rigid generic fibers of torsion translates of subtori of the formal torus $\mathscr{A}_{g,W}^{/x}$; cf. Theorem~\ref{thm:localstructureofT}; however, in the residue disk of a non-ordinary point, it can be much wilder. Besides all that has been said, it is a natural idea to impose \textit{integrality conditions} on the $p$-adic monodromy of $T$ (e.g., being ``unramified'' as in Theorem~\ref{demothm:TTLpintegraldcs}) in order to tame the behavior of $\mathfrak{T}_{\mathrm{can}}$, especially in the residue disk of a non-ordinary point. 

Indeed, the connection between the integrality of the $p$-adic monodromy and the geometry of $\mathfrak{T}_{\mathrm{can}}$ is analogues to the connection between the hyperspecial condition of the level group and the geometry of an integral Shimura variety. In \cite{KM09}, Kisin studies the local geometry of a certain integral Shimura subvariety $\mathscr{S}_W\subseteq\mathscr{A}_{g,W}$ with hyperspecial level structure. In the formal neighborhood of an arbitrary $\Fpbar$-point $x\in \mathscr{A}_{g,\Fpbar}$, there is a type of \textbf{Lie theoretic coordinates} (in contrast to the canonical coordinates) that comes from the explicit deformation of $p$-divisible groups with Tate classes (cf. \cite[\S 7]{Fal99}, \cite[\S 4]{Moo98} and \cite[\S1.4-\S 1.5]{KM09}). Using Breuil--Kisin modules, Kisin is able to show that $\mathscr{S}^{/x}_W$ is a union of \textit{linear subspaces} in the Lie theoretic coordinates. In particular, $\mathscr{S}_W$ is smooth up to a normalization.

We briefly explain how this inspires a strategy to tackle the algebraization problem for $\mathfrak{T}_{\mathrm{can}}$, with a view towards the proof of Theorem~\ref{demothm:TTLpintegraldcs}. 

First, we show that if the 
$p$-adic monodromy of $T$ is unramified, then (up to replacing $T$ by a Hecke translate) the canonical Tate-linear locus  $\mathfrak{T}_{\mathrm{can}}$ is \textit{linear} in the Lie theoretic coordinates in the residue disk of every point of $x\in T(\Fpbar)$. Needless to say, Breuil--Kisin theory plays a key role in establishing the linearity. However, a major difference is that Kisin starts with a collection of $p$-adic étale cycles (arising from Hodge cycles) with good integral behavior, which he can feed in the Breuil--Kisin theory, while in our case we only have a collection of crystalline cycles  with good integral behavior. The particular definition of $\mathfrak{T}_{\mathrm{can}}$, however, equips us with a collection of de Rham cycles compatible with the crystalline ones. To obtain $p$-adic étale cycles with good integral behavior, we have to apply more recent development on (relative) integral $p$-adic Hodge theory, e.g., \cite{du2024logprismaticfcrystalspurity}. Of course, \cite{du2024logprismaticfcrystalspurity} only applies to semistable formal schemes, while we do not \textit{a priori} know that $\mathfrak{T}_{\mathrm{can}}$ admits a nice formal model. This is where lots of technicalities from $p$-adic geometry kick in:  splitting into pieces, normlizing, throwing out singular loci, using Raynaud's theory to find a formal model, formally alterating the model (cf. \cite{TM17}), etc. 

After one shows that $\mathfrak{T}_{\mathrm{can}}$ is {linear} in the residue disk of every point of $x\in T(\Fpbar)$, the strategy is to show that $\mathfrak{T}_{\mathrm{can}}$ is contained in a union of finitely many tubes of smaller radius (i.e., tubes of the form $]U[_{<1-\epsilon}$, where $U\subseteq T$ is Zariski open). When $T$ is proper, this condition implies that $\mathfrak{T}_{\mathrm{can}}$ is Zariski closed in $(\mathcal{A}_{g,K}^{\Sigma})^{\an}$: a situation that enables us to apply rigid GAGA. To avoid too much technicality, we narrow ourselves down to the $\dim T=1$ case, where the proof is relatively clean. This essentially proves Theorem~\ref{demothm:TTLpintegraldcs}. In future works, we will come back and extend these results to full generality.

\subsection{Organization of the paper} Section~\ref{sec:prelim} is devoted for background knowledge and convention setup. We prove Theorem~\ref{thm:connectionweb} in section~\ref{sec:ProofMT} and section~\ref{sec:ProofAO}. The main results in category (b) will be proved in section~\ref{Sec:classical}, and the  main results in category (c) will be proved in section~\ref{sec:Newmethod}.

\subsection*{Acknowledgements} The author thanks 
Brian Conrad, Haoyang Guo, Jiaqi Hou, Zeyu Liu, Martin Olsson, Abhishek Oswal, Ananth Shankar, Sug Woo Shin, Yunqing Tang, Ziquan Yang and Alex Youcis for valuable discussions. The author also thanks Hanson Hao for bringing him happiness when writing up the paper. 
\section{Preliminaries}\label{sec:prelim}
We review basic notions of Shimura varieties, local coordinates on Shimura varieties, coefficient objects, the mod $p$ Mumford--Tate conjecture and cycle conjectures. 
\subsection{Shimura varieties of Hodge type and special subvarieties}\label{Sec:GSpinDef}
Let $H$ be a self-dual symplectic $\bZ_{(p)}$-lattice. One can form a Siegel Shimura datum $(\GSp(H_\bQ),\mathfrak{H}^{\pm})$ with reflex field $\bQ$. Let $\bK=\bK_p{\bK}^p$ be a hyperspecial level structure such that $\bK_p=\GSp(H)(\bZ_p)$, ${\bK}^p\subseteq \GSp(H_{\bQ})(\bA_f^p)$ compact open, and $\bK$ leaves $H_{\widehat{\bZ}}$ stable. For $\bK'$ sufficiently small, we get a smooth variety $\mathcal{A}_{g,\bK}:=\Sh_{\bK}(\GSp(H_\bQ),\mathfrak{H}^{\pm})$ over $\bQ$, called the Siegel modular variety, which admits an integral canonical model $\mathscr{A}_{g,\bK}$ over $\bZ_{(p)}$ (\cite[Theorem 2.10]{Mil92}). 

When $\bK$ is sufficiently small, $H$ gives rise to a universal abelian scheme over $\cA_{g,\bK}$ (up to prime-to-$p$ isogeny), whose first $\bZ_{(p)}$-Betti cohomology is the local system induced by $H$. The abelian scheme can be extended to $\mathscr{A}_{g,\bK}$, and we denote by $A\rightarrow\mathscr{A}_{g,\bK}$ the universal abelian scheme over  $\mathscr{A}_{g,\bK}$. Let $\bbH_\text{B},\bbH_{\text{dR}}, \bbH_{l,\text{ét}}$ be the integral Betti, de Rham, $l$-adic étale ($l\neq p$) relative first cohomology of $A\rightarrow \mathscr{A}_{g,\bK}$, and let $\bbH_{\text{cris}}$ be the first integral crystalline cohomology of $A_{\mathbb{F}_p}\rightarrow \mathscr{A}_{g,\bK,\Fpbar_p}$. We also denote by $\bH_\text{B},\bH_{\text{dR}}, \bH_{l,\text{ét}}$ and $\bH_{\text{cris}}$ the rational cohomologies. 
\subsubsection{Special subvarieties and naïve integral models}\label{subsub:naivespecial} 
A Shimura datum $(G,\mathcal{D})$ is of \textbf{Hodge type}, if there is a Siegel Shimura datum $(\GSp(H_\bQ),\mathfrak{H}^{\pm})$, and an embedding of Shimura data $\iota:(G,\mathcal{D})\hookrightarrow (\GSp(H_\bQ),\mathfrak{H}^{\pm})$. Let $E$ be the reflex field of $(G,\mathcal{D})$. For a suitable level structure $\bbK\subseteq G(\bA_f)$ such that $\iota(\bbK)\subseteq \bK$, we get a morphism of Shimura varieties. 
\begin{equation}\label{eq:shimuraHodgemorphi}
\Sh_{\iota,\bbK,\bK}:\Sh_{\bbK}(G,\mathcal{D})\rightarrow \mathcal{A}_{g,\bK,E}
\end{equation}
which is an embedding for suitable choice of $\bbK$ and $\bK$.


A \textbf{special subvariety of $\mathcal{A}_{g,\bK}$ of Hodge type} is (a component of) the Hecke translate of the image of a map of the form  (\ref{eq:shimuraHodgemorphi}). There is also a notion of \textbf{weakly special subvariety} which is, roughly speaking, the product of a special subvariety with a possibly non-CM point. This notion will not be used elsewhere in the paper. We refer the readers to \cite[Definition 2.5]{M97}, \cite[\S1]{M98} for more details.  

Now let's assume that $\bK$ is a sufficiently small hyperspecial level structure, so we have an ambient integral canonical model $\mathscr{A}_{g,\bK}$. Let $k$ be either a number field or $\overline{\bQ}$, and let $\mathfrak{p}$ be a prime of $O_k$ that lies above $p$. Let $\mathcal{X}\subseteq \cA_{g,\bK,k}$ be a special subvariety. The \textbf{naïve integral model} of $\mathcal{X}$ is the Zariski closure of $\mathcal{X}$ inside $\mathscr{A}_{g,\bK,O_{k,(\mathfrak{p})}}$, denoted by $\mathscr{X}$.  There is a notion of universal abelian scheme and various coefficient sheaves $\bbH_{\bullet}$ over $\mathscr{X}$ with respect to the embedding. They are just the pullback of the corresponding objects from $\mathscr{A}_{g,\bK}$. 

\subsubsection{Special subvarieties in the mod $p$ fiber} Consider an ambient integral canonical model $\mathscr{A}_{g,\bK}$. Let $\mathcal{X}\subseteq \cA_{g,\bK,k}$ be a special subvariety. The \textbf{ordinary naïve integral model} of $\mathcal{X}$ is the ordinary locus of the mod $p$ reduction of the naïve integral model $\mathscr{X}$, denoted by $\mathscr{X}^{\ord}$.

A generically ordinary (irreducible locally closed) subvariety $X\subseteq \mathscr{A}_{g,\bK,\Fpbar}$ is called \textbf{special}, if the Zariski closure of $X$ is an irreducible component of the mod $p$ fiber of the naïve integral model of a special subvareity. 

We also have the notion of \textbf{quasi-weakly special subvariety}; cf. \cite{J23}. Roughly speaking, a generically ordinary (irreducible locally closed) subvariety   $X\subseteq\mathscr{A}_{g,\bK,\Fpbar}$ is called {quasi-weakly special}, if there is a special subvariety $Y\subseteq \mathscr{A}_{g,\bK,{\Fpbar}}^{\ord}$ which splits as an almost product of two special subvarieties $Y_1$ and $Y_2$ with $\dim Y_1>0$, such that $X^{\ord}$ is Zariski dense in the almost product of $Y_1$ with a possibly non-special subvariety $X'\subseteq Y_2$. 
The precise definition is more technical, and won't be used in a serious manner, so we refer the readers to \cite{J23}.   
\subsection{Local coordinates on mixed characteristic Shimura varieties} Fix an ambient integral Siegel modular variety $\mathscr{A}_{g,\bK}$ with hyperspecial level structure. We will work over a finite field $\Fpbar_q$. Let $W=W(\Fpbar_q)$. 
\subsubsection{Canonical coordinates}\label{sub:ST} Let $x$ be an ordinary $\mathbb{F}_q$-point of $\mathscr{A}_{g,\bK,\mathbb{F}_p}$. We briefly review the theory of canonical coordinates; cf. \cite{K79}. Let $\mathscr{G}_0=A_x[p^{\infty}]$ and let $\mathscr{G}^{\et}_0$, $\mathscr{G}^{\loc}_0$ be the étale and local part of $\mathscr{G}_0$. Let  $M_0=\bD(\mathscr{G}_0)(W)$. Then $M_0$ canonically admits a $\bZ_p$-sublattice $$\Gamma_0=\{v\in M_0\,|\,(F-p^i)v, \;i\in \{0,1\}\} \subseteq M_0$$
such that $\Gamma_0\otimes W = M_0$. We have an canonical isomorphism $\Gamma_0\simeq X^*(\mathscr{G}^{\loc}_0)\oplus T_p(\mathscr{G}^{\et}_0)^{\vee}$, where the symbols $X^*$ and $T_p$ stand for the character lattice and the $p$-adic Tate module. One can define a canonical cocharacter $\mu: \bG_m\rightarrow \GL(\Gamma_0)$ inducing the splitting of $\Gamma_0$ and is of weight 1 on $X^*(\mathscr{G}^{\loc}_0)$ and weight 0 on $T_p(\mathscr{G}^{\et}_0)^{\vee}$. By a slight abuse of notation, we also write $\mu:\bG_m\rightarrow \GL(M_0)$ for the obvious scalar extension of $\mu$ to $W$. Via Grothendieck--Messing theory, $\mu$ is the Hodge cocharacter of the canonical lift of $\mathscr{G}_0$, and will be called  the \textbf{canonical Hodge cocharacter}.  

Let $U_{\GL(\Gamma_0),\mu^{-1}}$ be the opposite unipotent of $\mu$ in $\GL(\Gamma_0)$. The theory of canonical coordinates asserts that the formal deformation space $\Def(\mathscr{G}_0/W)$ canonically admits the structure of a formal torus:
$$ \Def(\mathscr{G}_0/W)\simeq \mathrm{Lie} U_{\GL(\Gamma_0),\mu^{-1}}\otimes_{\mathbb{Z}_p}\mathbb{G}_{m,W}^\wedge, $$
where $\mathbb{G}_{m,W}^\wedge$ is the formal torus representing the constant multiplicative $p$-divisible group $\mu_{p^\infty,W}$. We also have a canonical identification $\mathrm{Lie} U_{\GL(\Gamma_0),\mu^{-1}}=X_*(\mathscr{G}^{\loc}_0)\otimes_{\mathbb{Z}_p}  T_p(\mathscr{G}^{\et}_0)^{\vee}$. 
\begin{defn}
  A formal subscheme $\mathscr{T}\subseteq \Def(\mathscr{G}_0/W)$ is called \textbf{linear} (resp. \textbf{quasi-linear}) in the canonical coordinates, if it is a formal subtorus (resp. if $\mathscr{T}_{\overline{W}}\subseteq \Def(\mathscr{G}_0/W)_{\overline{W}}$ is a finite union of translates of formal subtori by torsion points).  
\end{defn}
For example, the formal deformation space $\mathscr{A}_{g,\bK,W}^{/x}\subseteq \Def(\mathscr{G}_0/W)$ is linear in the canonical coordinates: it can be identified with $\Lie U_{\bK_p,\mu^{-1}}\otimes_{\mathbb{Z}_p}\mathbb{G}_{m,W}^\wedge$, where $U_{\bK_p,\mu^{-1}}\subseteq U_{\GL(\Gamma_0),\mu^{-1}}$ is the opoosite unipotent of $\mu$ in the level group $\bK_p$. In general, formal completions of special varieties in $\mathscr{A}_{g,\bK,W}$ at an ordinary $\Fpbar_q$-point are quasi-linear:  
\begin{theorem}[{\cite[Theorem 3.7]{N96}}]\label{thm:noottheorem} Let $\mathscr{X}\subseteq \mathscr{A}_{g,\bK,W}$ be the naive integral model of a special subvariety. Then $\mathscr{X}_{{W}}^{/x}\subseteq \Def(\mathscr{G}_0/W)$ is quasi-linear. 
\end{theorem}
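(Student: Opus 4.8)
\textbf{Proof plan for Theorem~\ref{thm:noottheorem} (Noot's theorem).}

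The plan is to reduce the statement to a computation of the Hodge cocharacter on the canonical filtered $F$-isocrystal attached to the special subvariety, using the fact that $\mathscr{X}$ carries, after passage to a finite cover, a family of absolute Hodge (hence crystalline, hence $p$-adic étale) cycles cutting out a reductive subgroup $G$ of $\GSp$. First I would recall that, since $\mathscr{X}$ is the naïve integral model of a special subvariety associated to a Shimura subdatum $(G,\mathcal{D})\hookrightarrow(\GSp(H_\bQ),\mathfrak{H}^\pm)$, the deformation functor of the $p$-divisible group $\mathscr{G}_0 = A_x[p^\infty]$ equipped with the Tate classes $(s_\alpha)$ that cut out $G$ is, by Grothendieck--Messing theory combined with Faltings' deformation theory (cf.\ \cite[\S7]{Fal99}, \cite[\S4]{Moo98}), pro-represented by a formal subscheme of $\Def(\mathscr{G}_0/W)$; and $\mathscr{X}_W^{/x}$ is exactly this subscheme (up to the usual identification, this is where one invokes that $x$ lies in the image of the integral Shimura variety and that the level is hyperspecial, so the Kottwitz--Rapoport/Kisin formalism applies). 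So the task becomes: identify this ``deformations preserving the $s_\alpha$'' locus inside the formal torus $\Def(\mathscr{G}_0/W)$.

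The key step is the following Lie-theoretic observation. Because $x$ is ordinary, the canonical Hodge cocharacter $\mu:\bG_m\to \GL(\Gamma_0)$ factors (after perhaps conjugating so that $G$ is in good position, which is harmless up to a Hecke translate and a choice of symplectic basis adapted to the slope decomposition) through $G_{\bZ_p}$; this is precisely the statement that the canonical lift of an ordinary point of a special subvariety is again a CM point lying on that special subvariety, which one checks by an eigenvalue/weight argument on $\Gamma_0 = X^*(\mathscr{G}_0^{\loc})\oplus T_p(\mathscr{G}_0^{\et})^\vee$ (the slope filtration is $\mu$-split). Granting this, the ``$G$-preserving'' deformations inside $\Def(\mathscr{G}_0/W)=\Lie U_{\GL(\Gamma_0),\mu^{-1}}\otimes\bG_{m,W}^\wedge$ are governed by the $G$-stable part of the opposite unipotent, namely by $\Lie U_{G,\mu^{-1}}\otimes\bG_{m,W}^\wedge$ — a formal subtorus — after a further finite base change to split the finite group $\pi_0$ of the relevant Mumford--Tate/étale monodromy group (this accounts for the ``quasi-'' in quasi-linear: over $\overline{W}$ one picks up a finite union of torsion translates of this subtorus, indexed by the connected components / the torsion coming from the class group of the CM point). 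Concretely one writes down the universal deformation in Faltings' coordinates, expresses the condition that the crystalline realizations of the $s_\alpha$ stay in $\Fil^0$ (Grothendieck--Messing), and reads off that this condition is \emph{linear} in the logarithmic (canonical) coordinate $\log$ on the formal torus, hence cuts out a subtorus; the translation by a torsion point arises only from the discrepancy between the actual point $x$ and the base point of the formal group.

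The main obstacle I anticipate is the compatibility bookkeeping: one must match three a priori different structures at $x$ — (i) the formal completion $\mathscr{X}_W^{/x}$ coming from the global integral model of the Shimura subvariety, (ii) the deformation space of $(\mathscr{G}_0,(s_\alpha))$ with its Faltings/Lie-theoretic coordinates, and (iii) the subspace of the canonical formal torus $\Def(\mathscr{G}_0/W)$ — and verify that all three agree, including torsion translates, with the identifications of the cohomological tensors being crystalline/de Rham compatible. This requires knowing that the $s_\alpha$ are \emph{absolute} Hodge cycles (so their various realizations match under the comparison isomorphisms), which for abelian varieties is Deligne's theorem, and that the integral canonical model of the Shimura subvariety is characterized by a Néron-type extension property so that its completion really is a deformation space in the above sense — this is the content one imports from \cite{Moo98} and \cite{N96} itself. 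Once the dictionary is set up correctly, the proof is the short linear-algebra computation sketched above; the length in Noot's original argument is almost entirely the dictionary, not the Lie theory.
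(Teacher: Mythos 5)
The paper does not actually prove this statement — it cites it directly as Noot's Theorem 3.7 from \cite{N96} — so there is no internal argument to compare against, only Noot's original. Your sketch has the correct Lie-theoretic endgame: the $\Fil^0$/horizontality condition on the $s_\alpha$ is linear in the logarithm of the Serre--Tate coordinate and carves out $\Lie U_{G,\mu^{-1}}\otimes\bG_{m,W}^{\wedge}$, with the Artin--Hasse change of coordinates of \S\ref{subsub:lethcoor} converting ``linear in Lie-theoretic coordinates'' into ``formal subtorus.'' But your route to the key identification $\mathscr{X}_W^{/x}=\{\text{deformations of }(\mathscr{G}_0,(s_\alpha))\}$ goes through Kisin's integral canonical model formalism, which is not what Noot does. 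Noot (pre-Kisin) applies Ogus's theorem on $p$-adic absolute Hodge cycles to produce the de Rham realizations of the $s_\alpha$ on the germ, constructs the \emph{maximal} horizontality subscheme $\mathcal{N}$ of his Theorem 2.8, proves the inclusion $\mathscr{X}_W^{/x}\subseteq\mathcal{N}$ via absolute Hodge cycles, and then forces equality by the dimension formula of Theorem 2.8(B2) — precisely the argument the present paper re-runs in the proof of Proposition~\ref{prop:relationbetweencycleconjectures}. As stated, your sketch supplies only the inclusion; the step that pins down equality is handwaved into ``the Kottwitz--Rapoport/Kisin formalism applies,'' which quietly assumes the naïve integral model (the Zariski closure) coincides formally locally at $x$ with Kisin's model (by definition the \emph{normalization} of that closure). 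Whether the normalization map is a formal isomorphism at $x$, or instead has several preimages of $x$, is exactly what is at stake, and is where the ``quasi-'' really enters.

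Relatedly, your explanation of the torsion translate is misplaced: the base point of $\Def(\mathscr{G}_0/W)$ already \emph{is} the canonical lift of $x$, so there is no ``discrepancy between $x$ and the base point.'' The translates arise because the CM point(s) of the generic fiber $\mathcal{X}$ lying over $x$ are typically quasi-canonical lifts, i.e.\ torsion points of the Serre--Tate torus, and each preimage of $x$ in the normalization contributes a branch of $\mathscr{X}_W^{/x}$ equal to the corresponding translate of the subtorus. With those two points repaired — the dimension/normalization argument that forces equality rather than inclusion, and the correct source of the torsion translate — your Kisin-theoretic route would give an alternative to Noot's, but as written it does not close the proof.
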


\subsubsection{Lie theoretic coordinates}\label{subsub:lethcoor} Let $x$ be an $\mathbb{F}_q$-point of $\mathscr{A}_{g,\bK,\mathbb{F}_p}$. When $x$ is not ordinary, the theory of canonical coordinates is no longer valid. However, we have another type of coorindates, which is less canonical, but serves our purpose well in most of situations. We briefly review the constructions in \cite[\S 1.5]{KM09}. Let $\mathscr{G}_0=A_x[p^{\infty}]$ and let $M_0=\bD(\mathscr{G}_0)(W)$. Let $\mu$ be a cocharacter of $\bG_m\rightarrow \GL(M_0)$ whose mod $p$ reduction gives rise to the Hodge filtration of $M_{0,\Fpbar_q}=\bD(\mathscr{G}_0)(\Fpbar_q)$. By Grothendieck--Messing theory, any such $\mu$ is the Hodge cocharacter of a lift of $\mathscr{G}_0$ to $W$. When $x$ is not ordinary, there is in general no canonical choice of $\mu$. 

Let $U_{\GL(M_0),\mu^{-1}}$ be the opposite unipotent of $\GL(M_0)$ defined by $\mu$. Then by $\Def(\mathscr{G}_0/W)$ can be identified with the completion of $U_{\GL(M_0),\mu^{-1}}$ at its (mod $p$) identity section. This gives a system of coordinates on $\Def(\mathscr{G}_0/W)$. We will call it the \textbf{Lie theoretic coordinates}. When $x$ is ordinary, one can explicitly write down a coordinate change from the Lie theoretic coordinates to the canonical coordinates using the Artin--Hasse exponential; cf. \cite[Theorem 1.4.2]{DI81}.


\begin{defn}
  A formal subscheme $\mathscr{T}\subseteq \Def(\mathscr{G}_0/W)$ is called \textbf{linear} in the Lie theoretic coordinates, if it is the formal completion of a $W$-subgroup scheme of $U_{\GL(M_0),\mu^{-1}}$ at the identity section. 
\end{defn}

Let $G\subseteq \GL(M_0)$ be a connected reductive subgroup defined by a family of $\varphi$-invariant tensors $\{s_\alpha\}\subseteq (M_0[\frac{1}{p}])^{\otimes}$ such that the filtration on $M_{0,\Fpbar_q}$ is $G\otimes_W \Fpbar_q$-split\footnote{In \cite[\S 1.5.4]{KM09}, Kisin requires that the tensors $\{s_\alpha\}$ lie in $M_0^{\otimes}$. The construction remains unchanged even if we start by $\{s_\alpha\}\subseteq (M_0[\frac{1}{p}])^{\otimes}$, since we can scale each $s_\alpha$ so that it lies in $M_0^{\otimes}$, and scaling does not affect the reductive group $G$ cut out by the tensors.}; cf,\cite[\S 1.5.4]{KM09}. There exists a cocharacter $\mu:\bG_m\rightarrow G$ whose mod $p$ reduction is the cocharacter inducing the filtration on 
$M_{0,\Fpbar_q}$. Let $U_{G,\mu^{-1}}\subseteq G$ be the opposite unipotent defined by $\mu$, and let $R_G$ be the complete local ring at the (mod $p$) identity section of $U_{G,\mu^{-1}}$. Then $\Spf R_G$ is a linear formal subscheme of $\Def(\mathscr{G}_0/W)$ in the Lie theoretic coordinates. We will leave the readers to check \cite[\S 1.5.4]{KM09} on the construction of a crystal $M_{R_G}:=M_0\otimes_W R_G$ over $R_G$.

Let $K'$ be a finite extension of $K=W[\frac{1}{p}]$ and let $\varpi:R\rightarrow \cO_{K'}$ be a continuous map of $W$-algebras and denote by $\mathscr{G}_{\varpi}$ the induced $p$-divisible group over $K'$. Using Breuil--Kisin modules, Kisin is able to characterize when the map $\varpi$ factors through $R_G$; cf. \cite[Proposition 1.5.8]{KM09}. For our purpose, we will need the following 
\begin{theorem}[Kisin]\label{thm:BKcriterion} Suppose that $p>2$. Let $\varpi:R\rightarrow \cO_{K'}$ and $\mathscr{G}_{\varpi}$ be as above. Let $L=T_p\mathscr{G}^*_{\varpi}$ and suppose that there exists a family of $\Gal(K')$-invariant tensors $\{s_{\alpha,\et}\}\subseteq (L[\frac{1}{p}])^\otimes$ defining a connected reductive subgroup of $\GL(L)$. Suppose that under the $p$-adic comparison isomorphism $L\otimes_{\bZ_p} B_{\cris}\simeq M_0\otimes_{W}B_{\cris}$\footnote{Let $k'$ be the residue field of $K'$, which is a finite extension of $\Fpbar_q$. Strictly speaking, $M_0$ here is $\bD(\mathscr{G}_{0,k'})(W(k'))$. But let's follow Kisin's work and abuse the notation.}, $\{s_{\alpha,\et}\}$ maps to $\{s_\alpha\}\subseteq (M_0[\frac{1}{p}])^{\otimes}$. Then $\{s_\alpha\}$ cuts out a connected reductive subgroup $G\subseteq \GL(M_0)$, and $\varpi$ factors through $R_G$. 
\end{theorem}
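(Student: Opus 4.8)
The plan is to reduce the statement to Kisin's analysis of the deformation ring $R_G$ via Breuil--Kisin modules: what must be done is to manufacture the group $G$ and a collection of Frobenius-invariant tensors on the Breuil--Kisin module interpolating between $\{s_{\alpha,\et}\}$ and $\{s_\alpha\}$, after which ``$\varpi$ factors through $R_G$'' is essentially \cite[Proposition 1.5.8]{KM09} together with the Berthelot--Ogus comparison relating the de Rham and crystalline realizations. Concretely, let $k'$ be the residue field of $K'$, put $W'=W(k')$ and $\mathfrak{S}=W'[[u]]$ with its Frobenius $\varphi$, and fix a uniformizer $\pi$ of $K'$ with Eisenstein polynomial $E(u)$. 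Since $p>2$, Kisin's classification attaches to $\mathscr{G}_\varpi^{*}$ a finite free Breuil--Kisin module $(\mathfrak{M},\varphi_{\mathfrak{M}})$ over $\mathfrak{S}$ with $\coker(\varphi^{*}\mathfrak{M}\to\mathfrak{M})$ killed by $E(u)$, equipped with canonical comparisons: (i) over the $p$-adic completion of $\mathfrak{S}[1/u]$ it recovers the étale $\varphi$-module of $L=T_p\mathscr{G}_\varpi^{*}$; (ii) $\mathfrak{M}/u\mathfrak{M}\simeq \bD(\mathscr{G}_{0,k'})(W')\simeq M_0\otimes_W W'$ as $\varphi$-modules; and (iii) the base change along $u\mapsto\pi$ recovers $\bD(\mathscr{G}_\varpi)(\cO_{K'})$ with its Hodge filtration, all compatibly with the crystalline comparison over $B_{\cris}$.

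Next I would spread the tensors out over $\mathfrak{S}$. Scaling each $s_{\alpha,\et}$ into $L^{\otimes}$ and $s_\alpha$ into $M_0^{\otimes}$ (which changes neither the subgroups cut out nor the hypotheses), the $s_{\alpha,\et}$ become $\varphi$-invariant tensors in the étale $\varphi$-module, and by hypothesis the subgroup $G_L\subseteq\GL(L)$ they define is a connected reductive group scheme. Following Kisin's argument (\cite[\S1.4--\S1.5]{KM09}), the torsor of tensor-preserving trivializations of $\mathfrak{M}$ --- \emph{a priori} defined only over the étale Frobenius ring (by (i), using $s_{\alpha,\et}\mapsto s_\alpha$) and over $\mathfrak{S}/u=W'$ (by (ii)) --- extends to a $G_L$-torsor over all of $\Spec\mathfrak{S}$: one covers the punctured spectrum by $\Spec\mathfrak{S}[1/u]$ and $\Spec\mathfrak{S}[1/p]$ and uses a purity/extension statement for torsors under a reductive group over the two-dimensional regular local ring $\mathfrak{S}$, together with triviality of $\GL$-torsors over $\mathfrak{S}$, to conclude that the extended torsor is trivial. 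A trivialization produces $\varphi$-invariant tensors $\{s_{\alpha,\mathfrak{S}}\}\subseteq\mathfrak{M}^{\otimes}[1/p]$ specializing to $\{s_{\alpha,\et}\}$ under (i) and, by the hypothesis on the comparison isomorphism together with (ii), reducing modulo $u$ to $\{s_\alpha\}\subseteq M_0^{\otimes}$; it also transports $G_L$ to a connected reductive subgroup scheme $G\subseteq\GL(M_0)$ over $W$ cut out by $\{s_\alpha\}$. Finally, $\varphi$-invariance of $\{s_{\alpha,\mathfrak{S}}\}$ forces the Hodge filtration on $M_0\otimes_W\Fpbar_q$ to be $G\otimes_W\Fpbar_q$-split, so the triple $(G,\{s_\alpha\},\mu)$ is of exactly the type for which $R_G$ is constructed in \cite[\S1.5.4]{KM09}.

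To conclude, recall that by Grothendieck--Messing the $\cO_{K'}$-point of $\Def(\mathscr{G}_0/W)$ determined by $\varpi$ is the lift of the Hodge filtration on $M_0\otimes_W\cO_{K'}$ attached to $\mathscr{G}_\varpi$, and that $\Spf R_G\subseteq\Def(\mathscr{G}_0/W)$ is by construction the locus of deformations whose lifted Hodge filtration is split by a $G$-valued cocharacter conjugate to $\mu$. Comparison (iii) identifies $\bD(\mathscr{G}_\varpi)(\cO_{K'})$ with $\mathfrak{M}\otimes_{\mathfrak{S},\,u\mapsto\pi}\cO_{K'}$, under which $\{s_{\alpha,\mathfrak{S}}\}$ becomes $\{s_\alpha\}$ and (through $\varphi_{\mathfrak{M}}$ and $E(u)$) exhibits the Hodge filtration as $G\otimes_W\cO_{K'}$-split; hence $\varpi$ factors through $R_G$. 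That the crystalline tensors $\{s_\alpha\}$ here agree with the de Rham tensors used in \cite{KM09} is the Berthelot--Ogus comparison identifying the crystalline cohomology with de Rham cohomology and sending $\varphi$-invariant crystalline tensors to horizontal de Rham tensors; granting this, the implication is precisely \cite[Proposition 1.5.8]{KM09}.

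The main obstacle is the middle step: extending the tensors from the étale and crystalline ``boundary'' data to the entire Breuil--Kisin module $\mathfrak{M}$, i.e. the triviality over $\mathfrak{S}$ of the torsor of tensor-preserving trivializations. This is exactly the place where connected reductivity of the group is indispensable --- through the purity statement for reductive-group torsors over $\mathfrak{S}$ --- and it is the part of Kisin's argument that cannot be shortcut; once the $\mathfrak{S}$-tensors and the group $G$ are in hand, the reduction to $R_G$ and the verification that $\varpi$ factors through it are formal.
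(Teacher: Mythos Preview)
Your proposal is correct and follows essentially the same route as the paper, both reducing to Kisin's results in \cite[\S1.4--\S1.5]{KM09}. The paper's proof is simply more terse---scaling the \'etale tensors to lie in $L^{\otimes}$ and then citing \cite[Corollary 1.4.3(3)]{KM09} for the reductivity of $G\subseteq\GL(M_0)$ and \cite[Corollary 1.5.11]{KM09} for the factorization through $R_G$---whereas you have unwound the torsor/purity argument over $\mathfrak{S}$ and the Grothendieck--Messing step that underlie those corollaries.
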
 
\begin{proof}
Scaling the tensors by $p$-power does not affect the group scheme cut out by them. So possible up to a scaling we can assume that $\{s_{\alpha,\et}\}\subseteq L^{\otimes}$. Now the fact that $\{s_\alpha\}$ cuts out a connected reductive subgroup $G\subseteq \GL(M_0)$ follows from \cite[Corollary 1.4.3 (3)]{KM09}, and $\varpi$ factors through $R_G$ is a consequence of \cite[Corollary 1.5.11]{KM09}.
\end{proof}

\subsection{Isocystals and their monodromy}
We will use the following convention: $X_0$ is a geometric connected smooth variety over a finite field $\mathbb{F}_q$, $X=(X_{0})_{\Fpbar}$, and $x$ is an $\Fpbar$-point of $X_0$. Let $W_0=W(\Fpbar_q)$, $W=W(\Fpbar)$ and $K=W[\frac{1}{p}]$. 

\subsubsection{Convergent isocrystals} Convergent isocrystals on $X_0$ can be defined as sheaves on the convergent site by Ogus \cite{OgusII}, or as modules with convergent connection on rigid tubular neighborhoods by Berthelot \cite{Berthlot86}. The two definitions are from the two perspectives of formal and rigid analytic geometry, respectively, but are equivalent. We will very briefly review the constructions. 

First we review the site theoretic definition. Mimicking the construction of the crystalline site $(X_0/W_0)_{\mathrm{cris}}$, Ogus constructed the so called convergent site $(X_0/W_0)_{\mathrm{conv}}$. An object of $(X_0/W_0)_{\mathrm{conv}}$ is called an \textit{enlargement}, which is a certain thickening datum, analogous to the notion of PD thickening in $(X_0/W_0)_{\mathrm{cris}}$. The key difference is that one does not require the existence of a PD structure in the definition of an enlargement. One then defines a \textbf{convergent crystal} as a certain pullback-invariant locally free sheave on $(X_0/W_0)_{\mathrm{conv}}$. One can also make sense of a \textbf{convergent $F$-crystal} as a convergent crystal with a Frobenius structure compatible with the absolute Frobenius on $X_0$. A convergent crystal gives rise to a crystal in $(X_0/W_0)_{\mathrm{cris}}$ by restricting to the subcategory of \textit{$p$-adic enlargements}. Conversely, an $F$-crystal in $(X_0/W_0)_{\mathrm{cris}}$ upgrades to a convergent $F$-crystal, essentially by Dwork's trick. The isogeny category of convergent crystals on $X_0$ will be denoted by $\mathbf{Isoc}(X_0)$. An element in $\mathbf{Isoc}(X_0)$ is called an \textbf{isocrystal}. Similarly, the isogeny category of convergent $F$-crystals on $X_0$ will be denoted by $\FIsoc(X_0)$. An element in $\FIsoc(X_0)$ is called an \textbf{$F$-isocrystal}.

Now we review the rigid analytic definition. Suppose that $X_0$ admits an embedding $X_0$ into a smooth $p$-adic formal scheme $\mathscr{P}_0$ over $W_0$. The rigid tubular neighborhood of $X_0$ in $\mathscr{P}_0$ is denoted by $]X_0[_{\mathscr{P}_0}$ (when the context is clear, we often omit $\mathscr{P}_0$ from the subscript). Berthelot defines the category $\mathbf{Isoc}(X_0)$ of convergent isocrystals as the category of bundles with convergent connections over $]X_0[$. The definition does not depend on the choice of $\mathscr{P}_0$. In general, even if $X_0$ does not admit an embedding $X_0\hookrightarrow\mathscr{P}_0$, one can still cover it by Zariski open where such embedding exists, and glue the category $\mathbf{Isoc}(X_0)$ from local. It can be shown that $\mathbf{Isoc}(X_0)$ is independent of the choices made (i.e., the embedding and the covering), and is equivalent to the site theoretical definition by Ogus. One can also define the category $\FIsoc(X_0)$ as isocrystals with a certain Frobenius structure. 

\subsubsection{Overconvergent isocrystals}
There is another notion, called the \textbf{overconvergent isocrystals}, which can only be described using 
analytic geometry. Suppose that $X_0$ admits a compactification $Y_0$, and $Y_0$ admits a embedding into a $p$-adic formal scheme $\mathscr{P}_0$ over $W_0$, which is smooth along $X_0$. Then an isocrystal  $M$ over $]X_0[$ is said to be \textbf{overconvergent} if, roughly speaking, it extends to a strict neighborhood of $]X_0[$ in $]Y_0[$. In general, the category of overconvergent isocrystals is independent of $Y_0$, $\mathscr{P}_0$, and can be glued from local. The category of overconvergent isocrystals over $X_0$ is denoted by 
$\mathbf{Isoc}^{\dagger}(X_0)$. Finally, the category of \textbf{overconvergent $F$-isocrystals} consists of overconvergent isocrystals with a certain Frobenius structure, and is denoted by $\FIsoc^{\dagger}(X_0)$. There are obvious forgetful maps $\mathrm{Fgt}:\mathbf{Isoc}^{\dagger}(X_0)\rightarrow \mathbf{Isoc}(X_0)$ and $\mathrm{Fgt}:\FIsoc^{\dagger}(X_0)\rightarrow \FIsoc(X_0)$, the second one being fully faithful; cf. \cite{Ked01}. In practice, relative crystalline cohomology of a smooth proper scheme $\pi:\mathcal{X}\rightarrow X_0$ gives rise to an object in $\FIsoc^{\dagger}(X_0)$.

\subsubsection{Monodromy of convergent isocrystals}\label{subsub:mciiso}As indicated in \cite{Crew1992}, the category $\mathbf{Isoc}(X_0)$ and $\FIsoc(X_0)$ are Tannkian, and we can attach monodromy group to ($F$-)isocrystals. Let $\mathbf{Isoc}(X_0)$ (resp. $\FIsoc(X_0)$) be the $\bZ_q$-linear (resp. $\bZ_p$-linear) tensor abelian category of convergent isocrystals (resp. $F$-isocrystals) over $X_0$. Consider an object $\mathcal{M}$ in $\mathbf{Isoc}(X_0)$. We denote by $\la\mathcal{M}\ra^{\otimes}\subseteq \mathbf{Isoc}(X_0)$ the tensor abelian subcategory generated by $\mathcal{M}$, and let $\la\mathcal{M}\ra^{\otimes}_K$ be its scalar extension to $K$. Following \cite{Crew1992}, we can define the following fiber functor 
\begin{align*}
 \omega_x^{\iso}: \langle \mathcal{M}\rangle^\otimes_K &\rightarrow \Vect_{K}\\
\mathcal{N} &\rightarrow \mathcal{N}_x.
\end{align*} 
This realizes $\langle \mathcal{M}\rangle^\otimes_K$ as a $K$-linear neutral Tannakian category. The monodromy group of $\mathcal{M}$ is defined to be the Tannakian fundamental group $\text{Aut}^\otimes(\omega^{\mathrm{iso}}_x)\subseteq \GL(\omega^{\mathrm{iso}}_x(\mathcal{M}))$. 

Suppose that $\mathcal{M}$ is further equipped with a Frobenius structure, i.e., $\mathcal{M}$ is an object of $\FIsoc(X_0)$. One can makes sense of the monodromy group of $\mathcal{M}$ via the following two slightly different methods. \begin{enumerate}
    \item The first method is to forget the $F$-structure, then take the monodromy group as the monodromy group $\text{Aut}^\otimes(\omega^{\mathrm{iso}}_x)$ of the underlying isocrystal.
    
    \item The second method is to incorporate the $F$-structure into the fiber functor: Let $\la(\mathcal{M},F)\ra^{\otimes}\subseteq \FIsoc(X_0)$ the tensor abelian subcategory generated by $\mathcal{M}$. Let $e$ be the smallest positive integer such that the slopes of $\mathcal{M}_x$ multiplied by $e$ lie in $\mathbb{Z}$, and let $\la(\mathcal{M},F)\ra^{\otimes}_{\mathbb{Q}_{p^e}}$ be the scalar extension of $\la(\mathcal{M},F)\ra^{\otimes}$ to $\mathbb{Q}_{p^e}$. We define the Dieudonné--Manin fiber functor as \begin{align*}
 \omega_x^{\mathrm{DM}}: \langle (\mathcal{M},F)\rangle^\otimes_{\mathbb{Q}_{p^e}} &\rightarrow \Vect_{\mathbb{Q}_{p^e}}\\
(\mathcal{N},F)&\rightarrow \{v\in \mathcal{N}_x|\exists i\in \mathbb{Z},\,\,({F^e_{x}}-p^i)v=0\}.
\end{align*}   
This realizes $\langle (\mathcal{M},F)\rangle^\otimes_{\mathbb{Q}_{p^e}}$ as a $\bQ_{p^e}$-linear neutral Tannakian category. The monodromy group of $\mathcal{M}$ is defined to be the Tannakian fundamental group $\text{Aut}^\otimes(\omega^{\mathrm{DM}}_x)\subseteq \GL(\omega^{\mathrm{DM}}_x(\mathcal{M}))$. 
\end{enumerate}
For an $\mathcal{M}\in \FIsoc(X_0)$, we can make a canonical identification $\GL(\omega_x^{\mathrm{DM}}(\mathcal{M}))\times_{\Spec \bQ_{p^e}} \Spec K=\GL(\omega_x^{\mathrm{iso}}(\mathcal{M}))$. Then $\text{Aut}^\otimes(\omega^{\mathrm{DM}}_x)\times_{\Spec \bQ_{p^e}} \Spec K$ identifies with $\text{Aut}^\otimes(\omega^{\mathrm{iso}}_x)$. If not otherwise specified, the monodromy group of $\mathcal{M}$ obtained via the fiber functor $\omega^{\mathrm{DM}}_x$ will be denoted by $G(\mathcal{M},x)$. In some situations, we will also use $G(\mathcal{M},x)$ to denote the monodromy group obtained from $\omega^{\mathrm{iso}}_x$.
\subsubsection{Monodromy of overconvergent isocrystals}\label{subsub:moiso} Let $\mathbf{Isoc}^\dagger(X_0)$ (resp. $\FIsoc^\dagger(X_0)$) be the $\bZ_q$-linear (resp. $\bZ_p$-linear) tensor abelian category of overconvergent isocrystals (resp. $F$-isocrystals) over $X_0$. We will write $\mathrm{Fgt}$ for the forgetful map  $\mathbf{Isoc}^\dagger(X_0)\rightarrow \mathbf{Isoc}(X_0)$ or $\FIsoc^\dagger(X_0)\rightarrow \FIsoc(X_0)$.

Let $\mathcal{M}^\dagger$ be an object in $\mathbf{Isoc}^\dagger(X_0)$. The monodromy of $\mathcal{M}^\dagger$ is defined as the Tannakian fundamental group of $\la\mathcal{M}^\dagger\ra^{\otimes}_K$ with fiber functor $\omega_x^{\iso}\comp \mathrm{Fgt}$. Similarly, if $\mathcal{M}^\dagger$ is an object of $\FIsoc(X_0)$, one can makes sense of the monodromy group of $\mathcal{M}^\dagger$ via the following two slightly different methods. The first is to take the monodromy of the underlying overconvergent isocrystal. The second is to take the Tannakian fundamental group of $\la\mathcal{M}^\dagger\ra^{\otimes}_{\bQ_{p^e}}$ with fiber functor $\omega^{\mathrm{DM}}_x\circ \mathrm{Fgt}$. 

For an $\mathcal{M}^\dagger\in \FIsoc^\dagger(X_0)$, the two notions of monodromy groups coincide after base change to $K$. If not otherwise specified, the monodromy group of $\mathcal{M}^\dagger_x$ obtained via the fiber functor $\omega^{\mathrm{DM}}_x\circ \mathrm{Fgt}$ will be denoted by $G(\mathcal{M}^\dagger,x)$. In some situations, we will also use $G(\mathcal{M}^\dagger,x)$ to denote the monodromy group obtained from $\omega^{\mathrm{iso}}_x\circ \mathrm{Fgt}$.

\subsection{Betti, étale, and crystalline monodromy for mod $p$ abelian schemes}\label{sub:becm} In the following, we will always assume that the hyperspecial level structure of
$\mathcal{A}_{g,\bK}$ is sufficiently small, and we will drop it from the notation. Let $\mathscr{A}_{g}$ be the canonical integral model. Let $X$ be a smooth connected variety over $\Fpbar$ and let \begin{equation}\label{eq:setupmaps}f:X\rightarrow {\mathscr{A}}_{g,\Fpbar}
\end{equation} be a morphism whose image lies generically in the ordinary locus. We will fix an ordinary base point $x\in X(\Fpbar)$. Let's denote by $A_X$ the abelian scheme over $X$ pulled back via the map $f$. We say that \begin{equation}\label{eq:setupmapsfinitemodel}
    f_0:X_0\rightarrow \mathscr{A}_{g,\mathbb{F}_q}
\end{equation}
is a \textbf{finite field model} of $f$, if $X_0/\bF_q$ is a geometrically connected smooth variety with a map $f_0$ whose base change to $\Fpbar$ is (\ref{eq:setupmaps}). Let's denote by $A_{X_0}$ the abelian scheme over $X_0$ pulled back via the map $f_0$.
\begin{defn}[Betti monodromy; cf.{\cite[\S 4]{J23}}]\label{def:MTgroups}Following \cite{J23}, we define $\mathrm{MS}_f(\mathcal{A}_g)$ as the set of minimal irreducible special subvarieties of $\mathcal{A}_{g,\overline{\bQ}}$ through whose naïve integral model the map $f$ factors. It is known from \cite[Lemma 4.1]{J23} that any two element in $\mathrm{MS}_f(\mathcal{A}_g)$ are $p$-power Hecke translates of each other. The \textbf{Mumford--Tate group} (\textbf{Betti monodromy group}) of $A_X$ is defined to be the generic Mumford--Tate group of any element in $\mathrm{MS}_f(\mathcal{A}_g)$, and is denote by $G_\mathrm{B}(f)$. \footnote{In \cite{J23}, it is denoted by $\MTT(f)$.} The group $G_\mathrm{B}(f)$ is connected and reductive, and is equipped with a tautological representation $\rho_\mathrm{B}$ on the $\bQ$-space $\bH_{\mathrm{B},\Tilde{x}_{\bC}}$, where $\Tilde{x}_{\bC}$ is the canonical lift of $x$, base changed to $\bC$ along the fixed map $W\hookrightarrow \bC$.   
\end{defn}

\begin{defn}[Étale and overconvergent crystalline monodromy] Fix a finite field model (\ref{eq:setupmapsfinitemodel}). Let $l\neq p$, we write $\bH_{l,f_0}$ or $\bH_{l,X_0}$ for the pullback étale sheaf on $X_0$ via the map $f_0$. We also write $\bH_{\cris,f_0}$ or $\bH_{\cris,X_0}$ for the pullback overconvergent $F$-isocrystal on $X_0$ via the map $f_0$. To ease notation, we will often write them as $\bH_{p,f_0}$ or $\bH_{p,X_0}$. Let $u\in \mathrm{fpl}(\bQ)$, an element $\bH_{u,f_0}$ is called a \textbf{$u$-adic coefficient object}. 

The collection $\{\bH_{l,f_0}\}_{l\neq p}$ is a \textbf{$\bQ$-compatible system of coefficient objects} in the sense of \cite{DA20}. The collection $\{\bH_{u,f_0}\}_{u\in \mathrm{fpl}(\bQ)}$ is a \textbf{weakly $\bQ$-compatible system of coefficient objects} in the sense of \cite[\S 3.4]{J23}. It can be upgraded to a $\bQ$-compatible system if we replace the overconvergent $F$-isocrystal $\bH_{u,f_0}$ by an overconvergent $F^a$-isocystal for a suitable $a\in \bN$. 

Let $(G(\bH_{u,f_0},x),\rho_u)$ be the monodromy group of $\bH_{u,f_0}$, together with its tataulogical representation. When $u\neq p$, this is just the étale monodromy group. When $u=p$, this is the monodromy group of the overconvergent $F$-isocrystal $\bH_{p,f_0}$ with respect to the fiber functor $\omega_x^{\mathrm{DM}}\circ \mathrm{Fgt}$; cf. \S\ref{subsub:moiso}. It is known that $G(\bH_{u,f_0},x)$ is a reductive $\bQ_u$-group. When $u=l\neq p$ (resp. $u=p$), we will call it the \textbf{$l$-adic étale monodromy} (resp. \textbf{overconvergent crystalline monodromy}) of $A_{X_0}$. For convenience, we will usually just call it the \textbf{$u$-adic monodromy}. To ease notation, we will often denote $G(\bH_{u,f_0},x)$ simply by $G_u(f_0)$. It is known that the neutral component $G_u(f_0)^\circ$ together with $\rho_u$ is 
``independent of $u$'' in the sense that it admits a rational model over a sufficiently large number field in the sense of \cite[Theorem 1.2.4]{DA20}.  
\end{defn}
\begin{defn}[Convergent crystalline monodromy]Let $\bH_{p,f_0}^{-}$ be the image of $\bH_{p,f_0}$ under the forgetful map
$\mathrm{Fgt}:\FIsoc^{\dagger}(X_0)\rightarrow \FIsoc(X_0)$. Let $(G(\bH_{p,f_0}^{-},x),\rho_p)$ be the monodromy of $\bH_{p,f_0}^{-}$ with respect to the fiber functor $\omega_x^{\mathrm{DM}}$; cf. \S\ref{subsub:mciiso}. This will be called the \textbf{convergent crystalline monodromy} of $A_{X_0}$.  

Suppose that the image of $f$ lies in the ordinary strata (so that $\bH_{p,f_0}^{-}$ 
 admits a slope filtration), then $G(\bH_{p,f_0}^{-},x)$ is the parabolic of $G(\bH_{p,f_0},x)$ fixing the Newton cocharacter $\nu_x$. This is a consequence of Crew's parabolicity conjecture, which is now a theorem of D'Addezio; cf. \cite{MD20}. 
\end{defn}

\begin{notation}[Independence on the  models]\label{not:hidingfinitefield} In practice, we often work with coefficient objects and their monodromy without referring to a specific finite field model. It is convenient to set up a model-free system of notation.  Let $f$ be as in (\ref{eq:setupmaps}), and let $f_0$ be a finite field model. For each $u$, the neutral component $G(\bH_{u,f_0},x)^\circ$ is independent of the finite field model chosen. When we vary the finite field models, there exist finite field models defined over certain sufficiently large $\Fpbar_q$ such that $G(\bH_{u,f_0},x)$ is the smallest among all possible $G(\bH_{u,f_0},x)$ over all 
possible finite field models. We will write $\bH_{u,X}$ or $\bH_{u,f}$ for the coefficient objects over those finite field models, and write $G(\bH_{u,f},x)$ for the monodromy group. To ease notation, we will often denote $G(\bH_{u,f},x)$ simply by $G_u(f)$. Similarly, we will write $\bH_{p,X}^-$ for the underlying convergent $F$-isocrystal of $\bH_{p,X}$. We will denote $G(\bH_{p,f}^-,x)$ by $G_p^-(f)$. 
\end{notation}
\subsubsection{The mod $p$ Mumford--Tate conjecture}\label{subsub:mdppmtintro} Let $f$ be as in (\ref{eq:setupmaps}), and let $f_0$ be a finite field model. For $l\neq p$, there is a canonical identification $\iota_l:\bH_{\mathrm{B},\Tilde{x}_{\bC}}\otimes \bQ_l\simeq \bH_{l,x}$. In the crystalline setting, we also have a  canonical identification $\iota_p:\bH_{\mathrm{B},\Tilde{x}_{\bC}}\otimes \bQ_p\simeq \omega_x^{\mathrm{DM}}(\bH_{p})$; cf. \cite[\S 4.2]{J23}

\begin{theorem}[{\cite[]{J23}}]\label{thm:GincludeinMT}
Let $u\in \mathrm{fpl}(\bQ)$. After making identification $\iota_u$ as above, we have a canonical embedding $G_u(f)^\circ\subseteq G_{\mathrm{B}}(f)_{\mathbb{Q}_u}$ intertwining the tautological representations.  
\end{theorem}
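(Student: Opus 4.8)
The plan is to establish Theorem~\ref{thm:GincludeinMT} by comparing the Tannakian categories in which the three monodromy groups live, realizing all of them as subgroups of $\GL(\bH_{\mathrm{B},\tilde x_{\bC}})$ after the comparison isomorphisms, and then checking that the algebraic relations ``cut out'' by the Betti side are automatically satisfied on the $u$-adic side. Concretely, for $u=l\neq p$ one argues as follows. By Definition~\ref{def:MTgroups}, $f$ factors through the naïve integral model $\mathscr{X}$ of a minimal special subvariety $\mathcal{X}\subseteq\mathcal{A}_{g,\overline{\bQ}}$ with generic Mumford--Tate group $G_{\mathrm{B}}(f)$. The $l$-adic sheaf $\bH_{l,f}$ is the pullback of $\bH_{l,\mathrm{\acute et}}$ restricted to $\mathscr{X}$, and the $l$-adic monodromy of the local system on the \emph{characteristic $0$} special subvariety $\mathcal{X}$ is contained in $G_{\mathrm{B}}(f)_{\bQ_l}$ by the classical theory of Shimura varieties (André/Deligne: the image of $\pi_1$ of a connected Shimura subvariety lands in the derived group, a fortiori in $G_{\mathrm{B}}$). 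The point is then that specialization from characteristic $0$ to characteristic $p$ does not enlarge the monodromy: the étale fundamental group of $X$ in characteristic $p$ maps, via the ordinary locus and the canonical lift, into (a subgroup of) the arithmetic $\pi_1$ of $\mathscr{X}_{\overline{\bQ}}$, so $G_l(f)^\circ\subseteq G_{\mathrm{B}}(f)_{\bQ_l}$ intertwining $\rho_l$ with $\rho_{\mathrm{B}}\otimes\bQ_l$ under $\iota_l$. Equivalently and more robustly, one exhibits a generating set of Hodge tensors $\{t_\beta\}\subseteq\bH_{\mathrm{B},\tilde x_{\bC}}^{\otimes}$ cutting out $G_{\mathrm{B}}(f)$; these spread out over $\mathscr{X}$ (after shrinking, over $\mathscr{X}^{\ord}_{\mathbb{F}_q}$) to Tate tensors in $\bH_{l,\mathrm{\acute et}}^{\otimes}$ fixed by the $l$-adic monodromy, whence $G_l(f)\subseteq\Stab(\{t_\beta\})=G_{\mathrm{B}}(f)_{\bQ_l}$.

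For $u=p$ the mechanism is parallel but uses the crystalline realization in place of the étale one, and this is where the identification $\iota_p:\bH_{\mathrm{B},\tilde x_{\bC}}\otimes\bQ_p\simeq\omega_x^{\mathrm{DM}}(\bH_p)$ of \cite[\S4.2]{J23} does the heavy lifting. One takes the same Hodge tensors $\{t_\beta\}$; because $\mathcal{X}$ is a special subvariety, these are \emph{absolute} Hodge (indeed motivated) cycles on the universal abelian scheme, so they have de Rham and hence crystalline incarnations. On the ordinary naïve integral model $\mathscr{X}^{\ord}_{\mathbb{F}_q}$, the de Rham/crystalline avatars of $\{t_\beta\}$ are flat sections of $\bH_{\cris,\mathscr{X}}^{\otimes}$ which, being pulled back from the canonical lift, are moreover Frobenius-invariant (the canonical lift is the CM/Serre--Tate point, where the crystalline Frobenius acts through the reflex norm). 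Therefore each $t_\beta$ is fixed by the full $F$-isocrystal monodromy group $G_p(f)=G(\bH_{p,f},x)$ with respect to the Dieudonné--Manin fiber functor, giving $G_p(f)\subseteq\Stab_{\GL(\omega_x^{\mathrm{DM}}(\bH_p))}(\{t_\beta\})$; under $\iota_p$ the right side is exactly $G_{\mathrm{B}}(f)_{\bQ_p}$, and $\iota_p$ is by construction compatible with the tensor structure, so the inclusion intertwines $\rho_p$ with $\rho_{\mathrm{B}}\otimes\bQ_p$. Passing to neutral components yields $G_p(f)^\circ\subseteq G_{\mathrm{B}}(f)_{\bQ_p}$. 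One should also record that the statement is independent of the finite field model, which is immediate from Notation~\ref{not:hidingfinitefield} since the neutral component is model-independent and enlarging $\mathbb{F}_q$ only shrinks $G_u(f_0)$ toward $G_u(f)$.

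The main obstacle is the $p$-adic case, specifically verifying Frobenius-invariance of the spread-out Hodge tensors and making the compatibility of $\iota_p$ with tensor operations precise. In the $\ell$-adic setting everything reduces cleanly to fundamental-group considerations and specialization of lisse sheaves, which are standard; but crystalline cohomology in characteristic $p$ has no fundamental-group description that directly sees the characteristic $0$ Shimura variety, so one genuinely needs: (i) that the Hodge tensors, being absolutely Hodge, give well-defined horizontal de Rham tensors over the integral model (this is where properness of $\mathscr{X}$, or a Deligne-style extension argument across the boundary of a toroidal compactification, may be needed, together with Ogus's comparison between de Rham cohomology of the generic fiber and the convergent $F$-isocrystal on the special fiber); (ii) that at the canonical lift these de Rham tensors land in $\Fil^0$ and are eigenvectors of Frobenius with the correct (trivial, after Tate twist) eigenvalue, so that they define morphisms in $\FIsoc^\dagger(X_0)$ rather than merely in $\mathbf{Isoc}(X_0)$; and (iii) that the Dieudonné--Manin fiber functor $\omega_x^{\mathrm{DM}}$ sees these tensors, i.e.\ that the slope-$0$ part picks them out. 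Since all of this is attributed to \cite[]{J23} in the excerpt, I would organize the proof as a citation-driven reduction: state the tensor package $\{t_\beta\}$ once, invoke \cite[\S4.2]{J23} for the construction and tensor-compatibility of $\iota_u$ for all $u\in\mathrm{fpl}(\bQ)$, and then in both cases conclude $G_u(f)^\circ\subseteq\Stab(\{t_\beta\})=G_{\mathrm{B}}(f)_{\bQ_u}$ by the single principle that a group fixing a generating family of defining tensors is contained in the stabilizer of that family.
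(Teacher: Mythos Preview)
The paper does not give its own proof of this statement: it is stated as a theorem and attributed entirely to \cite{J23}. So there is no argument in the paper to compare against beyond the citation itself.

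Your reconstruction via Hodge tensors is the right idea and is essentially what \cite{J23} does. The clean version is exactly your second formulation: pick tensors $\{t_\beta\}$ cutting out $G_{\mathrm{B}}(f)$, show they spread to horizontal $\varphi$-invariant sections of $\bH_u^{\otimes}$ over the naïve integral model (étale for $u=l$, crystalline for $u=p$), and conclude $G_u(f)\subseteq \Stab(\{t_\beta\})=G_{\mathrm{B}}(f)_{\bQ_u}$. Your first $l$-adic argument via ``specialization does not enlarge monodromy'' is a bit loose as written: the map of étale fundamental groups goes from the special fiber into $\pi_1$ of the integral model, not directly into $\pi_1$ of the generic fiber, so one really is using that the tensors live on the whole integral model. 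The tensor argument handles both $u=l$ and $u=p$ uniformly and avoids this issue.

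For $u=p$, you have correctly identified the substantive points (Frobenius invariance, lying in $\Fil^0$, compatibility of $\iota_p$ with tensor operations), and you are right that these are where the work in \cite{J23} lies. Since the paper treats this as a black-box citation, your proposal to organize the proof as a citation-driven reduction to \cite[\S4.2]{J23} is exactly appropriate here.
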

The {Mod $p$ Mumford--Tate conjecture}~\ref{conj:MTforAg} ($\mathrm{MT}_p$) claims that the above embedding is actually an equality. In what follows we summarize some results from \cite{J23}.  First, there are structural results parallel to the classical characteristic 0 setting: 
\begin{proposition}[{\cite[Proposition 4.8]{J23}}]\label{thm: Endequal}
Let $\eta$ be the generic point of $X$.   Then for any $u\in \mathrm{fpl}(\bQ)$,  $$\End_{G_u(f)^\circ}(\rho_u)=\End_{G_\mathrm{B}(f)}(\rho_\mathrm{B})_{\bQ_u}=\End({A}_{X}\times{\overline{\eta}})_{\bQ_u}.$$
\end{proposition}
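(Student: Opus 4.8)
\textbf{Proof proposal for Proposition~\ref{thm: Endequal}.} The plan is to prove the two equalities separately, moving from the Betti side outward to each $u$-adic realization, and using Theorem~\ref{thm:GincludeinMT} as the bridge. The key structural input is that $G_\mathrm{B}(f)$, being the generic Mumford--Tate group of a special subvariety through which $f$ factors, is connected and reductive, and its tautological representation $\rho_\mathrm{B}$ on $\bH_{\mathrm{B},\widetilde{x}_\bC}$ is a polarizable rational Hodge structure. For the identity $\End_{G_\mathrm{B}(f)}(\rho_\mathrm{B}) = \End(A_X\times\overline{\eta})$, I would argue as follows. By the theory of special subvarieties, any element $\mathcal{X}\in\mathrm{MS}_f(\mathcal{A}_g)$ carries a universal abelian scheme $B$, and the Mumford--Tate group of its generic fibre is $G_\mathrm{B}(f)$; the pullback of $B$ along the generic point of $X$ is (isogenous to) $A_X\times\overline{\eta}$, since $f$ factors through the naïve integral model of $\mathcal{X}$ and $X$ is generically ordinary in $\mathcal{A}_g$. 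By the classical theory (the endomorphism algebra of an abelian variety over a field of characteristic zero equals the algebra of Hodge endomorphisms of its $H^1$, i.e.\ the $G_\mathrm{B}$-equivariant endomorphisms), applied to the generic fibre of $B$ over a suitable number field where all endomorphisms are defined, we get $\End_{G_\mathrm{B}(f)}(\rho_\mathrm{B})_\bQ = \End(B_{\overline{\eta_{\mathcal{X}}}})_\bQ$. One then has to check that specialization along $X\to\mathcal{X}$ does not change the endomorphism algebra: here the generic ordinarity and the fact that $f$ factors \emph{minimally} through $\mathcal{X}$ (so $G_\mathrm{B}(f)$ is the \emph{generic} Mumford--Tate group of $\mathcal{X}$, not a smaller one cut out by extra endomorphisms appearing on $X$) is exactly what forces the two to coincide. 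I would make this precise using the fact that a Hodge-generic point of a special subvariety has endomorphism algebra equal to the $G_\mathrm{B}$-invariants, and the image of $f$ is Hodge-generic in $\mathcal{X}$ by minimality.

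\textbf{The $u$-adic side.} For $u = l \neq p$ this is the char-$p$ analogue of Faltings' theorem: the étale monodromy group $G_l(f)^\circ$ of the lisse sheaf $\bH_{l,f}$ has the property that $\End_{G_l(f)^\circ}(\rho_l) = \End_{\pi_1(X_0)}(\bH_{l,f}\otimes\overline{\bQ}_l)^{\mathrm{Gal}} = \End(A_{X_0}\times\overline{\eta})\otimes\bQ_l$, the first equality because $\End_{G_l(f)^\circ}$ computes precisely the endomorphisms commuting with the connected monodromy, and the second by the Tate conjecture for endomorphisms of abelian varieties over finitely generated fields of characteristic $p$ (Tate, Zarhin, Mori) applied to $A_{X_0}$ over the function field of $X_0$ --- here one uses that the geometric generic monodromy group has the same invariants after passing to a finite extension of $\bF_q$ absorbing the component group, which is why one works with the minimal model as in Notation~\ref{not:hidingfinitefield}. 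For $u = p$ the analogous statement for the overconvergent $F$-isocrystal $\bH_{p,f}$ is due to the full faithfulness of the crystalline realization on the category of motives/abelian schemes in char $p$ (this is again a form of the Tate conjecture, available for $\FIsoc^\dagger$ by work on compatible systems, e.g.\ the $p$-adic companion results): $\End_{G_p(f)^\circ}(\rho_p) = \End_{\FIsoc^\dagger(X_0)}(\bH_{p,f}) = \End(A_{X_0}\times\overline{\eta})\otimes\bQ_p$, where in the Tannakian formalism $\End_{G_p(f)^\circ}(\rho_p) = \End$ of $\bH_{p,f}$ in $\langle\bH_{p,f}\rangle^\otimes$ after passing to the connected monodromy (i.e.\ after a finite cover of $X_0$), and the middle term is computed via the fiber functor $\omega_x^{\mathrm{DM}}$.

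\textbf{Assembling.} Having established, for each $u$, both $\End_{G_u(f)^\circ}(\rho_u) = \End(A_X\times\overline{\eta})_{\bQ_u}$ and $\End_{G_\mathrm{B}(f)}(\rho_\mathrm{B})_{\bQ_u} = \End(A_X\times\overline{\eta})_{\bQ_u}$, the chain of equalities in the proposition follows, since the Betti endomorphism algebra $\End(A_X\times\overline{\eta})$ is a finite-dimensional $\bQ$-algebra and tensoring with $\bQ_u$ is faithfully flat. The compatibility of the identifications $\iota_u$ with the tautological representations (Theorem~\ref{thm:GincludeinMT}) guarantees that these isomorphisms of endomorphism algebras are the natural ones, so the displayed equation is canonical. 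I expect the \textbf{main obstacle} to be the $u=p$ case: one must carefully invoke the correct form of the Tate conjecture / full faithfulness for overconvergent $F$-isocrystals attached to abelian schemes (as opposed to the classical $\ell$-adic statement), and correctly match $\End_{G_p(f)^\circ}(\rho_p)$ --- a Tannakian invariant computed via $\omega_x^{\mathrm{DM}}$ after passing to the connected monodromy group --- with $\End$ in the isocrystal category; the passage between "endomorphisms in the Tannakian subcategory generated by $\bH_{p,f}$" and "global endomorphisms of $A_{X_0}$ over the function field" is where the subtlety lies, and it is precisely what the cited results of \cite{J23} (Proposition 4.8) are set up to handle, so in the actual write-up I would simply cite that and spell out only the reduction to it.
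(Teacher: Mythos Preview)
The paper does not give its own proof of this proposition: it is stated with the citation \cite[Proposition 4.8]{J23} and no argument is reproduced. So there is nothing in the present paper to compare your proposal against line by line.

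That said, your sketch is the right shape and matches the standard route one expects the cited result to follow: the $u=l\neq p$ equality is Zarhin's theorem (Tate conjecture for endomorphisms over function fields), the $u=p$ equality is the crystalline analogue due to de~Jong (full faithfulness for $p$-divisible groups, hence for the overconvergent $F$-isocrystal attached to the abelian scheme), and the Betti side uses the minimality of $\mathcal{X}_f\in\mathrm{MS}_f(\mathcal{A}_g)$. One point you should tighten: the sentence ``specialization along $X\to\mathcal{X}$ does not change the endomorphism algebra'' is where the real content lies, and the mechanism you gesture at is not quite enough. The clean way to get the inclusion $\End(A_X\times\overline{\eta})\subseteq \End_{G_\mathrm{B}(f)}(\rho_\mathrm{B})$ is: any $\phi\in\End(A_X\times\overline{\eta})_\bQ$ spreads over $X$ and, at an ordinary closed point $x$, lifts to an endomorphism of the canonical lift $A_{\tilde{x}}$ (this is where ordinariness is used, not merely ``generic ordinarity in $\mathcal{A}_g$''). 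The PEL-type locus in $\mathcal{A}_g$ where this endomorphism is defined is a special subvariety whose naïve integral model contains $\im f$; minimality of $\mathcal{X}_f$ then forces $\mathcal{X}_f$ to lie in that locus, so $\phi\in\End_{G_\mathrm{B}(f)}(\rho_\mathrm{B})$. Your phrase ``the image of $f$ is Hodge-generic in $\mathcal{X}$ by minimality'' is a char-$0$ statement that does not literally apply to the char-$p$ variety $X$; the canonical-lift argument above is what replaces it. Your closing remark that in practice one simply cites \cite{J23} is exactly what the paper does.
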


\begin{proposition}[{\cite[Proposition 4.9]{J23}}]\label{thm:Z0equal}
Let $u\in \mathrm{fpl}(\bQ)$, then $Z^0G_u(f)= Z^0G_{\mathrm{B}}(f)_{\bQ_l}$, where $Z^0G$ stands for the neutral component of the center of $G$. 
\end{proposition}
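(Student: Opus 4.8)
The plan is to trap $Z^0 G_u(f)$ inside $(Z^0 G_{\mathrm{B}}(f))_{\bQ_u}$ by an elementary commutant argument, and then to force equality by a rank count; throughout I identify $G_u(f)^\circ \subseteq G_{\mathrm{B}}(f)_{\bQ_u} \subseteq \GL(V_{\bQ_u})$ via $\iota_u$ as in Theorem~\ref{thm:GincludeinMT}, where $V$ carries $\rho_{\mathrm{B}}$. \emph{Upper bound.} For any connected reductive $H \subseteq \GL(V)$ with commutant $C := \End_H(V)$ one has $Z(H) = H \cap C^\times$, hence $Z^0 H = (H \cap C^\times)^\circ$. By Proposition~\ref{thm: Endequal} the commutant of $G_u(f)^\circ$ in $\End(V_{\bQ_u})$ coincides with that of $G_{\mathrm{B}}(f)_{\bQ_u}$, namely $C := \End_{G_{\mathrm{B}}(f)}(\rho_{\mathrm{B}})_{\bQ_u}$; combined with $G_u(f)^\circ \subseteq G_{\mathrm{B}}(f)_{\bQ_u}$ this gives $G_u(f)^\circ \cap C^\times \subseteq G_{\mathrm{B}}(f)_{\bQ_u} \cap C^\times$, and passing to neutral components, $Z^0 G_u(f) \subseteq (Z^0 G_{\mathrm{B}}(f))_{\bQ_u}$.

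Next I would reduce the reverse inclusion to a rank equality. For connected reductive $H$ one has $\dim Z^0 H = \rk H - \rk H^{\der}$, and $G_u(f)^{\circ,\der} \subseteq G_{\mathrm{B}}(f)_{\bQ_u}^{\der}$ forces $\rk G_u(f)^{\der} \le \rk G_{\mathrm{B}}(f)^{\der}$. Hence it is enough to prove the rank equality $\rk G_u(f) = \rk G_{\mathrm{B}}(f)$: this yields $\dim Z^0 G_u(f) \ge \dim Z^0 G_{\mathrm{B}}(f) = \dim (Z^0 G_{\mathrm{B}}(f))_{\bQ_u}$, and since a connected subtorus of a torus of the same dimension is the whole torus, the inclusion of the previous paragraph becomes an equality, which is what we want.

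For the rank equality, $\le$ is Theorem~\ref{thm:GincludeinMT}, and for $\ge$ I would run the Frobenius-torus argument, using the ordinary char $p$ structure. Fix a finite field model $f_0 : X_0 \to \mathscr{A}_{g,\bF_q}$. For an ordinary closed point $y \in |X_0|$ the element $\Frob_y$ acts semisimply (its eigenvalues are Weil numbers) on a fibre of $\bH_{u,f_0}$, so the identity component $T_{y,u}$ of the Zariski closure of $\langle\Frob_y\rangle$ is a subtorus of $G_u(f_0)$; by the $\bQ$-compatibility of $\{\bH_{u,f_0}\}$ (\cite{DA20}) the characteristic polynomial of $\Frob_y$, and hence $\rk T_{y,u}$, is independent of $u$. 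A Serre-type genericity statement shows that for suitable $y$ the torus $T_{y,u}$ is maximal in $G_u(f_0)$, so $\rk T_{y,u} = \rk G_u(f)$. On the other hand, by Serre--Tate theory the canonical lift $\widetilde A_y$ is a CM abelian variety whose $u$-adic realization is canonically that fibre with its Frobenius; the main theorem of complex multiplication identifies its $u$-adic monodromy with $\MTT(\widetilde A_y)_{\bQ_u}$, and $\widetilde A_y$ lies on the special subvariety $\mathcal{X}$ attached to $f$ (Theorem~\ref{thm:noottheorem}), so $T_{y,u} \subseteq \MTT(\widetilde A_y)_{\bQ_u}$ and $\MTT(\widetilde A_y) \subseteq G_{\mathrm{B}}(f)$. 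It then remains to choose $y$ so that $\MTT(\widetilde A_y)$ is a \emph{maximal} torus of $G_{\mathrm{B}}(f)$: the lifts $\{\widetilde A_y\}$ are Zariski dense in Chai's canonical lift $\widetilde{X_0^{\circ}}$, whose closure in $\mathcal{A}_{g,K}$ is not contained in any proper special subvariety of $\mathcal{X}$ (reducing mod $p$ would otherwise contradict the minimality of $\mathcal{X}$), hence has generic Mumford--Tate group $G_{\mathrm{B}}(f)$; by Andr\'e--Oort (the locus of CM points with non-maximal torus being a proper special subset) together with the density of CM points with maximal Mumford--Tate torus on Hodge-type Shimura varieties, a Zariski-dense set of the $\widetilde A_y$ has maximal Mumford--Tate torus. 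For such $y$, $\rk T_{y,u} = \rk G_{\mathrm{B}}(f)$, giving $\rk G_u(f) \ge \rk G_{\mathrm{B}}(f)$, and the proof is complete.

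The main obstacle is the rank equality, and within it the extraction of a sufficiently generic ordinary closed point $y$ — i.e.\ packaging the Serre-type genericity of Frobenius tori with the density of Serre--Tate canonical lifts among the CM points of $\mathcal{X}$ and the Andr\'e--Oort property of the non-maximal locus. (In the degenerate case where $X_0$ is $0$-dimensional, so that $A$ is defined over a finite field, $G_{\mathrm{B}}(f) = \MTT(\widetilde A_x)$ is itself a torus and the assertion is precisely the main theorem of complex multiplication for $\widetilde A_x$; the Frobenius-torus input is then only needed to see that the single Frobenius torus exhausts $\MTT(\widetilde A_x)_{\bQ_u}$.)
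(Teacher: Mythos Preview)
The paper does not prove this here; it is quoted from \cite[Proposition 4.9]{J23}. Your structure is correct: the commutant argument via Proposition~\ref{thm: Endequal} gives $Z^0 G_u(f) \subseteq (Z^0 G_{\mathrm{B}}(f))_{\bQ_u}$, and a dimension bound in the other direction would finish. The gap is in that bound. You aim for $\rk G_u(f) = \rk G_{\mathrm{B}}(f)$ by producing an ordinary $y$ whose canonical lift has \emph{maximal} Mumford--Tate torus in $G_{\mathrm{B}}(f)$, but (i) you invoke Chai's lift $\widetilde{X_0^\circ}$, which by Proposition~\ref{prop:chaitatelinearlifting} exists only when $\im f$ is Tate-linear, not assumed here; and (ii) the Andr\'e--Oort step does not do what you claim --- Andr\'e--Oort constrains Zariski closures of sets of special points, it does not force the specific countable set $\{\widetilde A_y\}$ to meet the (also dense, also countable) set of CM points with maximal Mumford--Tate torus in $\mathcal{X}$. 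You correctly flag this as the main obstacle, but the sketch does not overcome it.

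There is a shortcut that sidesteps the rank equality. Fix \emph{any} single ordinary $y$. All Hodge cocharacters of points of $\mathcal{X}$ are $G_{\mathrm{B}}(f)$-conjugate, hence share a common image in the abelianization $G_{\mathrm{B}}(f)^{\mathrm{ab}}$; if a proper $\bQ$-subtorus of $G_{\mathrm{B}}(f)^{\mathrm{ab}}$ contained that image, its preimage would be a proper \emph{normal} $\bQ$-subgroup of $G_{\mathrm{B}}(f)$ through which every Hodge cocharacter factors, contradicting the defining minimality of $G_{\mathrm{B}}(f)$. Hence $G_{\mathrm{B}}(y) \to G_{\mathrm{B}}(f)^{\mathrm{ab}}$ is surjective. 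Since $G_{\mathrm{B}}(y)_{\bQ_u} = G_u(y)^\circ \subseteq G_u(f)^\circ$ (\S\ref{sub:Frobtorus}), the composite $G_u(f)^\circ \hookrightarrow G_{\mathrm{B}}(f)_{\bQ_u} \to G_{\mathrm{B}}(f)^{\mathrm{ab}}_{\bQ_u}$ is surjective, giving $\dim Z^0 G_u(f) \ge \dim Z^0 G_{\mathrm{B}}(f)$ directly; combined with your inclusion this yields equality.
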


There is another fundamental result relating the structure of $G_p(f)$ with the geometry of $f$ in terms of the canonical coordinates on $\mathscr{A}_{g,\Fpbar}$, which does not admit a characteristic 0 analogue. Write $\mu_x$ for the canonical Hodge cocharacter of $x$. Let $\mathscr{G}_0$ be the $p$-divisible group $A_x[p^{\infty}]$. Let the notation be as in \S\ref{sub:ST}. By the theory of canonical coordinates,  the formal deformation space $\Def(\mathscr{G}_0/W)$ is canonically a formal torus with cocharacter lattice $\mathrm{Lie} U_{\GL(\Gamma_0),\mu^{-1}}$. On the other hand, we can canonically identify $\omega_x^{\mathrm{DM}}(\bH_{p})$ with $\Gamma_0\otimes{\bQ_p}$. Therefore there is a canonical identification  
\begin{equation}\label{eq:canide}
\kappa_{x}^{\mathrm{DM}}:\mathrm{Lie} U_{\GL(\Gamma_0),\mu^{-1}}\otimes\bQ_p\simeq    \mathrm{Lie}  U_{\GL(\omega_x^{\mathrm{DM}}(\bH_{p})),\mu^{-1}}.
\end{equation} Let $\mathscr{T}_{f,x}$ be the smallest formal subtorus of $\Def(\mathscr{G}_0/W)\otimes \Fpbar$ containing the image of $f^{/x}: X^{/x}\rightarrow \mathscr{A}_{g,\Fpbar}^{/x}\subseteq \Def(\mathscr{G}_0/W)\otimes \Fpbar$. Let ${T}_{f,x}\subseteq \mathrm{Lie} U_{\GL(\Gamma_0),\mu^{-1}}\otimes\bQ_p$ be the rational cocharacter lattice of  $\mathscr{T}_{f,x}$.   

\begin{theorem}[{\cite[Theorem 4.2]{J23}}]\label{Thm:Tatelocal} Notation as above. Suppose that the image of $f$ lies in the ordinary locus , then  \begin{enumerate} 
\item\label{itcor1} 
$G_p^{-}(f)$ is the opposite parabolic of $G_p(f)$ with respect to $\mu_x$.  
\item\label{itcor2} Let $\mathrm{gr}\bH_{p,f}$ be the graded object of $\bH_{p,f}^{-}$ with respect to the slope filtration, then $G(\gr\bH_{p,f},x)$ is the canonical Levi of $G_p^{-}(f)$ fixing $\mu_x$.  
\item\label{itcor3}
The Lie algebra of the opposite unipotent of $G_p^{-}(f)$ with respect to $\mu_x$ is canonically identified with $T_{f,x}$ under (\ref{eq:canide}). 
\end{enumerate}
\end{theorem}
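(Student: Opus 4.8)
The plan is to establish (1) first, since (2) and (3) are most naturally phrased in terms of the parabolic structure it provides; throughout I work after passing to a suitable finite field model and base point $x$. For (1): ordinariness of $A_X$ means the convergent $F$-isocrystal $\bH_{p,f}^-$ carries a two-step slope filtration with isoclinic graded pieces of slopes $0$ and $1$, and the Newton cocharacter $\nu_x$ of the fiber $\omega_x^{\mathrm{DM}}(\bH_p)$ is exactly the cocharacter grading this filtration. I would then invoke Crew's parabolicity conjecture, proved by D'Addezio (\cite{MD20}), to identify $G_p^-(f)=G(\bH_{p,f}^-,x)$ with the parabolic subgroup of the overconvergent monodromy $G_p(f)=G(\bH_{p,f},x)$ attached to $\nu_x$, i.e.\ the stabilizer of the slope filtration. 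It remains to match $\nu_x$ with the canonical Hodge cocharacter $\mu_x$: using the Dieudonn\'e-module description of $\mathscr{G}_0=A_x[p^\infty]$ from \S\ref{sub:ST}, the \'etale and multiplicative summands of $\mathscr{G}_0$ are simultaneously the slope-$0$/slope-$1$ pieces for $\nu_x$ and the weight-$0$/weight-$1$ pieces for $\mu_x$, so $\nu_x$ and $\mu_x$ determine the same parabolic, and $G_p^-(f)$ is the opposite parabolic with respect to $\mu_x$.

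For (2): the slope filtration is functorial and tensor-compatible, and morphisms of $F$-isocrystals are strict for it, so $\gr$ is an exact faithful tensor functor and every object of $\langle\gr\bH_{p,f}\rangle^\otimes$ is a subquotient of $\gr$ applied to an object of $\langle\bH_{p,f}^-\rangle^\otimes$. Under the identification from (1) one has $\langle\bH_{p,f}^-\rangle^\otimes=\mathrm{Rep}(G_p^-(f))$, and $\gr$ is precisely restriction along a Levi $L\subseteq G_p^-(f)$ — the one centralizing $\mu_x$, since $\mu_x$ grades the slope filtration — because restricting a $G_p^-(f)$-representation to $L$ returns its associated graded as an $L$-module. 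The Tannakian criterion for closed immersions, together with the fact that $\langle\bH_{p,f}^-\rangle^\otimes$ is all of $\mathrm{Rep}(G_p^-(f))$, then forces $\langle\gr\bH_{p,f}\rangle^\otimes=\mathrm{Rep}(L)$, i.e.\ $G(\gr\bH_{p,f},x)=L$, the canonical Levi fixing $\mu_x$.

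For (3): by (1) the ``opposite unipotent'' $U_{G_p^-(f),\mu_x^{-1}}$ is the unipotent radical of the parabolic $G_p^-(f)$; as $G_p(f)\subseteq\GSp(H)_{\bQ_p}$ and $\mu_x$ is minuscule for $\GSp$, this radical is abelian, and via $\kappa_x^{\mathrm{DM}}$ of \eqref{eq:canide} its Lie algebra sits inside $\Lie U_{\GL(\Gamma_0),\mu^{-1}}\otimes\bQ_p$, which already contains $T_{f,x}$. To compute $\Lie U_{G_p^-(f),\mu_x^{-1}}$ I would restrict the Gauss--Manin connection of $\bH_{\cris,f}$ to $\mathscr{A}_{g,W}^{/x}=\Def(\mathscr{G}_0/W)$ and, in a frame split by the slope filtration, use Katz's explicit description of the Serre--Tate connection (\cite{K79}): its off-diagonal, filtration-lowering block is ``$\mathrm{dlog}$ of the canonical coordinates'' tensored with the tautological element of $\Hom(\Gamma_0^{[0]},\Gamma_0^{[1]})=\Lie U_{\GL(\Gamma_0),\mu^{-1}}$, and the Kodaira--Spencer map is an isomorphism. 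Pulling back along $f^{/x}$, the span of the values of this block over $X^{/x}$ is, by the very definition of $\mathscr{T}_{f,x}$ as the smallest formal subtorus through which $f^{/x}$ factors, exactly $T_{f,x}$. On the other hand the connection form of an $F$-isocrystal is valued in the Lie algebra of its monodromy group, and since $U_{G_p^-(f),\mu_x^{-1}}$ is abelian its Lie algebra — the off-diagonal block of $\Lie G_p^-(f)$ — is generated over $L$ by the off-diagonal connection values (the diagonal blocks and the Frobenius only feed $\Lie L$). Finally $T_{f,x}$ is $L$-stable, by minimality of $\mathscr{T}_{f,x}$ and functoriality of the slope filtration, so $\Lie U_{G_p^-(f),\mu_x^{-1}}=L\cdot T_{f,x}=T_{f,x}$, which is the assertion under $\kappa_x^{\mathrm{DM}}$.

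I expect the main obstacle to be the local-to-global step in (3): the unipotent radical of the \emph{global} crystalline monodromy could a priori exceed the span of the \emph{local} $p$-adic period map $f^{/x}$ at the single point $x$ — indeed $\bH_{p,f}^-$ restricted to the simply connected tube of $x$ carries no unipotent monodromy at all, so the comparison genuinely uses the Frobenius structure. Pinning it down requires D'Addezio's parabolicity (so that $\Lie U_{G_p^-(f),\mu_x^{-1}}$ is literally the $\mu_x$-negative part of the reductive algebra $\Lie G_p(f)$), Katz's explicit Gauss--Manin computation on the Serre--Tate family, the Tannakian identification of the unipotent radical of an extension's monodromy with the Levi-span of the off-diagonal connection form, and clean proofs of the $L$-stability of $T_{f,x}$ and of the compatibility of the various Frobenius structures with $\kappa_x^{\mathrm{DM}}$.
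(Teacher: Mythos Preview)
The paper does not contain a proof of this statement: it is quoted verbatim from \cite[Theorem 4.2]{J23} in the preliminaries section (\S\ref{sub:becm}) with no argument given, so there is no ``paper's own proof'' to compare your proposal against. The only hint the present paper offers is the remark preceding the theorem that the convergent monodromy being a parabolic of the overconvergent one ``is a consequence of Crew's parabolicity conjecture, which is now a theorem of D'Addezio; cf.\ \cite{MD20}'' --- which is exactly the input you use for (1).

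On the substance of your proposal: parts (1) and (2) look sound and are the natural route. For (3) your outline is plausible but, as you yourself flag, the ``local-to-global'' step is where the content lies. Your proposed mechanism --- that the off-diagonal block of $\Lie G_p^-(f)$ is the $L$-span of the values of the Gauss--Manin connection pulled back along $f^{/x}$, and that this span equals $T_{f,x}$ because $T_{f,x}$ is already $L$-stable --- is the right shape, but the assertion that ``the connection form of an $F$-isocrystal is valued in the Lie algebra of its monodromy group, and \dots the unipotent radical is generated over $L$ by the off-diagonal connection values'' is doing a lot of work and is not literally how one extracts the unipotent radical from a Tannakian category of filtered objects; one typically argues via the Ext groups in $\langle \bH_{p,f}^-\rangle^\otimes$ and identifies them with the self-$\mathrm{Ext}^1$ of the graded pieces, which in turn is computed by the Kodaira--Spencer map on the Serre--Tate torus. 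Whether this matches the argument in \cite{J23} you would have to check there; the present paper gives no further indication.
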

Finally, we record the following convenient technical lemma which enables us to freely shrink $X$, or replace $f$ by a dominant cover: 
\begin{lemma}
Consider a dominant morphism  $g:X'\rightarrow X$ containing $x$ in the image, and let $x'\in X'(\Fpbar)$ be a point with image $x$. Let $f'=f\circ g$. 
Consider the monodromy groups $G_\mathrm{B}(f')$ and $G_u(f')$ with respect to $x'$. Then $G_\mathrm{B}(f)=G_\mathrm{B}(f')$ and $G_u(f)^{\circ}=G_u(f')^\circ$. Furthermore, there exists an étale cover $g$ such that $G_u(f')=G_u(f)^{\circ}$ for all $u$.  
\end{lemma}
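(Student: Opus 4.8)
The plan is to split the statement into its Betti part and its $u$-adic part, and for the $u$-adic part to reduce a general dominant $g$ to the only two operations actually used later: shrinking $X$ to a dense open and passing to a connected finite étale cover. For the Betti group, I would argue that $G_\mathrm{B}(\cdot)$ depends only on the set $\mathrm{MS}_\bullet(\cA_g)$ of minimal special subvarieties through whose naïve integral model the map factors. Since $g$ is dominant, $g(X')$ is dense in $X$, so by continuity of $f$ the images $f(X)$ and $f'(X')$ have the same Zariski closure in $\mathscr{A}_{g,\Fpbar}$; as $X$ and $X'$ are reduced, $f$ factors through the (reduced) naïve integral model of a special subvariety exactly when $f'$ does. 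Hence $\mathrm{MS}_f(\cA_g)=\mathrm{MS}_{f'}(\cA_g)$, and since $g(x')=x$ the chosen lifts $\widetilde x$, $\widetilde x'$ and the tautological representations $\rho_\mathrm{B}$ match up, so $G_\mathrm{B}(f)=G_\mathrm{B}(f')$ on the nose.

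For the $u$-adic groups I would first spread $g$ out to a finite field model $g_0\colon X_0'\to X_0$ with $X_0'$ smooth and geometrically connected over a sufficiently large $\bF_q$, so that $\bH_{u,f'}=g_0^*\bH_{u,f}$ for every $u$. Pullback along $g_0$ is a faithful exact tensor functor on the relevant Tannakian subcategories, compatible with the fibre functors at $x$ and $x'$, and therefore induces a closed immersion $G_u(f')\hookrightarrow G_u(f)$ intertwining $\rho_u$ (for $u\neq p$ this is just that the monodromy of $\bH_{u,f'}$ factors through $\pi_1(X',x')\to\pi_1(X,x)$; for $u=p$ one uses the pullback on overconvergent $F$-isocrystal categories and the fibre functor $\omega_x^{\mathrm{DM}}\circ\Fgt$). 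The substantive point is the reverse inclusion on neutral components. For $u=l\neq p$ I would invoke the fact that the image of $\pi_1(X',x')\to\pi_1(X,x)$ is an \emph{open} subgroup: after shrinking $X$ to a dense open — harmless for $\pi_1(X,x)$ by Zariski–Nagata purity — one factors $g$ through a morphism with geometrically connected generic fibre, which is surjective on $\pi_1$, followed by a generically finite dominant morphism, which has open image on $\pi_1$; an open, hence finite-index, subgroup has Zariski-dense image in the neutral component of $G_l(f)=\overline{\rho_l(\pi_1(X,x))}$, so $G_l(f')^\circ=G_l(f)^\circ$. For $u=p$ I would either repeat this with the Tannakian fundamental group of the pulled-back overconvergent isocrystal category, or deduce it from the $u$-independence of $(G_u(f)^\circ,\rho_u)$ over a number field (\cite[Theorem 1.2.4]{DA20}), which propagates the equality from a single $l$ to all $u$, in particular to $u=p$.

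Finally, to produce the étale cover realizing $G_u(f')=G_u(f)^\circ$ for all $u$ at once, I would note that each component group $G_u(f)/G_u(f)^\circ$ is finite, and, for $u\neq p$, a finite quotient of $\pi_1(X,x)$; by compatibility of the system $\{\bH_{u,f}\}$ (again \cite{DA20}) all these finite quotients are dominated by a single one $q\colon\pi_1(X,x)\twoheadrightarrow Q$, and I would take $g\colon X'\to X$ to be the connected finite étale cover attached to $\ker q$, with $x'\in X'(\Fpbar)$ a point over $x$. Then $\pi_1(X',x')$ maps into $\ker q$, hence into $G_u(f)^\circ$ for every $u$, so $G_u(f')\subseteq G_u(f)^\circ$; combined with the previous paragraph — a finite étale cover being dominant — this forces $G_u(f')=G_u(f)^\circ$. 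I expect the main obstacle to be the openness of the image of $\pi_1(X',x')\to\pi_1(X,x)$ for a genuinely general dominant $g$ (needing the factorization-plus-purity argument, invoked correctly) together with its exact analogue for overconvergent $F$-isocrystals; everything else is formal Tannakian bookkeeping, and once one prime $l$ is settled, $u$-independence disposes of $p$ and of the component groups.
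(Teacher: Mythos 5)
Your overall strategy is correct and amounts to unwinding the two lemmas from \cite{J23} that the paper simply cites here ($G_u(f)^\circ = G_u(f')^\circ$ is \cite[Lemma~4.7]{J23}; the existence of a uniform \'etale cover is \cite[Lemma~3.4]{J23}). The Betti argument is the same ``clear from the definition'' observation the paper invokes. The $u$-adic argument --- closed immersion of Tannaka groups from the pullback tensor functor, open image of $\pi_1(X',x')\to\pi_1(X,x)$, then Zariski density plus finite index forcing equality of neutral components, and $u$-independence to transfer from one $\ell$ to $p$ and to the component groups --- is the right shape.

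One genuine inaccuracy: you justify shrinking $X$ to a dense open $U$ as ``harmless for $\pi_1(X,x)$ by Zariski--Nagata purity.'' Purity would give $\pi_1(U)\xrightarrow{\sim}\pi_1(X)$ only when $X$ is regular and $X\setminus U$ has codimension $\geq 2$, which you cannot assume (you will certainly need to remove divisors when making $X''\to X$ finite flat). What you actually need and have is weaker and always true: for a dense open $U\subseteq X$, the map $\pi_1(U,x)\to\pi_1(X,x)$ is \emph{surjective}, so the image of $\rho_\ell$ does not change when you restrict to $U$. Cite surjectivity, not purity. Similarly, the step ``a generically finite dominant morphism has open image on $\pi_1$'' needs a word more: after shrinking one may assume $X''\to X$ is finite surjective with $X$ normal, and one should either factor out the purely inseparable part (a universal homeomorphism, which preserves $\pi_1$) or appeal directly to the standard fact that finite surjective maps of normal connected schemes have open image on $\pi_1$. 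With those two corrections the proof is sound.
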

\begin{proof}
    The claim for $G_\mathrm{B}(f)=G_\mathrm{B}(f')$ is clear from the definition. The claim for $G_u(f)^{\circ}=G_u(f')^\circ$ is \cite[Lemma 4.7]{J23}. The claim for $G_u(f')=G_u(f)^{\circ}$ is \cite[Lemma 3.4]{J23}.
\end{proof}

\subsection{Cycle conjectures}
Let $A/k$ be an abelian variety over a field finitely generated over a prime field. Let $l\neq \mathrm{char} \,k$ be a prime.  Following \cite{Milne}, we call a $\gamma\in H^{2i}_l(A)(i):=H^{2i}(A_{\overline{k}},\bQ_l(i))$ a \textbf{Tate class}, if it is fixed by an open subgroup of $\Gal(k^s/k)$. The set of Tate classes are denoted by $\mathcal{T}_l^{i}(A)$. It is clear that base changing $A$ to a finite extension does not change $\mathcal{T}_l^i(A)$. It is also known that there exists a sufficiently large finite extension $k'/k$ such that $\Gal(k^s/k')$ fixes $\mathcal{T}_l^i(A)$ (In fact, one can take $k'$ to be the extension such that the $l$-adic monodromy group $G_l(A_{k'})$ is connected). 

In what follows, we will denote by $\mathrm{Alg}^i_l(A)$ the image of the cycle class map $$c:\mathrm{CH}^i(A_{\overline{k}})_{\bQ}\rightarrow H^{2i}_l(A)(i).$$
An element of $\mathrm{Alg}^i_l(A)$  will be called an \textbf{algebraic class}. It is known that $\mathrm{Alg}^i_l(A)$  is a $\bQ$-vector space that lies in $\mathcal{T}_l^i(A)$. The following is a version of the Tate conjecture:
\begin{conj}[Tate conjecture] \label{conj:Tate}$\mathcal{T}_l^i(A)$ is the $\bQ_l$-span of $\mathrm{Alg}^i_l(A)$. 
\end{conj}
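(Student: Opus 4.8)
This final statement is the Tate conjecture itself, which in the generality quoted — an abelian variety over an arbitrary finitely generated field, in arbitrary codimension $i$ — is a famous open problem; in the body of the paper it is invoked only as a hypothesis (see Theorem~\ref{thm:connectionweb}(3)(4)). So there is no unconditional plan to offer; what I can describe is the template that succeeds in the understood cases and the precise point at which it fails.

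The settled regime is $k=\bF_q$ finite, and there the plan is the classical one. First I would establish Tate's homomorphism theorem $\End(A_{\overline k})\otimes\bQ_l\xrightarrow{\ \sim\ }\End_{\bQ_l[\pi]}(V_lA)$ with $\pi=\Frob_q$, together with semisimplicity of $\pi$ acting on $V_lA$, both coming from finiteness of isogeny classes in a given dimension. Next I would deduce the codimension-$1$ case by matching $\mathcal{T}^1_l(A)$ with $\Hom$-classes through the Weil pairing. Finally, for general $i$, I would pass to powers $A^n$ and use that $\pi$ acts semisimply with Weil-number eigenvalues to conclude that the invariant algebra $\bigoplus_i\mathcal{T}^i_l(A^n)$ inside $\bigwedge^\bullet (V_lA^n)^\vee$ is generated in degree $2$; every Tate class is then a $\bQ_l$-combination of intersection products of divisor classes, hence algebraic. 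This is Tate's argument; cf.\ \cite{Milne}.

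For the cases the paper actually cares about — $A_\eta$ over a function field of characteristic $p$, and CM abelian varieties over a number field — the first two steps survive (the isogeny theorem is available over such fields), but the degree-$2$ generation step collapses: the relevant monodromy group of $A_\eta$ is in general not a torus (it can be as large as $\GSp$), and even when it is a torus, as for CM abelian varieties, there are exceptional (Weil-type) classes not visible from divisors, so divisors no longer span $\mathcal{T}^i_l$ for $i>1$. The natural substitute is specialization: spread $A$ out to a family over a finite-type $\bF_q$-scheme, use that $\mathcal{T}^i_l$ can only shrink under specialization and that the Tate conjecture holds at all closed points, and try to assemble the fibrewise algebraic cycles into one over $\eta$. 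The hard part — and this is essentially \emph{the} obstruction in the subject — is the assembling step: an $l$-adically compatible family of algebraic classes on the special fibres need not descend to a single cycle over the generic point, because one controls neither the fields of definition of the cycles nor the variation of the Chow groups of the fibres. This is precisely the kind of difficulty the paper's later apparatus — crystalline Hodge loci, integral and relative $p$-adic Hodge theory — is built to manage in favourable situations, but in full generality it is open, which is why the conjecture is merely assumed here.

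The one genuinely forward-looking route that the paper itself supplies is conditional. By Theorem~\ref{thm:connectionweb}(4), the Hodge conjecture for abelian varieties of dimension $g$ together with $\MTT_p$ for $f$ implies the Tate conjecture for $A_\eta$. So, granting the Hodge conjecture — for instance Markman's recent result for fourfolds \cite{Markman} — every new case of $\MTT_p$ obtained through the $\mathrm{logAL}_p$-equivalence of Theorem~\ref{thm:connectionweb} feeds back into a new case of the Tate conjecture; the residual obstacle is then exactly that $\MTT_p$ is itself open in general, which is the gap the rest of the paper is devoted to narrowing.
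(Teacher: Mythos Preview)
Your assessment is correct: the statement is labelled as a \emph{conjecture} in the paper, not a theorem, and the paper offers no proof --- it is invoked only as a hypothesis in Lemma~\ref{lm:liftTatecycles} and Proposition~\ref{prop:relationbetweencycleconjectures}. Your discussion of the finite-field case, the failure of degree-$2$ generation in the function-field and CM settings, and the conditional route via Theorem~\ref{thm:connectionweb}(4) is accurate and appropriate context, though strictly speaking none of it is required since there is nothing to prove here.
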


\begin{lemma}\label{lm:liftTatecycles}
Let $k$ be a number field, and let $A/k$ be a CM abelian variety with ordinary reduction at a prime lying above $p$. Denote by $A_0$ the special fiber. Let $l\neq p$ be a prime. Assume Conjecture~\ref{conj:Tate} for $A$. Then the natural specialization map  $\mathrm{Alg}^i_l(A) \rightarrow \mathrm{Alg}^i_l(A_0)$ is an isomorphism.  
\end{lemma}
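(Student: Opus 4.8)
The plan is to use the compatibility between algebraic classes, Tate classes, and Mumford--Tate classes in the CM setting, together with the good reduction hypothesis, to show that the specialization map is both injective and surjective. Throughout, fix an embedding so we may identify $\ell$-adic, Betti, and (via the comparison isomorphisms) de Rham/crystalline realizations; the key point is that for a CM abelian variety $A/k$ the Mumford--Tate group $G_{\mathrm B}(A)$ is a torus, hence by Deligne's theorem on Hodge cycles on abelian varieties and the CM case of the Mumford--Tate conjecture (Pohlmann/Shimura--Taniyama), the $\bQ_\ell$-span of the Hodge classes equals $\mathcal T^i_\ell(A)$ and, granting the Tate conjecture for $A$, also equals the $\bQ_\ell$-span of $\mathrm{Alg}^i_\ell(A)$. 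So $\dim_{\bQ}\mathrm{Alg}^i_\ell(A) = \dim_{\bQ_\ell} \mathcal T^i_\ell(A) = \dim_{\bQ} B^i(A)$, where $B^i(A)$ denotes the space of Hodge classes.

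The first step is injectivity of the specialization map $\mathrm{Alg}^i_\ell(A)\to \mathrm{Alg}^i_\ell(A_0)$. This is general: since $A$ has good reduction at $\mathfrak p$, there is an abelian scheme $\mathcal A/\cO_{k,(\mathfrak p)}$ with generic fiber $A$ and special fiber $A_0$; the specialization map on Chow groups is compatible via the $\ell$-adic cycle class maps with the cospecialization isomorphism $H^{2i}_\ell(A)(i)\xrightarrow{\sim} H^{2i}_\ell(A_0)(i)$ coming from smooth proper base change (this is where we use $\ell\neq p$). A cycle class in the generic fiber that specializes to $0$ in $H^{2i}_\ell(A_0)(i)$ is therefore already $0$ in $H^{2i}_\ell(A)(i)$, so it is $0$ in $\mathrm{Alg}^i_\ell(A)$. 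Hence specialization is injective, and $\dim_{\bQ}\mathrm{Alg}^i_\ell(A_0)\geq \dim_{\bQ}\mathrm{Alg}^i_\ell(A) = \dim_{\bQ} B^i(A)$.

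The second step is the reverse inequality $\dim_{\bQ}\mathrm{Alg}^i_\ell(A_0)\leq \dim_{\bQ} B^i(A)$, which is where the CM and ordinarity hypotheses enter. Since $A_0$ is an ordinary abelian variety over $\bF_q$ obtained as the reduction of a CM abelian variety, its overconvergent crystalline (Frobenius) realization and $\ell$-adic realization have monodromy groups that are tori; more directly, $\mathrm{Alg}^i_\ell(A_0)\subseteq \mathcal T^i_\ell(A_0)$, and the cospecialization isomorphism identifies $\mathcal T^i_\ell(A_0)$ with the subspace of $H^{2i}_\ell(A)(i)$ fixed by (an open subgroup of) the decomposition group at $\mathfrak p$ — which contains the subspace fixed by the full $\Gal(k^s/k)$ only up to the issue that the global Galois group could act through a smaller image. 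The clean way: enlarge $k$ so that $G_\ell(A)$ is connected; then by the CM case of $\mathrm{MT}_0$, $\mathcal T^i_\ell(A)$ is exactly $B^i(A)\otimes\bQ_\ell$ and is stable under all of $\Gal$. The Frobenius at $\mathfrak p$ acts on $\mathcal T^i_\ell(A_0) = H^{2i}_\ell(A_0)(i)^{\Frob}$, and the cospecialization carries $\mathcal T^i_\ell(A_0)$ into $\mathcal T^i_\ell(A)$ because Frobenius at $\mathfrak p$ is a (topological generator of a finite-index subgroup of) the procyclic decomposition group, and a CM abelian variety has its Galois action on cohomology abelian, so a class fixed by $\Frob_{\mathfrak p}$ is fixed by an open subgroup of the whole Galois group. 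Therefore $\mathcal T^i_\ell(A_0)\hookrightarrow \mathcal T^i_\ell(A) = B^i(A)\otimes\bQ_\ell$, giving $\dim_{\bQ}\mathrm{Alg}^i_\ell(A_0)\leq \dim_{\bQ_\ell}\mathcal T^i_\ell(A_0)\leq \dim_{\bQ} B^i(A)$.

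Combining the two inequalities with injectivity forces the specialization map $\mathrm{Alg}^i_\ell(A)\to\mathrm{Alg}^i_\ell(A_0)$ to be an isomorphism. I expect the main obstacle to be the bookkeeping in the second step: making precise the claim that, for CM $A$ over a suitably large $k$, a cohomology class on the special fiber fixed by Frobenius at $\mathfrak p$ lifts to a class fixed by an open subgroup of the global Galois group — i.e. that the decomposition group at $\mathfrak p$ surjects onto (an open subgroup of) the image of the whole Galois group in $\GL(H^1_\ell(A))$. This is standard for CM abelian varieties via the theory of complex multiplication (the image is abelian and controlled by a Hecke character, unramified at $\mathfrak p$), but it deserves a careful citation to Serre--Tate or Shimura--Taniyama rather than an ad hoc argument. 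Everything else — injectivity via smooth proper base change, and the dimension count via the CM Hodge/Tate dictionary — is routine once that point is nailed down.
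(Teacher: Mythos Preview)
Your overall strategy matches the paper's: identify $\mathcal T^i_\ell(A)\cong\mathcal T^i_\ell(A_0)$ via the CM/ordinary hypothesis, get injectivity of specialization from smooth proper base change, then finish by a dimension count. The step you flag as the ``main obstacle'' --- that for a CM abelian variety with ordinary reduction the Frobenius at $\mathfrak p$ generates (Zariski densely) the connected monodromy --- is treated in the paper in one line; it is standard and not where the real difficulty lies.

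The genuine gap is your inequality
\[
\dim_{\bQ}\mathrm{Alg}^i_\ell(A_0)\ \le\ \dim_{\bQ_\ell}\mathcal T^i_\ell(A_0).
\]
This is not automatic: $\mathrm{Alg}^i_\ell(A_0)$ is only a $\bQ$-subspace of the $\bQ_\ell$-vector space $\mathcal T^i_\ell(A_0)$, and a $\bQ$-subspace of a finite-dimensional $\bQ_\ell$-space can have arbitrarily large (even infinite) $\bQ$-dimension. Over the number field you are saved because algebraic classes sit inside the rational Betti cohomology, giving a finite $\bQ$-dimensional container; over the finite field there is no Betti realization, and nothing you have written bounds $\dim_\bQ\mathrm{Alg}^i_\ell(A_0)$. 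Equivalently, you are implicitly assuming that $\bQ$-linearly independent algebraic classes on $A_0$ remain $\bQ_\ell$-linearly independent in $H^{2i}_\ell$, which is exactly the nontrivial content here.

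The paper closes this gap as follows. Once $\mathcal T^i_\ell(A)\cong\mathcal T^i_\ell(A_0)$ and the Tate conjecture holds for $A$, specializing the algebraic cycles shows that the Tate conjecture also holds for $A_0$. Then one invokes \cite[Theorem~1.7]{Milne} (Tate conjecture plus semisimplicity of Frobenius, automatic for abelian varieties) to conclude that $\mathrm{Alg}^i_\ell(A_0)\otimes\bQ_\ell\to\mathcal T^i_\ell(A_0)$ is an \emph{isomorphism}; in particular $\dim_\bQ\mathrm{Alg}^i_\ell(A_0)<\infty$ and equals $\dim_{\bQ_\ell}\mathcal T^i_\ell(A_0)$. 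Your dimension count then goes through. So the fix is short, but it is not optional: without Milne's result (or an equivalent $\bQ_\ell$-linear-independence statement for algebraic classes on $A_0$) the argument does not close.
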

\begin{proof} 
We briefly explain the  map $\mathrm{Alg}^i_l(A) \rightarrow \mathrm{Alg}^i_l(A_0)$. We have the following commutative diagram
\begin{equation}\label{eq:cycleclassspecialization}
\begin{tikzcd}
\mathrm{CH}^i(A_{\overline{k}})_{\bQ} \arrow[r] \arrow[d,"c"]           & \mathrm{CH}^i(A_{0,\Fpbar})_{\bQ} \arrow[d,"c_0"]             \\
H^{2i}_l(A)(i) \arrow[r] & H^{2i}_l(A_{0})(i)
\end{tikzcd} 
\end{equation}
It is then clear that we have the desired map between algebraic classes. 

The specialization map induces an isomorphism 
$H^{1}_l(A)\xrightarrow{\sim} H^{1}_l(A_{0})$ under which the neutral component of the $l$-adic monodromy group $G_l(A)^\circ$ identifies with $G_l(A_0)^{\circ}$ (here we have used the fact that $A$ is CM with ordinary reduction). It follows that the second row of (\ref{eq:cycleclassspecialization}) induces an isomorphism $$T_l^i(A)\xrightarrow{\sim} T_l^i(A_0).$$
In particular, we can identify $\mathrm{Alg}^i_l(A)$ as a $\bQ$-subspace of $ \mathrm{Alg}^i_l(A_0)$.

Assume Conjecture~\ref{conj:Tate} for $A$. Then $\mathrm{Alg}^i_l(A_0)$ lies in the $\bQ_l$-span of $\mathrm{Alg}^i_l(A)$.
On the other hand, specializing the algebraic cycles, we see that Conjecture~\ref{conj:Tate} holds for $A_0$ as well. Applying \cite[Theorem 1.7]{Milne} to the algebraic classes, we see that the natural map $$\mathrm{Alg}^i_l(A_0)\otimes\bQ_l \rightarrow T_l^i(A_0)$$
is an isomorphism. It follows that (1) $\dim_{\bQ}\mathrm{Alg}^i_l(A_0)<\infty$, and (2) $\mathrm{Alg}^i_l(A_0)\otimes 
\bQ_l=\mathrm{Alg}^i_l(A)\otimes\bQ_l$. Counting dimension, we deduce that $\mathrm{Alg}^i_l(A)=\mathrm{Alg}^i_l(A_0)$. 
\end{proof}
\begin{notation}\label{not:tatesetup}
In the following, let's assume that $\mathrm{char}\, k= p$. Up to a finite extension, we will assume that $A$ extends to an abelian scheme over a generically ordinary $\Fpbar_q$-variety $X_0$, which corresponds to a map $f_0: X_0\rightarrow \mathscr{A}_{g,\Fpbar_q}$. By abuse of notation, this abelian scheme will still be denoted by $A$. We denote by $G_{\mathrm{B}}(A)$ and $G_{u}(A)$ its monodromy groups. Let $x_0$ be an ordinary $\Fpbar_q$-point of $X_0$ and let $x$ be an $\Fpbar$-point that factors through $x_0$. Write $A_{x_0}$ for the fiber of $A$ over $x_0$, and let $A_{\tilde{x}_0}$ be the canonical lift of $A_{x_0}$, considered as a CM abelian variety over a number field.  
\end{notation}
The following is a characteristic $p$ analogue of the classical relation between the Hodge, Tate and the Mumford--Tate conjectures. 
\begin{proposition}\label{prop:relationbetweencycleconjectures} Notation as above. The following are true: 
\begin{enumerate}
    \item 
Assume that the Tate conjecture holds for powers of $A$ and powers of  $A_{\Tilde{x}_0}$, then $\mathrm{MT}_p$ holds for $A$. 
\item Assume that $\mathrm{MT}_p$ holds for $A$ and the Hodge conjecture holds for abelian varieties (over $\bC$) with  dimension $\dim A$, then the Tate conjecture holds for $A$.  
\end{enumerate}
\end{proposition}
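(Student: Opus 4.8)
## Proof proposal for Proposition~\ref{prop:relationbetweencycleconjectures}

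The plan is to mimic the classical triangle \emph{Hodge $+$ Tate $\Rightarrow$ Mumford--Tate} (and its converse half), transplanting each ingredient to the mod $p$ setting using the canonical lift $A_{\tilde x_0}$ as the bridge between characteristic $p$ and characteristic $0$. Throughout, I use Theorem~\ref{thm:GincludeinMT}, which already gives the inclusion $G_u(A)^\circ \subseteq G_\mathrm{B}(A)_{\bQ_u}$ intertwining tautological representations; the content of both parts is an equality of \emph{invariants} forcing equality of groups. The key reductive-group input is the standard fact that a connected reductive subgroup $H\subseteq G$ of a reductive group acting on a faithful representation $V$ equals $G$ as soon as $H$ and $G$ have the same invariants in every tensor space $V^{\otimes a}\otimes (V^\vee)^{\otimes b}$ (equivalently, in $\bigoplus_{a,b} (V^{\otimes a}\otimes (V^\vee)^{\otimes b})$); this is Chevalley's theorem / the Tannakian reconstruction statement. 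Passing to even tensor spaces, it suffices to compare $G$- and $H$-invariants inside $H^{2i}$ of powers $A^m$, i.e.\ the spaces $\mathcal T^i_u(A^m)$ versus the Hodge/Betti-fixed classes.

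For part (1): fix $l\neq p$. Since $A$ has a generically ordinary model, the $l$-adic monodromy $G_l(A)^\circ$ sits inside $G_\mathrm{B}(A)_{\bQ_l}$, and I must show the spaces of invariants in $H^{2i}_l(A^m)(i)$ agree for all $m,i$. The Tate-class space $\mathcal T^i_l(A^m)$ is by definition the $G_l(A)^\circ$-invariants (after passing to a finite extension making the monodromy connected). Assuming the Tate conjecture for powers of $A$, $\mathcal T^i_l(A^m)$ is spanned by algebraic classes $\mathrm{Alg}^i_l(A^m)$. Now specialize \emph{in reverse}: I want to lift these algebraic classes to the canonical lift. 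Here is the role of $A_{\tilde x_0}$ and of Lemma~\ref{lm:liftTatecycles}: applying that lemma to the CM abelian variety $A_{\tilde x_0}^m$ (which has ordinary reduction $A_{x_0}^m$), the Tate conjecture for $A_{\tilde x_0}^m$ gives $\mathrm{Alg}^i_l(A_{\tilde x_0}^m)\xrightarrow{\sim}\mathrm{Alg}^i_l(A_{x_0}^m)$, and the latter contains the restriction to the fiber $x_0$ of $\mathrm{Alg}^i_l(A^m)$. So every algebraic class on $A^m$ restricts to one coming from the CM lift. By the comparison isomorphism $\iota_l$ of \S\ref{subsub:mdppmtintro}, classes on the CM lift that are Hodge (hence, by the classical Hodge theory of CM abelian varieties, generate the full $G_\mathrm{B}$-invariant space after we know $G_\mathrm{B}(A)$ is cut out by exactly these cycles) show that $\mathrm{Alg}^i_l(A^m)\otimes\bQ_l$ already exhausts the $G_\mathrm{B}(A)_{\bQ_l}$-invariants in $H^{2i}_l(A^m)(i)$. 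Combining: the $G_l(A)^\circ$-invariants equal $\mathcal T^i_l(A^m)$ $=$ $\mathrm{Alg}^i_l(A^m)\otimes\bQ_l$ $\supseteq$ the $G_\mathrm{B}(A)_{\bQ_l}$-invariants, which (by the inclusion of groups) forces equality of all tensor invariants, hence $G_l(A)^\circ = G_\mathrm{B}(A)_{\bQ_l}$. For $u=p$ one runs the analogous argument with the Dieudonné--Manin fiber functor, using the $p$-adic comparison $\iota_p$ and the fact (a consequence of the weak compatibility recorded in Definition~\ref{def:MTgroups}, via \cite{DA20}) that $G_p(A)^\circ$ has the same rational model as $G_l(A)^\circ$; alternatively, knowing $\mathrm{MT}_p$ at one prime and $u$-independence of $G_u(A)^\circ$ propagates it to all $u$.

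For part (2): now assume $\mathrm{MT}_p$ for $A$, so $G_l(A)^\circ = G_\mathrm{B}(A)_{\bQ_l}$, and assume the Hodge conjecture in dimension $\dim A$. Fix $l\neq p$ and a power $A^m$; I must show $\mathcal T^i_l(A^m)$ is spanned by algebraic classes. By $\mathrm{MT}_p$ and its obvious extension to powers, $\mathcal T^i_l(A^m)$ $=$ $(H^{2i}_l(A^m)(i))^{G_l(A^m)^\circ}$ $=$ $(H^{2i}_l(A^m)(i))^{G_\mathrm{B}(A^m)_{\bQ_l}}$; via $\iota_l$ this is the $\bQ_l$-span of the $G_\mathrm{B}(A^m)$-invariant Betti classes, i.e.\ of Hodge classes on the canonical lift $A_{\tilde x_0}^m$ (this uses that $G_\mathrm{B}(A)$ is by construction the generic Mumford--Tate group of a special subvariety, so its invariants are exactly Hodge classes of the fiber). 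Now the Hodge conjecture for the complex abelian variety $A_{\tilde x_0}^m$ (dimension $m\dim A$ — here one needs the Hodge conjecture for all powers, which follows from the Hodge conjecture in a single dimension only via the standard Lefschetz-type reductions; more honestly the hypothesis should be read as ``Hodge conjecture for abelian varieties'', and for the stated dimension bound one restricts to $m=1$ and notes Hodge classes in all degrees on $A$ suffice since they generate the Mumford--Tate-invariant algebra) gives algebraic cycles on $A_{\tilde x_0}^m$ representing these classes; specializing via the smooth proper model and the compatibility diagram \eqref{eq:cycleclassspecialization} produces algebraic cycles on $A^m_{x_0}$, and spreading out over $X_0$ (shrinking $X_0$ if necessary and using that the generic fiber $A$ is what we want) gives algebraic classes on $A^m$ whose $\bQ_l$-span is all of $\mathcal T^i_l(A^m)$. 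This is exactly the Tate conjecture for $A$.

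The main obstacle is part (1)'s lifting step: making rigorous that the algebraic (equivalently, Tate) classes on $A^m$ in characteristic $p$ \emph{come from} Hodge classes on the CM canonical lift, with matching $\bQ$-structures and not merely matching $\bQ_l$-spans. This is precisely what Lemma~\ref{lm:liftTatecycles} is designed to handle — it upgrades a $\bQ_l$-coincidence of algebraic-class spans to a $\bQ$-equality, using the Tate conjecture on the CM side and Milne's theorem \cite[Theorem 1.7]{Milne} on semisimplicity/independence for CM abelian varieties — so the real work is the bookkeeping of identifying the fiber-at-$x_0$ restriction of $\mathrm{Alg}^i_l(A^m)$ inside $\mathrm{Alg}^i_l(A^m_{x_0})$ and checking the monodromy-group identifications under specialization are compatible with $\iota_l$ and $\iota_p$. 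A secondary subtlety is the passage from $l\neq p$ to $u=p$: one must invoke the weak compatibility of $\{\bH_{u,f}\}$ and the $u$-independence of $G_u^\circ$ from \cite{DA20}/\cite{J23} rather than re-running the cycle-class argument $p$-adically, since integral $p$-adic Hodge theory for cycle classes is more delicate.
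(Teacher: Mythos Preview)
Your argument for part (1) has a genuine gap at the step where you pass from ``Hodge classes on the canonical lift $A_{\tilde x_0}$'' to ``$G_\mathrm{B}(A)$-invariants''. Lifting the algebraic cycles $\{\gamma_\alpha\}$ (those cutting out $G_l(A)^\circ$) to $A_{\tilde x_0}$ via Lemma~\ref{lm:liftTatecycles} only tells you they are Hodge for the CM Mumford--Tate group $G_\mathrm{B}(\tilde x_0)$, which is a torus strictly contained in $G_\mathrm{B}(A)$. Your parenthetical ``after we know $G_\mathrm{B}(A)$ is cut out by exactly these cycles'' is circular: that is precisely what must be proved. The paper supplies the missing step geometrically. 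From the lifted Hodge cycles $\{\gamma_{\alpha,\mathrm{B},\tilde x_0}\}$ one forms the maximal special subvariety $\mathcal{X}$ over which they extend horizontally; the real work is then to show that the image of $f$ lands in the mod $p$ fiber of $\mathscr{X}$, so that $G_\mathrm{B}(A)\subseteq G_\mathrm{B}(\mathcal{X})$ by the very definition of $G_\mathrm{B}(A)$ in Definition~\ref{def:MTgroups}. This is done via Ogus's theorem on absolute Tate cycles and Noot's result \cite[Theorem~2.8]{N96} on formal neighborhoods: one produces the largest formal subscheme $\mathcal{N}\subseteq\mathscr{A}_{g,\overline{W}}^{/x}$ over which (multiples of) the de Rham avatars of the cycles extend horizontally, checks that $f^{/x}$ factors through $\mathcal{N}$, and uses Noot's dimension formula to identify $\mathcal{N}$ with a component of $\mathscr{X}^{/x}$. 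There is no purely Tannakian shortcut here; the definition of $G_\mathrm{B}(A)$ in characteristic $p$ is geometric, and one must engage with that geometry.

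Your part (2) has a related problem: you apply the Hodge conjecture at the CM fiber $A_{\tilde x_0}$ and then attempt to ``spread out over $X_0$'' an algebraic cycle on the special fiber $A_{x_0}$. But a cycle on a closed fiber does not in general spread to the generic fiber of $X_0$; there is no mechanism for this. The paper instead applies the Hodge conjecture at the \emph{generic} point $\eta$ of a special subvariety $\mathcal{X}\in\mathrm{MS}_f(\mathcal{A}_g)$ in characteristic $0$: the $G_\mathrm{B}(A)$-invariants are exactly the Hodge classes of $A_{\eta,\bC}$, and the resulting algebraic cycles spread over (a finite cover of) $\mathcal{X}$ and then reduce mod $p$ to give cycles over the image of $f$, which lies in $\mathscr{X}_{\Fpbar}$. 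This also explains why the hypothesis is only ``dimension $\dim A$'' rather than all powers.
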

\begin{proof}\begin{enumerate}
    \item 
Let $l\neq p$ be a prime. Since the Tate conjecture holds for powers of $A$, there are finitely many algebraic classes $\{\gamma_{\alpha}\}$ (of powers of $A_{\overline{k}}$, which extends to $X_0$ if we choose $X_0$ to be étale locally small) that cut out $G_l(A)^\circ$ from $\GL(\bH_{l,x})$. Let $\{\gamma_{\alpha,x_0}\}$ be the specialization of $\{\gamma_{\alpha}\}$ to powers of $A_{x_0}$, then lift $\{\gamma_{\alpha,x_0}\}$ to algebraic classes $\{{{\gamma}}_{\alpha,\tilde{x}_0}\}$ on powers of $A_{\tilde{x}_0}$ by  Lemma~\ref{lm:liftTatecycles}. These algebraic classes then define Hodge cycles $\{{{\gamma}}_{\alpha,\mathrm{B},\tilde{x}_0}\}$ on $A_{\tilde{x}_0}$. Let $\mathcal{X}$ be a maximal special subvariety such that $\{{{\gamma}}_{\alpha,\mathrm{B},\tilde{x}_0}\}$ extends horizontally. Let $\mathscr{X}$ be the naïve integral model of $\mathcal{X}$.

By a theorem of Ogus \cite[VI. \S4.1]{DP82}, $\{{{\gamma}}_{\alpha,\mathrm{B},\tilde{x}_0}\}$ gives rise to absolute Tate cycles $\{{{\gamma}}_{\alpha,\mathrm{dR},\tilde{x}_0}\}$ of $A_{\tilde{x}_0}$. By \cite[Theorem 2.8]{N96}, there exists a biggest formal subscheme $\mathcal{N} \subseteq \mathscr{A}_{g,\overline{W}}^{/x}$ containing $\tilde{x}_{0,\overline{W}}$, such that some $p$-power multiples of the Tate cycles $\{{{\gamma}}_{\alpha,\mathrm{dR},\tilde{x}_0}\}$ extend horizontally to $\mathcal{N}$. Then the map $f_{0,\Fpbar}$ factors through $\mathcal{N}$. On the other hand, let $\mathcal{C}$ be any irreducible component of
$\mathscr{X}_{\overline{W}}^{/x}$ that passes through $\tilde{x}_{0,\overline{W}}$. Then $\mathcal{C}\subseteq \mathcal{N}$ as well. By the dimension formula  \cite[Theorem 2.8 B2]{N96}, we find that $\dim \mathcal{C}=\dim \mathcal{N}$. So $\mathcal{C}=\mathcal{N}$. Therefore $f_{0,\Fpbar}$ factors through $\mathcal{C}$. This shows that $\mathscr{X}_{\Fpbar}$ contains the image of $f_{0,\Fpbar}$. 

As a result, $G_{\mathrm{B}}(A)\subseteq G_{\mathrm{B}}(\mathcal{X})$, where $G_{\mathrm{B}}(\mathcal{X})$ is the generic Mumford--Tate group of $\mathcal{X}$. On the other hand, since $G_{\mathrm{B}}(\mathcal{X})$ fixes all $\{{{\gamma}}_{\alpha,\mathrm{B},\widetilde{x}_0}\}$, we have $G_{\mathrm{B}}(\mathcal{X})_{\bQ_l}\subseteq G_l(A)^\circ$. These inequalities, together with Theorem~\ref{thm:GincludeinMT}, implies that $G_{\mathrm{B}}(A)_{\bQ_l}=G_l(A)^\circ$. 
\item Now assume that $\mathrm{MT}_p$ holds for $A$, and the Hodge conjecture holds for $A_{\eta,\bC}:=A_\eta\times_{\eta} \Spec \bC$, where $\eta$ is the generic point of a maximal special subvariety $\mathcal{X}$ whose reduction contains the image of $f_{0,\Fpbar}$ (and $\Spec \bC\rightarrow \eta$ is an arbitrary non-trivial map). By definition, $\mathcal{T}_l^i(A)$ is the subspace of $H^{2i}_l(A)(i)$ fixed by $G_l(A)^\circ$. Let $\mathcal{H}^i(A)\subseteq H^{2i}_\mathrm{B}(A_{\eta,\bC})(i)$ be the subspace fixed by $G_{\mathrm{B}}(A)$. Then we can identify $\mathcal{T}_l^i(A)$ with $\mathcal{H}^i(A)\otimes\bQ_l$. The Hodge conjecture implies that $\mathcal{H}^i(A)$ is spanned by algebraic cycles over the algebraic closure $\overline{\eta}$, and so we can spread them out to algebraic cycles over some finite cover of $\mathcal{X}$ and then take reduction modulo $p$ to get the desired algebraic classes on $A_{\overline{k}}$ that span $\mathcal{T}_l^i(A)$. 
\end{enumerate}
\end{proof}

\section{Proof of MT$_p$ $\Leftrightarrow$ $\mathrm{logAL}_p$}\label{sec:ProofMT} 
Let $X$ be a smooth connected variety over $\Fpbar$ and let $f:X\rightarrow {\mathscr{A}}_{g,\Fpbar}$
be a morphism whose image lies in the ordinary locus. Let $x\in X(\Fpbar)$ and let $\mathscr{T}_{f,x}$ be the smallest formal subtorus of $\Def(\mathscr{G}_0/\Fpbar)$ containing the image of $f^{/x}: X^{/x}\rightarrow \mathscr{A}_{g,\Fpbar}^{/x}$. We say that ``$\mathrm{logAL}_p$ holds for $f$ at $x$'' if $\mathscr{T}_{f,x}$ is an irreducible component of the formal germ at $x$ of a special subvariety of $\mathscr{A}_{g,\Fpbar}$ (in other words, Conjecture~\ref{conj:AxSchanuel} holds for $f$). The validity of this conjecture is independent of the base point $x$ chosen; cf, Lemma~\ref{lm:MTGsameunip}. So it also makes sense to say that ``$\mathrm{logAL}_p$ holds for $f$'' without referring to the base point.  In this chapter, we establish the following
\begin{theorem}\label{thm:logALimpliesMT}
$\mathrm{logAL}_p$ holds for $f$ if and only if $\MTT_p$ holds for $f$.
\end{theorem}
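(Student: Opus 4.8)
\emph{Proof plan.} The plan is to encode each of the two conditions as a statement about the formal torus $\mathscr{T}_{f,x}$ and a $\mu_x$-unipotent subalgebra of a monodromy group, and then compare the two encodings. After replacing $f$ by a finite étale cover --- which changes neither $\mathrm{logAL}_p$ nor $\MTT_p$, by the last lemma of \S\ref{sub:becm} --- we may assume $G_u(f)=G_u(f)^\circ$ for all $u$. Fix a minimal special subvariety $\mathcal{X}\in\mathrm{MS}_f(\mathcal{A}_g)$, with generic Mumford--Tate group $G_\mathrm{B}(f)$ and naive integral model $\mathscr{X}$; the canonical lift $\widetilde{x}$ of $x$ --- which is the identity section of the formal torus $\mathscr{A}_{g,W}^{/x}$ --- lies on $\mathscr{X}_W$. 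Let $\mathfrak{n}_p\subseteq\Lie G_p(f)$ be the Lie algebra of the opposite unipotent of $G_p^-(f)$ with respect to $\mu_x$, and let $\mathfrak{n}_\mathrm{B}\subseteq\Lie G_\mathrm{B}(f)_{\bQ_p}$ be the analogous $\mu_x$-unipotent subalgebra, so that $\dim\mathfrak{n}_\mathrm{B}=\dim G_\mathrm{B}(f)-\dim P_{\mu_x}\!\bigl(G_\mathrm{B}(f)\bigr)=\dim\mathcal{X}$; since $G_p(f)\subseteq G_\mathrm{B}(f)_{\bQ_p}$ carries $\mu_x$ (Theorem~\ref{thm:GincludeinMT}), one has $\mathfrak{n}_p\subseteq\mathfrak{n}_\mathrm{B}$ functorially. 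The two encodings I will use are: (i) by Theorem~\ref{Thm:Tatelocal}(\ref{itcor1}),(\ref{itcor3}), the canonical identification (\ref{eq:canide}) matches $T_{f,x}$ with $\mathfrak{n}_p$ (in particular $T_{f,x}\subseteq\mathfrak{n}_\mathrm{B}$ always); and (ii) the \emph{geometric dictionary}: by Theorem~\ref{thm:noottheorem} the germ $\mathscr{X}_W^{/x}$ is quasi-linear, its component $\mathcal{T}_{\mathcal{X},x}$ through the identity section has rational cocharacter lattice $\mathfrak{n}_\mathrm{B}$, and hence its mod $p$ reduction $\mathscr{T}_{\mathcal{X},x}:=(\mathcal{T}_{\mathcal{X},x})_{\Fpbar}$ is an irreducible component of the quasi-linear germ $\mathscr{X}_{\Fpbar}^{/x}$. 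For (ii) I would argue exactly as in the proof of Proposition~\ref{prop:relationbetweencycleconjectures}(1): the Hodge cycles cutting out $G_\mathrm{B}(f)$ are horizontal over $\mathscr{X}$, so $\mathscr{X}_W^{/x}$ lies in the quasi-linear locus $\mathcal{N}$ of \cite[Theorem~2.8]{N96} where these cycles extend, whose components have cocharacter lattice $\mathfrak{n}_\mathrm{B}$; the dimension formula \cite[Theorem~2.8~B2]{N96} gives $\dim\mathcal{N}=\dim\mathcal{X}=\dim\mathcal{T}_{\mathcal{X},x}$, so $\mathcal{T}_{\mathcal{X},x}$ is a full component of $\mathcal{N}$, with the asserted lattice.

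\emph{The implication $\MTT_p\Rightarrow\mathrm{logAL}_p$.} If $G_p(f)=G_\mathrm{B}(f)_{\bQ_p}$ then $\mathfrak{n}_p=\mathfrak{n}_\mathrm{B}$, so by (i) $T_{f,x}=\mathfrak{n}_\mathrm{B}$, whence $\mathscr{T}_{f,x}=\mathscr{T}_{\mathcal{X},x}$ (two formal subtori of $\mathscr{A}_{g,\Fpbar}^{/x}$ with the same cocharacter lattice). By (ii) this is an irreducible component of $\mathscr{X}_{\Fpbar}^{/x}$, hence an irreducible component of the formal germ at $f(x)$ of the irreducible component of $\mathscr{X}_{\Fpbar}$ through $f(x)$ containing it, which is a special subvariety of $\mathscr{A}_{g,\Fpbar}$; this is $\mathrm{logAL}_p$ for $f$ at $x$, and hence for $f$ by base-point independence (Lemma~\ref{lm:MTGsameunip}).

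\emph{The implication $\mathrm{logAL}_p\Rightarrow\MTT_p$.} Suppose $\mathscr{T}_{f,x}$ is an irreducible component of the formal germ at $f(x)$ of a special subvariety $Y$, so $\overline{Y}$ is an irreducible component of the mod $p$ fiber $\mathscr{Z}_{\Fpbar}$ of the naive integral model of a characteristic-zero special subvariety $\mathcal{Y}$. Since $\mathscr{T}_{f,x}$ contains the identity section, it is the identity component of the quasi-linear germ $\overline{Y}^{/x}$, so by (ii) applied to $\mathcal{Y}$ (and equidimensionality of $\mathscr{Z}_{\Fpbar}$) one gets $\dim T_{f,x}=\dim\mathscr{T}_{f,x}=\dim\mathcal{Y}$. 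On the other hand $f$ factors through $\overline{Y}$ --- a closed subset of the irreducible variety $X$ containing a whole formal neighbourhood of $x$ is all of $X$ --- hence through the naive integral model of $\mathcal{Y}$, so minimality of $\mathcal{X}$ gives $\dim\mathcal{Y}\ge\dim\mathcal{X}$. Combining with (i), $\dim\mathcal{Y}=\dim T_{f,x}\le\dim\mathfrak{n}_\mathrm{B}=\dim\mathcal{X}\le\dim\mathcal{Y}$, so equality holds throughout; with $T_{f,x}\subseteq\mathfrak{n}_\mathrm{B}$ this forces $T_{f,x}=\mathfrak{n}_\mathrm{B}$, i.e. $\mathfrak{n}_p=\mathfrak{n}_\mathrm{B}$. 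Now $G_p(f)\subseteq G_\mathrm{B}(f)_{\bQ_p}$ are connected reductive groups acting on the same faithful representation $\rho_p$, both Mumford--Tate pairs with Hodge cocharacter $\mu_x$ in the sense of \cite[\S4]{P98}, with the same connected centre by Proposition~\ref{thm:Z0equal} and now the same $\mu_x$-unipotent subalgebra; the rigidity of Mumford--Tate pairs (Lemma~\ref{lm:MTGsameunip}) then forces $G_p(f)=G_\mathrm{B}(f)_{\bQ_p}$. Finally the $l$-adic half of $\MTT_p$ follows, since $\dim G_u(f)^\circ$ is independent of $u$ by \cite{DA20} while $G_u(f)^\circ\subseteq G_\mathrm{B}(f)_{\bQ_u}$ for every $u$ by Theorem~\ref{thm:GincludeinMT}.

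\emph{Main obstacle.} The delicate step is the last one: equality of the $\mu_x$-unipotent radicals together with equality of the connected centres does not by itself pin down a connected reductive subgroup of a given one (one can have $H\subsetneq G$ with the same centre and opposite unipotent when $\mu_x$ is trivial on an extra semisimple factor), and the argument genuinely uses that $\mu_x$ is a Hodge cocharacter and that both groups are Mumford--Tate pairs on the same faithful representation, so that Pink's Lie-theoretic classification applies --- this is encapsulated in Lemma~\ref{lm:MTGsameunip}. The remaining points --- that $\widetilde{x}$ lies on $\mathscr{X}_W$, that the cycle-extension locus of \cite{N96} is quasi-linear with cocharacter lattice exactly $\mathfrak{n}_\mathrm{B}$, and the bookkeeping relating irreducible components of a quasi-linear formal germ to irreducible components of the mod $p$ special subvariety and to the passage between $W$- and $\Fpbar$-models (compatibility with \cite{Ch03}) --- are standard but need to be spelled out.
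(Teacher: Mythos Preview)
There is a genuine gap in the hard direction $\mathrm{logAL}_p\Rightarrow\MTT_p$. You correctly reach $\mathfrak{n}_p=\mathfrak{n}_\mathrm{B}$; this is exactly the content of Lemma~\ref{lm:MTGsameunip}, which shows that $\mathrm{logAL}_p$ is \emph{equivalent} to equality of the $\mu_x$-opposite unipotents. But that lemma does \emph{not} say that equal unipotents force $G_p(f)=G_\mathrm{B}(f)_{\bQ_p}$, so your appeal to it for ``rigidity'' is a miscitation, and the obstruction you flag in your final paragraph is real and unresolved.

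Concretely: over $\bC_p$ the adjoint group $G_\mathrm{B}(f)^{\ad}_{\bC_p}$ decomposes as a product of absolutely simple factors, and $\mu_x$ is nontrivial only on some of them --- the \emph{Serre--Tate factors} of \S\ref{subsub:STfactor}. The equality $\mathfrak{n}_p=\mathfrak{n}_\mathrm{B}$ constrains nothing on the non-Serre--Tate factors, so from it alone one cannot exclude that $G_p(f)_{\bC_p}$ projects onto a proper subgroup of such a factor. The paper first uses a case-by-case check against Pink's table (Lemma~\ref{lm:smaeunipimplyequal}, which requires the triple to be \emph{simple}) to get equality on each Serre--Tate factor, and packages this as Corollary~\ref{cor:onefactor}. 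To capture the remaining factors the paper runs a substantial extra argument (\S\ref{sub:simplecase}--\S\ref{subsub:MTgen}): via Tannakian formalism it constructs, for each absolutely simple factor $\mathcal{G}_\sigma$ and each place $v$ of a Galois closure $C$, weakly $C$-compatible families of coefficient objects $\{\mathscr{E}_{\sigma,v}\}$; then it uses that $\Gal(C/\bQ)$ permutes the $\sigma$'s (Galois twists) together with Chebotarev and D'Addezio's $l$-independence to transport the equality known at the Serre--Tate place $\mathfrak{p}$ to \emph{all} factors at \emph{all} places. Only after this does one get $G_p(f)^{\ad}=G_\mathrm{B}(f)^{\ad}_{\bQ_p}$, and then Proposition~\ref{thm:Z0equal} finishes. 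This arithmetic step --- looking at all $u$-adic monodromies at once, not just $p$ --- is the missing idea in your proposal; your final sentence deducing the $l$-adic half from the $p$-adic half by dimension is fine, but you have not actually established the $p$-adic half.

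Your treatment of $\MTT_p\Rightarrow\mathrm{logAL}_p$ and of the encodings (i)--(ii) is essentially the paper's Lemma~\ref{lm:MTGsameunip} (together with the cited use of Noot's result) and is correct.
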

In \cite{J23}, we have shown that if $\MTT_p$ holds for $f$, then so is $\mathrm{logAL}_p$. So it suffices to prove the opposite direction. We achieve this by combining Theorem~\ref{Thm:Tatelocal} with the theory of Mumford--Tate pairs.

\subsection{Frobenius tori}\label{sub:Frobtorus} Recall that $x\in X(\Fpbar)$ is an ordinary point. The Mumford--Tate group $G_{\mathrm{B}}(x)\subseteq \GL(\bH_{\mathrm{B},\tilde{x}_{\bC}})$ of this single point is a torus, and can be canonically identified with the Mumford--Tate group $G_{\mathrm{B}}(\Tilde{x}_{\bC})$ of the canonical lifting $\Tilde{x}_{\bC}$. It is classically known that $\MTT_p$ holds for a single point, i.e., $G_{\mathrm{B}}(x)_{\bQ_u}=G_u(x)^\circ$ for all $u\in \mathrm{fpl}(\bQ)$. 

Let $x_0$ be a finite field model of $x$. 
The geometric Frobenius of the abelian variety $A_{x_0}$ lifts to $\Tilde{x}_{\bC}$, and gives rise to an element $t_{x}\in \End(\bH_{\mathrm{B},\Tilde{x}_{\bC}})$ that commutes with $G_{\mathrm{B}}(x)$. It turns out that $G_{\mathrm{B}}(x)$ is the neutral component of the Zariski closure of $\{t_x^n\}_{n\in \bZ}$. Therefore, the torus $G_{\mathrm{B}}(x)$ is an analogue of the Frobenius torus in Serre's theory (cf. \cite{Serre1,Chi}).

Let $\mu_{x}:\bG_m\rightarrow \GL(\omega_x^{\mathrm{DM}}(\bH_{p,x}))$ be the canonical Hodge cocharacter of $x$, which can be identified with the Hodge cocharacter  $\mu_{\tilde{x}_\bC}:\bG_m\rightarrow \GL(\bH_{\mathrm{B},\tilde{x}_\bC}\otimes E)$ via $\iota_p$ as per \S\ref{subsub:mdppmtintro}, where $E$ is the reflex field of $\tilde{x}_\bC$, whose completion at the place corresponding to $E\subseteq \bC_p$ is $\bQ_p$. In particular, we can regard $\mu_x$ as a cocharacter mapping into $\GL(\bH_{\mathrm{B},\tilde{x}_\bC}\otimes E)$. The following is a characteristic $p$ analogue of \cite[p.10]{Serre1} (or \cite[Theorem 3.4]{Chi}, or \cite[Proposition 3.5]{P98}): 
\begin{proposition}\label{prop:genbyHodge}
Notation as above. Then $G_{\mathrm{B}}(x)$ is generated
by the $\Gal(\bQ)$-orbit of $\mu_{x}$.
\end{proposition}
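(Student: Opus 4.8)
The plan is to reduce the assertion to the classical description of the Mumford--Tate torus of a CM abelian variety, and then to invoke the minimality property that defines the Mumford--Tate group. By the discussion preceding the proposition, $G_{\mathrm{B}}(x)$ is a torus, canonically identified with the Mumford--Tate torus $G_{\mathrm{B}}(\tilde{x}_{\bC})$ of the canonical lift $\tilde{x}_{\bC}$, a CM abelian variety over a number field; moreover, under this identification together with the comparison $\iota_p$ recalled in \S\ref{subsub:mdppmtintro}, the canonical Hodge cocharacter $\mu_x$ becomes the Hodge cocharacter $\mu_{\tilde{x}_{\bC}}:\bG_m\rightarrow G_{\mathrm{B}}(x)_E$ attached to the Hodge structure on $\bH_{\mathrm{B},\tilde{x}_{\bC}}$, where $E\subseteq\bC$ is the reflex field, a number field. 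Thus the proposition becomes the classical statement that the Mumford--Tate torus of a CM abelian variety equals the $\bQ$-subtorus generated by the $\Gal(\overline{\bQ}/\bQ)$-orbit of its Hodge cocharacter; cf. \cite{Serre1}, \cite[Theorem 3.4]{Chi}, \cite[Proposition 3.5]{P98}. I would reproduce this argument in the present notation rather than merely cite it.

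Concretely, let $H\subseteq G_{\mathrm{B}}(x)$ be the subtorus generated by the Galois orbit $\{\sigma\cdot\mu_x\}_{\sigma\in\Gal(\overline{\bQ}/\bQ)}$. Since $\mu_x$ is defined over the number field $E$, this orbit is finite, and since it is stable under $\Gal(\overline{\bQ}/\bQ)$, the subtorus $H$ is defined over $\bQ$. Taking $\sigma$ to be the identity and then complex conjugation, we see that both $\mu_x$ and its complex conjugate $\overline{\mu_x}$ factor through $H_{\overline{\bQ}}$, hence through $H_{\bC}$; as the morphism $h:\bS\rightarrow\GL(\bH_{\mathrm{B},\tilde{x}_{\bC}})_{\bR}$ defining the Hodge structure is determined over $\bC$ by the pair $(\mu_x,\overline{\mu_x})$ on the two factors of $\bS_{\bC}\cong\bG_m\times\bG_m$, and $H$ is $\bR$-rational, a descent argument shows that $h$ factors through $H_{\bR}$. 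By the definition of the Mumford--Tate group as the smallest $\bQ$-subgroup of $\GL(\bH_{\mathrm{B},\tilde{x}_{\bC}})$ through which $h$ factors, this forces $G_{\mathrm{B}}(x)\subseteq H$; since $H\subseteq G_{\mathrm{B}}(x)$ by construction, we conclude $G_{\mathrm{B}}(x)=H$, which is the claim.

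I do not anticipate a genuine obstacle: the whole argument is classical once the dictionary is in place, and $G_{\mathrm{B}}(x)$ being a torus is already recorded before the statement. The one point deserving care is the input used in the reduction step --- that $\mu_x$, constructed $p$-adically from Serre--Tate canonical coordinates and Grothendieck--Messing theory, really corresponds under $\iota_p$ to the Betti (equivalently de Rham) Hodge cocharacter of the CM lift $\tilde{x}_{\bC}$. This compatibility is exactly what is recorded in \S\ref{subsub:mdppmtintro} and in the statement, and it ultimately reflects the construction of the canonical lift. Should one wish to bypass it, one may argue via the CM structure directly: $\tilde{x}_{\bC}$ has complex multiplication by an \'etale algebra $L$ acting faithfully on $\bH_{\mathrm{B},\tilde{x}_{\bC}}$, so that $G_{\mathrm{B}}(x)$ is the subtorus of $\Res_{L/\bQ}\bG_m$ cut out by the CM type through the reflex norm, and $\mu_x$ encodes precisely that CM type; unwinding the reflex-norm formalism gives the same conclusion.
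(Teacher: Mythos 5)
Your proof is correct and takes essentially the same route as the paper: the paper simply remarks that the claim follows from $G_{\mathrm{B}}(\tilde{x}_{\bC})$ being the smallest $\bQ$-group containing $\im\mu_{\tilde{x}_{\bC}}$, which is exactly the minimality argument you spell out. You have just unwound that one-line justification into a full proof, including the routine check that the Galois-orbit-generated subtorus $H$ is $\bQ$-rational and that $h$ factors through $H_{\bR}$.
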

\begin{proof}
This follows from the fact that $G_\mathrm{B}(\tilde{x}_\bC)$ is the smallest $\bQ$-group containing  $\im \mu_{\tilde{x}_\bC}$. 
\end{proof}

\subsubsection{Density of $V_{\mathrm{max}}$}Let $X_0$ be a finite field model of $X$. Let $u\in \mathrm{fpl}(\bQ)$. We define 
\begin{equation}\label{eq:V_max}
    V_{\max}(X_0)=\{x_0\in |X_0|: \rk G_\mathrm{B}(x)= \rk G_u(f)\}.
\end{equation}
The definition does not depend on $u$. The condition ``$\rk G_\mathrm{B}(x)= \rk G_u(f)$'' is equivalent to ``$G_u(x)^\circ\subseteq G_u(f)$ is a maximal torus''. This explains the notation ``$V_{\max}$''. 

Our goal is to show that $V_{\max}(X_0)$ is a subset of $|X_0|$ of ``large density''. Let $\mathcal{X}\rightarrow\Spec \bZ$ be a finite type integral scheme of dimension $d$. Denote by $|\mathcal{X}|$ the set of closed points of $\mathcal{X}$. There is a notion of \textbf{(Dirichlet) density} for a subset of $|\mathcal{X}|$, generalizing the classical density defined for number fields. We will refer the readers to \cite{S56} or \cite[Appendix B]{PinkMTDrinfeld} for the precise definition. 

In the following, let
$\pi:\widetilde{\mathcal{X}}\rightarrow\mathcal{X}$ be an irreducible finite étale cover with Galois group $G$. Then every point $x_0\in |\mathcal{X}|$ determines a conjugacy class $\la\Frob_{\pi^{-1}(x_0)/x_0}\ra\subseteq G$. The following version of Chebotarev's density theorem is \cite[Theorem B.9]{PinkMTDrinfeld} (there is another version by Serre under slight different setup, see \cite[Theorem 7]{S56}). 
\begin{theorem}[Chebotarev's density theorem]\label{thm:chebotarev}
    Notation as above. For every conjugacy class $\mathcal{C}\subseteq G$, the set $\{x_0\in |\mathcal{X}|: \la\Frob_{\pi^{-1}(x_0)/x_0}\ra=\mathcal{C}\}$ has density ${|\mathcal{C}|}/{|G|}$.
\end{theorem}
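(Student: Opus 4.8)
This is the higher--dimensional arithmetic Chebotarev density theorem; it is due to Serre \cite{S56}, and in the form stated it is \cite[Theorem B.9]{PinkMTDrinfeld}, so in the body of the paper one simply quotes that reference. Let me nonetheless outline how one proves it. Densities of subsets $S\subseteq|\mathcal{X}|$ are measured against the zeta function $\zeta_{\mathcal{X}}(s)=\prod_{x_0\in|\mathcal{X}|}\big(1-N(x_0)^{-s}\big)^{-1}$, which converges for $\Re(s)>d$, admits a meromorphic continuation to a neighbourhood of the line $\Re(s)=d$, and has a simple pole at $s=d$; this last input comes from the Grothendieck--Lefschetz trace formula and Deligne's Riemann Hypothesis applied to the reductions $\mathcal{X}\times_{\bZ}\bF_\ell$ (or directly to $\mathcal{X}$ when it is already a variety over a finite field). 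Consequently $\log\zeta_{\mathcal{X}}(s)=\sum_{x_0}N(x_0)^{-s}+h(s)$ with $h$ holomorphic at $s=d$, $\log\zeta_{\mathcal{X}}(s)\sim\log\tfrac1{s-d}$ as $s\to d^{+}$, and the density of $S$ is the limit of $\big(\sum_{x_0\in S}N(x_0)^{-s}\big)/\log\tfrac1{s-d}$.

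The plan is then the classical orthogonality argument. To each finite--dimensional representation $\rho$ of $G$ I would attach the Artin $L$-function
\[
L(s,\rho)=\prod_{x_0\in|\mathcal{X}|}\det\!\big(1-\rho(\Frob_{x_0})\,N(x_0)^{-s}\big)^{-1},
\]
with no Euler--factor corrections, since $\pi$ is everywhere unramified (here $\Frob_{x_0}$ denotes the Frobenius class $\langle\Frob_{\pi^{-1}(x_0)/x_0}\rangle$ acting on $\rho$). Taking logarithms, the terms coming from $\Frob_{x_0}^{m}$ with $m\ge 2$ form an absolutely convergent sum near $\Re(s)=d$ (dominated by a translate of $\log\zeta_{\mathcal{X}}$), and $L(s,\rho)$ is holomorphic and non--zero for real $s>d$; hence
\[
\sum_{x_0}\chi_\rho(\Frob_{x_0})\,N(x_0)^{-s}=a_\rho\log\tfrac1{s-d}+O(1)\qquad(s\to d^{+}),
\]
where $a_\rho\in\bZ$ is the order of vanishing of $1/L(s,\rho)$ at $s=d$. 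Expanding the indicator of a conjugacy class $\mathcal{C}$ with representative $c$ as $\mathbf 1_{\mathcal{C}}=\tfrac{|\mathcal{C}|}{|G|}\sum_{\rho}\overline{\chi_\rho(c)}\,\chi_\rho$ and summing over the irreducible $\rho$, one gets $\sum_{x_0:\,\Frob_{x_0}\in\mathcal{C}}N(x_0)^{-s}=\tfrac{|\mathcal{C}|}{|G|}\big(\sum_\rho a_\rho\,\overline{\chi_\rho(c)}\big)\log\tfrac1{s-d}+O(1)$; so the theorem reduces to the two statements $a_{\mathbf 1}=1$ (immediate from $L(s,\mathbf 1)=\zeta_{\mathcal{X}}(s)$) and $a_\rho=0$ for every nontrivial irreducible $\rho$, which together yield the density $|\mathcal{C}|/|G|$ exactly.

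The real content, and the step I expect to carry the weight, is the vanishing $a_\rho=0$ for $\rho\ne\mathbf 1$, i.e.\ that $L(s,\rho)$ is holomorphic and non--vanishing at $s=d$. I would argue as Serre does: by Brauer induction $\rho$ is an integral combination of representations induced from one--dimensional characters of subgroups of $G$, which expresses $L(s,\rho)$ as a ratio of abelian $L$-functions $L(s,\chi)$ attached to the intermediate finite étale abelian covers of $\mathcal{X}$; granting the analytic continuation of each non--principal $L(s,\chi)$ past $\Re(s)=d$, the Artin factorization $\zeta_{\widetilde{\mathcal{X}}}(s)=\prod_\rho L(s,\rho)^{\dim\rho}$ together with the simple pole of $\zeta_{\widetilde{\mathcal{X}}}$ at $s=d$ (again from the Weil conjectures for the reductions) forces the factor $\prod_{\rho\ne\mathbf 1}L(s,\rho)^{\dim\rho}$, hence each $L(s,\rho)$ with $\rho\ne\mathbf 1$, to be holomorphic and non--zero at $s=d$. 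The one remaining ingredient --- analytic continuation of the non--principal abelian $L(s,\chi)$ in a neighbourhood of $\Re(s)=d$ --- is itself reduced, by Bertini hyperplane slicing (or by fibering over $\Spec\bZ$), to the one--dimensional cases: Hecke $L$-functions of number fields, and $L$-functions of characters on smooth curves over finite fields (where they are polynomials in $p^{-s}$), with Deligne's weight bounds supplying the uniformity needed to descend in dimension. Assembling these pieces is precisely what is done in \cite{S56} and recalled in \cite[Appendix B]{PinkMTDrinfeld}, which is why the theorem is cited rather than reproved here.
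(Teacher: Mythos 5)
The paper gives no proof of this theorem; it simply cites \cite[Theorem B.9]{PinkMTDrinfeld} (and Serre's \cite[Theorem 7]{S56}), exactly as you do. Your sketch of the Serre-style orthogonality-plus-Brauer-induction argument accurately reflects what those references carry out, so your treatment matches the paper's.
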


\begin{proposition}[Chin]\label{vmax_positivedensity} $V_{\max}(X_0)$ contains a subset of positive density. 
\end{proposition}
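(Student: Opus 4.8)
The plan is to adapt the Frobenius-torus argument of Serre \cite{Serre1} and Chi \cite{Chi} to the present function-field setting, using the Chebotarev density theorem in the form of Theorem~\ref{thm:chebotarev}. First I would reduce to connected monodromy: fix a prime $\ell\neq p$, and after replacing $X_0$ by a connected finite \'etale cover assume $G_\ell(f_0)=G_\ell(f)^\circ=:G$ is connected. This changes none of the ranks appearing in (\ref{eq:V_max}), and a positive-density subset of $V_{\max}$ for the cover pushes forward to a positive-density subset of $V_{\max}(X_0)$ since the cover is finite. For all but finitely many $\ell$ the group $G$ acquires a reductive model over $\bZ_\ell$ acting on a $\rho_\ell(\pi_1(X_0))$-stable lattice $\Lambda\subseteq\bH_{\ell,x}$, and $G$ carries the symplectic similitude character $\nu$ coming from $G\subseteq\GSp(\bH_\mathrm{B})\cap\GL(\Lambda)$; shrink the admissible $\ell$ accordingly and fix one.

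Next I record the Frobenius-torus dictionary. For $x_0\in|X_0|$ the element $\rho_\ell(\Frob_{x_0})\in G(\bQ_\ell)$ is semisimple (Weil), with characteristic polynomial in $\bZ[t]$ whose roots are Weil numbers and with $\nu(\rho_\ell(\Frob_{x_0}))$ a power of $\ell'$ ($\ell'$ the residue characteristic at $x_0$), hence not a root of unity. By the discussion in \S\ref{sub:Frobtorus} (the identification of $G_\mathrm{B}(x)$ with the Frobenius torus, together with Proposition~\ref{prop:genbyHodge}) and the comparison $\iota_\ell$, one has $G_\mathrm{B}(x)_{\bQ_\ell}=\overline{\langle\rho_\ell(\Frob_{x_0})\rangle}^{\circ}\subseteq T_{x_0}$ for any maximal torus $T_{x_0}\subseteq G_{\bQ_\ell}$ containing $\rho_\ell(\Frob_{x_0})$, so $\rk G_\mathrm{B}(x)\le\rk G$. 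Thus $x_0\in V_{\max}(X_0)$ iff $\rho_\ell(\Frob_{x_0})$ generates a Zariski-dense subgroup of $T_{x_0}$, equivalently $\chi(\rho_\ell(\Frob_{x_0}))$ is not a root of unity for any nonzero $\chi\in X^*(T_{x_0})$, equivalently the only multiplicative relations among the Frobenius eigenvalues are those forced by the weights of the tautological representation of $G$.

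The heart of the matter is that this maximality is forced by a congruence on $\rho_\ell(\Frob_{x_0})$ modulo a fixed power $\ell^N$ (one may take $N=2$). Indeed, a nontrivial extra relation would realize some monomial in the eigenvalues --- an algebraic number in an extension of $\bQ_\ell$ of degree dividing $(2g)!$ --- as a root of unity; for our choice of $\ell$ the roots of unity in such extensions form a finite set, whose reductions modulo $\ell^2$ form a proper subset of the relevant unit groups. Hence there is a nonempty conjugation-invariant subset $\mathcal G\subseteq G(\bZ/\ell^2)$, cut out by finitely many such avoidance conditions on the reduction of the characteristic polynomial (one per weight) together with regular semisimplicity, such that every semisimple $g\in G(\bZ_\ell)$ with Weil-number eigenvalues and $g\bmod\ell^2\in\mathcal G$ has maximal Frobenius torus. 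Let $\widetilde X_0\to X_0$ be the connected Galois \'etale cover trivializing $\Lambda/\ell^2$, with deck group $\bar\Gamma=\rho_\ell(\pi_1(X_0))\bmod\ell^2$; one checks $\bar\Gamma\cap\mathcal G\neq\varnothing$. Applying Theorem~\ref{thm:chebotarev} to $\widetilde X_0\to X_0$ and a conjugacy class of $\bar\Gamma$ meeting $\mathcal G$ yields a positive-density set of $x_0\in|X_0|$ whose Frobenius classes lie in $\mathcal G$; by the previous paragraph these all lie in $V_{\max}(X_0)$.

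The step I expect to be the main obstacle is the nonemptiness $\bar\Gamma\cap\mathcal G\neq\varnothing$, together with the uniformity of the cutoff $\ell^N$: one must know that the mod-$\ell^2$ image of the monodromy contains an element whose characteristic polynomial is as generic as the structure of $G$ allows. This rests on a largeness statement for the $\ell$-adic monodromy of $A_{X_0}$ --- Zariski-density of $\rho_\ell(\pi_1(X_0))$ in $G$, combined with $\ell$-adic equidistribution of the Frobenii (again via Chebotarev) and, where needed, an openness result for $\ell$-adic monodromy of abelian schemes over function fields --- which is precisely what is extracted in Chin's treatment \cite{Chi}. Everything else is the standard bookkeeping with Weil numbers, Kronecker's theorem, and reduction modulo $\ell^2$.
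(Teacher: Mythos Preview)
Your approach is the same as the paper's --- Frobenius-torus dictionary plus Chebotarev --- but you try to build the ``good'' locus by hand as a congruence class mod $\ell^N$, whereas the paper simply invokes \cite[Theorem~5.7]{Chin}: this supplies a conjugation-stable Zariski-dense open $U\subseteq G_\ell(f_0)$ such that $\rho_\ell^{\et}\langle\Frob_{x_0}\rangle\subseteq U$ forces $x_0\in V_{\max}(X_0)$. Since Zariski-open implies $\ell$-adic open, the preimage $(\rho_\ell^{\et})^{-1}(U)\subseteq\pi_1^{\et}(X_0)$ is open in the profinite topology, hence a union of cosets of an open normal subgroup; nonemptiness is immediate because the image of $\rho_\ell^{\et}$ is by definition Zariski-dense in $G_\ell(f_0)$. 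Then Theorem~\ref{thm:chebotarev} applies to the corresponding finite cover.

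Two points on your version. First, the claim that $N=2$ suffices uniformly is not justified: the order of the roots of unity that can appear as accidental multiplicative relations among Frobenius eigenvalues is bounded, but the bound depends on the splitting-field degree and on the character lattice of a maximal torus, and there is no reason it should divide $\ell(\ell-1)$ for your chosen $\ell$. Second, the nonemptiness $\bar\Gamma\cap\mathcal G\neq\varnothing$ that you correctly flag as the crux does \emph{not} follow from Zariski-density alone once the good locus is phrased as a fixed congruence class rather than as a Zariski-open set; for that you would need openness of the $\ell$-adic image in $G(\bZ_\ell)$, which is strictly stronger input and not what Chin's theorem provides. The paper's formulation via a Zariski-open $U$ sidesteps both difficulties: the exponent $N$ is produced \emph{a posteriori} (any profinite-open set is a union of cosets of some congruence subgroup), and nonemptiness is automatic from Zariski-density.
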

\begin{proof}
Take $u$ in (\ref{eq:V_max}) be a prime  $l$ not equal to $p$. Possibly replacing $X_0$ by a Galois cover, we can assume that $G_l(f_0)$ is connected. Let $U\subseteq G_l(f_0)$ be the conjugation-stable Zariski dense open subset in \cite[Theorem 5.7]{Chin} (note that the conditions of this theorem are met since we are working with the first $l$-adic étale cohomology of an abelian scheme). It has the property that $$\rho_l^{\et}\la\Frob_{x_0}\ra\in U \Rightarrow x_0\in V_{\max}(X_0),$$
where $\rho_l^{\et}:\pi_1^{\et}(X_0)\rightarrow G_l(f_0)$ and $\la\Frob_{x_0}\ra$ is the conjugacy class of the geometric Frobenius of $x_0$. The map $\rho_l^{\et}$ is continuous with respect to the pro-finite topology on $\pi_1^{\et}(X_0)$ and $l$-adic topology on $G_l(f_0)$. On the other hand, the $l$-adic topology on $G_l(f_0)$ is finer than the Zariski topology. It follows that $(\rho_l^{\et})^{-1}(U)\subseteq\pi_1^{\et}(X_0)$ is a conjugate-invariant non-empty open subset. As a result, there exists a Galois cover $\pi:\widetilde{X}_0\rightarrow X_0$ such that $(\rho_l^{\et})^{-1}(U)$ is a finite union of cosets of $\pi_1^{\et}(\widetilde{X}_0)$. Apply Theorem~\ref{thm:chebotarev} to the cover $\pi$ to conclude. 
\end{proof}
\subsection{Mumford--Tate pairs} We review the theory of Mumford--Tate pairs, a crucial Lie theoretic tool for attacking the Mumford--Tate conjecture. We show that the theory is still valid for ordinary abelian varieties in characteristic $p$. 
\begin{defn}[{\cite[\S4]{P98}}]
    Let $(G, \rho)$ be a faithful finite dimensional representation of a reductive group over
a characteristic $0$ field $F$.  The pair $(G,\rho)$ is called a \textbf{weak Mumford–Tate pair of weights $\{0,1\}$} if there exist finitely many cocharacters $\mu_1$, ..., $\mu_d$ of $G_{\overline{F}}$ such that $\rho\comp \mu_i$ are all of weights 0 and 1 and that $G_{\overline{F}}$ is generated by $G({\overline{F}})$-conjugates of $\mu_1$, ..., $\mu_d$. The pair $(G,\rho)$ is called a \textbf{strong Mumford–Tate pair of weights $\{0,1\}$} if it is a {weak Mumford–Tate pair of weights $\{0,1\}$} and that the $\mu_i$'s are conjugate under $\Gal(F)$. 
       \end{defn}
\begin{example}\label{example:classical}
It is classically known (cf. \cite[\S 5]{P98}) that the Mumford--Tate group of an abelian variety $A/\overline{\bQ}$, together with its representation, form a strong Mumford--Tate pair over $\bQ$. Moreover, the Mumford--Tate group is generated by the $\Gal(\bQ)$-conjugates of the Hodge cocharacter of $A$.
\end{example}\begin{remark}
It is known that simple factors of a weak Mumford--Tate pair are again weak Mumford--Tate pairs (\cite[\S4]{P98}). Simple weak Mumford--Tate pairs are classified thoroughly, see \cite[Table 4.2, Corollary 5.11]{P98}. On the other hand, absolutely irreducible strong Mumford--Tate pairs have several good structural properties, see \cite[Proposition 4.4, 4.5]{P98}.   
\end{remark} 
The theory of Mumford--Tate pairs works equally well for ordinary abelian schemes in char $p$: 
\begin{proposition}\label{prop:MTpairs} Recall that $f:X\rightarrow {\mathscr{A}}_{g,\Fpbar}$
is a morphism whose image lies in the ordinary locus. The following are true: \begin{enumerate}
    \item $(G_{\mathrm{B}}(f),\rho_\mathrm{B})$ is a strong Mumford--Tate pair over $\bQ$.
    \item For each $u\in \mathrm{fpl}(\bQ)$, $(G_u(f)^\circ,\rho_u)$ is a weak Mumford--Tate pair over $\bQ_u$.
\end{enumerate}
\end{proposition}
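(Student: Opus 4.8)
The plan is to establish both parts simultaneously by identifying a single family of cocharacters that generates all the relevant monodromy groups, and checking it has the required weight property. For part (1), this is essentially a combination of Example~\ref{example:classical} with the density results of \S\ref{sub:Frobtorus}: by Proposition~\ref{prop:genbyHodge}, for each ordinary $\Fpbar$-point $x$ the Frobenius torus $G_{\mathrm{B}}(x)$ is generated by the $\Gal(\bQ)$-orbit of the canonical Hodge cocharacter $\mu_x$, which (via $\iota_p$ and Grothendieck--Messing) is a cocharacter such that $\rho_{\mathrm{B}}\comp\mu_x$ has weights $\{0,1\}$ — indeed it is conjugate to the Hodge cocharacter of the canonical lift $\widetilde{x}_\bC$. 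By Theorem~\ref{thm:GincludeinMT} we have $G_{\mathrm{B}}(x)\subseteq G_{\mathrm{B}}(f)$, so the $\Gal(\bQ)$-conjugates of all the $\mu_x$'s generate a subgroup $H\subseteq G_{\mathrm{B}}(f)$ which is automatically normalized by $\Gal(\bQ)$, hence defined over $\bQ$. To see $H=G_{\mathrm{B}}(f)$ one picks a finite field model $X_0$ and uses Proposition~\ref{vmax_positivedensity}: the set $V_{\max}(X_0)$ of points where $G_u(x)^\circ$ is a maximal torus of $G_u(f)$ has positive density, hence is non-empty, and a Frobenius torus which is a maximal torus of the reductive group $G_{\mathrm{B}}(f)_{\bQ_u}$ already generates it up to conjugacy; a standard spreading-out/specialization argument (as in Serre's and Pink's treatment) then forces $H_{\bQ_u}=G_u(f)^\circ$, and combined with $G_u(f)^\circ=G_{\mathrm{B}}(f)_{\bQ_u}$ being false a priori — wait, we do not assume $\MTT_p$ — one instead argues directly: $H$ is a connected $\bQ$-subgroup of $G_{\mathrm{B}}(f)$ containing a maximal torus of $G_{\mathrm{B}}(f)$ after base change, and containing the $\Gal(\bQ)$-orbit of a Hodge cocharacter; since $G_{\mathrm{B}}(f)$ is by definition the generic Mumford--Tate group of the minimal special subvariety through which $f$ factors, and since Hodge cocharacters of points in $V_{\max}$ generate that generic Mumford--Tate group, we get $H=G_{\mathrm{B}}(f)$. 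This exhibits $(G_{\mathrm{B}}(f),\rho_{\mathrm{B}})$ as a strong Mumford--Tate pair, the Galois-conjugacy of the generating cocharacters being built into the construction (they are all $\Gal(\bQ)$-conjugate to the single Hodge cocharacter type of the ambient Shimura datum).

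For part (2), fix $u\in\mathrm{fpl}(\bQ)$. After identification via $\iota_u$, Theorem~\ref{thm:GincludeinMT} gives $G_u(f)^\circ\subseteq G_{\mathrm{B}}(f)_{\bQ_u}$ intertwining $\rho_u$ with $\rho_{\mathrm{B}}$, and by Proposition~\ref{thm:Z0equal} the two groups share the same connected center; what we need is a supply of cocharacters of $G_u(f)^\circ$ of weight $\{0,1\}$ generating it after base change to $\overline{\bQ}_u$. When $u=l\neq p$ this is the classical argument: $G_l(f)^\circ$ is generated by Frobenius tori $G_l(x)^\circ$ for $x$ ranging over a positive-density subset (Proposition~\ref{vmax_positivedensity} again), each Frobenius torus being the Zariski closure of the powers of a Frobenius element; one then invokes Bogomolov-type results (or the structure of Frobenius tori for abelian varieties) to see that the relevant Frobenius elements, or rather suitable cocharacters through them, have eigenvalues of the right weight — more precisely, one uses the fact that over $\overline{\bQ}_l$ the Frobenius tori are generated by $\Gal$-conjugates of cocharacters that, under $\rho_l$, pull back $\bH_{l,x}$ to a sum of the two $\GL$-weight spaces matching the Hodge filtration, which is exactly the weight $\{0,1\}$ condition. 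The cleanest route is to note that $G_l(f)^\circ$ is generated by the $G_l(x)^\circ$ for $x\in V_{\max}(X_0)$ together with the image of one Hodge-type cocharacter transported through the comparison, and that each $G_l(x)^\circ$ is itself a weak Mumford--Tate pair (being a Frobenius torus of an abelian variety, essentially by Serre--Chi).

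The key input, and the one genuinely special to characteristic $p$, is the case $u=p$: here one must produce cocharacters of the overconvergent crystalline monodromy group $G_p(f)^\circ$ of weight $\{0,1\}$ generating it. This is where Theorem~\ref{Thm:Tatelocal} does the work. That theorem tells us $G_p^-(f)$ is the opposite parabolic of $G_p(f)$ with respect to the canonical Hodge cocharacter $\mu_x$, and that $\mu_x$ itself — which by construction has $\rho_p\comp\mu_x$ of weights $\{0,1\}$, since it induces the Hodge filtration of the canonical lift — is a cocharacter landing in $G_p(f)^\circ$. Running over base points $x$ in a positive-density set $V_{\max}$, and using that Frobenius tori at such points are maximal tori of $G_p(f)^\circ$ (so that the $\mu_x$ for varying $x$, being cocharacters of varying maximal tori, cannot all lie in a single proper subgroup), I would argue that the $\Gal(\bQ_p)$-conjugates (or just $\overline{\bQ}_p$-conjugates) of the $\mu_x$ generate $G_p(f)^\circ$. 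The main obstacle I anticipate is precisely this last generation statement at $p$: unlike the $l$-adic case, where Chebotarev plus Frobenius tori is a well-oiled machine, for the crystalline monodromy one needs to know that enough Hodge cocharacters are available — this requires knowing $G_p(x)^\circ$ is a maximal torus of $G_p(f)^\circ$ for a Zariski-dense (equivalently positive-density, via the $u$-independence of $V_{\max}$) set of $x$, and that the span of their cocharacter lattices, enlarged by Galois, is everything. I expect this follows by combining Proposition~\ref{vmax_positivedensity} (whose statement is $u$-independent) with the fact that a reductive group over a field is generated by any Zariski-dense collection of maximal tori, plus the observation that $\mu_x$ is non-central (it induces a non-trivial Hodge filtration), so the conjugates genuinely escape the center; the weight condition is then free because every $\mu_x$ is a Hodge cocharacter by Theorem~\ref{Thm:Tatelocal}\eqref{itcor3} and the definition of the canonical lift.
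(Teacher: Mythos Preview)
Your proposal for part~(1) has a genuine gap. You try to show that the subgroup $H\subseteq G_{\mathrm{B}}(f)$ generated by $\Gal(\bQ)$-conjugates of the $\mu_x$'s equals $G_{\mathrm{B}}(f)$, but the Frobenius tori $G_{\mathrm{B}}(x)$ for $x\in V_{\max}(X_0)$ are only known to be maximal tori of $G_u(f)^\circ$, not of $G_{\mathrm{B}}(f)$: without assuming $\MTT_p$ one has $\rk G_u(f)^\circ \leq \rk G_{\mathrm{B}}(f)$ with possibly strict inequality, so the tori you produce might all sit inside a proper reductive subgroup of $G_{\mathrm{B}}(f)$. You notice this yourself (``wait, we do not assume $\MTT_p$''), but your patch (``since Hodge cocharacters of points in $V_{\max}$ generate that generic Mumford--Tate group'') is precisely the assertion to be proven. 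The paper sidesteps this entirely: since $G_{\mathrm{B}}(f)$ is \emph{by definition} the generic Mumford--Tate group of a special subvariety $\mathcal{X}_f\subseteq\mathcal{A}_{g,\overline{\bQ}}$, one picks a Hodge generic point $z$ of $\mathcal{X}_f$ (such a point exists by results of Cadoret on Galois-generic points) and applies Example~\ref{example:classical} directly to the characteristic-zero abelian variety $A_z$. No char-$p$ input is needed for part~(1).

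For part~(2) your idea is correct in spirit but overcomplicated. You distinguish $u=l\neq p$ from $u=p$, invoke Theorem~\ref{Thm:Tatelocal} for the latter, and argue via a Zariski-dense collection of maximal tori. The paper's argument is uniform in $u$ and uses a single point: pick any $y\in V_{\max}(X_0)$ (nonempty by Proposition~\ref{vmax_positivedensity}), so that $G_u(y)^\circ=G_{\mathrm{B}}(y)_{\bQ_u}$ is a maximal torus of $G_u(f)^\circ$. By Proposition~\ref{prop:genbyHodge} this torus is generated by the (finite) $\Gal(\bQ)$-orbit of $\mu_y$, each of whose members has weights $\{0,1\}$ under $\rho_u$. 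Since the $G_u(f)^\circ(\overline{\bQ}_u)$-conjugates of any maximal torus generate the whole reductive group over $\overline{\bQ}_u$, you are done. Theorem~\ref{Thm:Tatelocal} is not needed.
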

\begin{proof} By definition, $G_{\mathrm{B}}(f)$ is the  generic Mumford--Tate group of a special subvariety $\mathcal{X}\subseteq \mathcal{A}_g$, which is defined over some number field $E$. By \cite[Fact 3.3.1.1]{Cado}, $\mathcal{X}$ contains an $l$-Galois-generic point $z$ over a finite extension of $E$. By \cite[Theorem A]{Cado}, $z$ is Galois generic, hence Hodge generic by \cite[Proposition 6.2.1]{Cado}. This means that the Mumford--Tate group of $z$ equals $G_{\mathrm{B}}(f)$. By Example~\ref{example:classical}, we see that the Mumford--Tate group of $z$ and its representation is a strong Mumford--Tate pair. This concludes (1).   

By Proposition~\ref{vmax_positivedensity}, there is an ordinary point $y\in X(\Fpbar)$ such that $G_u(y)^\circ$ is a maximal torus of $G_u(f)^\circ$. Since  $G_u(y)^\circ=G_{\mathrm{B}}(y)_{\bQ_u}$, we conclude by Proposition~\ref{prop:genbyHodge} that $G_u(y)^\circ$ is generated by the $\Gal(\bQ)$-orbit of $\mu_y$. Since $\mu_y$ is defined over a number field,  the $\Gal(\bQ)$-orbit of $\mu_y$ contains only finitely many cocharacters $\mu_1,...,\mu_d$, and they generate a maximal torus of $G_u(f)^\circ$. Since $G_u(f)^\circ(\overline{\bQ}_u)$-conjugates of a maximal torus generate $G_u(f)^\circ_{\overline{\bQ}_u}$, this proves (2). 
\end{proof}
Proposition~\ref{prop:MTpairs} enables us to adapt some classical results to the characteristic $p$ setting.  
\begin{theorem}[Analogue of {\cite[Theorem 5.3]{P98}}]\label{thm:Pinkcase2}
Let $\eta$ be the generic point of $X$. Assume that $\End({A}_{\overline{\eta}}) = \mathbb{Z}$ and that the root system of each simple
factor of $G_\mathrm{B}(f)$ has type $A_{2s-1}$ with $s\geq 1$ or $B_r$ with $r\geq 1$. Then $\MTT_p$ holds for $f$. 
\end{theorem}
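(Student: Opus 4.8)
The plan is to transcribe Pink's proof of \cite[Theorem~5.3]{P98} to the present setting, the point being that Proposition~\ref{prop:MTpairs}, Proposition~\ref{thm: Endequal}, Proposition~\ref{thm:Z0equal} and Theorem~\ref{thm:GincludeinMT} supply exactly the Lie-theoretic input that Pink's argument requires. First, by \cite[Theorem~1.2.4]{DA20} the dimension of $G_u(f)^\circ$ is independent of $u\in\mathrm{fpl}(\bQ)$, and by Theorem~\ref{thm:GincludeinMT} one has $G_u(f)^\circ\subseteq G_{\mathrm B}(f)_{\bQ_u}$ with $G_{\mathrm B}(f)$ connected; so it suffices to prove $G_l(f)^\circ=G_{\mathrm B}(f)_{\bQ_l}$ for a single $l\neq p$, equality for all $u$ --- in particular $u=p$, which yields $\MTT_p$ --- then following by comparison of dimensions. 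By Proposition~\ref{thm:Z0equal} these two connected reductive groups have the same connected center, so we are reduced to $G_l(f)^{\circ,\der}=G_{\mathrm B}(f)^{\der}$, which may be checked after base change to $\overline{\bQ}_l$.

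Because $\End(A_{\overline\eta})=\bZ$, Proposition~\ref{thm: Endequal} gives $\End_{G_{\mathrm B}(f)}(\rho_{\mathrm B})=\bQ$ and $\End_{G_l(f)^\circ}(\rho_l)=\bQ_l$; as each commutant equals the respective base field, Schur's lemma makes $\rho_{\mathrm B}$ absolutely irreducible over $G_{\mathrm B}(f)$ and $\rho_l$ absolutely irreducible over $G_l(f)^\circ$. Combined with Proposition~\ref{prop:MTpairs}, $(G_{\mathrm B}(f),\rho_{\mathrm B})$ is an absolutely irreducible strong Mumford--Tate pair of weights $\{0,1\}$ over $\bQ$, and $(G_l(f)^\circ,\rho_l)$ is an absolutely irreducible weak Mumford--Tate pair of weights $\{0,1\}$ over $\bQ_l$.

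Next, fix an isomorphism $\overline{\bQ}_l\cong\bC$ and run Pink's structure theory \cite[\S4--5]{P98}. Write $G_{\mathrm B}(f)^{\der}_{\overline{\bQ}_l}=H_1\cdots H_k$ as an almost-direct product of its simple factors; since the connected center acts by scalars, $\rho_{\mathrm B}$ remains absolutely irreducible on $G_{\mathrm B}(f)^{\der}_{\overline{\bQ}_l}$ and hence decomposes as an exterior tensor product $W_1\boxtimes\cdots\boxtimes W_k$, each $(H_i,W_i)$ being an absolutely irreducible simple weak Mumford--Tate pair of weights $\{0,1\}$ and each $W_i$ nontrivial (by faithfulness of $\rho_{\mathrm B}$). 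By hypothesis every $H_i$ has root system of type $A_{2s-1}$ ($s\ge1$) or $B_r$ ($r\ge1$), a property unaffected by base change and by passing to the derived group. The subgroup $G:=G_l(f)^{\circ,\der}_{\overline{\bQ}_l}\subseteq\prod_iH_i$ is connected semisimple, and $\rho_l=\rho_{\mathrm B}$ stays absolutely irreducible on it; hence each projection $G\to H_i$ has image $H_i'$ with $W_i|_{H_i'}$ irreducible, and $(H_i',W_i|_{H_i'})$ is again a weak Mumford--Tate pair (after a central twist). The decisive input --- which is exactly what the hypothesis on the Lie types buys, and which is extracted from the proof of \cite[Theorem~5.3]{P98} together with the classification \cite[Table~4.2, Corollary~5.11]{P98} --- is that for $H$ simple of type $A_{2s-1}$ or $B_r$, no proper connected reductive subgroup of $H$ admits an irreducible weak Mumford--Tate sub-pair on $W$; therefore $H_i'=H_i$ for every $i$. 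A Goursat-type argument, using that the tensor product of two nontrivial irreducible representations of a connected simple group is reducible, then forces a connected semisimple subgroup of $\prod_iH_i$ that surjects onto every factor and acts irreducibly on $W_1\boxtimes\cdots\boxtimes W_k$ to coincide with $\prod_iH_i$. Thus $G=G_{\mathrm B}(f)^{\der}_{\overline{\bQ}_l}$, which completes the proof.

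The main obstacle is expected to lie entirely in Proposition~\ref{prop:MTpairs}: namely, that $(G_u(f)^\circ,\rho_u)$ is a weak Mumford--Tate pair in the ordinary characteristic $p$ setting, which rests on the positive density of $V_{\max}(X_0)$ (Proposition~\ref{vmax_positivedensity}, via Chin's theorem) and the identification $G_u(y)^\circ=G_{\mathrm B}(y)_{\bQ_u}$ at a suitable point $y$. Granting that, the argument above is purely group-theoretic; the rigidity invoked for types $A_{2s-1}$ and $B_r$ is a statement over $\overline{\bQ}_l$ insensitive to the base field, so it applies verbatim. The only genuine care needed is the bookkeeping that absolute irreducibility and the Mumford--Tate pair structures propagate to all the simple factors exactly as in \cite{P98}, together with a check that $(H_i',W_i|_{H_i'})$ inherits the weak Mumford--Tate property after rescaling the generating cocharacters.
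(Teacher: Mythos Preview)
Your proposal is correct and follows essentially the same approach as the paper: both combine Theorem~\ref{thm:GincludeinMT}, Proposition~\ref{thm: Endequal}, and Proposition~\ref{prop:MTpairs} to exhibit $(G_u(f)^\circ,\rho_u)$ as an absolutely irreducible weak Mumford--Tate sub-pair of $(G_{\mathrm B}(f)_{\bQ_u},\rho_{\mathrm B})$, then invoke Pink's rigidity for types $A_{2s-1}$ and $B_r$. The paper's proof is shorter because it applies this argument directly for every $u$ and cites \cite[Proposition~4.3]{P98} as a black box, whereas you add an unnecessary reduction to a single $l\neq p$ via $l$-independence and then unpack Pink's proposition into its factor-wise surjectivity plus Goursat components.
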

\begin{proof}
Let $u$ be a finite place of $\bQ$,  Proposition~\ref{thm:GincludeinMT}, Proposition~\ref{thm: Endequal} and Proposition~\ref{prop:MTpairs} show that $G_u(f)^\circ_{\overline{\bQ}_u}$ is a subgroup $G_{\mathrm{B}}(f)_{\overline{\bQ}_u}$ whose tautological representation is irreducible and is a weak Mumford-Tate pair of weights $\{0, 1\}$. It follows from \cite[Proposition 4.3]{P98} that $G_{\mathrm{B}}(f)_{\bQ_u}$ does not possess any proper subgroup whose tautological representation is irreducible and is a weak Mumford-Tate pair of weights $\{0, 1\}$.
\end{proof}

\subsubsection{Tensor decomposition}\label{subsub:STfactor}
Let $F$ be a characteristic 0 algebraically closed field, and let $(G,\rho)$ be a weak Mumford--Tate pair of weights $\{0,1\}$ over $F$. Fix a cocharacter $\mu:\bG_{m,F}\rightarrow G$ of weights $\{0,1\}$. There is an almost direct product $$G=Z\cdot G_1\cdot G_2\cdot...\cdot G_n,$$ such that $Z$ is the center, and each $G_i$ is almost simple.  We say that $G_i$ is a \textbf{Serre--Tate factor with respect to $\mu$}, if $\mu$ factors non-trivially through it. The collection of Serre--Tate factors depends only on the conjugate class of $\mu$.  

Now suppose that $\rho$ is irreducible. There are irreducible representations $\rho_{0},\rho_{1},...,\rho_{n}$ of $Z,G_{1},...,G_{n}$, respectively, such that $\rho\simeq \rho_{0}\boxtimes\rho_{1}\boxtimes...\boxtimes\rho_{n}$. Arguing as \cite[\S4, Tensor decomposition]{P98}, we have \begin{enumerate}
    \item  $Z\simeq \bG_{m,F}$, where $\rho_0$ is the standard representation.
    \item\label{it:simplMTtriple}  Let $G_i$ be a Serre--Tate factor with respect to $\mu$. Then $\mu$ factors through $Z\cdot G_{i}$, and the component of $\mu$ in $Z$ is non-trivial. The pair $(Z\cdot G_{i},\rho_{0}\boxtimes\rho_{i})$ is a Mumford--Tate pair of weights $\{0,1\}$, generated by the conjugates of $\mu$.  
\end{enumerate}
We say that $(G,\rho, \mu)$ (over $F$) is a \textbf{simple Mumford--Tate triple of weights $\{0,1\}$}, if $(G,\rho)$ is an irreducible weak Mumford--Tate pair of weights $\{0,1\}$, $G=\bG_m\cdot G^{\der}$, $G^{\der}$ is almost simple, and $\mu$ is a cocharacter of weights $\{0,1\}$ whose $G(\overline{F})$-conjugates generate $G$. When $\mu$ is clear, $(G,\rho)$ is called a \textbf{simple Mumford--Tate pair of weights $\{0,1\}$}. We will often omit ``of weights $\{0,1\}$'' from the name. The triple $(Z\cdot G_{i},\rho_{0}\boxtimes\rho_{i},\mu)$ in (\ref{it:simplMTtriple}) is an example of simple Mumford--Tate triple of weights $\{0,1\}$. Such triples are classified in \cite[Table 4.2]{P98}.

\subsection{Big unipotents} Under the assumption that logAL$_p$ holds for $f$, we show that a the $u$-adic monodromy have big unipotents. In addition to Mumford--Tate pairs, this produces another piece of crucial information, as illustrated by the following pure Lie theoretic result: 
\begin{lemma}\label{lm:smaeunipimplyequal}
    Let $(G',\rho)\subseteq (G,\rho)$ be an inclusion of weak Mumford--Tate pairs of weights $\{0,1\}$ over $F=\overline{F}$. Let $\mu: \mathbb{G}_m\rightarrow G'$ be a cocharacter of weights $\{0,1\}$. Suppose that 
    \begin{enumerate}
        \item $(G,\rho,\mu)$ is a simple Mumford--Tate triple of weights $\{0,1\}$. 
        \item $G$ and $G'$ have the same unipotent and opposite unipotent with respect to $\mu$. 
    \end{enumerate}
    Then $G=G'$. 
\end{lemma}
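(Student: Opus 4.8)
The plan is to compare Lie algebras and exploit the structure of a simple Mumford--Tate triple. Write $\mathfrak{g} = \Lie G$ and $\mathfrak{g}' = \Lie G'$, and decompose both with respect to the cocharacter $\mu$: since $\rho\circ\mu$ has weights $\{0,1\}$, conjugation by $\mu$ acts on $\mathfrak{g}$ with weights in $\{-1,0,1\}$, giving $\mathfrak{g} = \mathfrak{u}^- \oplus \mathfrak{l} \oplus \mathfrak{u}^+$ where $\mathfrak{u}^\pm$ are the (opposite) unipotent radicals attached to $\mu$ and $\mathfrak{l}$ is the centralizer of $\mu$; similarly $\mathfrak{g}' = (\mathfrak{u}')^- \oplus \mathfrak{l}' \oplus (\mathfrak{u}')^+$. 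Hypothesis (2) says $\mathfrak{u}^\pm = (\mathfrak{u}')^\pm$. So the only thing to prove is $\mathfrak{l} = \mathfrak{l}'$, i.e.\ that the Levi parts agree; then $\mathfrak{g} = \mathfrak{g}'$, and since $G$ is connected (being generated by conjugates of $\mu$) and $G' \subseteq G$, this forces $G = G'$.

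First I would observe that $\mathfrak{l}'$ contains $[\mathfrak{u}^+, \mathfrak{u}^-]$: indeed $\mathfrak{g}'$ is a Lie subalgebra and contains both $\mathfrak{u}^+ = (\mathfrak{u}')^+$ and $\mathfrak{u}^- = (\mathfrak{u}')^-$, so it contains their bracket, which lands in the weight-$0$ part $\mathfrak{l}$ of $\mathfrak{g}$. Thus $[\mathfrak{u}^+,\mathfrak{u}^-] \subseteq \mathfrak{l}'\subseteq \mathfrak{l}$. Now I would use the simplicity hypothesis (1): $G = \mathbb{G}_m \cdot G^{\der}$ with $G^{\der}$ almost simple, $\mu$ has a nontrivial component in the central $\mathbb{G}_m$, and $\rho$ is irreducible with $\mathfrak{u}^+ \neq 0$ (otherwise $\mu$ would be central and $\rho\circ\mu$ could not have two distinct weights while $\rho$ stays irreducible — more precisely, if $\mathfrak u^+=0$ then $G=\mathfrak l$ is reductive of rank equal to that of its own Levi, forcing $G=G'$ trivially; so assume $\mathfrak u^+\neq0$). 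For an almost simple $\mathfrak{g}^{\der}$ with the grading $\mathfrak{g}^{\der} = \mathfrak{u}^- \oplus \mathfrak{l}^{\der}\oplus \mathfrak{u}^+$ coming from a cocharacter (a so-called \emph{short} $\mathbb{Z}$-grading, since the weights are $\{-1,0,1\}$), a standard fact in the structure theory of graded simple Lie algebras is that $\mathfrak{l}^{\der} = [\mathfrak{u}^+, \mathfrak{u}^-]$ — the degree-$0$ piece of a short grading of a simple Lie algebra is generated by brackets of the $\pm1$ pieces. Combined with the previous paragraph this gives $\mathfrak{l}^{\der}\subseteq \mathfrak{l}'$. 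Finally the central torus $\mathbb{G}_m \subseteq \mathfrak{l}$ is recovered because $\mu$ itself lies in $\mathfrak{g}'$ (it maps into $G'$) and its component along the central $\mathbb{G}_m$ is nontrivial, and together with $\mathfrak l^{\der}$ this spans all of $\mathfrak{l}$; hence $\mathfrak{l} = \mathfrak{l}'$ and we are done.

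The step I expect to be the main obstacle is making the fact ``the degree-zero part of a short $\mathbb{Z}$-grading of a simple Lie algebra equals the bracket of the $\pm 1$ parts'' completely airtight in the generality needed — in particular handling the case where $\mathfrak{g}^{\der}$ is only almost simple (quotient by a finite center, several isogenous simple factors permuted or not) and where the grading on $\mathfrak g^{\der}$ induced by $\mu$ could a priori be trivial on some factor. One clean way around this is to argue factor-by-factor: decompose $\mathfrak{g}^{\der} = \bigoplus_i \mathfrak{s}_i$ into simple ideals; $\mu$ acts on each $\mathfrak s_i$ with weights in $\{-1,0,1\}$; on factors where $\mu$ acts trivially, $\mathfrak s_i\subseteq\mathfrak l$, but irreducibility of $\rho$ plus the tensor-decomposition description of simple Mumford--Tate triples (§\ref{subsub:STfactor}, item (\ref{it:simplMTtriple}), together with Pink's classification) forces $G^{\der}$ to be \emph{a single} almost simple factor through which $\mu$ passes nontrivially, so this degenerate case does not occur; on the remaining factor I invoke the short-grading fact, which for the classical and exceptional simple Lie algebras admitting a short grading can be checked from Pink's Table 4.2. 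Alternatively — and perhaps more robustly — one avoids the structure theory entirely: set $\mathfrak{h} = \mathfrak{u}^- + [\mathfrak{u}^+,\mathfrak{u}^-] + \mathfrak{u}^+ + \Lie(\mathbb{G}_m)$, check directly that $\mathfrak{h}$ is a $\mu$-stable Lie subalgebra of $\mathfrak{g}$ containing $\mathfrak u^\pm$, note $\mathfrak h\subseteq\mathfrak g'$, and then observe that the $G$-submodule of $\rho$ generated by any $\mathfrak u^+$-lowest weight line is all of $\rho$ (by irreducibility) and is already stable under $\mathfrak h$; chasing this shows $\mathfrak h$ acts irreducibly, so its image in $\mathfrak{gl}(\rho)$ has the same reductive part as $\mathfrak g$, and a dimension/centralizer count via Schur (using that $\End_{G}(\rho) = \End_{G'}(\rho) = F$, which holds since both pairs are irreducible) closes the gap $\mathfrak h=\mathfrak g'=\mathfrak g$. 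I would present the factor-by-factor version as the main line and remark that it reduces, by the classification already cited in the paper, to a short finite check.
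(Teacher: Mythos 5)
Your proof is correct and takes a genuinely different route from the paper's. The paper reduces the claim to showing that the roots $\alpha$ with $\langle\alpha,\mu\rangle=\pm 1$ generate $\Phi_G$, and then verifies this case-by-case from Pink's Table~4.2 (only spelling out the $A_r$ column, declaring ``the rest is similar''). You instead observe the equivalent Lie-algebra statement and dispatch it in one stroke with the standard short-grading fact $[\mathfrak{g}^1,\mathfrak{g}^{-1}]=\mathfrak{g}^0$ for a simple Lie algebra $\mathfrak{g}$ carrying a $\{-1,0,1\}$-grading. (The one-line proof of that fact, which you gesture at rather than give: $\mathfrak{g}^{-1}+[\mathfrak{g}^1,\mathfrak{g}^{-1}]+\mathfrak{g}^1$ is a nonzero $\mu$-graded ideal of $\mathfrak{g}$ --- by Jacobi and $\mathfrak{g}^{\pm 2}=0$ --- hence equals $\mathfrak{g}$ by simplicity, and comparing degree-$0$ parts gives the claim.) Since ``$G^{\der}$ almost simple'' in the definition of a simple Mumford--Tate triple means $\mathfrak{g}^{\der}$ is a single simple Lie algebra, the worry you raise about multiple simple factors does not arise, and the abstract argument closes the gap uniformly with no table-check. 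One cosmetic point: writing the degree-$0$ part of $\mathfrak{g}^{\der}$ as ``$\mathfrak{l}^{\der}$'' invites confusion with the derived algebra of the Levi $\mathfrak{l}$ (which is generally smaller, e.g.\ it is $0$ for $\mathfrak{sl}_2$); a different symbol such as $(\mathfrak{g}^{\der})^0$ would be safer. Also note that $\langle d\mu\rangle$ together with $(\mathfrak{g}^{\der})^0$ spans $\mathfrak l$ precisely because the central component of $\mu$ is nonzero, which is part of the definition of a simple Mumford--Tate triple via \S\ref{subsub:STfactor}(\ref{it:simplMTtriple}) --- you invoke this implicitly and correctly. Your alternative second argument (via irreducibility of $\rho$ and a Schur/centralizer count) is sketchier and not needed; the short-grading argument is the clean one and a worthwhile simplification of the proof in the paper.
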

  \begin{proof}
 Fix a maximal torus of $G$ containing the image of $\mu$ and consider the root system $\Phi_G$. It suffices to show that the positive and negative roots with respect to $\mu$ generate $\Phi_G$. In fact, by condition (2), this immediately implies that $G'\supseteq G^{\der}$. It then follows that $G=G'$. 

 To show that roots generate $\Phi_G$, we can check \cite[Table 4.2]{P98} case by case. For simplicity we only check the first column of \cite[Table 4.2]{P98} ($A_r$ with standard representation), and the rest is similar. 
The root system of $A_r$ is $\{e_i- e_j\}$. From Pink's chart, we see that for any $1\leq s\leq r$, the set of positive roots with respect to $\omega_s^{\vee}$ is $\{e_{i}-e_{j}|1\leq i\leq s, s+1\leq j\leq r+1\}$. Clearly, any $e_i-e_j$ is either a  positive root, or a  negative root, or the sum of a positive root and a negative root.
  \end{proof}
Now we deduce some geometric consequences from the group theoretic result. Let $y$ be any ordinary point of $X(\Fpbar)$. We will identify $G(\bH_{p,f},y)$ with $G_p(f)$. The following lemma says that the condition ``$\mathrm{logAL}_p$ holds for $f$'' can be translated to a condition on the unipotents of $G_p(f)$, and implies that a ``unipotent variant'' of $\MTT_p$ holds for $f$.
\begin{lemma}\label{lm:MTGsameunip}
Let $\mathcal{S}\in \mathrm{MS}_f(\mathcal{A}_g)$. The following are equivalent: \begin{enumerate}
     \item\label{it:logALnotdependonx1} $\mathrm{logAL}_p$ holds for $f$ at $y$.
     \item\label{it:logALnotdependonx1.5}  $\dim \mathcal{S}=\rk \mathscr{T}_{f,y}$.
     \item\label{it:logALnotdependonx2} $\dim\mathcal{S}=\dim U_{G_p(f),\mu_y}$. 
     \item\label{it:logALnotdependonx3} $G_p(f)$ and $G_\mathrm{B}(f)_{\mathbb{Q}_p}$ have the same unipotent and opposite unipotent with respect to $\mu_{y}$.
 \end{enumerate}
  Furthermore, if $\mathrm{logAL}_p$ holds for $f$ at $y$, then it holds for $f$ at any other ordinary point.  
\end{lemma}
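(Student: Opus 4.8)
The plan is to prove the chain of equivalences $(\ref{it:logALnotdependonx1})\Leftrightarrow(\ref{it:logALnotdependonx1.5})\Leftrightarrow(\ref{it:logALnotdependonx2})\Leftrightarrow(\ref{it:logALnotdependonx3})$ and then deduce the independence of the base point. The equivalence $(\ref{it:logALnotdependonx1})\Leftrightarrow(\ref{it:logALnotdependonx1.5})$ should be essentially a dimension count: by definition $\mathscr{T}_{f,y}$ is the smallest formal subtorus of $\Def(\mathscr{G}_0/\Fpbar)$ containing the image of $f^{/y}$, and it is an irreducible component of the formal germ at $y$ of a special subvariety $\mathcal{S}'$; since by Theorem~\ref{thm:noottheorem} the formal germ of any special subvariety is quasi-linear, being an \emph{irreducible component} is equivalent to the equality of dimensions $\dim \mathcal{S}' = \dim \mathscr{T}_{f,y} = \rk \mathscr{T}_{f,y}$. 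One then has to see that among all special subvarieties whose germ contains $\mathscr{T}_{f,y}$, the minimal one in $\mathrm{MS}_f(\mathcal{A}_g)$ is the relevant one; here I would use that all elements of $\mathrm{MS}_f(\mathcal{A}_g)$ are $p$-power Hecke translates of each other (hence equidimensional), and that $f$ factors through the naïve integral model of $\mathcal{S}$, so $\mathscr{T}_{f,y} \subseteq \mathcal{S}^{/y}$ and $\dim\mathcal{S} \ge \rk\mathscr{T}_{f,y}$, with equality iff $\mathscr{T}_{f,y}$ is a component.

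Next, $(\ref{it:logALnotdependonx1.5})\Leftrightarrow(\ref{it:logALnotdependonx2})$ is a direct application of Theorem~\ref{Thm:Tatelocal}\eqref{itcor3}: the Lie algebra of the opposite unipotent of $G_p^-(f)$ with respect to $\mu_y$ is canonically identified with $T_{f,y}$, the rational cocharacter lattice of $\mathscr{T}_{f,y}$, so $\rk\mathscr{T}_{f,y} = \dim U_{G_p^-(f),\mu_y^{-1}}$. By Theorem~\ref{Thm:Tatelocal}\eqref{itcor1}, $G_p^-(f)$ is the opposite parabolic of $G_p(f)$ with respect to $\mu_y$, so its opposite unipotent (with respect to $\mu_y$) coincides with the unipotent $U_{G_p(f),\mu_y}$ of $G_p(f)$. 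Hence $\rk\mathscr{T}_{f,y} = \dim U_{G_p(f),\mu_y}$, which is exactly the content of $(\ref{it:logALnotdependonx2})$ once one observes $\dim\mathcal{S} = \rk\mathscr{T}_{f,y}$. For $(\ref{it:logALnotdependonx2})\Leftrightarrow(\ref{it:logALnotdependonx3})$: by Theorem~\ref{thm:GincludeinMT} we have $G_p(f)^\circ \subseteq G_\mathrm{B}(f)_{\bQ_p}$ intertwining the tautological representations and, by Proposition~\ref{prop:genbyHodge} and Example~\ref{example:classical}, the cocharacter $\mu_y$ lands in $G_p(f)$ (it is the canonical Hodge cocharacter, which generates $G_\mathrm{B}(y) \subseteq G_p(y)^\circ \subseteq G_p(f)$). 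Since $\mathcal{S}$ is a special subvariety with generic Mumford--Tate group $G_\mathrm{B}(f) = G_\mathrm{B}(\mathcal{S})$, Theorem~\ref{Thm:Tatelocal}\eqref{itcor3} applied to (a finite-field model of) the inclusion $\mathcal{S}^{/y}\hookrightarrow\mathscr{A}_{g,\Fpbar}^{/y}$ identifies $\dim\mathcal{S}$ with $\dim U_{G_\mathrm{B}(f)_{\bQ_p},\mu_y}$; alternatively one invokes the classical fact that the canonical lift $\tilde y$ is Hodge-generic in $\mathcal{S}$ so that $\dim\mathcal{S} = \dim U_{G_\mathrm{B}(\mathcal{S}),\mu_y}$ by standard Shimura-variety theory. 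Combining, $(\ref{it:logALnotdependonx2})$ says $\dim U_{G_p(f),\mu_y} = \dim U_{G_\mathrm{B}(f)_{\bQ_p},\mu_y}$; since one unipotent sits inside the other, this is equivalent to their equality, and similarly (applying $w_0$ or passing to the opposite parabolic) for the opposite unipotents. That is precisely $(\ref{it:logALnotdependonx3})$.

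Finally, for the independence of the base point: I would argue that condition $(\ref{it:logALnotdependonx3})$ is visibly independent of $y$ once phrased correctly. Two ordinary points $y, y'$ give two canonical Hodge cocharacters $\mu_y, \mu_{y'}$ of $G_\mathrm{B}(f)$ (via the identifications $\iota_p$), and by Proposition~\ref{prop:genbyHodge} together with Example~\ref{example:classical} both are $G_\mathrm{B}(f)(\overline{\bQ})$-conjugate into a common maximal torus, in fact $\Gal(\bQ)$-conjugate orbits generating $G_\mathrm{B}(f)$; the relevant dimension $\dim U_{G,\mu}$ depends only on the $G_\mathrm{B}(f)(\overline{\bQ}_p)$-conjugacy class of the cocharacter, and all the $\mu_y$ lie in a single such class because they are all Hodge cocharacters of points of the \emph{same} Shimura datum underlying $\mathcal{S}$. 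Hence the validity of $(\ref{it:logALnotdependonx3})$ — equality of unipotents $\dim U_{G_p(f),\mu_y} = \dim U_{G_\mathrm{B}(f)_{\bQ_p},\mu_y}$ — does not depend on which ordinary $y$ we pick (note $G_p(f)$, the ``model-free'' monodromy group of Notation~\ref{not:hidingfinitefield}, is itself independent of $y$ up to canonical isomorphism, and $\mu_y \in G_p(f)$ for every ordinary $y$). Therefore if $\mathrm{logAL}_p$ holds at one ordinary point it holds at all of them.

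I expect the main obstacle to be the bookkeeping around $(\ref{it:logALnotdependonx1})\Leftrightarrow(\ref{it:logALnotdependonx1.5})$ and the base-point independence: one must be careful that ``$\mathscr{T}_{f,y}$ is a component of the germ of \emph{a} special subvariety'' really forces that special subvariety to have the same dimension as the minimal $\mathcal{S}\in\mathrm{MS}_f(\mathcal{A}_g)$ through which $f$ factors — this uses quasi-linearity (Theorem~\ref{thm:noottheorem}) plus the fact that a special subvariety containing $\im f$ and minimal with this property is forced, and that passing to $p$-power Hecke translates (which is what relates different elements of $\mathrm{MS}_f$) does not change dimensions or the conjugacy class of the Hodge cocharacter. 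The Lie-theoretic inputs (Theorem~\ref{Thm:Tatelocal}, Theorem~\ref{thm:GincludeinMT}, Proposition~\ref{prop:genbyHodge}) are all in hand, so once the dimension dictionary is set up carefully the equivalences are formal.
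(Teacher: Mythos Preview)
Your proposal is correct and follows essentially the same route as the paper: both arguments rest on the two dimension identities $\rk\mathscr{T}_{f,y}=\dim U_{G_p(f),\mu_y}$ (from Theorem~\ref{Thm:Tatelocal}) and $\dim\mathcal{S}=\dim U_{G_\mathrm{B}(f)_{\bQ_p},\mu_y}$ (since $\mu_y$ identifies with the Hodge cocharacter of the canonical lift), together with Noot's quasi-linearity (Theorem~\ref{thm:noottheorem}) and the minimality/Hecke-translate description of $\mathrm{MS}_f(\mathcal{A}_g)$ for $(\ref{it:logALnotdependonx1})\Leftrightarrow(\ref{it:logALnotdependonx1.5})$. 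The paper organizes the equivalences with $(\ref{it:logALnotdependonx1.5})$ as a hub rather than as a linear chain, but the content is identical.

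There is one small looseness in your base-point-independence step. You argue that the $\mu_y$'s are $G_\mathrm{B}(f)(\overline{\bQ}_p)$-conjugate (being Hodge cocharacters of the same Shimura datum), and this indeed makes $\dim U_{G_\mathrm{B}(f)_{\bQ_p},\mu_y}$ constant in $y$. But condition $(\ref{it:logALnotdependonx3})$ also involves $\dim U_{G_p(f),\mu_y}$, and conjugacy in the ambient group $G_\mathrm{B}(f)_{\bQ_p}$ does not a priori give conjugacy in the subgroup $G_p(f)$, so your argument does not immediately show this second dimension is constant. The paper avoids this by working with condition $(\ref{it:logALnotdependonx2})$ instead: by Theorem~\ref{Thm:Tatelocal}\eqref{itcor1}, the parabolic of $G_p(f)$ determined by $\mu_y$ is always the convergent monodromy group $G_p^-(f)$, which is intrinsically defined independently of $y$; hence $\dim U_{G_p(f),\mu_y}=\dim G_p(f)-\dim G_p^-(f)$ is manifestly constant. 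Once you have $(\ref{it:logALnotdependonx2})\Leftrightarrow(\ref{it:logALnotdependonx3})$ this is a one-line fix.
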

\begin{proof}
Let $\mathscr{S}$ be the naïve integral model of an $\mathcal{S}\in \mathrm{MS}_f(\mathcal{A}_g)$. By Theorem~\ref{Thm:Tatelocal}, we have 
$$\dim U_{G_p(f),\mu_y^{-1}}=\rk \mathscr{T}_{f,y}.$$ 
So (\ref{it:logALnotdependonx1.5})$\Leftrightarrow$ (\ref{it:logALnotdependonx2}). Since $\mu_{y}$ identifies with the Hodge cocharacter of its canonical lifting, we have 
$$\dim U_{G_{\mathrm{B}}(f)_{\bQ_p},\mu_y^{-1}}=\dim \mathcal{S}.$$
So (\ref{it:logALnotdependonx1.5})$\Leftrightarrow$ (\ref{it:logALnotdependonx3}). Note that $\mathscr{S}_{\overline{W}}^{/y}$ is quasi-linear by Theorem~\ref{thm:noottheorem}. So (\ref{it:logALnotdependonx1.5})$\Rightarrow$ (\ref{it:logALnotdependonx1}). Conversely, 
there exists a special subvariety $\mathcal{S}'$, such that $\mathscr{T}_{f,y}$ is an irreducible component of the formal germ at $y$ of the mod $p$ reduction of $\mathcal{S}'$. By minimality of $\mathcal{S}$ and the fact that any two elements in $\mathrm{MS}_f(\mathcal{A}_g)$ are $p$-power Hecke translates of each other, we find that $\mathcal{S}=\mathcal{S}'$ up to a $p$-power Hecke translate. So (\ref{it:logALnotdependonx1})$\Rightarrow$(\ref{it:logALnotdependonx1.5}).

Finally, if $\mathrm{logAL}_p$ holds for $f$ at $y$, then (\ref{it:logALnotdependonx2}) holds. Since $\dim U_{G_p(f),\mu_y}$ does not depend on $y$, (\ref{it:logALnotdependonx2}) holds true for other choices of $y$ as well. Therefore $\mathrm{logAL}_p$ holds for $f$ any other ordinary point. 
\end{proof}

\begin{corollary}\label{cor:onefactor}
Suppose that $\mathrm{logAL}_p$ holds for $f$. Let $y\in X(\Fpbar)$ be an ordinary point. Following \S\ref{subsub:STfactor}, let $\{G_1,...,G_r\}, r\geq 1$ be the set of Serre--Tate factors of the weak Mumford--Tate pair $(\MTT(f)_{\bC_p},\rho_{\mathrm{B}})$ with respect to the canonical Hodge cocharacter $\mu_y$. Let $\mathfrak{g}_i=\Lie G_i$. Then $\mathfrak{g}_1\oplus...\oplus\mathfrak{g}_r$ is a direct summand of $\Lie G_p(f)_{\bC_p}$. 
\end{corollary}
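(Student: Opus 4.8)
The plan is to deduce Corollary~\ref{cor:onefactor} from Lemma~\ref{lm:MTGsameunip} together with the tensor-decomposition picture of \S\ref{subsub:STfactor}, working entirely at the level of Lie algebras over $\bC_p$. First I would set up notation: let $G=G_\mathrm{B}(f)_{\bC_p}$, let $G'=G_p(f)_{\bC_p}\subseteq G$ (the inclusion coming from Theorem~\ref{thm:GincludeinMT}), and let $\mu=\mu_y$ be the canonical Hodge cocharacter, which by \S\ref{subsub:mdppmtintro} is a cocharacter of weights $\{0,1\}$ into $G'$. By Proposition~\ref{prop:MTpairs}, $(G,\rho_\mathrm{B})$ is a (strong, hence weak) Mumford--Tate pair and $(G',\rho_p)$ is a weak Mumford--Tate pair of weights $\{0,1\}$. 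Decompose $G=Z\cdot G_1\cdots G_n$ as an almost-direct product with $Z$ the center, and let $G_1,\dots,G_r$ (after reindexing) be exactly the Serre--Tate factors with respect to $\mu$, i.e. those $G_i$ through which $\mu$ projects nontrivially; set $\mathfrak g_i=\Lie G_i$ and $\mathfrak g=\Lie G$, $\mathfrak g'=\Lie G'$.

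The key input is Lemma~\ref{lm:MTGsameunip}: since $\mathrm{logAL}_p$ holds for $f$, the groups $G$ and $G'$ have the same unipotent radical $U^+$ and the same opposite unipotent $U^-$ with respect to $\mu$. Passing to Lie algebras and using the weight decomposition of $\mathfrak g$ under $\mathrm{Ad}\circ\mu$, this says precisely that the weight-$(+1)$ and weight-$(-1)$ pieces of $\mathfrak g$ (which are $\Lie U^+$ and $\Lie U^-$) are contained in $\mathfrak g'$. Next I would observe that $\mu$ factors through the Serre--Tate part $Z\cdot G_1\cdots G_r$ (the factors $G_{r+1},\dots,G_n$ centralize $\mu$), so the weight decomposition of $\mathfrak g$ under $\mu$ only involves the summands $\mathfrak g_1,\dots,\mathfrak g_r$: concretely $\mathfrak g = \mathfrak g^{0}\oplus\bigoplus_{i=1}^r(\mathfrak g_i^{+}\oplus\mathfrak g_i^{-})$ where $\mathfrak g^0$ is the centralizer of $\mu$ and $\mathfrak g_i^{\pm}$ is the $\pm1$-weight space of $\mathfrak g_i$, both nonzero for $i\le r$. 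Therefore $\bigoplus_{i=1}^r(\mathfrak g_i^+\oplus\mathfrak g_i^-)=\Lie U^+\oplus\Lie U^-\subseteq\mathfrak g'$.

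Now I would show that the $\pm1$-weight spaces of each almost-simple $\mathfrak g_i$ ($i\le r$) already generate $\mathfrak g_i$ as a Lie algebra. This is exactly the root-system computation already carried out in the proof of Lemma~\ref{lm:smaeunipimplyequal}: for each simple type occurring in Pink's classification \cite[Table 4.2]{P98}, every root of $G_i$ is either a $\mu$-positive root, a $\mu$-negative root, or a sum of one of each, so the $\pm1$ weight spaces bracket-generate all of $\mathfrak g_i$. (Here I use that $\mu$ restricted to $Z\cdot G_i$ has the form described in item~(\ref{it:simplMTtriple}) of \S\ref{subsub:STfactor}, so that $(Z\cdot G_i,\rho_0\boxtimes\rho_i,\mu)$ is a simple Mumford--Tate triple and the relevant table entries apply.) Consequently $\mathfrak g_i\subseteq\mathfrak g'$ for each $i=1,\dots,r$, hence $\mathfrak g_1\oplus\cdots\oplus\mathfrak g_r\subseteq\mathfrak g'$. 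Finally, since the $\mathfrak g_i$ are among the simple ideals of the reductive Lie algebra $\mathfrak g$, they are ideals of $\mathfrak g$; intersecting with the ideal $\mathfrak g'\subseteq\mathfrak g$ and using semisimplicity of $\mathfrak g^{\mathrm{der}}$, one gets that $\mathfrak g_1\oplus\cdots\oplus\mathfrak g_r$ is a direct summand of $\mathfrak g'=\Lie G_p(f)_{\bC_p}$ as a Lie-algebra (it is a sum of simple ideals, hence has a complementary ideal inside $\mathfrak g'$).

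The main obstacle I anticipate is bookkeeping rather than conceptual: making sure that ``same unipotent with respect to $\mu_y$'' from Lemma~\ref{lm:MTGsameunip} is correctly matched with the weight-space description, i.e. that $\Lie U_{G,\mu}$ is the sum of the \emph{strictly positive} weight spaces and equals $\mathfrak g_i^{+}$-sum (no weight-$0$ contribution, no weights beyond $\pm1$), which is where the weights-$\{0,1\}$ hypothesis on $\rho\circ\mu$ and the fact that $\mathrm{ad}\circ\mu$ then has weights in $\{-1,0,1\}$ get used. A secondary point to handle carefully is that ``$\{G_1,\dots,G_r\}$, $r\ge 1$'' presupposes at least one Serre--Tate factor; this holds because $\mu$ is nontrivial (the Shimura datum is nontrivial), so $\mu$ must project nontrivially to some $G_i$ — I would record this one-line remark. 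Once these compatibilities are nailed down, the argument is a direct assembly of Lemma~\ref{lm:MTGsameunip}, the root-theoretic step from the proof of Lemma~\ref{lm:smaeunipimplyequal}, and the structure of reductive Lie algebras.
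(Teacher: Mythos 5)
Your argument is correct, and it takes a genuinely different (and somewhat cleaner) route than the paper's. The paper works top-down with the projections $p_i\colon\Lie G_p(f)_{\bC_p}\to\mathfrak{g}_i$: it first shows each $p_i$ is surjective for Serre--Tate indices by passing to projected pairs and invoking Lemma~\ref{lm:smaeunipimplyequal}, then applies Goursat's lemma to the sub-product $\mathfrak{h}\subseteq\mathfrak{g}_1\oplus\cdots\oplus\mathfrak{g}_r$ to rule out diagonal simple factors (which would shrink the unipotents), and finally checks that no simple factor of $\Lie G_p(f)_{\bC_p}$ can project nontrivially to both $\mathfrak{h}$ and the non-Serre--Tate part. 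You instead establish the literal containment $\mathfrak{g}_i\subseteq\Lie G_p(f)_{\bC_p}$ for each $i\le r$ directly: the $\pm1$ weight spaces $\mathfrak{g}_i^{\pm}$ lie in $\Lie G_p(f)_{\bC_p}$ because Lemma~\ref{lm:MTGsameunip} gives equality of the two unipotents, and these weight spaces bracket-generate $\mathfrak{g}_i$. This sidesteps Goursat entirely, and the paper's final paragraph (no simple factor straddles $\mathfrak{h}$ and $\mathfrak{h}'$) becomes unnecessary.

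Two small repairs. First, for the bracket-generation step you appeal to the case-by-case root computation from the proof of Lemma~\ref{lm:smaeunipimplyequal}; that computation does give what you need (the $\pm1$-weight roots span the root lattice, so their coroots span the semisimple Cartan, and the root-space brackets recover the remaining root spaces), but there is a softer argument that avoids Pink's table: the Lie subalgebra $\mathfrak{k}\subseteq\mathfrak{g}_i$ generated by $\mathfrak{g}_i^{+}\cup\mathfrak{g}_i^{-}$ is stable under $\mathrm{ad}(\mathfrak{g}_i^{0})$ (induct on bracket depth using Jacobi and $[\mathfrak{g}_i^{0},\mathfrak{g}_i^{\pm}]\subseteq\mathfrak{g}_i^{\pm}$), hence $\mathfrak{k}$ is a nonzero ideal of the simple $\mathfrak{g}_i$ and therefore equals $\mathfrak{g}_i$. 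Second, your closing phrase ``intersecting with the ideal $\mathfrak{g}'\subseteq\mathfrak{g}$'' is wrong as written: $\Lie G_p(f)_{\bC_p}$ is a Lie subalgebra of $\Lie G_{\mathrm{B}}(f)_{\bC_p}$ but not an ideal. The correct finish is that $\bigoplus_{i\le r}\mathfrak{g}_i$ is an ideal of $\mathfrak{g}$ contained in $\mathfrak{g}'$, hence an ideal of $\mathfrak{g}'$ (because $[\mathfrak{g}',\bigoplus\mathfrak{g}_i]\subseteq[\mathfrak{g},\bigoplus\mathfrak{g}_i]\subseteq\bigoplus\mathfrak{g}_i$), and every ideal of a reductive Lie algebra admits a complementary ideal.
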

\begin{proof}
Suppose that $\Lie G_{\mathrm{B}}(f)_{\bC_p}$ factors as $\mathfrak{z}\oplus \mathfrak{g}_1\oplus...\oplus\mathfrak{g}_r \oplus \mathfrak{g}_{r+1}\oplus...\oplus \mathfrak{g}_{m}$, where $\mathfrak{z}$ is the center, and $\mathfrak{g}_{r+1},...,\mathfrak{g}_m$ are the Lie algebras of non-Serre--Tate factors. Denote by $p_i$ the projection map $\Lie G_p(f)_{\bC_p}\rightarrow \mathfrak{g}_i$. Let $\mathfrak{h}$ and $\mathfrak{h}'$ be the projection of $\Lie G_p(f)_{\bC_p}$ in $\mathfrak{g}_1\oplus...\oplus\mathfrak{g}_r$ and $ \mathfrak{z} \oplus \mathfrak{g}_{r+1}\oplus...\oplus \mathfrak{g}_{m}$, respectively. We need to show that $\mathfrak{h}=\mathfrak{g}_1\oplus...\oplus\mathfrak{g}_r$, and $\Lie G_p(f)_{\bC_p}=\mathfrak{h}\oplus \mathfrak{h}'$.

Since the weak Mumford--Tate pair $(G_{\mathrm{B}}(f)_{\bC_p},\rho_{\mathrm{B}})$ is not necessarily irreducible, we need to first decompose it into irreducible summands:  $\rho_{\mathrm{B}}=\phi_1\oplus \phi_2\oplus...\oplus\phi_n$. For each $j\in [1,n]$, $(\phi_j(G_{\mathrm{B}}(f)_{\bC_p}),\phi_j)$
is an irreducible weak Mumford--Tate pair, and we can use Pink's observation outlined in \S\ref{subsub:STfactor}.  The set of Serre--Tate factors of $(G_{\mathrm{B}}(f)_{\bC_p},\rho_{\mathrm{B}})$ is the union of the sets of Serre--Tate factors of $(\phi_j(G_{\mathrm{B}}(f)_{\bC_p}),\phi_j)$, for $j$ running through $1,2,...,n$. By Lemma~\ref{lm:MTGsameunip}, $G_p(f)$ and $G_{\mathrm{B}}(f)_{\mathbb{Q}_p}$ have the same unipotent and opposite unipotent with respect to $\mu_y$. This property carries over to the Serre--Tate factors, and Lemma~\ref{lm:smaeunipimplyequal} guarantees that $p_i$ is surjective for $1\leq i\leq r$.
Since each $\mathfrak{g}_i$ is simple, the subalgebra $\mathfrak{h}\subseteq \mathfrak{g}_1\oplus...\oplus\mathfrak{g}_r$ can be easily described via Goursat's lemma. If $\mathfrak{h}$ was strictly smaller than $\mathfrak{g}_1\oplus...\oplus\mathfrak{g}_r$, then it has a simple factor $\mathfrak{g}'$ that projects isomorphically onto two different $\mathfrak{g}_i$'s. Then $G_p(f)$ and $G_{\mathrm{B}}(f)_{\mathbb{Q}_p}$ won't have the same unipotent with respect to $\mu_y$.  Therefore, we must have  $\mathfrak{h}=\mathfrak{g}_1\oplus...\oplus\mathfrak{g}_r$.

Now we show that $\Lie G_p(f)_{\bC_p}=\mathfrak{h}\oplus \mathfrak{h}'$. It suffices to show that    $\Lie G_p(f)_{\bC_p}$ does not admit a simple factor that admits non-trivial projections to both $\mathfrak{h}$ and $\mathfrak{h}'$. Now suppose that $\mathfrak{g}'$ is a simple factor of $\Lie G_p(f)_{\bC_p}$ that admits non-trivial projection to $\mathfrak{h}$. Then $p_i(\mathfrak{g}')=\mathfrak{g}_i$ for some $1\leq i\leq r$. Since $\mu_y$ factors nontrivially through $\mathfrak{g}_i$, it also factors nontrivially through $\mathfrak{g}'$. Since 
$\mathfrak{g}_{r+1},...,\mathfrak{g}_m$ are not Serre--Tate, the projection of $\mathfrak{g}'$ in $\mathfrak{h}'$ can only lie in $\mathfrak{z}$. But this means that the projection is trivial. This finishes the proof.
\end{proof}
\subsection{Proof of Theorem~\ref{thm:logALimpliesMT}}
Corollary~\ref{cor:onefactor} claims that that $G_{\mathrm{B}}(f)$ and $G_p(f)^\circ$ agree over Serre--Tate factors. To establish $\MTT_p$, we also need to prove the equality over other factors. To achieve this, we should not just confine ourselves to $p$-adic monodromy. Instead, we need to look at $u$-adic monodromy for all $u\in \mathrm{fpl}(\bQ)$ all at once. The proof in this section is reminiscent of \cite[\S 5.2.3, \S 5.3]{J23}. Roughly speaking, we first decompose $G_\mathrm{B}(f)^{\ad}$ into simple factors over a number field $F$. Fixing a Galois closure $C\supseteq F$. We can then split a faithful irreducible $\bQ$-representation of $G_\mathrm{B}(f)^{\ad}$ over $C$. By Tannakian formalism, this enables us to construct several families of weakly $C$-compatible system of coefficient objects. We then use Galois twist to travel between them, and use Chebotarev's density theorem to establish the agreement of $G_{\mathrm{B}}(f)$ and $G_p(f)^\circ$ over all factors. 
\begin{ass}
   Replacing $X$ by sufficiently large étale cover, we will assume that all $G_u(f)$ are connected. This is always achievable by \cite[Lemma 3.4]{J23}. We also write $X_0$ for a finite field model of $X$ such that $G_u(f_0)=G_u(f)$. 
\end{ass}
\subsubsection{The simple case}\label{sub:simplecase}
We first suppose that $G_{\mathrm{B}}(f)^{\ad}$ is simple. Then there is an absolutely simple reductive group $\mathcal{G}$ over a number field $F$, such that $G_{\mathrm{B}}(f)^{\ad}=\Res_{F/\bQ}\mathcal{G}$. Let $\phi:\mathcal{G}\hookrightarrow \GL(\mathcal{V})$ be a faithful irreducible representation, where $\mathcal{V}$ is a vector space over $F$. Let $C$ be a Galois closure of $F$, with a fixed embedding into $\bC$. Let $\Sigma$ be the set of embeddings of $F$ into $C$. Then $G_{\mathrm{B}}(f)^{\ad}_C= \prod_{\sigma\in \Sigma} \mathcal{G}_{\sigma,C}$. We also get representations $\phi_{\sigma,C}: \mathcal{G}_{\sigma,C}\acts \mathcal{V}_{\sigma,C}$. The sum $G_{\mathrm{B}}(f)^{\ad}_C\acts \bigoplus_{\sigma\in \Sigma} \mathcal{V}_{\sigma,C}$ descends to a $\bQ$-representation $\varphi:G_{\mathrm{B}}(f)\rightarrow \GL(V)$.  By Tannakian formalism, for each $u\in \text{fpl}(\bQ)$, the 
representation $$G_u(f)\hookrightarrow G_{\mathrm{B}}(f)_{\bQ_u}\xrightarrow{\varphi_u} \GL(V_{\bQ_u})$$
corresponds to a $\bQ_u$-coefficient object $\mathscr{E}_u$ over a finite étale cover $X_0'\rightarrow X_0$. Similarly, for each $u\in \text{fpl}(\bQ)$ and each $v\in\text{fpl}(C)$ dividing $u$, the representation 
$$G_u(f)_{C_v}\hookrightarrow G_{\mathrm{B}}(f)_{C_v}\rightarrow \mathcal{G}_{\sigma,C_v}\xrightarrow{\phi_{\sigma,C_v}}\GL(\mathcal{V}_{\sigma,C_v})$$
gives rise to a $C_v$-coefficient object $\mathscr{E}_{\sigma,v}\subseteq \mathscr{E}_u\otimes C_v$ over $X_0'$. We have $\mathscr{E}_u\otimes C_v=\bigoplus_{\sigma\in \Sigma}\mathscr{E}_{\sigma,v}$. 
We also have the following relations between monodromy groups: $$G(\mathscr{E}_{\sigma,v},x)\subseteq \mathcal{G}_{\sigma,C_v},\;\;G(\mathscr{E}_{u}\otimes C_v,x)\subseteq \bigoplus_{\sigma\in \Sigma}G(\mathscr{E}_{\sigma,v},x).$$
\begin{lemma}\label{lm:compatibleinsinglecase}
The collection $\{\mathscr{E}_u\}_{u\in\mathrm{fpl}(\bQ)}$ is a weakly $\bQ$-compatible system in the sense of \cite[\S 3.4]{J23}. For each $\sigma$, the collection $\{\mathscr{E}_{\sigma,v}\}_{v\in\mathrm{fpl}(C)}$ is a weakly $C$-compatible system. 
\end{lemma}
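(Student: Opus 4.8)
The plan is to deduce the lemma from the weak $\bQ$-compatibility of the tautological system $\{\bH_{u,f}\}_{u\in\mathrm{fpl}(\bQ)}$ recorded in \S\ref{sub:becm} (following \cite{DA20}), together with the fact that all of $\mathscr{E}_u$ and $\mathscr{E}_{\sigma,v}$ are obtained from it by Tannakian operations that respect companions. Since $\rho_\mathrm{B}$ is faithful and $G_{\mathrm{B}}(f)$ is reductive, the $\bQ$-representation $\varphi$ occurs as a subquotient, hence (by reductivity) as a direct summand, of a finite sum $\Psi=\bigoplus_i \rho_\mathrm{B}^{\otimes a_i}\otimes(\rho_\mathrm{B}^\vee)^{\otimes b_i}$; fix once and for all an idempotent $e\in\End_{G_{\mathrm{B}}(f)}(\Psi)$ with $e\,\Psi\cong\varphi$, and similarly note that the decomposition $\varphi_C=\bigoplus_{\sigma\in\Sigma}\phi_{\sigma,C}$ of $G_{\mathrm{B}}(f)_C$-representations is cut out by $C$-rational idempotents $p_\sigma\in\End_{G_{\mathrm{B}}(f)_C}(\Psi_C)$. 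On the coefficient side put $\mathbb{G}_u=\bigoplus_i \bH_{u,f}^{\otimes a_i}\otimes(\bH_{u,f}^\vee)^{\otimes b_i}$ over $X_0'$. Since $\oplus$, $\otimes$ and $(-)^\vee$ preserve weak $\bQ$-compatibility, $\{\mathbb{G}_u\}_u$ is weakly $\bQ$-compatible; and since $\{\bH_{u,f}\otimes C_v\}_{v\mid u}$ is its base change, $\{\mathbb{G}_u\otimes C_v\}_{v\in\mathrm{fpl}(C)}$ is weakly $C$-compatible.

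Next I would realize $\mathscr{E}_u$ (resp. $\mathscr{E}_{\sigma,v}$) as the image of one of these ``constant'' idempotents. Using the comparison isomorphisms $\iota_u$ of \S\ref{subsub:mdppmtintro} the fibre $\mathbb{G}_{u,x}$ is identified with $\Psi\otimes\bQ_u$, and by Theorem~\ref{thm:GincludeinMT} (together with the running assumption that the $G_u(f)$ are connected) we have $G_u(f)\subseteq G_{\mathrm{B}}(f)_{\bQ_u}$; hence $e\otimes\bQ_u$ is $G_u(f)$-equivariant and defines an idempotent endomorphism $e_u$ of the coefficient object $\mathbb{G}_u$ with $e_u\mathbb{G}_u=\mathscr{E}_u$, and likewise the $p_\sigma$ give idempotent endomorphisms of $\mathbb{G}_u\otimes C_v$ with image $\mathscr{E}_{\sigma,v}\subseteq\mathscr{E}_u\otimes C_v$, compatibly with $\bigoplus_\sigma\mathscr{E}_{\sigma,v}=\mathscr{E}_u\otimes C_v$. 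Now for any closed point $x_0\in|X_0'|$, $e_{u,x_0}$ commutes with $\Frob_{x_0}$ (it is $\pi_1(X_0')$-invariant), so the Frobenius characteristic polynomial of $\mathscr{E}_u$ at $x_0$ equals $\det\bigl(T-e_{u,x_0}\Frob_{x_0}\,\bigm|\,\mathbb{G}_{u,x_0}\bigr)$, and similarly over $C_v$. The Frobenius conjugacy class on $\mathbb{G}_{u,x_0}$ is independent of $u$ up to the $F^a$-normalisation (weak compatibility of $\{\mathbb{G}_u\}$), and the idempotent cutting out the summand comes from the single space $\End_{G_{\mathrm{B}}(f)}(\Psi)$ (resp. $\End_{G_{\mathrm{B}}(f)_C}(\Psi_C)$); hence these polynomials are independent of $u$ (resp. of $v$) in the same weak sense, which is exactly the claim. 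The general principle that ``images of $G_{\mathrm{B}}(f)$-equivariant, $\bQ$- or $C$-rational idempotents preserve (weak) compatibility'' is what I would extract from \cite[\S 3.4]{J23} and D'Addezio's companion theory \cite{DA20}.

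The one genuinely delicate point is the last one: that spreading $e$ over $X_0'$ $u$-adically via $\iota_u$ produces idempotents $e_u\in\End(\mathbb{G}_u)$ which are ``the same'' for all $u$ in the sense needed to match Frobenius polynomials, i.e. that the companion relation is compatible with passing to the sub-object defined by such an idempotent. For $u=\ell\neq p$ this is a Chebotarev argument in the style of \S\ref{sub:Frobtorus}: the $e_\ell$ are $\pi_1(X_0')$-invariant and defined over $\overline{\bQ}$, the $\{\bH_{\ell,f}\}$ have matching Frobenius traces, and one compares $\operatorname{Tr}(e_\ell\Frob_{x_0})$ along a density-one set of closed points to conclude that the traces of $\Frob_{x_0}$ on $\mathscr{E}_\ell$ are $\ell$-independent. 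For $u=p$ one feeds this $\ell$-adic input into D'Addezio's companion correspondence \cite{DA20} after the $F^a$-upgrade (this is precisely what the word ``weakly'' absorbs). Everything else — the bookkeeping with $\Sigma$, the inclusions and direct-sum decomposition of the $\mathscr{E}_{\sigma,v}$, and the $\Gal(C/\bQ)$-equivariance of the whole package that will be exploited afterwards to ``travel between'' the $\mathscr{E}_{\sigma,v}$ — is formal from the Tannakian dictionary and needs no further argument.
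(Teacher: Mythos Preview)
Your approach and the paper's diverge at the crucial point, and there is a real gap in yours.

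The paper's argument is very short: since $f$ lands in the ordinary locus, every closed point $x_0\in|X_0|$ is ordinary, so the geometric Frobenius of $A_{x_0}$ lifts via the canonical lifting to a genuine $\bQ$-rational element $t_{x_0}\in G_{\mathrm{B}}(\tilde{x}_0)(\bQ)\subseteq G_{\mathrm{B}}(f)(\bQ)$ (this is exactly what \S\ref{sub:Frobtorus} records). Then the characteristic polynomial of Frobenius on $\mathscr{E}_{u,x_0}$ is simply the characteristic polynomial of the $\bQ$-linear operator $\varphi(t_{x_0})$ on $V$, visibly in $\bQ[T]$ and independent of $u$ (with the usual $F^a$ caveat at $p$). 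The $C$-statement follows by splitting this $P\in\bQ[T]$ into its $C$-factors $P_\sigma$.

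Your route tries to avoid this and instead deduce compatibility of $\{\mathscr{E}_u\}$ from compatibility of $\{\mathbb{G}_u\}$ by ``cutting with the constant idempotent $e$''. The step that fails is the implication
\[
\text{(}\GL(\mathbb{G}_u)\text{-conjugacy class of }\Frob_{x_0}\text{ is $u$-independent)}\ +\ \text{($e$ is $\bQ$-rational)}\ \Longrightarrow\ \det(T-\Frob_{x_0}\mid e\,\mathbb{G}_u)\text{ is $u$-independent.}
\]
Weak compatibility of $\{\mathbb{G}_u\}$ only pins down the $\GL$-conjugacy class of $\Frob_{x_0}$; but $\chi_\varphi(\Frob_{x_0})=\operatorname{Tr}(\Frob_{x_0}\mid e\,\mathbb{G}_u)$ is a class function on $G_{\mathrm{B}}(f)$, not on $\GL(H)$, and two semisimple elements of $G_{\mathrm{B}}(f)$ that are $\GL(H)$-conjugate need not be $G_{\mathrm{B}}(f)$-conjugate (already for $G_{\mathrm{B}}(f)$ a torus this fails). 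So the distribution of Frobenius eigenvalues between $e\,\mathbb{G}_u$ and $(1-e)\mathbb{G}_u$ is \emph{not} determined by your inputs. Your ``Chebotarev argument in the style of \S\ref{sub:Frobtorus}'' does not supply the missing information; what \S\ref{sub:Frobtorus} actually provides is the canonical-lift element $t_{x_0}\in G_{\mathrm{B}}(f)(\bQ)$, and once you invoke that you are doing the paper's proof. (Minor side remark: your formula $\det(T-e\Frob\mid\mathbb{G})$ differs from the characteristic polynomial of $\Frob$ on $e\,\mathbb{G}$ by a factor $T^{\dim(1-e)\mathbb{G}}$.)

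In short, the idempotent/Tannakian packaging is fine bookkeeping, but the substance of the lemma is the ordinariness input $t_{x_0}\in G_{\mathrm{B}}(f)(\bQ)$, which your write-up never actually uses.
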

\begin{proof}
To show that $\{\mathscr{E}_u\}_{u\in\mathrm{fpl}(\bQ)}$ is weakly $\bQ$-compatible, it suffices to show that for every closed point $x_0\in |X_0|$ the characteristic polynomial of $\mathscr{E}_{u,x_0}$ is a $\mathbb{Q}$-polynomial independent of $u$. Since $x_0$ is ordinary, it admits a canonical lift $\Tilde{x}_0$. The geometric Frobenius lifts to an element $t_{x_0}\in G_{\mathrm{B}}(\Tilde{x}_0)\subseteq G_{\mathrm{B}}(f)$. Let $P\in \bQ[t]$
be the characteristic polynomial of $t_{x_0}$ with respect to the representation $\varphi:G_{\mathrm{B}}(f)\rightarrow \GL(V)$. When $u\neq p$, the charateristic polynomial of $\mathscr{E}_{u,x_0}$ coincides with $P$. When $u=p$, one can replace the Frobenius on $\mathscr{E}_{u,x_0}$ by some power so that the charateristic polynomial again coincides with $P$. This shows that $\{\mathscr{E}_u\}_{u\in\mathrm{fpl}(\bQ)}$ is weakly $\bQ$-compatible.  

The proof of the fact that $\{\mathscr{E}_{\sigma,v}\}_{v\in\mathrm{fpl}(C)}$  is weakly $C$-compatible is similar. By definition, the base change $\varphi_C$ splits up into various $\phi_{\sigma,C}$. Accordingly, the polynomial $P$ splits into a product of $P_\sigma\in C[t]$, which are permuted under the $\Gal(C/\bQ)$-action. Now it is easy to see that each element in $\{\mathscr{E}_{\sigma,v}\}_{v\in\mathrm{fpl}(C)}$ has characteristic polynomial $P_\sigma$ over $x_0$ (again, when $v|p$, we need to replace the Frobenius by a power), hence the compatibility. 
\end{proof}
\begin{construction}[Galois twists]
  Note that $\Gal({C}/\mathbb{Q})$ acts on $\Sigma$. Let $g\in \Gal({C}/\mathbb{Q})$, the \textbf{$g$-twist} of a $C_v$-scheme $X$ is the $C_{gv}$-scheme $X\times_g \Spec C_{gv}$, denoted by $gX$. There is also an obvious notion of $g$-twist of morphisms between $C_v$-schemes. There is also a notion of \textbf{$g$-twist of $C_v$-coefficient objects} defined as $g\mathcal{E}:= \mathcal{E}\otimes_{g}C_{gv}$. We have 
\begin{equation}\label{lm:Galtwist}
g\mathcal{G}_{\sigma,{C}_v}=  \mathcal{G}_{g\sigma,{C}_{gv}},\,\,\,g\mathscr{E}_{\sigma,v}=\mathscr{E}_{g\sigma,gv},\,\,\,,gG(\mathscr{E}_{\sigma,v})=G(g\mathscr{E}_{\sigma,v}).
\end{equation}  \end{construction}
\begin{lemma}\label{lm:monodromyEandMT}
Suppose that $\mathrm{logAL}_p$ holds for $f$, then $G(\mathscr{E}_u,x)=(\Res_{F/\bQ}\mathcal{G})_{\bQ_u}$ for any $u\in \mathrm{fpl}(\bQ)$.
\end{lemma}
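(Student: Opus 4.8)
\textbf{Proof plan for Lemma~\ref{lm:monodromyEandMT}.}
The goal is to upgrade the ``unipotent agreement'' furnished by Corollary~\ref{cor:onefactor} to the full statement $G(\mathscr{E}_u,x)=(\Res_{F/\bQ}\mathcal{G})_{\bQ_u}$. The plan is to work one simple factor at a time after base change to $C$, exploiting the weak $C$-compatibility of the systems $\{\mathscr{E}_{\sigma,v}\}_{v}$ (Lemma~\ref{lm:compatibleinsinglecase}) and transitivity of the Galois action on the factors. First I would record that, since $\mathrm{logAL}_p$ holds for $f$, Corollary~\ref{cor:onefactor} applies: the Lie algebras of all Serre--Tate factors of $(G_{\mathrm{B}}(f)_{\bC_p},\rho_{\mathrm{B}})$ with respect to a canonical Hodge cocharacter $\mu_y$ appear as a direct summand of $\Lie G_p(f)_{\bC_p}$. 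Translating through the construction of $\mathscr{E}_p$, this says that $G(\mathscr{E}_{\sigma,v},x)=\mathcal{G}_{\sigma,C_v}$ for at least one $v\mid p$ and at least one $\sigma$ — precisely those $\sigma$ for which the $\mu_y$-component is nontrivial (and, after possibly replacing $f$ by a Hecke translate so that $\mu_y$ is ``generic enough'', for every $\sigma$ such that $\mathcal{G}_\sigma$ is a Serre--Tate factor; since $G_{\mathrm{B}}(f)^{\ad}$ is simple and $\mu_y$ is a nontrivial Hodge cocharacter, at least one such $\sigma$ exists).

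The key step is then a Galois-twisting argument. By (\ref{lm:Galtwist}), $g\mathscr{E}_{\sigma,v}=\mathscr{E}_{g\sigma,gv}$ and $gG(\mathscr{E}_{\sigma,v},x)=G(\mathscr{E}_{g\sigma,gv},x)$, so from one factor $\sigma_0$ with $G(\mathscr{E}_{\sigma_0,v_0},x)=\mathcal{G}_{\sigma_0,C_{v_0}}$ of full size we obtain, for every $g\in\Gal(C/\bQ)$, that $G(\mathscr{E}_{g\sigma_0,gv_0},x)=\mathcal{G}_{g\sigma_0,C_{gv_0}}$. Since $\Gal(C/\bQ)$ acts transitively on $\Sigma$ (as $C$ is the Galois closure of $F$), this gives: for every embedding $\sigma$, there is \emph{some} place $v_\sigma\mid p$ of $C$ with $G(\mathscr{E}_{\sigma,v_\sigma},x)=\mathcal{G}_{\sigma,C_{v_\sigma}}$. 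Now invoke weak $C$-compatibility of $\{\mathscr{E}_{\sigma,v}\}_{v\in\mathrm{fpl}(C)}$: the monodromy group of a weakly compatible system, being computed from characteristic polynomials of Frobenii at closed points, has a rank (and more precisely an isomorphism type of neutral component, via a common rational model in the sense of \cite[Theorem 1.2.4]{DA20}) that is independent of the place $v$. Hence $G(\mathscr{E}_{\sigma,v},x)$ has the same dimension as $\mathcal{G}_\sigma$ for \emph{all} $v$, and being a subgroup of the absolutely simple $\mathcal{G}_{\sigma,C_v}$ of full dimension it must equal $\mathcal{G}_{\sigma,C_v}$ (using that $\mathcal{G}_\sigma$ is connected, so has no proper subgroup of full dimension). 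In particular this holds for every $v\mid u$, for every $u\in\mathrm{fpl}(\bQ)$.

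Finally I would assemble the pieces over a fixed $u$. We have $G(\mathscr{E}_u,x)\subseteq (\Res_{F/\bQ}\mathcal{G})_{\bQ_u}$ with equality to be shown; after base change to $C_v$ ($v\mid u$), $G(\mathscr{E}_u\otimes C_v,x)\subseteq\bigoplus_{\sigma\in\Sigma}G(\mathscr{E}_{\sigma,v},x)=\bigoplus_{\sigma\in\Sigma}\mathcal{G}_{\sigma,C_v}=(\Res_{F/\bQ}\mathcal{G})_{C_v}$. It remains to see the projection $G(\mathscr{E}_u\otimes C_v,x)\to\mathcal{G}_{\sigma,C_v}$ is surjective for each $\sigma$ (which we have) \emph{and} that no collapsing occurs between distinct factors; this is a Goursat-type argument exactly as in Corollary~\ref{cor:onefactor}: a simple factor of $G(\mathscr{E}_u\otimes C_v,x)$ mapping isomorphically onto two distinct $\mathcal{G}_{\sigma,C_v}$ and $\mathcal{G}_{\sigma',C_v}$ would force these two to be isomorphic as ``Mumford--Tate'' factors with matching cocharacters, contradicting that they are the conjugates of a single absolutely simple factor under distinct embeddings combined with the unipotent count from Lemma~\ref{lm:MTGsameunip} / Corollary~\ref{cor:onefactor}. (Alternatively, one compares $\dim G(\mathscr{E}_u,x)$ with $\dim(\Res_{F/\bQ}\mathcal{G})$ directly using weak $\bQ$-compatibility of $\{\mathscr{E}_u\}_u$ and the fact that equality of dimension already holds at $u=p$ on the Serre--Tate part, then propagates by compatibility — but the cleanest route is the factor-by-factor one above.) The main obstacle I anticipate is the bookkeeping in the last step: ensuring the passage from ``each factor surjects'' to ``the product is hit'' genuinely uses the compatibility/rank-independence and the absolute simplicity of $\mathcal{G}$, rather than circularly assuming $\MTT_p$; this is where the weak $C$-compatibility of $\{\mathscr{E}_{\sigma,v}\}_v$ does the essential work, since it is what lets us transport the single good place $v_0\mid p$ to all places.
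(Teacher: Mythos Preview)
Your overall skeleton is right (Corollary~\ref{cor:onefactor} at the place $\mathfrak{p}\mid p$, Galois twist, weak compatibility to propagate), and your steps 1--3 do give $G(\mathscr{E}_{\sigma,v},x)=\mathcal{G}_{\sigma,C_v}$ for every $\sigma$ and every $v$. The gap is entirely in step~4. Your Goursat argument does not go through: the factors $\mathcal{G}_{\sigma,C_v}$ and $\mathcal{G}_{\sigma',C_v}$ are $\Gal(C/\bQ)$-conjugates of the same absolutely simple group, hence isomorphic, so an iso-graph between them is \emph{not} forbidden by ``they come from distinct embeddings''. And the unipotent count from Lemma~\ref{lm:MTGsameunip}/Corollary~\ref{cor:onefactor} is only available at $(p,\mathfrak{p})$ and only for Serre--Tate factors; it says nothing about two non-Serre--Tate factors collapsing, nor about the situation at $u\neq p$. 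Your alternative (compare $\dim G(\mathscr{E}_u,x)$ using $\bQ$-compatibility) has the same problem: at $u=p$ you do not know the full dimension equality, only that the Serre--Tate summand splits off.

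What the paper does differently is to propagate the \emph{splitting}, not just surjectivity of each projection. From Corollary~\ref{cor:onefactor} one extracts, at $(p,\mathfrak{p})$, the stronger equality
\[
G(\mathscr{E}_p\otimes C_{\mathfrak{p}},x)\;=\;G(\mathscr{E}_{\tau,\mathfrak{p}},x)\times G\bigl(\textstyle\bigoplus_{\sigma\neq\tau}\mathscr{E}_{\sigma,\mathfrak{p}},x\bigr),\qquad G(\mathscr{E}_{\tau,\mathfrak{p}},x)=\mathcal{G}_{\tau,C_{\mathfrak{p}}},
\]
for a Serre--Tate $\tau$. Weak compatibility plus dimension counting then forces this pair of equalities at every $(u,v)$. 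The key move you are missing is Chebotarev applied to $\Gal(C/\bQ)$: for any $\tau'\in\Sigma$ there exist a place $v_0$ and $g\in\Gal(C/\bQ)$ with $g v_0=v_0$ and $g\tau=\tau'$; the $g$-twist then gives the same splitting with $\tau$ replaced by $\tau'$ \emph{at the same place} $v_0$, and compatibility again spreads it to all $v$. Having every $\tau'$ split off as a direct factor forces $G(\mathscr{E}_u\otimes C_v,x)=\prod_\sigma\mathcal{G}_{\sigma,C_v}$. In short: twist the splitting, not just the factor, and use Chebotarev to keep the place fixed so the twist is meaningful before invoking compatibility.
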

\begin{proof}
The proof is similar to \cite[Theorem 5.11]{J23}. First, there exists a place $\tau\in \Sigma$, such that $\mathcal{G}_{\tau,{\bC_p}}$ is a Serre--Tate factor of $(G_{\mathrm{B}}(f)_{\bC_p}, \rho_\mathrm{B})$ with respect to $\mu_x$. For every $u\in \mathrm{fpl}(\bQ)$ and $v\in \mathrm{fpl}(C)$ dividing $u$, we always have 
\begin{equation}\label{eq:uvsub}
    G(\mathscr{E}_u\otimes C_v, x)\subseteq G(\mathscr{E}_{\tau,v},x) \times G(\bigoplus_{\sigma\in \Sigma-\{\tau\}}\mathscr{E}_{\sigma,v},x) \text{ and } G(\mathscr{E}_{\tau,v},x)\subseteq \mathcal{G}_{\tau,C_v}.
\end{equation} 
Let $\mathfrak{p}$ be the place of $C$ induced from $C\hookrightarrow \bC\simeq \bC_p$. Then by Corollary~\ref{cor:onefactor}, the two inclusions in (\ref{eq:uvsub}) are equalities when $(u,v)=(p,\mathfrak{p})$. By Lemma~\ref{lm:compatibleinsinglecase}, \cite[Lemma 3.4]{J23}, and a simple dimension counting argument, we find that the two inclusions in (\ref{eq:uvsub}) are equalities for all $(u,v)$. 

Let $\tau'\in \Sigma$ be any element. Chebotarev's density theorem guarantees the existence of a place $v_0\in \mathrm{fpl}(C)$ and an element $g\in \Gal(C/\bQ)$ fixing $v_0$ such that $g\tau=\tau'$. Let $v=v_0$ in (\ref{eq:uvsub}) and consider the $g$-twist, we obtain
\begin{equation}\label{eq:uvsub2}
    G(\mathscr{E}_u\otimes C_{v_0}, x)=G(\mathscr{E}_{\tau',v_0},x) \times G(\bigoplus_{\sigma\in \Sigma-\{\tau'\}}\mathscr{E}_{\sigma,v_0},x) \text{ and } G(\mathscr{E}_{\tau',v_0},x)=\mathcal{G}_{\tau',C_{v_0}}.
\end{equation} 
By \cite[Lemma 3.4]{J23} again, we see that (\ref{eq:uvsub2}) holds for all $v\in \mathrm{fpl}(C)$. As a result, for any $v$, we have 
$$  G(\mathscr{E}_u\otimes C_{v}, x)= \prod_{\sigma\in \Sigma} G(\mathscr{E}_{\sigma,v},x)=\prod_{\sigma\in \Sigma} \mathcal{G}_{\sigma,C_{v}}=(\Res_{F/\bQ}\mathcal{G})_{C_v}.$$
This concludes the lemma. 
\end{proof}
It follows from the lemma that $G_{\mathrm{B}}(f)_{\bQ_u}^{\ad}=G_u(f)^{\ad}$. Together with Theorem~\ref{thm:Z0equal}, this proves  Theorem~\ref{thm:logALimpliesMT} when $G_{\mathrm{B}}(f)^{\ad}$ is simple.
 \subsubsection{The general case}\label{subsub:MTgen}
Note that Theorem~\ref{thm:logALimpliesMT} readily follows from the simple case by Theorem~\ref{modpCommelin}. However, we give an alternate proof. The proof is essentially already written down in \cite[\S 5.3.1]{J23}. 

Let $\bbI$ be a finite index set, and let $G_{\mathrm{B}}(f)^{\ad}=\prod_{i\in \bbI}\Res_{F_i/\bQ}\mathcal{G}_i$, where each $\mathcal{G}_i$ is an absolute simple reductive group over a number field $N_i$. Let $\Sigma'_{i}$ be the set of complex embeddings of $N_i$. Fix $C\subseteq\bC$ to be a sufficiently large number field which contains the image of all complex embedding of all $N_i$. For each $i\in \bbI$, let $\phi_i:\mathcal{G}_i\hookrightarrow \GL(\mathcal{V}_i)$ be a faithful irreducible representation over $N_i$. By Tannakian formalism, for each $i\in \bbI$ and $u\in \mathrm{fpl}(\bQ)$, the representation $$G_u(f)\hookrightarrow G_{\mathrm{B}}(f)_{\bQ_u}\xrightarrow{(\Res_{N_i/\bQ}\phi_{i})_{\bQ_u}} \GL((\Res_{N_i/\bQ} \mathcal{V}_i)_{\bQ_u})$$
gives rise to a $\bQ_u$-coefficient object $\mathscr{E}_{i,u}$. For each $i\in \bbI$, $\sigma\in \Sigma'_i$, $u\in \mathrm{fpl}(\bQ)$ and $v\in\text{fpl}(C)$ dividing $u$, the representation
$$G_u(f)_{C_v}\hookrightarrow G_{\mathrm{B}}(f)_{C_v}\rightarrow (\mathcal{G}_i)_{\sigma,C_v}\xrightarrow{(\phi_i)_{\sigma,C_v}}\GL((\mathcal{V}_{i})_{\sigma,C_v})$$
gives rise to a $C_v$-coefficient object $\mathscr{E}_{i,\sigma,v}$. Argue as Lemma~\ref{lm:compatibleinsinglecase}: for each $i\in \bbI$, $\{\mathscr{E}_{i,u}\}_{u\in\mathrm{fpl}(\bQ)}$ is a weakly $\bQ$-compatible system, and for each $\sigma\in \Sigma'_i$ the collection $\{\mathscr{E}_{i,\sigma,v}\}_{v\in\mathrm{fpl}(C)}$ is a weakly $C$-compatible system.    
\begin{lemma}\label{lm:GC_v}
Suppose that $\mathrm{logAL}_p$ holds for $f$, then 
 $G(\mathscr{E}_{i,u}, x)=(\Res_{N_i/\bQ}\mathcal{G}_i)_{\bQ_u}$, and $G(\mathscr{E}_{i,\sigma,v}, x)=(\mathcal{G}_i)_{\sigma,C_v}$. 
 \end{lemma}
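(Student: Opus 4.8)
The plan is to reduce the general case (Lemma~\ref{lm:GC_v}) to the simple case (\S\ref{sub:simplecase}) by working one adjoint factor at a time. Fix $i\in\bbI$. The key point is that the $\bQ$-coefficient object $\mathscr{E}_{i,u}$ was built, via Tannakian formalism, from the composite of $G_u(f)\hookrightarrow G_\mathrm{B}(f)_{\bQ_u}$ with the projection $G_\mathrm{B}(f)^{\ad}\twoheadrightarrow \Res_{N_i/\bQ}\mathcal{G}_i$ followed by $\Res_{N_i/\bQ}\phi_i$. Thus the monodromy group $G(\mathscr{E}_{i,u},x)$ is exactly the image of $G_u(f)$ under this projection, i.e.\ the $i$-th adjoint component $G_u(f)^{\ad,i}$ of $G_u(f)^{\ad}$ (using that $\phi_i$ is faithful, so the projection and $\Res_{N_i/\bQ}\phi_i$ have the same kernel on the relevant subgroup). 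So the claim $G(\mathscr{E}_{i,u},x)=(\Res_{N_i/\bQ}\mathcal{G}_i)_{\bQ_u}$ is equivalent to the assertion that $G_u(f)^{\ad}$ surjects onto its $i$-th factor $(\Res_{N_i/\bQ}\mathcal{G}_i)_{\bQ_u}$, and similarly $G(\mathscr{E}_{i,\sigma,v},x)=(\mathcal{G}_i)_{\sigma,C_v}$ is the statement that $G_u(f)^{\ad}_{C_v}$ surjects onto the $(i,\sigma)$-simple factor.

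First I would observe that everything done in \S\ref{sub:simplecase} goes through verbatim with $G_\mathrm{B}(f)^{\ad}$ replaced by a single factor $\Res_{N_i/\bQ}\mathcal{G}_i$ and with the coefficient objects $\mathscr{E}_u,\mathscr{E}_{\sigma,v}$ replaced by $\mathscr{E}_{i,u},\mathscr{E}_{i,\sigma,v}$: the weak $\bQ$- and $C$-compatibility were just established above, the Galois-twist relations (\ref{lm:Galtwist}) hold factor-by-factor, and Chebotarev applies to $\Gal(C/\bQ)$ acting on $\Sigma'_i$ exactly as before. The only input of \S\ref{sub:simplecase} that is genuinely global — that uses $G_\mathrm{B}(f)$ and not just the $i$-th factor — is the appeal to Corollary~\ref{cor:onefactor}, which produces a Serre--Tate factor $\mathcal{G}_{\tau,\bC_p}$ of $(G_\mathrm{B}(f)_{\bC_p},\rho_\mathrm{B})$ and asserts that the inclusions (\ref{eq:uvsub}) are equalities at the place $(p,\mathfrak{p})$. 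So the real work is to check that Corollary~\ref{cor:onefactor} still delivers what is needed when we restrict attention to one adjoint factor.

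Here is where I would be careful, and this is the main obstacle. Corollary~\ref{cor:onefactor} says: if $\{G_1,\dots,G_r\}$ are the Serre--Tate factors of $(G_\mathrm{B}(f)_{\bC_p},\rho_\mathrm{B})$ with respect to $\mu_y$, then $\mathfrak{g}_1\oplus\cdots\oplus\mathfrak{g}_r$ is a direct summand of $\Lie G_p(f)_{\bC_p}$ — equivalently, $G_p(f)^{\ad}_{\bC_p}$ surjects onto each Serre--Tate simple factor and splits off the product of these. If \emph{every} simple factor of $G_\mathrm{B}(f)^{\ad}_{\bC_p}$ were Serre--Tate we would be done immediately. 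The genuine issue is the non-Serre--Tate factors: for those, $\mu_y$ acts trivially, so the local ($p$-adic) argument says nothing, and one must travel to the other places $u\neq p$. This is precisely the role of the weak compatibility plus Chebotarev in Lemma~\ref{lm:monodromyEandMT}: for the $i$-th factor, pick (by Chebotarev applied to $\Gal(C/\bQ)\acts\Sigma'_i$) one embedding $\tau_i\in\Sigma'_i$ such that $(\mathcal{G}_i)_{\tau_i,\bC_p}$ is a Serre--Tate factor of $(G_\mathrm{B}(f)_{\bC_p},\rho_\mathrm{B})$ \emph{provided $\Res_{N_i/\bQ}\mathcal{G}_i$ has at least one Serre--Tate factor at all}. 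This is the crux: one must argue that \emph{every} $\bQ$-simple factor $\Res_{N_i/\bQ}\mathcal{G}_i$ of $G_\mathrm{B}(f)^{\ad}$ contributes at least one Serre--Tate factor to $(G_\mathrm{B}(f)_{\bC_p},\rho_\mathrm{B})$ with respect to $\mu_y$ — i.e.\ $\mu_y$ has nontrivial projection into each $\bQ$-simple factor. This holds because $\mu_y$ (the canonical Hodge cocharacter) generates $G_\mathrm{B}(f)$ together with its Galois conjugates, by Proposition~\ref{prop:genbyHodge}; if $\mu_y$ projected trivially to some $\Res_{N_i/\bQ}\mathcal{G}_i$, then so would all its $\Gal(\bQ)$-conjugates (Galois permutes the $\bQ$-factors among themselves, at worst permuting this one with others on which $\mu_y$ is then also trivial), contradicting that the conjugates generate $G_\mathrm{B}(f)$, whose adjoint quotient has $\Res_{N_i/\bQ}\mathcal{G}_i$ as a nontrivial factor.

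With that in hand the proof assembles: for each $i$, choosing $\tau_i\in\Sigma'_i$ with $(\mathcal{G}_i)_{\tau_i,\bC_p}$ Serre--Tate, Corollary~\ref{cor:onefactor} gives that $G(\mathscr{E}_{i,\tau_i,\mathfrak{p}},x)=(\mathcal{G}_i)_{\tau_i,C_\mathfrak{p}}$ and that this factor splits off; then weak $C$-compatibility of $\{\mathscr{E}_{i,\sigma,v}\}_v$ together with \cite[Lemma 3.4]{J23} and a dimension count (exactly as in Lemma~\ref{lm:monodromyEandMT}) propagate the equality to the place $(p,\mathfrak{p})$ for \emph{all} $\sigma\in\Sigma'_i$; finally, for any $\tau'\in\Sigma'_i$, Chebotarev produces $g\in\Gal(C/\bQ)$ fixing some $v_0$ with $g\tau_i=\tau'$, and $g$-twisting (using (\ref{lm:Galtwist})) transports the equality, so $G(\mathscr{E}_{i,\sigma,v},x)=(\mathcal{G}_i)_{\sigma,C_v}$ for every $\sigma,v$, whence $G(\mathscr{E}_{i,u},x)=\prod_{\sigma\in\Sigma'_i}(\mathcal{G}_i)_{\sigma,C_v}\cap(\text{descent to }\bQ_u)=(\Res_{N_i/\bQ}\mathcal{G}_i)_{\bQ_u}$. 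I would expect the bookkeeping of ``which factors split off over $\bC_p$ so that the dimension count closes'' to be the only delicate part; conceptually it is identical to \S\ref{sub:simplecase}, applied one $\bbI$-factor at a time, and the one new observation needed — that no $\bQ$-simple factor of $G_\mathrm{B}(f)^{\ad}$ is $\mu_y$-invisible — is immediate from Proposition~\ref{prop:genbyHodge}.
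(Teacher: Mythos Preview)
Your approach is exactly the paper's one-line proof (``project to each factor and rerun Lemma~\ref{lm:monodromyEandMT}''), and you have correctly isolated the one point the paper leaves implicit: that each $\bQ$-simple factor $\Res_{N_i/\bQ}\mathcal{G}_i$ contributes at least one Serre--Tate factor, i.e.\ that $\mu_y$ projects nontrivially into it.

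However, your justification of this point is not quite right. Proposition~\ref{prop:genbyHodge} only says that the $\Gal(\bQ)$-conjugates of $\mu_y$ generate the \emph{torus} $G_\mathrm{B}(y)$, not $G_\mathrm{B}(f)$; the sentence ``the conjugates generate $G_\mathrm{B}(f)$'' is false as stated. Also, your parenthetical ``Galois permutes the $\bQ$-factors among themselves'' is backwards: each $\Res_{N_i/\bQ}\mathcal{G}_i$ is defined over $\bQ$, so Galois \emph{fixes} every $\bQ$-simple factor (it only permutes the $\bC_p$-simple factors within each one). The clean argument is: $\mu_y$ is identified with the Hodge cocharacter $\mu_{\tilde{y}_\bC}$ of the canonical lift $\tilde{y}\in\mathcal{X}_f$, hence belongs to the conjugacy class of Hodge cocharacters of the Shimura datum $(G_\mathrm{B}(f),\cD)$; since $G_\mathrm{B}(f)$ is the \emph{generic} Mumford--Tate group of $\mathcal{X}_f$ (Definition~\ref{def:MTgroups}), no $\bQ$-simple adjoint factor is killed by the Hodge cocharacter. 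With this fix your argument goes through.
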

\begin{proof}
This follows from Lemma~\ref{lm:monodromyEandMT} by projecting to each factor. 
\end{proof}
As a consequence, the projection $G_u(f)\rightarrow (\Res_{N_i/\bQ}\mathcal{G}_i)_{\bQ_u}$ is surjective. This gives rise to a surjection $G_u(f)^{\ad}\twoheadrightarrow (\Res_{N_i/\bQ}\mathcal{G}_i)_{\bQ_u}$. The induced map $G_u(f)^{\ad}\rightarrow \prod_{i\in\bbI}(\Res_{N_i/\bQ}\mathcal{G}_i)_{\bQ_u}= G_{\mathrm{B}}(f)_{\bQ_u}^{\ad}$ is an injection, and identifies $G_u(f)^{\ad}$ as the monodromy group of $\bigoplus_{i\in \bbI}\mathscr{E}_{i,u}$.

\begin{defn}
    Let $\{G_a\}_{a\in \bbA}$ be a finite collection of (algebraic) groups over a field. Suppose there is a subgroup $G\subseteq \prod_{a\in \bbA} G_a$ such that each projection $G\rightarrow G_a$ is an isomorphism. We will say that $G$ is an \textbf{iso-graph} over the collection $\{G_a\}_{a\in \bbA}$. 
\end{defn}

Let $\bbJ$ be a subset of $\bbI$. To compactify the notation, we write, for $u\in \mathrm{fpl}(Q)$, $\mathscr{E}_{\bbJ,u}:= \bigoplus_{j\in \bbJ}\mathscr{E}_{j,u}$.

\begin{lemma}\label{lm:goodlemma}
Suppose that $\mathrm{logAL}_p$ holds for $f$. There is a partition $\bbI=\bigsqcup_{h\in \mathcal{G}}\bbI_h$, such that for any $u\in \mathrm{fpl}(\bQ)$, we have $G_u(f)^{\ad}=\prod_{h\in \mathcal{G}} G(\mathscr{E}_{\bbI_h,u},x)$, where each $G(\mathscr{E}_{\bbI_h,u},x)$ is an iso-graph over $\{G(\mathscr{E}_{i,u},x)\}_{i\in \mathbf{I}_h}$. 
\end{lemma}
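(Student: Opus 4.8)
The plan is a Goursat-type analysis carried out over the splitting fields $C_v$, followed by Galois descent, with $u$-independence controlled via \cite{DA20}. Fix $u\in\mathrm{fpl}(\bQ)$ and a place $v\in\mathrm{fpl}(C)$ above $u$. Since $C$ contains the images of all complex embeddings of every $N_i$, one has $(\Res_{N_i/\bQ}\mathcal{G}_i)_{C_v}=\prod_{\sigma\in\Sigma'_i}(\mathcal{G}_i)_{\sigma,C_v}$ with each $(\mathcal{G}_i)_{\sigma,C_v}$ absolutely simple adjoint. Using that $G_u(f)^{\ad}$ is the monodromy of $\bigoplus_{i\in\bbI}\mathscr{E}_{i,u}$ together with Lemma~\ref{lm:GC_v}, the semisimple adjoint group $G_u(f)^{\ad}_{C_v}$ sits inside $\prod_{i\in\bbI}\prod_{\sigma\in\Sigma'_i}(\mathcal{G}_i)_{\sigma,C_v}$ and surjects onto each absolutely simple factor, and even onto each sub-product $\prod_{\sigma\in\Sigma'_i}(\mathcal{G}_i)_{\sigma,C_v}$. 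Goursat's lemma for products of absolutely simple adjoint groups then produces a partition of $J:=\{(i,\sigma):i\in\bbI,\ \sigma\in\Sigma'_i\}$ into blocks, on each of which $G_u(f)^{\ad}_{C_v}$ restricts to an iso-graph; surjectivity onto $\prod_{\sigma\in\Sigma'_i}(\mathcal{G}_i)_{\sigma,C_v}$ forces that no block contains two pairs with the same first coordinate $i$.

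The next step is to show this partition of $J$ is $\Gal(C/\bQ)$-stable and independent of the choice of $v\mid u$. The $v$-independence is the usual statement that the monodromy of a (weakly) compatible system has a common model, exactly as invoked in the proof of Lemma~\ref{lm:monodromyEandMT} via \cite[Lemma~3.4]{J23}. The $\Gal(C/\bQ)$-stability holds because the whole configuration $G_\mathrm{B}(f)\subseteq\prod_i\Res_{N_i/\bQ}\mathcal{G}_i$ and the objects $\mathscr{E}_{i,\sigma,v}$ are defined over $\bQ$: a $g$-twist ($g\in\Gal(C/\bQ)$) carries the partition attached to $G_u(f)^{\ad}_{C_v}$ to the one attached to $G_u(f)^{\ad}_{C_{gv}}$ by the relations in (\ref{lm:Galtwist}), and the latter equals the former by $v$-independence. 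Because $\Gal(C/\bQ)$ acts transitively on each $\Sigma'_i$, Galois-stability together with the ``no repeated first coordinate in a block'' property forces: whenever $(i,\sigma)$ and $(j,\tau)$ share a block there is a bijection $\Sigma'_i\xrightarrow{\sim}\Sigma'_j$ matching the blocks through $i$ with the blocks through $j$; declaring $i\sim j$ in this case defines an equivalence relation on $\bbI$, with associated partition $\bbI=\bigsqcup_h\bbI_h$. On a class $\bbI_h$, the blocks meeting it assemble the restriction of $G_u(f)^{\ad}_{C_v}$ to $\prod_{i\in\bbI_h}(\Res_{N_i/\bQ}\mathcal{G}_i)_{C_v}$ into an iso-graph over $\{(\Res_{N_i/\bQ}\mathcal{G}_i)_{C_v}\}_{i\in\bbI_h}$; since $G_u(f)^{\ad}$ is a $\bQ_u$-group this descends to an iso-graph $G(\mathscr{E}_{\bbI_h,u},x)$ over $\{G(\mathscr{E}_{i,u},x)\}_{i\in\bbI_h}$, and $G_u(f)^{\ad}=\prod_h G(\mathscr{E}_{\bbI_h,u},x)$.

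It remains to see the partition $\{\bbI_h\}$ does not depend on $u$. The relation ``$i\sim j$'' is equivalent to the image of $G_u(f)^{\ad}$ in $G(\mathscr{E}_{i,u},x)\times G(\mathscr{E}_{j,u},x)$ being a proper subgroup, which is a structural feature of the pair $(G_u(f)^\circ,\rho_u)$ and the Tannakian subobjects cut out of it; by \cite[Theorem~1.2.4]{DA20} this pair is the base change of one defined over a fixed number field, so the relation — hence the partition — is the same for every $u$. (One could equally argue place by place through the weakly $\bQ$-compatible systems of Lemma~\ref{lm:compatibleinsinglecase} and a Chebotarev argument, as in Lemma~\ref{lm:monodromyEandMT}.) I expect the real care to go into the middle step: verifying that the Goursat partition is genuinely $\Gal(C/\bQ)$-stable and transporting the block structure on $J$ down to a partition of $\bbI$ in the claimed iso-graph form; the statement is morally already contained in \cite[\S 5.3.1]{J23}, but this bookkeeping across the three levels $\bQ_u$, $C_v$, and the absolutely simple factors — together with the interplay with the twists of (\ref{lm:Galtwist}) and Frobenius — is where the genuine work lies.
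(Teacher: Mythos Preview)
Your proposal is correct and follows essentially the same strategy as the paper's proof, which simply refers to \cite[Lemma~5.16]{J23}: Goursat over $C_v$, combined with compatibility of the systems $\{\mathscr{E}_{i,\sigma,v}\}_v$ and Galois twist, to produce a $u$-independent partition of $\bbI$. Your organization differs slightly in that you run a global Goursat analysis on all of $J=\{(i,\sigma)\}$ and then descend via the Galois-stability of the block partition, whereas the argument in \cite{J23} works pairwise on $\bbJ=\{i,j\}$ after fixing a single $\tau_i\in\Sigma'_i$ for each $i$, and uses Chebotarev to find a place $v_0$ fixed by the relevant $g\in\Gal(C/\bQ)$ so that the twist does not move the place; but the ingredients and the substance are the same, as you yourself note at the end.
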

\begin{proof}
    The proof is identical to \cite[Lemma 5.16]{J23}.  
\end{proof}
\begin{proof}[Proof of Theorem~\ref{thm:logALimpliesMT}]
    By Lemma~\ref{lm:GC_v}, we have $$G_{\mathrm{B}}(f)^{\ad}_{\bQ_p}= \prod_{i\in \bbI}(\Res_{N_i/\bQ}\mathcal{G}_i)_{\bQ_p}=\prod_{i\in \bbI} G_{i,p}.$$ By Lemma~\ref{lm:goodlemma}, $G_p(f)^{\ad}$ is a product of iso-graphs over subcollections of $\{G(\mathscr{E}_{i,p},x)\}_{i\in \bbI}$. Now $G_p(f)^{\ad}\subseteq G_{\mathrm{B}}(f)^{\ad}_{\bQ_p}$. By Lemma~\ref{lm:MTGsameunip}, 
$G_{\mathrm{B}}(f)^{\ad}_{\bQ_p}$ and $G_p(f)^{\ad}$ have the same unipotent with respect to $\mu^{-1}_x$. It then follows that $G_{\mathrm{B}}(f)^{\ad}_{\bQ_p}=G_p(f)^{\ad}$. We conclude by Proposition~\ref{thm:Z0equal}. 
\end{proof}

\section{Proof of MT$_p$ $\Rightarrow$ $\mathrm{geoAO}_{p}$}\label{sec:ProofAO} 
Let $f:X\hookrightarrow \mathscr{A}_{g,\Fpbar}$ be a locally closed embedding whose image lies in the ordinary locus. Our main goal of this section is to prove the following
\begin{theorem}\label{thm:MTimpliesAO} 
If $\MTT_p$ holds for $f$, then $\mathrm{geoAO}_p$ holds for $X$, i.e., either $X$ is quasi-weakly special, or it does not contain a Zarski dense collection of positive dimensional special subvarieties. 
\end{theorem}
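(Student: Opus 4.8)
The plan is to reduce $\mathrm{geoAO}_p$ to $\mathrm{logAL}_p$ via a standard unlikely-intersection induction on dimension, exploiting the equivalence $\MTT_p \Leftrightarrow \mathrm{logAL}_p$ established in Theorem~\ref{thm:logALimpliesMT}. First I would set up the induction: assume $X$ is not quasi-weakly special, and that $X$ contains a Zariski-dense collection $\{Z_i\}$ of positive-dimensional special subvarieties, aiming for a contradiction. By replacing $X$ with a suitable irreducible component and passing to the situation where $\MTT_p$ holds for the relevant maps (which is inherited since $\MTT_p$ for $f$ implies it for the restriction to any subvariety, the Betti monodromy only shrinking), I may assume $X$ is irreducible and that, for each $Z_i$, the smallest special subvariety $\mathcal S_i$ containing $Z_i$ is properly contained in the smallest special subvariety $\mathcal S$ containing $X$ (otherwise $Z_i$ would have to be dense in a component of the mod-$p$ reduction of $\mathcal S$, forcing $X$ itself special).

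The key step is to apply $\mathrm{logAL}_p$ at a well-chosen ordinary point $x \in X(\Fpbar)$. Since the $Z_i$ accumulate Zariski-densely, one can choose $x$ lying on infinitely many $Z_i$ (after passing to a subsequence and a positive-dimensional component of accumulation), so that $\mathscr{T}_{f,x}$ — the smallest formal subtorus of $\mathscr{A}_{g,\Fpbar}^{/x}$ through which $f^{/x}$ factors — contains the formal germs $(Z_i)^{/x}$. By $\mathrm{logAL}_p$ (available because $\MTT_p$ holds for $f$), $\mathscr{T}_{f,x}$ is an irreducible component of the formal germ at $x$ of a special subvariety $\mathcal{Y}$; by minimality and Lemma~\ref{lm:MTGsameunip} together with the characterization of $\mathrm{MS}_f(\mathcal A_g)$, $\mathcal{Y}$ is $p$-power Hecke-conjugate to $\mathcal S$, so $\dim \mathcal Y = \dim \mathcal S$. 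Now I would invoke the structure theory: the $Z_i$ being special of intermediate dimension, their germs lie in proper formal subtori of $\mathscr{T}_{f,x}$ quasi-linear in canonical coordinates (Theorem~\ref{thm:noottheorem}); the accumulation of infinitely many such translates inside the torus $\mathscr{T}_{f,x}$ forces, by a Manin--Mumford / equidistribution-of-torsion-translates argument (or directly, the finiteness of maximal proper subtori through which a formally generic map cannot factor), a positive-dimensional subtorus $\mathscr{T}' \subseteq \mathscr{T}_{f,x}$ and an infinite subfamily of $Z_i$ whose germs are torsion translates of $\mathscr{T}'$ by a fixed subtorus pattern. This produces an ``almost product'' splitting of $\mathcal S$ (equivalently of its mod-$p$ reduction near $x$) as $Y_1 \cdot Y_2$ with $\dim Y_1 > 0$ coming from $\mathscr{T}'$, and shows $X^{\ord}$ is dense in the almost product of $Y_1$ with a subvariety of $Y_2$ — i.e. $X$ is quasi-weakly special, the desired contradiction.

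The main obstacle I expect is the passage from ``infinitely many special subvarieties of intermediate dimension through a point $x$'' to an \emph{honest} positive-dimensional subtorus $\mathscr{T}'$ and the quasi-weakly-special splitting: this is precisely where the mod-$p$ analogue of the André--Oort induction must be carried out with the formal-torus and quasi-linearity structure in place of the archimedean flows used in characteristic $0$. Concretely one needs (i) a pigeonhole/finiteness input bounding the possible ``linear parts'' of the $(Z_i)^{/x}$ inside the fixed torus $\mathscr{A}_{g,\Fpbar}^{/x}$ (using that the cocharacter lattice is finitely generated, so only finitely many subtori appear up to the Hecke action), and (ii) a descent argument promoting the coincidence of formal germs at $x$ to an honest splitting of the ambient special subvariety $\mathcal S$, for which Theorem~\ref{thm:noottheorem} and Noot's dimension formula (as used in Proposition~\ref{prop:relationbetweencycleconjectures}) are the right tools. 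I would also need to handle the boundary and non-ordinary loci by working throughout with $X^{\ord}$ and the ordinary naïve integral models, and to verify the dichotomy is exclusive — that a quasi-weakly special $X$ which is not special genuinely does contain a dense collection of special subvarieties — which follows from the product structure and the density of CM points (equivalently, canonical lifts of ordinary points) in each special factor.
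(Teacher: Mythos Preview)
Your proposal has a genuine gap at its core. The step ``one can choose $x$ lying on infinitely many $Z_i$ (after passing to a subsequence and a positive-dimensional component of accumulation)'' is unjustified and in general false: a Zariski-dense collection of subvarieties need not accumulate through any single point (think of a pencil of disjoint curves sweeping out a surface). Even granting such an $x$, the subsequent ``Manin--Mumford / equidistribution'' step is not an argument: each $(Z_i)^{/x}$ is quasi-linear by Theorem~\ref{thm:noottheorem}, but the subtori appearing for different $Z_i$ can be entirely unrelated, and there is no pigeonhole reducing infinitely many subtori of a formal torus to a common positive-dimensional pattern. A secondary error: your claim that $\MTT_p$ is inherited by restriction is wrong --- both $G_u$ and $G_{\mathrm{B}}$ shrink under restriction, but there is no reason they shrink compatibly.

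The paper's proof proceeds along a different axis. It first replaces $f$ by a modification that \emph{separates factors}, so that $G_{\mathrm B}(f)^{\ad}$ decomposes as $\prod_{i\in\bbI}\Res_{F_i/\bQ}\mathcal G_i$ with each factor $\bQ$-simple, and reduces to the variant Conjecture~\ref{conj:varAO}. The key technical input is the \emph{geometric squeeze theorem} (Theorem~\ref{thm:squeeze}, imported from \cite{J23}): from the Zariski-dense collection $\Xi$ one extracts a nonzero saturated $\bZ_p$-lisse subsheaf $\mathcal R_f\subseteq\mathscr F_{\bbI}$ over an \'etale cover of $X_0$ with the sandwich property $\mathcal R_{f,x}\otimes\bG_m^\wedge\subseteq X^{/x}\subseteq\mathscr T_{f,x}$ for \emph{every} ordinary $x$. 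This is the replacement for your unavailable ``common point'' step: instead of forcing many $Z_i$ through one $x$, one builds a global object encoding their combined contribution. One then uses $\MTT_p$ to identify $\ad_{G,\mu_x}$ with the Serre--Tate representation of $G_{\mathrm B}(f)_{\bC_p}$, decomposes it into the irreducible pieces $\ad_{\mathcal G_{i,\sigma,\bC_p},\mu_x}$ coming from simple Mumford--Tate triples (Lemma~\ref{lm:simplecomponentTtrep}), and shows via Lemma~\ref{lm:Zprojpositivedim} that $R_{f,x}$ must saturate each Serre--Tate factor it touches. When $\#\bbI=1$ this forces $R_{f,x}=T_{f,x}$, so $X$ is Tate-linear and hence special; in general it produces the product splitting $\overline X=\cC\times Y$ required by Conjecture~\ref{conj:varAO}.
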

We achieve this by combining the theory of Mumford--Tate pairs with the geometric squeeze theorem introduced in \cite[\S 6]{J23}. 

\subsection{Preparation}\label{subsub:varconjAO}
\subsubsection{Morphism that separates factors}Let $\bbI$ be a finite index set. For each $i\in \bbI$, let $\mathcal{A}_{g_i}$ be a Siegel modular variety and let $\mathscr{A}_{g_i}$ be its canonical integral model. We use $\mathcal{A}_{g_\bbI}$ to denote the product $\prod_{i\in \bbI}\mathcal{A}_{g_i}$, regarded as a Shimura subvariety of an ambient space $\mathcal{A}_{g}$. Let $\mathscr{A}_{g_\bbI}$ be the product of $\mathscr{A}_{g_i}$'s. 

Suppose that $f:X\rightarrow \mathscr{A}_{g,\Fpbar}$ factors through $\mathscr{A}_{g_\bbI, \Fpbar}$. We will write $f_\bbI$ for the map $X\rightarrow \mathscr{A}_{g_\bbI, \Fpbar}$ (just to distinguish it from $f$). Let $f_i$ be the composition \begin{equation}\label{eq:factorize}
    X\xrightarrow{f_\bbI} \mathscr{A}_{g_\bbI, \Fpbar}\rightarrow \mathscr{A}_{g_i, \Fpbar}.
\end{equation}
We say that $f$ \textbf{separates factors} over $\mathscr{A}_{g_\bbI,\Fpbar}$, if $G_{\mathrm{B}}(f_i)^{\ad}$ is simple for each $i\in \bbI$. This is equivalent to the definition given in \cite[\S 4.4.2]{J23}.  It is shown in \cite[\S 4.4]{J23} that any $f$ can be modified to one which separates factors (possibly changing the ambient $\mathscr{A}_{g,\Fpbar}$). Furthermore, to prove geoAO$_p$ for $f$, it suffices to assume that $f$ separates factors, and prove the following variant: 

\begin{conj}[Mod $p$ geometric André--Oort for subvarieties that separate factors, cf. {\cite[Conjecture 4.21]{J23}}]\label{conj:varAO}
Suppose that $f:X\rightarrow \mathscr{A}_{g,\Fpbar}$ separates factors over $\mathscr{A}_{g_\bbI, \Fpbar}$. Let $\mathscr{X}_{f_i}$ be the naïve integral model of an element in $\mathrm{MS}_{f_i}(\mathcal{A}_{g_i})$. If $X$ contains a Zariski dense collection $\Xi$ of positive dimensional special subvarieties. 
Then there exists an index $i_0\in \bbI$, such that  the Zariski closure of $X$ in $\mathscr{A}_{g_\bbI,\Fpbar}^\ord$, is the product of a component of $\mathscr{X}_{f_{i_0}, \Fpbar}^\ord$ with a subvariety $Y\subseteq\prod_{i\in \bbI\setminus\{i_0\}}\mathscr{X}_{f_{i}, \Fpbar}^\ord$. 
\end{conj}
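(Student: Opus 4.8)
\textbf{Proof strategy for Conjecture~\ref{conj:varAO} (as a theorem, assuming $\MTT_p$).}
The plan is to run the geometric squeeze argument from \cite[\S 6]{J23} fibrewise over the factors of $\mathscr{A}_{g_\bbI,\Fpbar}$, using the Mumford--Tate pair structure (Proposition~\ref{prop:MTpairs}) together with $\MTT_p$ (which now holds by Theorem~\ref{thm:logALimpliesMT}/Theorem~\ref{thm:connectionweb}) to control the $p$-adic monodromy of the special subvarieties in the dense collection $\Xi$. First I would fix a smooth ordinary base point $x\in X(\Fpbar)$, replace $X$ by an étale neighborhood so that all monodromy groups $G_u(f)$ and $G_u(f_i)$ are connected, and pass to a Hecke translate so that $x$ maps to a canonical-lift point; we may also assume each $\mathscr{X}_{f_i}$ is the naïve integral model of a genuine element of $\mathrm{MS}_{f_i}(\mathcal{A}_{g_i})$, so that $G_\mathrm{B}(f_i)$ is its generic Mumford--Tate group and is $\ad$-simple by the separates-factors hypothesis.

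Next I would translate ``$X$ contains a Zariski dense collection $\Xi$ of positive-dimensional special subvarieties'' into a statement about formal tori: for each $Z\in\Xi$ and each smooth ordinary $z\in Z$, the formal germ $Z^{/z}$ is quasi-linear in the canonical coordinates on $\mathscr{A}_{g_\bbI,\Fpbar}^{/z}$ by Theorem~\ref{thm:noottheorem}, hence contains a positive-dimensional formal subtorus. Projecting to each factor $i$, Theorem~\ref{Thm:Tatelocal}(3) identifies the tangent space of the relevant formal torus with the opposite unipotent of $G_p(f_i)^{-}$ with respect to $\mu_z$, which under $\MTT_p$ equals the opposite unipotent of $G_\mathrm{B}(f_i)_{\bQ_p}$ with respect to $\mu_z$. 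The key point, exactly as in \cite[\S 6]{J23}, is a dichotomy driven by the simplicity of $G_\mathrm{B}(f_i)^{\ad}$ and the classification of simple Mumford--Tate pairs of weights $\{0,1\}$ in \cite[Table 4.2]{P98}: either the closure of $X$ in the $i$-th factor is already a component of $\mathscr{X}_{f_i,\Fpbar}^\ord$ (so $f_i$ ``dominates'' its special subvariety), or the special germs through $X$ in the $i$-th factor are forced into a proper subvariety. Running the geometric squeeze theorem of \cite{J23} then shows that, unless $X$ splits off a full factor $\mathscr{X}_{f_{i_0},\Fpbar}^\ord$, the density of $\Xi$ collapses: the union of the special subvarieties cannot be Zariski dense because each of them lies in a bounded family of proper subvarieties cut out by the Mumford--Tate-pair constraints. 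Choosing $i_0$ to be an index where the ``dominating'' alternative occurs, and then running the same analysis on the complementary product $\prod_{i\neq i_0}\mathscr{A}_{g_i}$, gives the asserted product decomposition with $Y\subseteq\prod_{i\ne i_0}\mathscr{X}_{f_i,\Fpbar}^\ord$.

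The main obstacle I anticipate is the fibrewise/inductive bookkeeping: one must show that the geometric squeeze argument, which in \cite{J23} was carried out for a single GSpin-type factor, propagates through an arbitrary product once $\MTT_p$ is available, and in particular that the ``dominating factor'' $i_0$ can be split off cleanly — i.e.\ that the Zariski closure of $X$ genuinely decomposes as an almost-product rather than merely surjecting onto a component of $\mathscr{X}_{f_{i_0},\Fpbar}^\ord$. This requires combining Proposition~\ref{thm: Endequal} and Proposition~\ref{thm:Z0equal} (to pin down the centralizer and central torus on each factor) with a Goursat-type analysis of $G_\mathrm{B}(f)$ inside $\prod_i G_\mathrm{B}(f_i)$, very much in the spirit of \S\ref{subsub:MTgen}, to rule out ``diagonal'' special subvarieties linking $i_0$ to the other factors. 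Once this product structure is established, Theorem~\ref{thm:MTimpliesAO} follows by feeding Conjecture~\ref{conj:varAO} back through the reduction of \cite[\S 4.4]{J23} from general $f$ to factor-separating $f$, together with the definition of quasi-weakly special.
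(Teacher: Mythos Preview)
Your proposal is headed in the right direction—you correctly identify the geometric squeeze theorem and Mumford--Tate pairs as the main engines—but there is a genuine gap, and your ``main obstacle'' paragraph shows you have not yet seen the clean path the paper takes.

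The key technical input you are missing is Lemma~\ref{lm:simplecomponentTtrep}: for a \emph{simple} Mumford--Tate triple $(G,\rho,\mu)$ of weights $\{0,1\}$, the Serre--Tate representation $\ad_{G,\mu}\colon \Lv_{G,\mu}\acts \Lie U_{G,\mu^{-1}}$ is \emph{irreducible} (verified case-by-case against \cite[Table 4.2]{P98}). This is what drives the whole argument. Once $\MTT_p$ holds, one has $G_p(f_\bbI)^\circ=G_\mathrm{B}(f_\bbI)_{\bQ_p}$, hence $T_{f,x}=\bigoplus_{i\in\bbI}T_{f_i,x}$ and the Serre--Tate representation $\ad_{G_{\bC_p},\mu_x}$ splits as a direct sum of the irreducible pieces $\ad_{\mathcal{G}_{i,\sigma_i,\bC_p},\mu_x}$ over all Serre--Tate factors. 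The squeeze theorem (Theorem~\ref{thm:squeeze}) produces a nonzero lisse subsheaf $\mathcal{R}_f$ whose fibre $R_{f,x}\subseteq T_{f,x}$ is $\ad_{G,\mu_x}$-stable; irreducibility then forces $R_{f,x,\bC_p}$ to be a partial sum of these simple pieces. Combined with Lemma~\ref{lm:Zprojpositivedim} (a special $Z\in\Xi$ projects positively onto \emph{every} Serre--Tate factor of each $G_\mathrm{B}(f_i)$), one gets $R_{f,x}=T_{f_\bbJ,x}$ on the nose, where $\bbJ$ is the set of factors on which $\mathcal{R}_f$ projects nontrivially. Squeeze part~(\ref{it:squeeze2}) then gives $X^{/x}\supseteq(\cC\times\{x'\})^{/x}$, whence $\overline{X}\supseteq\cC\times\{x'\}$; varying $x$ over a Zariski-dense set yields $\overline{X}=\cC\times Y$.

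The Goursat-type analysis you worry about is therefore unnecessary: $\MTT_p$ already forces the monodromy to split as a product over $\bbI$, and irreducibility of the Serre--Tate pieces rules out any ``diagonal'' subrepresentation of $T_{f,x}$. Your proposed inductive ``split off $i_0$, then repeat'' scheme would work but is strictly harder than what is needed—the product structure comes out in one shot from the representation-theoretic decomposition, not by peeling off factors one at a time.
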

\begin{notation}\label{not:H}
    Suppose that $f$ {separates factors} over $\mathscr{A}_{g_\bbI,\Fpbar}$ as in (\ref{eq:factorize}). Let $x\in X(\Fpbar)$ be an ordinary point. \begin{enumerate}
        \item write  $\bH_{i,\bullet}$ for the various cohomology sheaves on $\mathscr{A}_{g_i}$. 
\item write  $\mathcal{X}_{f_i}$ for an element in $\mathrm{MS}_{f_i}(\mathcal{A}_{g_i})$, and denote by $\mathscr{X}_{f_i}$ its naïve integral model. 
\item  let $\mathscr{T}_{f_i,x}$ be the smallest formal subtorus of $\mathscr{A}_{g_i,\Fpbar}^{/x}$ that contains the image of $f^{/x}_i$. Note that $\mathscr{T}_{f_i,x}\subseteq \mathscr{T}_{f,x}$, where $\mathscr{T}_{f,x}$ is the smallest formal subtorus of $\mathscr{A}_{g,\Fpbar}^{/x}$ that contains the image of $f^{/x}$. Let $X_*(\mathscr{T}_{f_i,x})$ be the cocharacter lattice of $\mathscr{T}_{f_i,x}$. Define $T_{f_i,x}= X_*(\mathscr{T}_{f_i,x})_{\bQ_p}\subseteq T_{f,x}$. 
\item write $\mathscr{G}_i$ for the universal $p$-divisible group over $\mathscr{A}_{g_i}$, and denote by $\mathscr{F}_i$ the lisse sheaf $X_*(\mathscr{G}^{\loc}_i)\otimes T_p(\mathscr{G}^{\et}_i)^\vee$ on $\mathscr{A}_{g_i,\Fpbar_q}^{\ord}$ (base change to suitable $\Fpbar_q$).
    \end{enumerate}
Let $\bbJ\subseteq\bbI$ be a subset. 
   \begin{enumerate}\setcounter{enumi}{4}
        \item write $\mathscr{A}_{g_\bbJ}$ for the product of $\mathscr{A}_{g_i}$ over $\bbJ$.  Write $f_\bbJ$ for the composition of $f_\bbI$ with the projection $\mathscr{A}_{g_\bbI,\Fpbar}\rightarrow \mathscr{A}_{g_\bbJ,\Fpbar}$.
        
        \item write $\bH_{\bbJ,\bullet}$, $\mathscr{F}_\bbJ$ as the direct sum of the pullback of various $\mathbb{H}_{i,\bullet}$, $\mathscr{F}_i$ to $\mathscr{A}_{g_\bbJ}$. 
\item write $\mathcal{X}_{f_\bbJ}$ for the product of $\prod_{i\in \bbJ}\mathcal{X}_{f_i}$. Note that $\mathcal{X}_{f_\bbJ}\in \mathrm{MS}_{f_\bbJ}(\mathcal{A}_{g_\bbJ})$, i.e., it is a minimal special subvariety of $\mathcal{A}_{g_\bbJ}$ though whose naïve integral model $f_\bbJ$ factors.
\item  let $\mathscr{T}_{f_\bbJ,x}\subseteq \mathscr{T}_{f,x}$ be the smallest formal subtorus of $\mathscr{A}_{g_\bbJ,\Fpbar}^{/x}$ that contains the image of $f^{/x}_\bbJ$, and let $X_*(\mathscr{T}_{f_{\bbJ},x})$ be its cocharacter lattice. Define $T_{f_\bbJ,x}= X_*(\mathscr{T}_{f_\bbJ,x})_{\bQ_p}\subseteq T_{f,x}$. 
\item let $G_u(f_\bbJ)$ be the monodromy group $G(\bH_{\bbJ,u,X},x)$. Let $G_\mathrm{B}(f_\bbJ)$ be the generic Mumford--Tate group of $\bH_{\bbJ,\mathrm{B},\mathcal{X}_{f_\bbJ}}$. We will say that \textbf{$\MTT_p$ holds for $f_\bbJ$}, if $G_\mathrm{B}(f_\bbJ)_{\bQ_u}= G_u(f_\bbJ)^\circ$.   
    \end{enumerate}
\end{notation}
\begin{remark}
  Suppose that $f$ separates factors over $\mathscr{A}_{g_\bbI,\Fpbar}$ as in (\ref{eq:factorize}). Then $G_\mathrm{B}(f_\bbJ)^{\ad}=G_\mathrm{B}(f)^{\ad}$, and $G_u(f_\bbJ)^{\circ\ad}= G_u(f)^{\circ\ad}$. 
  It is a consequence of Proposition~\ref{thm:Z0equal} that $\MTT_p$ holds for $f_\bbI$ if and only if $\MTT_p$ holds for $f$. We will use this fact without mentioning. 
\end{remark}

\begin{ass}\label{ass:AOsep}
  From now on, we will assume the following \begin{enumerate}
      \item  $f$ separates factors over $\mathscr{A}_{g_\bbI,\Fpbar}$ as in (\ref{eq:factorize}).
      \item $X$ contains a Zariski dense collection $\Xi$ of positive dimensional special subvarieties.
      \item  There are two further simplifications that one can make on the collection $\Xi$: 
\begin{itemize}
    \item We can arbitrarily shrink $X$ to open dense, and replace $\Xi$ by its restriction. 
    \item   We can assume that each $Z\in \Xi$ is smooth and geometrically connected, by throwing away the singular loci, and re-indexing the components. 
\end{itemize} We will assume that $\Xi$ is simplified as above. 
  \end{enumerate}
\end{ass}
Our goal is to show Conjecture~\ref{conj:varAO} under the condition that $\MTT_p$ holds for $f$. 

\subsubsection{Serre--Tate representations}
Let $G$ be a reductive group over a characteristic 0 field $F$. Consider a cocharacter $\mu: \bG_m\rightarrow G$. Recall that $\Lv_{G,\mu}$ and $U_{G,\mu^{-1}}$ are Levi and opposite unipotent with respect to $\mu$. The adjoint representation 
$$\ad_{G,\mu}:\Lv_{G,\mu} \acts \Lie U_{G,\mu^{-1}}$$
is called the \textbf{Serre--Tate representation} (cf. \cite[\S6.2.2]{J23}). In the most relevant situations, $G$ is a $p$-adic
monodromy group, $\mu$ is a Hodge cocharacter, and the Serre--Tate representation gives rise to a \textbf{Serre--Tate lisse sheaf} in the sense of \cite[\S 6.2.1]{J23}. Representations of this sort play a fundamental role in the studying the mod $p$ geometric André--Oort conjecture. 

\begin{lemma}\label{lm:simplecomponentTtrep}
    Suppose that $F$ is algebraically closed. Let $(G,\rho,\mu)$ be a simple Mumford--Tate triple of weights $\{0,1\}$. Then the corresponding Serre--Tate representation $\ad_{G,\mu}$ is irreducible. 
\end{lemma}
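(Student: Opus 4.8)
\textbf{Proof plan for Lemma~\ref{lm:simplecomponentTtrep}.}

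The plan is to reduce to Pink's classification table \cite[Table 4.2]{P98} and check the irreducibility of the Serre--Tate representation $\ad_{G,\mu}$ case by case. First I would set up the relevant linear algebra: fix a maximal torus $T\subseteq G$ containing the image of $\mu$, with associated root system $\Phi=\Phi_G$, and decompose $\Lie G_{\overline F} = \Lie T \oplus \bigoplus_{\alpha\in\Phi}\mathfrak{g}_\alpha$. Since $\mu$ has weights $\{0,1\}$ on $\rho$, the cocharacter $\mu$ pairs with each root $\alpha$ to give $\la\alpha,\mu\ra\in\{-1,0,1\}$ (this is the key constraint coming from the weight-$\{0,1\}$ hypothesis, used repeatedly in Pink's analysis). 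The Levi $\Lv_{G,\mu}$ is generated by $T$ together with the $\mathfrak{g}_\alpha$ for $\la\alpha,\mu\ra=0$, and $\Lie U_{G,\mu^{-1}}=\bigoplus_{\la\alpha,\mu\ra = -1}\mathfrak{g}_\alpha$. So $\ad_{G,\mu}$ is a representation of $\Lv_{G,\mu}$ on this last space, and its weights (as a $T$-representation) are exactly the roots $\alpha$ with $\la\alpha,\mu\ra=-1$.

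Next, the strategy for irreducibility: a representation of a connected reductive group with a given set of $T$-weights is irreducible precisely when that weight set forms a single orbit under the Weyl group $W(\Lv_{G,\mu})$ of the Levi, together with a multiplicity/highest-weight check. Equivalently, I would exhibit a lowest weight vector (the $\mathfrak{g}_\alpha$ for $\alpha$ the unique root with $\la\alpha,\mu\ra=-1$ minimal in the dominance order for $\Lv_{G,\mu}$) and show that applying the raising operators from $\Lv_{G,\mu}$ generates all of $\Lie U_{G,\mu^{-1}}$. Since $(G,\rho,\mu)$ is a \emph{simple} Mumford--Tate triple, $G^{\der}$ is almost simple, so $\Phi$ is irreducible, and $\mu$ (being minuscule-type, weights $\{0,1\}$) is a fundamental coweight $\omega_s^\vee$ up to the action of the Weyl group and central twist. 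For each entry of \cite[Table 4.2]{P98} --- type $A_r$ with $\rho$ the standard representation, $B_r$, $C_r$, $D_r$, and the exceptional cases $E_6$, $E_7$ with their minuscule representations, plus the spin cases --- the pair (root system, choice of $\omega_s^\vee$) is completely explicit, and one reads off the weight set $\{\alpha:\la\alpha,\omega_s^\vee\ra=-1\}$ and checks it is a single $W(\Lv)$-orbit. For instance, in the $A_r$ case treated in the proof of Lemma~\ref{lm:smaeunipimplyequal}, the opposite unipotent has weights $\{e_j-e_i: 1\le i\le s,\ s+1\le j\le r+1\}$, on which the Levi $\GL_s\times\GL_{r+1-s}$ acts as the tensor product of standard and dual-standard --- manifestly irreducible.

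The main obstacle I anticipate is not any single case but the bookkeeping: one must be careful that the cocharacter $\mu$ in a simple Mumford--Tate triple need not literally be a fundamental coweight --- it can differ by a central cocharacter and by Weyl conjugacy --- but since central cocharacters act trivially in the adjoint representation and Weyl conjugation only relabels, this does not affect irreducibility of $\ad_{G,\mu}$, and I would spell this reduction out first. A secondary point to verify is that $\ad_{G,\mu}$ restricted to $\Lv_{G,\mu}$ (rather than just $\Lv_{G,\mu}^{\der}$ or the full $G$) is still irreducible; this is automatic because the center of $\Lv_{G,\mu}$ acts by a single character on the whole of $\Lie U_{G,\mu^{-1}}$ when the weight set is a single $W(\Lv)$-orbit, so irreducibility as a $\Lv_{G,\mu}^{\der}$-representation upgrades to irreducibility as a $\Lv_{G,\mu}$-representation. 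After these reductions the argument is a finite, mechanical check against \cite[Table 4.2]{P98}, exactly parallel in spirit to the case analysis already performed in the proof of Lemma~\ref{lm:smaeunipimplyequal}.
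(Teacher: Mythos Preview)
Your proposal is correct and follows essentially the same approach as the paper: both reduce to Pink's classification \cite[Table 4.2]{P98} of simple Mumford--Tate triples and verify irreducibility of $\ad_{G,\mu}$ case by case, with the $A_r$ case (Levi $\GL_s\times\GL_{r+1-s}$ acting on the tensor product of standard and dual standard) worked out explicitly in both. The paper's proof is terser---it simply names the Levi and the resulting representation in a couple of columns (e.g.\ $\mathrm{Sym}^2\rho_r$ for column~2) and declares the rest similar---whereas you frame the check via the Weyl-orbit criterion for irreducibility, but this is a presentational difference rather than a substantive one.
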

\begin{proof}
    Since simple Mumford--Tate triples are classified in
 \cite[Table 4.2]{P98}, we can check the irreducibility case by case. For $n\in \bN$, let $\rho_n:\GL_{n}\acts F^{\oplus n}$ be the standard representation. For column 1 and 4 of \cite[Table 4.2]{P98}, there exists some $1\leq s\leq r$ such that the representation $\ad_{G,\mu}$ is isomorphic to the irrep $\rho_s\otimes \rho^{\vee}_{r+1-s}:\GL_{s}\times \GL_{r+1-s}\acts F^{\oplus s}\otimes F^{\oplus r+1-s}$. For column 2 of \cite[Table 4.2]{P98}, $\Lv_{G,\mu}$ is an almost direct product of a central torus $\bG_m$ with $\GL_r$, and    $\ad_{G,\mu}|_{\GL_r}\simeq \mathrm{Sym}^2\rho_r$. The rest of the columns can be checked in a similar manner.
\end{proof}
\subsubsection{The geometric squeeze theorem} Let $$f_0:X_0\hookrightarrow\mathscr{A}_{g_\bbI,\Fpbar_q}\hookrightarrow \mathscr{A}_{g,\Fpbar_q}$$ be a finite field model of $f$ that separate factors as in (\ref{eq:factorize}). Let the notation be as in Notation~\ref{not:H}. To ease notation, write \begin{equation}\label{eq:G=}
G:=G_p(f_\bbI)^\circ=G(\bH_{\bbI,p,X},x)^\circ.
\end{equation}
By Theorem~\ref{Thm:Tatelocal}, the Serre--Tate representation $\ad_{G,\mu_x}$ identifies with $$G(\gr \bH_{\bbI,p,X},x)^\circ\acts T_{f_\bbI,x}=T_{f,x}.$$ 
\begin{theorem}[{The geometric squeeze theorem}, {\cite[Theorem 6.2]{J23}}]\label{thm:squeeze} 
Let the setup be as in Assumption~\ref{ass:AOsep}. Possibly shrinking $X$, there exists a nonzero saturated $\bZ_p$-lisse sheaf $\mathcal{R}_f\subseteq \mathscr{F}_{\bbI,X_0'}$ over a Galois cover $X_0'\rightarrow X_0$, such that for any $x\in X(\Fpbar)$, the following are true:
\begin{enumerate}
\item\label{it:squeeze1} 
${R}_{f,x}:=(\mathcal{R}_{f,x})_{\bQ_p}\subseteq T_{f,x}$ is a subspace invariant under $\ad_{G,\mu_x}$ \footnote{ By $\mathcal{R}_{f,x}$ we mean the fiber of $\mathcal{R}_{f}$ at an $\Fpbar$-point of $X_0'$ that lifts $x$, identified as a sublattice of $X_*(\mathscr{T}_{f,x})$. Different lifts of $x$ may yield different $\mathcal{R}_{f,x}$ as sublattices of $X_*(\mathscr{T}_{f,x})$, but the theorem works for all of them.}.
  \item\label{it:squeeze2} $X^{/x}$ is sandwiched between two formal tori: $$\mathcal{R}_{f,x}\otimes \bG_m^\wedge\subseteq X^{/x}\subseteq X_*(\mathscr{T}_{f,x})\otimes \bG_m^\wedge.$$
   \item\label{it:squeeze3}
There is a $Z\in \Xi$, such that for $x\in Z(\Fpbar)$:
$${T}_{f|_Z,x}\subseteq {R}_{f,x}\subseteq T_{f,x},$$ 
and in addition, for each $i\in \bbI$, the projection image of $Z$ in $\mathscr{A}_{g_i,\Fpbar}$ is positive dimensional if and only if the projection image of $\mathcal{R}_{f}$ in $\mathscr{F}_{i,X_0'}$ has positive rank.
\end{enumerate}
\end{theorem}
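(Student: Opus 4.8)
\textbf{Proof plan for Theorem~\ref{thm:squeeze} (the geometric squeeze theorem).}

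The plan is to construct $\mathcal{R}_f$ as the largest piece of the Serre--Tate lisse sheaf ``seen'' by a Zariski dense subcollection of the special subvarieties in $\Xi$, and then check the three asserted properties one at a time. First I would pass to a Galois cover $X_0'\to X_0$ over which $G_p(f_{\bbI})=G_p(f_{\bbI})^\circ=G$, so that the Serre--Tate representation $\ad_{G,\mu_x}\colon G(\gr\bH_{\bbI,p,X},x)^\circ\acts T_{f,x}$ (via Theorem~\ref{Thm:Tatelocal}) is defined on all of $\pi_1$ and corresponds to a genuine $\bZ_p$-lisse sheaf inside $\mathscr{F}_{\bbI,X_0'}$; call its generic fiber $T_{f,x}$ the ``ambient'' Serre--Tate space. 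For each $Z\in\Xi$, Chai's canonical lifting plus the local description in Theorem~\ref{Thm:Tatelocal}(\ref{itcor3}) identifies $T_{f|_Z,x}$ with the opposite-unipotent Lie algebra of the $p$-adic monodromy of $Z$, a $\mu_x$-stable (in fact $\ad_{G,\mu_x}$-stable once one knows $\MTT_p$-type statements hold for the special subvariety $Z$, which they do since $Z$ is special) subspace of $T_{f,x}$ that varies as a sub-local-system over $Z$. Among all irreducible $\ad_{G,\mu_x}$-subrepresentations of $T_{f,x}$, pick those that occur inside $T_{f|_Z,x}$ for a Zariski-dense set of $Z\in\Xi$; let $\mathcal{R}_f$ be the saturation of their span. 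Shrinking $X$ so that $\mathcal{R}_f$ extends to a saturated subsheaf on all of $X_0'$ gives the sheaf in the statement, and (\ref{it:squeeze1}) is immediate from the construction since it is by definition a sum of $\ad_{G,\mu_x}$-subrepresentations.

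For (\ref{it:squeeze3}), a counting/pigeonhole argument over the (finitely many up to conjugacy) irreducible constituents of $\ad_{G,\mu_x}$: after replacing $\Xi$ by a dense subcollection, every $Z$ in it has $T_{f|_Z,x}$ containing exactly the constituents defining $\mathcal{R}_f$ and no others among those not in $\mathcal{R}_f$ — here I would invoke the classification in Lemma~\ref{lm:simplecomponentTtrep}, which says each simple Mumford--Tate factor contributes a \emph{single} irreducible Serre--Tate constituent, so the bookkeeping is by subsets of $\bbI$ rather than by arbitrary subrepresentations. This same bookkeeping yields the ``positive-dimensional projection iff positive-rank projection'' clause: the projection of $Z$ to $\mathscr{A}_{g_i,\Fpbar}$ is positive-dimensional exactly when the $i$-th Serre--Tate constituent appears in $T_{f|_Z,x}$, and by the dense-choice of $\mathcal{R}_f$ this matches the constituents appearing in $\mathcal{R}_f$. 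For (\ref{it:squeeze2}), the lower containment $\mathcal{R}_{f,x}\otimes\bG_m^\wedge\subseteq X^{/x}$ follows because $X$ contains the (dense family of) $Z$'s through nearby points whose formal completions are the tori $T_{f|_Z,x}\otimes\bG_m^\wedge\supseteq$ the relevant constituents — more precisely one argues that the formal completion $X^{/x}$, being a formal subscheme containing a dense family of formal subtori each containing the $\mathcal{R}_f$-directions, must contain $\mathcal{R}_{f,x}\otimes\bG_m^\wedge$; the upper containment $X^{/x}\subseteq X_*(\mathscr{T}_{f,x})\otimes\bG_m^\wedge$ is just the definition of $\mathscr{T}_{f,x}$ as the smallest formal torus through which $f^{/x}$ factors.

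The main obstacle I expect is making the lower containment in (\ref{it:squeeze2}) precise: one needs that the union of the formal completions of the $Z\in\Xi$ passing through a neighborhood of $x$ — which lie in $X$ — forces $X^{/x}$ to contain a specified formal subtorus, and this requires a Zariski-density-to-formal-density argument (the dense family of $Z$'s sweeping out enough tangent directions at $x$, together with the fact that $X^{/x}$ is a closed formal subscheme of a formal torus, hence cut out by binomial equations). One has to be careful that ``$\MTT_p$ holds for $f$'' (our hypothesis) is what guarantees $\ad_{G,\mu_x}$ is the \emph{full} Serre--Tate representation of $G_{\mathrm{B}}(f)$, so that the irreducible-constituent bookkeeping via Lemma~\ref{lm:simplecomponentTtrep} is valid and $\mathcal{R}_f$ is genuinely $\ad_{G,\mu_x}$-stable rather than merely $\ad_{G_p(f),\mu_x}$-stable. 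Apart from this, the remaining steps are routine once one sets up the correspondence between sub-local-systems of $\mathscr{F}_{\bbI}$, $\ad_{G,\mu_x}$-subrepresentations, and formal subtori of $\mathscr{A}_{g_\bbI,\Fpbar}^{/x}$ furnished by Theorem~\ref{Thm:Tatelocal}.
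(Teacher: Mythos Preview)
First, note that the paper does not prove this theorem: it is quoted from \cite[Theorem~6.2]{J23} as an input, so there is no in-paper argument to compare against. In the present paper the squeeze theorem is \emph{applied} in \S\ref{subsub:I=1}--\S\ref{subsub:pfofAOgeneral} under the extra hypothesis that $\MTT_p$ holds for $f$; that is where Lemma~\ref{lm:simplecomponentTtrep} and the bookkeeping by simple Mumford--Tate factors indexed by $\bbI$ legitimately enter.

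This is exactly where your plan goes wrong: you treat ``$\MTT_p$ holds for $f$'' as a hypothesis of Theorem~\ref{thm:squeeze}, but Assumption~\ref{ass:AOsep} only demands that $f$ separate factors and that $\Xi$ be Zariski dense. With $G:=G_p(f_\bbI)^\circ$ possibly strictly smaller than $G_{\mathrm{B}}(f_\bbI)_{\bQ_p}$, you cannot invoke Lemma~\ref{lm:simplecomponentTtrep} to split $\ad_{G,\mu_x}$ into irreducibles labeled by $\bbI$, so the pigeonhole bookkeeping for (\ref{it:squeeze3}) and the ``positive-dimensional iff positive-rank'' clause collapse. A second gap: your construction takes for granted that $T_{f|_Z,x}$ is $\ad_{G,\mu_x}$-stable, but $\MTT_p$ for the special $Z$ only gives stability under the Levi of $G_p(f|_Z)^\circ\subseteq G$, not under $\Lv_{G,\mu_x}$. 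Third, your sketch for the lower inclusion in (\ref{it:squeeze2}) is not an argument: a generic $x\in X(\Fpbar)$ lies on no $Z\in\Xi$ at all, so density of $\Xi$ in $X$ does not directly produce any formal subtorus inside $X^{/x}$; one needs a genuine rigidity/propagation step to pass from the locus swept out by the $Z$'s to every point of (a shrunk) $X$. The construction of $\mathcal{R}_f$ in \cite{J23} proceeds differently and handles these issues without assuming $\MTT_p$.
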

\subsection{Proof of Theorem~\ref{thm:MTimpliesAO}} We will first treat the case where $\# \bbI=1$, and prove a stronger claim that $X$ is already special (as opposed to quasi-weakly special). After that, we treat the general case. The proof is a generalization of the product GSpin case given in \cite[\S 6]{J23}. 

\subsubsection{The case where $\#\bbI=1$}\label{subsub:I=1} We first consider the case where $\bbI=\{1\}$. Then $G_{\mathrm{B}}(f)^{\ad}$ is a simple $\bQ$-group. We will drop the subscript $\bbI$ or $i$ since it plays no role. Following \S\ref{sub:simplecase}, let $G_{\mathrm{B}}(f)^{\ad}=\Res_{F/\bQ}\mathcal{G}$ and define $C\subseteq\bC$ and $\Sigma$ in exactly the same manner. Let $G$ be as in (\ref{eq:G=}). For $\sigma\in \Sigma$ and $v\in \mathrm{fpl}(C)$, let $\mathscr{E}_{\sigma,v}$ be the $C_u$-coefficient objects as per \S\ref{sub:simplecase}. Let $\Sigma^{\mathrm{ST}}\subseteq \Sigma$ be the subset such that $\{\Lie \mathcal{G}_{\sigma,\bC_p}\}_{\sigma\in \Sigma^{\mathrm{ST}}}$ is the set of Serre--Tate factors of $(G_{\mathrm{B}}(f)_{\bC_p},\rho_{\mathrm{B}})$ with respect to $\mu_x$. The set $\Sigma^{\mathrm{ST}}$ does not depend on $x$ chosen. Let 
\begin{equation}\label{eq:HST}
\mathcal{H}^{\mathrm{ST}}= \prod_{\sigma\in \Sigma^{\mathrm{ST}}}\mathcal{G}_{\sigma,C}.  
\end{equation}
Suppose that $\MTT_p$ holds for $f$, so $G=G_{\mathrm{B}}(f_\bbI)_{\bQ_p}$. By Theorem~\ref{Thm:Tatelocal}, we have
\begin{align*}
    &T_{f,x}= \Lie U_{G,\mu_x^{-1}}=\Lie U_{G_{\mathrm{B}}(f)_{\bQ_p},\mu_x^{-1}},\\
 &T_{f,x,\bC_p}=\Lie U_{ \mathcal{H}^{\mathrm{ST}}_{\bC_p},\mu_x^{-1}}.  
\end{align*}

\begin{lemma}\label{lm:Zprojpositivedim}
Let $Z\subseteq X$ be a smooth special subvariety of positive dimension and let $x\in Z(\Fpbar)$. Then for each $\sigma\in \Sigma^{\mathrm{ST}}$, the projection $$T_{f|_Z,x,\bC_p}\subseteq T_{f,x,\bC_p}\rightarrow \Lie U_{\mathcal{G}_{\sigma,\bC_p},\mu_{x}^{-1}}$$ has positive dimensional image.
\end{lemma}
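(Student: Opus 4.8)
The statement is a positivity assertion about the Serre--Tate datum restricted to a positive-dimensional special subvariety $Z$, viewed through a Serre--Tate factor $\mathcal{G}_\sigma$. The plan is to reduce it to two facts: (i) that $Z$ being special and positive-dimensional forces its Mumford--Tate group $G_{\mathrm{B}}(f|_Z)$ to have a factor ``matching'' the Serre--Tate factor $\sigma$, and (ii) that under $\MTT_p$ the $p$-adic picture at $x$ is governed by $G_{\mathrm{B}}(f)_{\bQ_p}$, so that $T_{f|_Z,x}$ sees exactly the unipotent part of $G_{\mathrm{B}}(f|_Z)_{\bQ_p}$ with respect to $\mu_x$. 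Concretely, since $f$ separates factors over $\mathscr{A}_{g_\bbI,\Fpbar}$ (here $\#\bbI=1$), and $\MTT_p$ holds for $f$, Theorem~\ref{thm:logALimpliesMT} gives $\mathrm{logAL}_p$ for $f$; applying it to $f|_Z$ (which is a map from the smooth variety $Z$ whose image lies in the ordinary locus), one gets that $\mathscr{T}_{f|_Z,x}$ is an irreducible component of the formal germ of a special subvariety, namely the smallest special subvariety $\mathcal{S}_Z \supseteq Z$ (up to $p$-power Hecke translate). Then by Lemma~\ref{lm:MTGsameunip} applied to $f|_Z$, $\dim \mathcal{S}_Z = \rk \mathscr{T}_{f|_Z,x} = \dim U_{G_p(f|_Z),\mu_x^{-1}}$, and under $\MTT_p$ for $Z$ (which holds since $Z$ is special, hence its $p$-adic monodromy at the generic point equals its Betti monodromy) this is $\dim U_{G_{\mathrm{B}}(f|_Z)_{\bQ_p},\mu_x^{-1}}$. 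So $T_{f|_Z,x} = \Lie U_{G_{\mathrm{B}}(f|_Z)_{\bQ_p},\mu_x^{-1}}$, compatibly with the inclusion into $T_{f,x}=\Lie U_{G_{\mathrm{B}}(f)_{\bQ_p},\mu_x^{-1}}$.

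\textbf{Key steps.} First I would record that $G_{\mathrm{B}}(f|_Z) \subseteq G_{\mathrm{B}}(f)$ is an inclusion of reductive groups, both strong Mumford--Tate pairs (Proposition~\ref{prop:MTpairs}), with $Z$ positive-dimensional forcing $\dim G_{\mathrm{B}}(f|_Z) > \dim Z \geq 1$, so $G_{\mathrm{B}}(f|_Z)$ is non-central. Next, since $Z$ has positive dimension, its Serre--Tate factor set with respect to $\mu_x$ is non-empty: the canonical Hodge cocharacter $\mu_x$ factors through $G_{\mathrm{B}}(f|_Z)$ non-trivially (otherwise $\mathscr{T}_{f|_Z,x}$ would be trivial), hence through at least one almost-simple factor. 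The heart of the argument is to show that \emph{every} Serre--Tate factor of $(G_{\mathrm{B}}(f)_{\bC_p},\rho_\mathrm{B})$ — i.e. every $\sigma\in \Sigma^{\mathrm{ST}}$ — already appears as a Serre--Tate factor ``coming from'' $G_{\mathrm{B}}(f|_Z)$. This is where I expect to use that $G_{\mathrm{B}}(f)^{\ad} = \Res_{F/\bQ}\mathcal{G}$ is \emph{simple} over $\bQ$: the Galois action of $\Gal(C/\bQ)$ permutes the factors $\{\mathcal{G}_{\sigma,C}\}_{\sigma\in\Sigma}$ transitively, and the projection of $G_{\mathrm{B}}(f|_Z)^{\ad}$ to any $\mathcal{G}_{\sigma,C}$ is either trivial or surjective; because $G_{\mathrm{B}}(f|_Z)$ is defined over $\bQ$ (or a number field), the set of $\sigma$ for which the projection is surjective is $\Gal$-stable, hence by transitivity is either empty or all of $\Sigma$. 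Since it is non-empty (by the previous step, $\mu_x$ hits some factor, which — being in the image of a Hodge cocharacter — lies among the Serre--Tate factors), it is all of $\Sigma$, in particular all of $\Sigma^{\mathrm{ST}}$. Therefore the projection $G_{\mathrm{B}}(f|_Z)_{\bC_p} \twoheadrightarrow \mathcal{G}_{\sigma,\bC_p}$ is surjective for every $\sigma\in\Sigma^{\mathrm{ST}}$; restricting to unipotent radicals relative to $\mu_x$, the composite $\Lie U_{G_{\mathrm{B}}(f|_Z)_{\bC_p},\mu_x^{-1}} \to \Lie U_{\mathcal{G}_{\sigma,\bC_p},\mu_x^{-1}}$ is surjective, and the target is non-zero (as $\sigma\in\Sigma^{\mathrm{ST}}$ means $\mu_x$ acts non-trivially on $\Lie \mathcal{G}_\sigma$). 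Identifying $\Lie U_{G_{\mathrm{B}}(f|_Z)_{\bC_p},\mu_x^{-1}}$ with $T_{f|_Z,x,\bC_p}$ via Step 1 gives exactly the claim.

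\textbf{Main obstacle.} The delicate point is the Galois-transitivity argument: I need $G_{\mathrm{B}}(f|_Z)$, or at least its image modulo the center, to be defined over $\bQ$ (or over a field on which $\Gal(C/\bQ)$ still acts compatibly with the permutation of the $\sigma$'s), and I need to know that the ``projection surjective'' locus $\{\sigma : \mathrm{pr}_\sigma(G_{\mathrm{B}}(f|_Z)^{\ad}_C) = \mathcal{G}_{\sigma,C}\}$ is genuinely $\Gal$-stable. This should follow because $G_{\mathrm{B}}(f|_Z)$ is a $\bQ$-subgroup of $G_{\mathrm{B}}(f)$ and the $C$-factorization of $G_{\mathrm{B}}(f)^{\ad}$ is canonical, so $\mathrm{Gal}(C/\bQ)$ carries $\mathrm{pr}_\sigma$ to $\mathrm{pr}_{g\sigma}$ and fixes $G_{\mathrm{B}}(f|_Z)$ — but one must be careful that passing to $\mathrm{ad}$ does not destroy rationality, which it does not. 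A secondary technical point is verifying $\MTT_p$ for the special subvariety $Z$ itself (needed to pass from $G_p(f|_Z)$ to $G_{\mathrm{B}}(f|_Z)_{\bQ_p}$): this is either built into the setup via Theorem~\ref{thm:noottheorem} and the quasi-linearity of $\mathscr{X}^{/x}$, or can be quoted as the (known) case of the Mumford--Tate conjecture for reductions of special subvarieties. Once these two rationality/descent points are nailed down, the rest is formal unipotent-radical bookkeeping.
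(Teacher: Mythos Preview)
Your argument has a genuine gap at the step you yourself flag as ``the heart of the argument'': the claim that the projection of $G_{\mathrm{B}}(f|_Z)^{\ad}$ to each $\mathcal{G}_{\sigma,C}$ is \emph{either trivial or surjective} is false. The image $M_\sigma := \mathrm{pr}_\sigma\bigl(G_{\mathrm{B}}(f|_Z)_C\bigr)$ can perfectly well be a proper non-trivial reductive subgroup of $\mathcal{G}_{\sigma,C}$ --- this is the generic situation whenever $Z$ is a special subvariety whose Mumford--Tate group is genuinely smaller than $G_{\mathrm{B}}(f)$. (Your dichotomy would hold if $G_{\mathrm{B}}(f|_Z)$ were \emph{normal} in $G_{\mathrm{B}}(f)$, but it is not.) Consequently the Galois-transitivity argument only yields that the set $\{\sigma : M_\sigma \neq 1\}$ is either empty or all of $\Sigma$; it does \emph{not} give $M_\sigma = \mathcal{G}_{\sigma,C}$, and hence does not give surjectivity of the unipotent projection. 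Your conclusion would be strictly stronger than what the lemma asserts, and it is simply not available.

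The paper proceeds differently and more delicately. It works directly with the possibly-proper images $M_\sigma$ and shows (using $\MTT_p$ for the special $Z$, via Theorem~\ref{thm:logALimpliesMT}) that the image of $T_{f|_Z,x,C_{\mathfrak p}}$ in $\Lie U_{\mathcal{G}_{\sigma,C_{\mathfrak p}},\mu_x^{-1}}$ is exactly $\Lie U_{M_{\sigma,C_{\mathfrak p}},\mu_x^{-1}}$. The contradiction is then obtained not from surjectivity onto $\mathcal{G}_\sigma$, but from the observation that if this unipotent vanished for some $\sigma\in\Sigma^{\mathrm{ST}}$, then $\mu_{\tilde x_{\bC}}$ would factor \emph{non-trivially} through $M_{\sigma,\bC}$ (since $\sigma$ is Serre--Tate) while having \emph{trivial} associated parabolic there --- and this, together with the fact that the $M_\sigma$ are all forms of one another (Galois conjugacy, since $G_{\mathrm{B}}(f|_Z)$ is defined over $\bQ$ and $\Gal(C/\bQ)$ permutes $\Sigma$ transitively), forces $\dim\mathcal{Z}=0$. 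So the paper never claims $M_\sigma=\mathcal{G}_\sigma$; it only needs $\Lie U_{M_\sigma,\mu_x^{-1}}\neq 0$, which is precisely the content of the lemma.
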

\begin{proof}
Let $\mathfrak{p}$ be the place of $C$ induced from $\bC\simeq \bC_p$. It suffices to show that for each $\sigma\in \Sigma^{\mathrm{ST}}$,\begin{equation}\label{eq:projonC_p}
T_{f|_Z,x,C_{\mathfrak{p}}}\subseteq T_{f,x,C_{\mathfrak{p}}}\rightarrow \Lie U_{\mathcal{G}_{\sigma,C_{\mathfrak{p}}},\mu_{x}^{-1}}\end{equation}
has positive dimensional image. 
Consider the Mumford--Tate group $G_{\mathrm{B}}(f|_Z)\subseteq G_{\mathrm{B}}(f)$. Let $M_\sigma$ be the projection of $G_{\mathrm{B}}(f|_Z)_{C}$ to $\mathcal{G}_{\sigma,C}$. Then $M_\sigma$'s are forms of each other. Let $Z'$ be a sufficiently large finite étale cover of $Z$, such that the $C_\mathfrak{p}$-coefficient object $\mathscr{E}_{\sigma,\fp}$ pulls back to a suitable finite field model $Z_0'$ of $Z'$. We also use $x$ to denote a lift of $x$ to $Z'$. Since $G(\bH_{p,Z},x)^\circ=G_{\mathrm{B}}(f|_Z)_{\bQ_p}$ (this is because $Z$ is special,  $\mathrm{logAL}_p$ holds for $f|_Z$, hence $\MTT_p$ holds for $f|_Z$ by Theorem~\ref{thm:logALimpliesMT}), we have 
\begin{equation}\label{eq:ideMG}
M_{\sigma,C_{\mathfrak{p}}}=G(\mathscr{E}_{\sigma,\fp, Z'},x).
\end{equation}

The image of $T_{f|_Z,x,C_{\mathfrak{p}}}$ in $\Lie U_{\mathcal{G}_{\sigma,C_{\mathfrak{p}}},\mu_{x}^{-1}}$ can be identified with $\Lie U_{G(\mathscr{E}_{\sigma,\fp,Z'},x),\mu_x^{-1}}$. Suppose (for the sake of contradiction) that $\Lie U_{G(\mathscr{E}_{\sigma,\fp,Z'},x),\mu_x^{-1}}=0$ for a $\sigma\in \Sigma^{\mathrm{ST}}$. Then (\ref{eq:ideMG}) implies that $\Lie U_{M_{\sigma,C_{\mathfrak{p}}},\mu_x^{-1}}=0$ for the same $\sigma$. Let $\mathcal{Z}\in \mathrm{MS}_{f|_Z}(\mathcal{A}_g)$, and let $\tilde{x}_{\bC}$ be a quasi-canonical lift of $x$ that lies in $\mathcal{Z}$. Then $\mu_{\tilde{x}_{\bC}}$ factors non-trivially through $M_{\sigma,\bC}$, while the parabolic of $M_{\sigma,\bC}$ with respect to $\mu_{\tilde{x}_{\bC}}$ is $M_{\sigma,\bC}$ itself. This indeed means that $\mathcal{Z}$ is zero dimensional. Therefore $\dim Z= 0$, contradictory to the fact that $Z$ is positive dimensional.  \end{proof}

\begin{theorem}\label{thm:AOvarsimplecase}
Notation as above. Suppose that $\MTT_p$ holds for $f$. Then $X$ is special. 
\end{theorem}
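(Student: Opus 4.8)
The plan is to show that under $\MTT_p$, the variety $X$ already equals (a component of) the mod $p$ fiber of a naïve integral model of the minimal special subvariety $\mathcal{X}_f\in \mathrm{MS}_f(\mathcal{A}_g)$. Write $\mathscr{X}=\mathscr{X}_f$ for this naïve integral model and $\mathscr{X}^{\ord}_{\Fpbar}$ for its ordinary locus in characteristic $p$; by construction $f$ factors through (a component of) $\mathscr{X}^{\ord}_{\Fpbar}$, so it suffices to prove the reverse inclusion $\dim X = \dim \mathscr{X}_{\Fpbar}$, equivalently $\dim X = \dim \mathcal{X}_f$. Fix an ordinary $x\in X(\Fpbar)$. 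Since $\MTT_p$ holds for $f$, Theorem~\ref{thm:logALimpliesMT} gives $\mathrm{logAL}_p$ for $f$, so by Lemma~\ref{lm:MTGsameunip} we have $\dim \mathcal{X}_f = \dim U_{G_p(f),\mu_x} = \rk \mathscr{T}_{f,x}$, and $\mathscr{T}_{f,x}$ is an irreducible component of the formal germ at $x$ of (the reduction of) $\mathcal{X}_f$. Thus the whole problem reduces to showing that the formal germ $X^{/x}$ is \emph{all} of $\mathscr{T}_{f,x}$, i.e.\ that $X^{/x}$ is a full formal subtorus; then $X$ is Tate-linear, $\mathrm{Tl}_p$ holds (which it does, being a consequence of $\mathrm{logAL}_p$), and $X$ is special.

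First I would invoke the geometric squeeze theorem (Theorem~\ref{thm:squeeze}) with $\bbI=\{1\}$: possibly shrinking $X$, there is a nonzero saturated $\bZ_p$-lisse subsheaf $\mathcal{R}_f\subseteq \mathscr{F}_{X_0'}$ whose fiber $R_{f,x}\subseteq T_{f,x}$ is $\ad_{G,\mu_x}$-invariant (with $G=G_p(f)^{\circ}=G_{\mathrm{B}}(f)_{\bQ_p}$ by $\MTT_p$), with $X^{/x}$ sandwiched as $\mathcal{R}_{f,x}\otimes \bG_m^\wedge \subseteq X^{/x}\subseteq X_*(\mathscr{T}_{f,x})\otimes \bG_m^\wedge$, and such that there is a positive-dimensional $Z\in\Xi$ through $x$ with $T_{f|_Z,x}\subseteq R_{f,x}$. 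The key point is now a representation-theoretic one: by Theorem~\ref{Thm:Tatelocal} the Serre--Tate representation $\ad_{G,\mu_x}$ on $T_{f,x}$, after base change to $\bC_p$, is the adjoint representation of the Levi of $\mathcal{H}^{\mathrm{ST}}_{\bC_p}=\prod_{\sigma\in\Sigma^{\mathrm{ST}}}\mathcal{G}_{\sigma,\bC_p}$ on $\Lie U_{\mathcal{H}^{\mathrm{ST}}_{\bC_p},\mu_x^{-1}}$, which splits as an external tensor product over the Serre--Tate factors, each tensor factor being irreducible by Lemma~\ref{lm:simplecomponentTtrep} (applied to the simple Mumford--Tate triples $(Z\cdot \mathcal{G}_{\sigma},\rho_0\boxtimes\rho_\sigma,\mu)$). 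Hence the only nonzero $\ad_{G,\mu_x}$-invariant subspaces of $T_{f,x,\bC_p}$ are of the form $\bigotimes_{\sigma\in S}(\text{full factor})\otimes \bigotimes_{\sigma\notin S}(0)$ for $S\subseteq\Sigma^{\mathrm{ST}}$ — and by Lemma~\ref{lm:Zprojpositivedim}, since $Z$ is positive-dimensional special and $T_{f|_Z,x}\subseteq R_{f,x}$, the projection of $R_{f,x}$ to \emph{every} Serre--Tate factor $\Lie U_{\mathcal{G}_{\sigma,\bC_p},\mu_x^{-1}}$ is nonzero. Therefore $S=\Sigma^{\mathrm{ST}}$, which forces $R_{f,x}=T_{f,x}$, and the sandwich in Theorem~\ref{thm:squeeze}(\ref{it:squeeze2}) collapses: $X^{/x}=X_*(\mathscr{T}_{f,x})\otimes \bG_m^\wedge=\mathscr{T}_{f,x}$.

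Having $X^{/x}=\mathscr{T}_{f,x}$ means $X$ is Tate-linear at $x$, hence Tate-linear; since $\mathrm{Tl}_p$ is a consequence of $\mathrm{logAL}_p$ (which holds by $\MTT_p$ via Theorem~\ref{thm:logALimpliesMT}), $X$ is special, completing the $\#\bbI=1$ case. Two bookkeeping points need to be dispatched: (i) shrinking $X$ in Theorem~\ref{thm:squeeze} is harmless because being special is insensitive to passing to a dense open (and to étale covers, via the lemma at the end of \S\ref{subsub:mdppmtintro}); (ii) the argument takes place fiberwise at a single ordinary $x$, but Chai's result (quoted in the definition of Tate-linear) propagates Tate-linearity from one smooth ordinary point to all of them, so no uniformity in $x$ is needed. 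I expect the main obstacle to be making precise the claim that the invariant subspaces of an external tensor product of pairwise-distinct irreducibles are exactly the ``coordinate sub-tensors'': this is standard when each factor is irreducible for a connected group, but one must be slightly careful that $\ad_{G,\mu_x}$ really is a tensor product of \emph{irreducible} pieces over $\bC_p$ and that no extra invariants appear from the central torus part or from the non-Serre--Tate factors (which act trivially, hence contribute only the zero or full subspace in their slot, and by Lemma~\ref{lm:Zprojpositivedim} combined with $T_{f,x,\bC_p}=\Lie U_{\mathcal{H}^{\mathrm{ST}}_{\bC_p},\mu_x^{-1}}$ they are simply absent from $T_{f,x}$). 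Once this linear-algebra step is nailed down, the rest is a direct chain of the cited results.
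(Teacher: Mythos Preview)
Your overall strategy matches the paper's proof: invoke the squeeze theorem, analyze the Serre--Tate representation over $\bC_p$ to force $R_{f,x}=T_{f,x}$, collapse the sandwich, and conclude via $\mathrm{logAL}_p$ (equivalently $\mathrm{Tl}_p$). However, there is a genuine error in the structural step. The Serre--Tate representation $\ad_{\mathcal{H}^{\mathrm{ST}}_{\bC_p},\mu_x}$ is a \emph{direct sum}, not an external tensor product, of the pieces $\ad_{\mathcal{G}_{\sigma,\bC_p},\mu_x}$ over $\sigma\in\Sigma^{\mathrm{ST}}$: for a product group one has $\Lie U_{G_1\times G_2,\mu^{-1}}=\Lie U_{G_1,\mu_1^{-1}}\oplus\Lie U_{G_2,\mu_2^{-1}}$ with the product Levi acting blockwise. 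Consequently your description of the invariant subspaces as ``$\bigotimes_{\sigma\in S}(\text{full})\otimes\bigotimes_{\sigma\notin S}(0)$'' is incoherent as written (any tensor with a zero factor vanishes). The correct statement---and what the paper uses---is that every subrepresentation is $\bigoplus_{\sigma\in\Sigma'}\Lie U_{\mathcal{G}_{\sigma,\bC_p},\mu_x^{-1}}$ for some $\Sigma'\subseteq\Sigma^{\mathrm{ST}}$; this holds because each summand is irreducible (Lemma~\ref{lm:simplecomponentTtrep}) and the summands are pairwise non-isomorphic as representations of the full Levi $\prod_\sigma L_\sigma$ (since $L_\tau$ acts trivially on the $\sigma$-summand for $\tau\neq\sigma$). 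With this fix, your invocation of Lemma~\ref{lm:Zprojpositivedim} to deduce that $R_{f,x}$ projects nontrivially onto every summand, hence $\Sigma'=\Sigma^{\mathrm{ST}}$, is exactly the paper's argument. Your own language (``projection to every Serre--Tate factor'') already betrays that you are thinking of a direct sum; just replace the tensor-product phrasing throughout.

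One minor route difference: the paper notes that $\Sigma'$ is independent of $x$ (since $\mathcal{R}_f$ is a lisse sheaf) and concludes $R_{f,x}=T_{f,x}$ for all $x$, then applies $\mathrm{logAL}_p$ directly. You instead work at a single $x\in Z(\Fpbar)$, obtain Tate-linearity there, propagate by Chai, and finish with $\mathrm{Tl}_p\subseteq\mathrm{logAL}_p$. Both are valid; yours is slightly cleaner in that it avoids the ``independent of $x$'' remark.
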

\begin{proof}In the following, we assume that our $X$ is sufficiently shrunk. Let the lisse sheaf $\mathcal{R}_f$ be as in Theorem~\ref{thm:squeeze}. Let $x\in X(\Fpbar)$,  Theorem~\ref{thm:squeeze}(\ref{it:squeeze1}) implies that ${R}_{f,x}\subseteq T_{f,x}$ is invariant under $\ad_{G,\mu_x}$. In the following, we show that ${R}_{f,x}= T_{f,x}$. It suffices to show the equality after base change to $\bC_p$.

First, note that $\ad_{G_{\bC_p},\mu_x}$ factors surjectively through $\ad_{\mathcal{H}^{\mathrm{ST}}_{\bC_p},\mu_x}$, so ${R}_{f,x,\bC_p}$ is invariant under $\ad_{\mathcal{H}^{\mathrm{ST}}_{\bC_p},\mu_x}$. On the other hand, \begin{equation}\label{eq:factorsimplerep}
\ad_{\mathcal{H}^{\mathrm{ST}}_{\bC_p},\mu_x}= \bigoplus_{\sigma\in \Sigma^{\mathrm{ST}}} \ad_{\mathcal{G}_{\sigma,\bC_p},\mu_x}.
\end{equation}
Since for each $\sigma\in \Sigma^{\mathrm{ST}}$, $\mathcal{G}_{\sigma,\bC_p}$ comes from a simple Mumford--Tate triple of weights $\{0,1\}$, Lemma~\ref{lm:simplecomponentTtrep} implies that each $\ad_{\mathcal{G}_{\sigma,\bC_p},\mu_x}$ in (\ref{eq:factorsimplerep}) is irreducible. As a result, there exists a $\Sigma'\subseteq \Sigma^{\mathrm{ST}}$, independent of $x$, such that 
$${R}_{f,x,\bC_p}= \bigoplus_{\sigma\in \Sigma'} \Lie U_{\mathcal{G}_{\sigma,\bC_p},\mu_x^{-1}}.$$
Let $Z\in \Xi$ be as in Theorem~\ref{thm:squeeze}(\ref{it:squeeze3}), and let $x\in Z(\Fpbar)$. By Lemma~\ref{lm:Zprojpositivedim}, (\ref{eq:projonC_p}) has positive dimensional image for each $\sigma\in \Sigma^{\mathrm{ST}}$. Since ${R}_{f,x,\bC_p}\supseteq T_{f|_Z,x,\bC_p}$, we conclude that $\Sigma'=\Sigma^{\mathrm{ST}}$. This finishes the proof of $R_{f,x}=T_{f,x}$ for all $x\in X(\Fpbar)$. 

It follows from Theorem~\ref{thm:squeeze}(\ref{it:squeeze2}) that $X^{/x}=\mathscr{T}_{f,x}$ for $x$ lying in an open dense subset of $X$. Since $\MTT_p$ for $f$ implies $\mathrm{logAL}$ for $f$, we conclude that $X$ is special. 
\end{proof}

\subsubsection{The general case}\label{subsub:pfofAOgeneral} Now we assume that $G_{\mathrm{B}}(f)^{\ad}=\prod_{i\in \bbI}\Res_{F_i/\bQ}\mathcal{G}_i$, where each $\Res_{F_i/\bQ}\mathcal{G}_i$ is simple over $\bQ$ and $\mathcal{G}_i$ is absolutely simple. Let $C,\Sigma_i$ be as in \S\ref{subsub:MTgen}. For each $i\in \bbI$, define $\Sigma_i^{\mathrm{ST}}$ and $\mathcal{H}^{\mathrm{ST}}_i$ as in \S\ref{subsub:I=1}. Let $$\mathcal{H}^{\mathrm{ST}}=\prod_{i\in \bbI}\mathcal{H}^{\mathrm{ST}}_i.$$ 
Let $G=G(\bH_{\bbI,p,X},x)^\circ$. If $\MTT_p$ holds for $f$, we then have $G=G_{\mathrm{B}}(f_\bbI)_{\bQ_p}$. It is easy to see that $$T_{f,x,\bC_p}=\Lie U_{ \mathcal{H}^{\mathrm{ST}}_{\bC_p},\mu_x^{-1}}.$$

\begin{proof}[Proof of Theorem~\ref{thm:MTimpliesAO}]
Let the notation be as in Notation~\ref{not:H}. 
 It suffices to prove Conjecture~\ref{conj:varAO} under Assumption~\ref{ass:AOsep}. In the following, we assume that our $X$ is sufficiently shrunk. Let $\mathcal{R}_{f}$ be as in Theorem~\ref{thm:squeeze}. Let $\bbJ\subseteq \bbI$ be the set of indices consisting of those $i$ such that the projection of $\mathcal{R}_{f}$ to $\mathscr{F}_{i,X_0'}$ is not 0. Then $\#\bbJ > 0$. Project $X$ to $\prod_{i\in \bbJ}\mathscr{X}_{f_{i},\Fpbar}^\ord$ and let $\cC$ be a component of the later space containing the image. Project $X$ to $\prod_{i\in \bbI\setminus \bbJ}\mathscr{X}_{f_{i},\Fpbar}^\ord$ and let $Y$ be the Zariski closure of the image in the later space.  We have\begin{equation}\label{eq:proddd}
\overline{X}\subseteq \cC\times Y.
\end{equation}
It suffices to show that (\ref{eq:proddd}) is an equality. Let $x\in X(\Fpbar)$. Since $\MTT_p$ holds for $f$, we have \begin{equation*}
    T_{f,x}=\bigoplus_{i\in \bbI} T_{f_i,x}.
\end{equation*}
 
\begin{claim}\label{cm:dfq}
  The inclusion ${R}_{f,x}\subseteq   T_{f_\bbJ,x}=\bigoplus_{i\in \bbJ}T_{f_{i},x}$ is an equality. 
\end{claim}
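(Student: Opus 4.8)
The plan is to prove Claim~\ref{cm:dfq} by a case-by-case reduction of the Serre--Tate representation to its irreducible constituents, exactly parallel to the argument in the $\#\bbI=1$ case treated in Theorem~\ref{thm:AOvarsimplecase}. First I would base change everything to $\bC_p$, since both sides of the asserted equality are $\bQ_p$-lattices and it suffices to check equality of the associated $\bC_p$-subspaces. Under the assumption $\MTT_p$ for $f$ we have $G=G_{\mathrm{B}}(f_\bbI)_{\bQ_p}$, and by Theorem~\ref{Thm:Tatelocal} the Serre--Tate representation $\ad_{G,\mu_x}$ is identified with $G(\gr\bH_{\bbI,p,X},x)^\circ\acts T_{f,x}$, with $T_{f,x,\bC_p}=\Lie U_{\mathcal{H}^{\mathrm{ST}}_{\bC_p},\mu_x^{-1}}=\bigoplus_{i\in\bbI}\bigoplus_{\sigma\in\Sigma_i^{\mathrm{ST}}}\Lie U_{\mathcal{G}_{\sigma,\bC_p},\mu_x^{-1}}$. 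By Theorem~\ref{thm:squeeze}(\ref{it:squeeze1}), ${R}_{f,x,\bC_p}$ is a subspace of $T_{f,x,\bC_p}$ invariant under $\ad_{G_{\bC_p},\mu_x}$, hence (since $\ad_{G_{\bC_p},\mu_x}$ factors surjectively through $\ad_{\mathcal{H}^{\mathrm{ST}}_{\bC_p},\mu_x}$) invariant under $\ad_{\mathcal{H}^{\mathrm{ST}}_{\bC_p},\mu_x}$.

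Next I would invoke Lemma~\ref{lm:simplecomponentTtrep}: each summand $\ad_{\mathcal{G}_{\sigma,\bC_p},\mu_x}$ is irreducible (each $\mathcal{G}_{\sigma,\bC_p}$ arising from a simple Mumford--Tate triple of weights $\{0,1\}$). Therefore there is a subset $\Sigma'\subseteq\bigsqcup_{i\in\bbI}\Sigma_i^{\mathrm{ST}}$, independent of $x$, with ${R}_{f,x,\bC_p}=\bigoplus_{(i,\sigma)\in\Sigma'}\Lie U_{\mathcal{G}_{\sigma,\bC_p},\mu_x^{-1}}$. The task is then to identify $\Sigma'$ with $\bigsqcup_{i\in\bbJ}\Sigma_i^{\mathrm{ST}}$. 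The inclusion ``$\subseteq$'' of indices follows from Theorem~\ref{thm:squeeze}(\ref{it:squeeze3}): the projection image of $\mathcal{R}_f$ in $\mathscr{F}_{i,X_0'}$ has positive rank if and only if $i\in\bbJ$, so for $i\notin\bbJ$ no constituent labeled by $\Sigma_i^{\mathrm{ST}}$ can occur in ${R}_{f,x}$. For the reverse inclusion, fix $i\in\bbJ$ and pick $Z\in\Xi$ as in Theorem~\ref{thm:squeeze}(\ref{it:squeeze3}), so that $T_{f|_Z,x}\subseteq R_{f,x}$ for $x\in Z(\Fpbar)$ and the projection image of $Z$ in $\mathscr{A}_{g_i,\Fpbar}$ is positive dimensional. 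Applying the analogue of Lemma~\ref{lm:Zprojpositivedim} to the factor $f_i$ (which is legitimate because $G_{\mathrm{B}}(f_i)^{\ad}=\Res_{F_i/\bQ}\mathcal{G}_i$ is simple, so the $\#\bbI=1$ analysis of \S\ref{subsub:I=1} applies verbatim to $f_i$), the projection of $T_{f|_Z,x,\bC_p}$ to $\Lie U_{\mathcal{G}_{\sigma,\bC_p},\mu_x^{-1}}$ is nonzero for every $\sigma\in\Sigma_i^{\mathrm{ST}}$; since $T_{f|_Z,x,\bC_p}\subseteq R_{f,x,\bC_p}$ and $R_{f,x,\bC_p}$ is a sum of whole isotypic pieces, this forces $(i,\sigma)\in\Sigma'$ for all $\sigma\in\Sigma_i^{\mathrm{ST}}$. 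Hence $\Sigma'=\bigsqcup_{i\in\bbJ}\Sigma_i^{\mathrm{ST}}$, i.e. $R_{f,x}=T_{f_\bbJ,x}=\bigoplus_{i\in\bbJ}T_{f_i,x}$, which is the claim.

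The main obstacle I anticipate is making the reduction to Lemma~\ref{lm:Zprojpositivedim} for an individual factor $f_i$ fully rigorous: one must check that $Z$ (or rather a suitable component of its image under the projection to $\mathscr{A}_{g_i,\Fpbar}$) is again a positive-dimensional special subvariety for which $\MTT_p$ holds, and that the Serre--Tate factor decomposition behaves well under the projection $G_{\mathrm{B}}(f)\to G_{\mathrm{B}}(f_i)$ — i.e. that $\Sigma_i^{\mathrm{ST}}$ computed inside $G_{\mathrm{B}}(f_i)$ agrees with the part of $\bigsqcup\Sigma_j^{\mathrm{ST}}$ indexed by $i$. This is essentially bookkeeping with the ``separates factors'' hypothesis (Assumption~\ref{ass:AOsep}) together with the remark after Notation~\ref{not:H} that $G_{\mathrm{B}}(f_\bbJ)^{\ad}=G_{\mathrm{B}}(f)^{\ad}$, but it needs to be stated carefully. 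Once Claim~\ref{cm:dfq} is in hand, Theorem~\ref{thm:squeeze}(\ref{it:squeeze2}) gives $X^{/x}=\mathscr{T}_{f_\bbJ,x}\times(\text{pt})$ on a dense open of $X$, which combined with the fact that $\MTT_p$ for $f$ implies $\mathrm{logAL}_p$ for $f_\bbJ$ yields that the image of $X$ in $\prod_{i\in\bbJ}\mathscr{A}_{g_i,\Fpbar}$ is (a component of) the reduction of a special subvariety, giving equality in (\ref{eq:proddd}) and hence Conjecture~\ref{conj:varAO}.
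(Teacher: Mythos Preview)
Your proposal is correct and follows essentially the same approach as the paper: the paper also uses Theorem~\ref{thm:squeeze}(\ref{it:squeeze1}) plus the surjection $\ad_{G_{\bC_p},\mu_x}\twoheadrightarrow\ad_{\mathcal{H}^{\mathrm{ST}}_{\bC_p},\mu_x}$ and Lemma~\ref{lm:simplecomponentTtrep} to force $R_{f,x,\bC_p}$ to be a sum of the irreducible pieces $\Lie U_{\mathcal{G}_{i,\sigma_i,\bC_p},\mu_x^{-1}}$, and then picks $Z\in\Xi$ as in Theorem~\ref{thm:squeeze}(\ref{it:squeeze3}) and invokes the proof of Theorem~\ref{thm:AOvarsimplecase} (i.e.\ Lemma~\ref{lm:Zprojpositivedim} applied to each factor $f_i$ with $i\in\bbJ$) to conclude $R_{f_i,x}=T_{f_i,x}$, hence $R_{f,x}=T_{f_\bbJ,x}$. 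The ``obstacle'' you flag is handled exactly as you suggest: the separating-factors hypothesis ensures the Serre--Tate factor decomposition for $G_{\mathrm{B}}(f)$ restricts factor-by-factor, and the projection of the special $Z$ to $\mathscr{A}_{g_i,\Fpbar}$ is again positive-dimensional special, so the $\#\bbI=1$ argument applies verbatim to each $f_i$.
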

Theorem~\ref{thm:squeeze}(\ref{it:squeeze1}) implies that ${R}_{f,x}$ is invariant under $\ad_{G,\mu_x}$. Note that $\ad_{G,\mu_x}$ factors surjectively through $\ad_{\mathcal{H}^{\mathrm{ST}}_{\bC_p},\mu_x}$, so  $R_{f,x,\bC_p}$ is also a subrep of $\ad_{\mathcal{H}^{\mathrm{ST}}_{\bC_p},\mu_x}$.  We again obtain a decomposition similar to (\ref{eq:factorsimplerep}):
 \begin{equation*}
\ad_{\mathcal{H}^{\mathrm{ST}}_{\bC_p},\mu_x}= \bigoplus_{i\in \bbI}\bigoplus_{\sigma_i\in \Sigma^{\mathrm{ST}}_i} \ad_{\mathcal{G}_{i,\sigma_i,\bC_p},\mu_x}.
\end{equation*}
Lemma~\ref{lm:simplecomponentTtrep} again implies that each $\ad_{\mathcal{G}_{i,\sigma_i,\bC_p},\mu_x}$ in (\ref{eq:factorsimplerep}) is irreducible. 

Let $\mathcal{R}_{f_{i}}$ be the projection of $\mathcal{R}_{f}$ to $\mathscr{F}_{i}$. Let $Z\in \Xi$ be as in Theorem~\ref{thm:squeeze}(\ref{it:squeeze3}), which has positive dimensional projection to  $\mathscr{A}_{g_i,\Fpbar}$ for each $i\in \bbJ$. This implies that 
${R}_{f_{i},x}\neq 0$ for $i\in \bbJ$. It then follows from the proof of Theorem~\ref{thm:AOvarsimplecase} that ${R}_{f_{i},x}=T_{f_i,x}$ for all $i\in \bbJ$. The irreducibility of each $\ad_{\mathcal{G}_{i,\sigma_i,\bC_p},\mu_x}$ guarantees that ${R}_{f,x}=T_{f_{\bbJ},x}$. 

Now, let ${x}'$ be the projection of $x$ to $Y$. Since $\mathcal{R}_{f,x}\otimes \bG_m^\wedge\subseteq X^{/x}\subseteq X_*(\mathscr{T}_{f,x})\otimes \bG_m^\wedge$ by Theorem~\ref{thm:squeeze}(\ref{it:squeeze2}), \textit{Claim}~\ref{cm:dfq} implies that 
$$X^{/x}\supseteq \mathcal{R}_{f,x}\otimes \bG_m^\wedge=(\cC\times \{x'\})^{/x}.$$ 
So $\overline{X}\supseteq \cC\times \{x'\}$.
Now let $x$ vary over $X(\Fpbar)$. Since the projection of $X$ to $Y$ is Zariski dense, we find that $\overline{X}\supseteq \cC\times Y$. This implies that (\ref{eq:proddd}) is an equality.   

\end{proof}

\section{Mod $p$ variants of classical methods}\label{Sec:classical} 
In this chapter, we adapt classical methods to establish $\MTT_p$ in various cases. The main results are the characteristic $p$ analogues of results of Commelin, Pink, Tankeev, and Moonen--Zarhin; cf. \cite{JMC,P98,MZ95,Tank83}. The techniques we use for establishing $\MTT_p$ are completely expected, but the truly interesting applications come when we combine these results with the previously established implications 
$$\MTT_p\Leftrightarrow \mathrm{logAL}_p\Rightarrow \mathrm{geoAO}_p.$$
This enables us to obtain nice consequences on the  unlikely intersections side of the story, see Corollary~\ref{cor:Commelinconsequence} and Corollary~\ref{cor:geoAOdim5}. 

In the following, let $k/\Fpbar_q$ be a function field. 

\begin{theorem}[Analogue of {\cite{JMC}}]\label{modpCommelin}
    Let $A_1,A_2$ be ordinary abelian varieties over $k$. Suppose that $\MTT_p$ holds for $A_1$ and $A_2$, then it holds for $A_1\times A_2$. 
\end{theorem}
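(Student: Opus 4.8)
The plan is to follow Commelin's proof of the analogous statement over number fields (\cite{JMC}), adapting the three main inputs to the mod $p$ setting. Write $A = A_1 \times A_2$, so that $\bH_{u,A} = \bH_{u,A_1} \oplus \bH_{u,A_2}$ for every $u \in \mathrm{fpl}(\bQ)$, and similarly $G_{\mathrm{B}}(A) \subseteq G_{\mathrm{B}}(A_1) \times G_{\mathrm{B}}(A_2)$ via the two projections, with each projection surjective. The same holds $u$-adically, and by Theorem~\ref{thm:GincludeinMT} we have $G_u(A)^\circ \subseteq G_{\mathrm{B}}(A)_{\bQ_u}$ compatibly with the projections. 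Since $\MTT_p$ is assumed for $A_1$ and $A_2$, the two projections of $G_u(A)^\circ$ are all of $G_{\mathrm{B}}(A_1)_{\bQ_u}$ and $G_{\mathrm{B}}(A_2)_{\bQ_u}$. So the content is entirely about the possible ``diagonal'' relations between the two factors: by Goursat's lemma applied to the adjoint (or derived) groups, a failure of $G_u(A)^\circ = G_{\mathrm{B}}(A)_{\bQ_u}$ would force a common isogeny factor of $G_{\mathrm{B}}(A_1)^{\mathrm{der}}$ and $G_{\mathrm{B}}(A_2)^{\mathrm{der}}$ through which the two monodromy representations are ``glued'' $u$-adically in a way not predicted by the Betti picture. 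The center is already handled by Proposition~\ref{thm:Z0equal}, so one reduces to the adjoint groups.

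The first step is therefore to reduce, via Proposition~\ref{thm:Z0equal} and Goursat, to showing the following: if $\mathcal{G}$ is a $\bQ$-simple factor appearing in both $G_{\mathrm{B}}(A_1)^{\mathrm{ad}}$ and $G_{\mathrm{B}}(A_2)^{\mathrm{ad}}$, and if over $\bQ_u$ the $u$-adic monodromy $G_u(A)^\circ$ projects isomorphically (``iso-graph'' in the terminology of \S\ref{subsub:MTgen}) onto the corresponding factors of the two sides, then this gluing is already present over $\bQ$, i.e.\ $G_{\mathrm{B}}(A)$ itself contains the corresponding diagonal. This is exactly the kind of statement that the compatible-systems machinery of D'Addezio settles: one builds, as in \S\ref{sub:simplecase}, a weakly $\bQ$-compatible system of coefficient objects $\{\mathscr{E}_u\}$ whose monodromy is the iso-graph datum, and the point is that the ``shape'' of the monodromy group (in particular, whether two factors are glued diagonally) is forced to be $u$-independent for a weakly compatible system. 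Here is where I would invoke the $l$-independence results of D'Addezio (\cite{DA20}), just as Commelin invokes $l$-independence of the Betti realizations of his auxiliary motive.

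The second ingredient is Deligne's construction: Commelin passes from an abelian-type Shimura datum to a Hodge-type one to realize the relevant motive geometrically, and the excerpt's introduction notes that this can be done integrally in our setting (``with some care, one can perform Deligne's construction integrally''). Concretely, $G_{\mathrm{B}}(A)$ together with its Hodge cocharacter defines a Shimura datum of abelian type; one replaces it by a Hodge-type cover, takes the associated special subvariety $\mathscr{X}$ in some $\mathscr{A}_{N}$ (and its ordinary naïve integral model), and checks that $f_0 \colon X_0 \to \mathscr{A}_{g,\Fpbar_q}$ lifts through it after a finite cover. The abelian scheme over $\mathscr{X}$ furnishes the auxiliary geometric object whose compatible system of realizations is what one actually runs the compatibility argument on. Kisin's CM-lifting input from Commelin's proof is, as the excerpt explicitly says, not needed here (our base points already have canonical CM lifts). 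Assembling these: $l$-independence forces the $u$-adic diagonal gluing to be uniform in $u$, hence to descend to the Betti side, which gives $G_u(A)^\circ = G_{\mathrm{B}}(A)_{\bQ_u}$ for all $u$ and hence $\MTT_p$ for $A_1 \times A_2$.

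I expect the main obstacle to be the bookkeeping around the $u = p$ place: the overconvergent $F$-isocrystal $\bH_{p}$ is only part of a \emph{weakly} $\bQ$-compatible system (one must pass to an $F^a$-isocrystal to get honest compatibility, and one works with the Dieudonné--Manin fiber functor $\omega_x^{\mathrm{DM}}$ rather than a naive one), so every place where Commelin cites strict $l$-adic compatibility has to be replaced by the weak-compatibility formalism of \cite[\S 3.4]{J23} plus D'Addezio's theorems, and one has to be careful that the Frobenius-power adjustment does not alter the monodromy group (it does not, since it only rescales the $F$-structure). A secondary technical point is carrying out Deligne's construction integrally and checking that the resulting special subvariety has nonempty ordinary locus and that $f_0$ genuinely lifts through its ordinary naïve integral model after a finite étale cover; this is routine given Theorem~\ref{thm:noottheorem} and the discussion in \S\ref{subsub:naivespecial}, but it is where the characteristic-$p$ subtleties (ordinariness, the role of the canonical lift) enter. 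Once these are in place, the argument is formally Commelin's.
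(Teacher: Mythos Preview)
Your overall outline follows Commelin's shape, but there is a genuine gap at the crucial bridge from ``uniform $u$-adic gluing'' to ``Betti gluing.'' You write that $l$-independence forces the diagonal gluing to be uniform in $u$, \emph{hence} to descend to the Betti side. That ``hence'' is unjustified. D'Addezio's results tell you that the groups $G_u(A)^\circ$ have the same shape for all $u$, but the containment $G_u(A)^\circ \subseteq G_{\mathrm{B}}(A)_{\bQ_u}$ only goes one way: nothing prevents $G_{\mathrm{B}}(A)$ from being the full product $G_{\mathrm{B}}(A_1)\times G_{\mathrm{B}}(A_2)$ while every $G_u(A)^\circ$ sits diagonally inside it. To force $G_{\mathrm{B}}(A)$ itself to be small, you must produce a Betti-level relation, and in Commelin's argument (and in the paper's proof) this comes from the \emph{Tate isogeny theorem}: one shows $H_l(\widetilde{A}_1)\simeq H_l(\widetilde{A}_2)$ as Galois representations, concludes that $\widetilde{A}_1$ and $\widetilde{A}_2$ are isogenous, and reads off $G_{\mathrm{B}}(M)\subsetneq G_{\mathrm{B}}(M_1)\times G_{\mathrm{B}}(M_2)$ from that isogeny. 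Your proposal never invokes this step.

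Relatedly, you have misidentified the role of Deligne's construction. It is not used to ``lift $f_0$ through a special subvariety''; it is used to manufacture, in characteristic $p$, the auxiliary ordinary abelian schemes $\widetilde{A}_1,\widetilde{A}_2$ over (a cover of) $X_0$ with $G_{\mathrm{B}}(\widetilde{A}_i)^{\ad}=G_{\mathrm{B}}(M_i)$, precisely so that the Tate isogeny theorem can be applied to them. The paper carries this out carefully (Lemma~\ref{lm:lmlm}, \S\ref{subsub:furthermodi}), ensuring the resulting Shimura variety has nonempty ordinary locus. You also skip the intermediate step of matching the Deligne--Dynkin diagrams $(\Delta_1,\mu_1)\simeq(\Delta_2,\mu_2)$; without this, one cannot perform Deligne's construction uniformly for both factors, and it requires a Hodge--Tate decomposition argument at a canonical lift (step (2) of the paper's proof). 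Finally, compatibility is used not to transfer to Betti, but to propagate the single isomorphism $H_\lambda(M_1)\simeq H_\lambda(M_2)$ at one place $\lambda$ (obtained from Goursat) to all places, so that eventually $H_l(\widetilde{A}_1)\simeq H_l(\widetilde{A}_2)$ and the isogeny theorem applies.
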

\begin{theorem}[Analogue of {\cite{P98}}]\label{modpPink}
    Let $A$ be an ordinary abelian variety over $k$. Suppose that $A$ is of Pink type; cf. Definition~\ref{def:pinktype}. Then $\MTT_p$ holds for $A$. 
\end{theorem}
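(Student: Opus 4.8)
The plan is to run the characteristic-$p$ analogue of Pink's argument, using that the whole ``transfer package'' is already available: $(G_{\mathrm{B}}(f),\rho_{\mathrm{B}})$ is a \emph{strong} Mumford--Tate pair of weights $\{0,1\}$ and each $(G_u(f)^\circ,\rho_u)$ a \emph{weak} Mumford--Tate pair of weights $\{0,1\}$ (Proposition~\ref{prop:MTpairs}); $G_u(f)^\circ\subseteq G_{\mathrm{B}}(f)_{\bQ_u}$ intertwining the tautological representations (Theorem~\ref{thm:GincludeinMT}); $Z^0G_u(f)=Z^0G_{\mathrm{B}}(f)_{\bQ_u}$ (Proposition~\ref{thm:Z0equal}); and, since $\End(A_{\overline{k}})=\bZ$, $\End_{G_u(f)^\circ}(\rho_u)=\End_{G_{\mathrm{B}}(f)}(\rho_{\mathrm{B}})\otimes\bQ_u=\bQ_u$ (Proposition~\ref{thm: Endequal}). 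First I would make the standard reductions: replacing $X$ by a finite étale cover (allowed by the final lemma of \S\ref{sub:becm}) we may assume every $G_u(f)$ is connected, and we fix a finite field model $f_0$ with $A=A_{X_0}$. By Proposition~\ref{thm:Z0equal} it then suffices to prove $G_u(f)^{\der}=(G_{\mathrm{B}}(f)_{\bQ_u})^{\der}$ for every $u\in\mathrm{fpl}(\bQ)$ (including $u=p$, treated uniformly), and this may be checked after base change to $\overline{\bQ}_u$.

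A short lemma upgrades irreducibility to absolute irreducibility: since $\End_{G_u(f)^\circ}(\rho_u)=\bQ_u$ is a field, $\rho_u$ is irreducible; if $\rho_u\otimes\overline{\bQ}_u$ decomposed further it would split into a Galois orbit of pairwise non-isomorphic absolutely irreducible summands, forcing $\End_{G_u(f)^\circ}(\rho_u)\otimes\overline{\bQ}_u\supsetneq\overline{\bQ}_u$, a contradiction; hence $\rho_u$ (and likewise $\rho_{\mathrm{B}}$) is absolutely irreducible of dimension $2\dim A$. We are thus reduced to a purely Lie-theoretic statement over $\overline{\bQ}_u$: an inclusion $(G',\rho)\subseteq(G,\rho)$ of absolutely irreducible Mumford--Tate pairs of weights $\{0,1\}$ — $G$ coming from a strong such pair, $G'$ from a weak one — with $\dim\rho=2\dim A$ and the same central torus must be an equality.

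For condition~\ref{it:P2} this is already recorded: the hypotheses ``$\End(A_{\overline{\eta}})=\bZ$'' and ``each simple factor of $G_{\mathrm{B}}(f)$ has type $A_{2s-1}$ or $B_r$'' are exactly $(\mathrm{P}_2)$, so $\MTT_p$ follows from Theorem~\ref{thm:Pinkcase2} (i.e. from \cite{P98}, Proposition~4.3, applied to the absolutely irreducible weak Mumford--Tate pair $G_u(f)^\circ_{\overline{\bQ}_u}\subseteq G_{\mathrm{B}}(f)_{\overline{\bQ}_u}$). For condition~\ref{it:P1} I would reproduce Pink's dimension count. Write $G^{\der}_{\overline{\bQ}_u}$ as an almost-direct product $\prod_i G_i$ of almost-simple factors; absolute irreducibility gives a tensor decomposition $\rho=\rho_0\boxtimes\bigotimes_i\rho_i$ with each $\rho_i$ nontrivial, so $2\dim A=\prod_i\dim\rho_i$, and because $(G_{\mathrm{B}}(f),\rho_{\mathrm{B}})$ is a \emph{strong} pair each $(\bG_m\cdot G_i,\rho_0\boxtimes\rho_i)$ is a simple Mumford--Tate pair of weights $\{0,1\}$, hence occurs in Pink's classification (cf. \S\ref{subsub:STfactor} and \cite{P98}, Table~4.2, Corollary~5.11). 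The numerical assumption in $(\mathrm{P}_1)$ — $2\dim A$ is neither an $r$-th power ($r>1$) nor $\binom{2r}{r}$ with $r$ odd $>1$ — is precisely what excludes, entry by entry of the table, every configuration producing a proper weak Mumford--Tate pair subgroup $G'\subsetneq G$ with $\dim\rho=2\dim A$ (this uses \cite{P98}, Propositions~4.3, 4.4, 4.5 and the numerical part of the proof of the main theorem of \cite{P98}, \S5). Therefore $G'=G$, i.e. $G_u(f)^{\der}=(G_{\mathrm{B}}(f)_{\bQ_u})^{\der}$; combined with Proposition~\ref{thm:Z0equal} this yields $G_u(f)^\circ=G_{\mathrm{B}}(f)_{\bQ_u}$ for all $u$, which is $\MTT_p$.

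The main obstacle is the case $(\mathrm{P}_1)$: one must verify that the relevant arguments in \cite{P98}, \S4--5 invoke \emph{only} the abstract features of (weak/strong) Mumford--Tate pairs — the tensor decomposition of \S\ref{subsub:STfactor}, absolute irreducibility of the tautological representation, equality of central tori, and the fact that $G_{\mathrm{B}}(f)$ is generated by the Galois orbit of the Hodge cocharacter (Propositions~\ref{prop:genbyHodge} and~\ref{prop:MTpairs}) — all of which are in hand, and not any arithmetic input special to abelian varieties over number fields (Faltings' theorem, CM lifting, semisimplicity of Frobenius, etc.). Once this bookkeeping is carried out the characteristic-$p$ proof is formally identical to Pink's; the only genuinely new ingredients are the transfer results (Theorem~\ref{thm:GincludeinMT}, Propositions~\ref{thm: Endequal}, \ref{thm:Z0equal}, \ref{prop:MTpairs}) established earlier in the paper.
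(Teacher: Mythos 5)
The treatment of condition~\ref{it:P2} is correct and matches the paper (it is Theorem~\ref{thm:Pinkcase2}). The treatment of condition~\ref{it:P1}, however, has a genuine gap, and it is precisely at the point you flag as ``the main obstacle'' and then wave away.

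You claim that, once one has an inclusion $(G',\rho)\subseteq(G,\rho)$ of absolutely irreducible Mumford--Tate pairs of weights $\{0,1\}$ with $G$ strong, $G'$ weak, and $\dim\rho=2\dim A$ satisfying the numerical condition of $(\mathrm{P}_1)$, the inclusion must be an equality ``entry by entry of the table.''  This is false.  Pink's numerical exclusion (\cite[Proposition 4.7]{P98}) applies to a \emph{strong} Mumford--Tate pair over $\bQ$ whose derived group is $\bQ$-\emph{simple}: there the tensor factors of $\rho$ over $\overline{\bQ}$ are permuted transitively by $\Gal(\bQ)$, hence all have equal dimension $d$, so $2g=d^r$ and the numerical hypothesis forces $r=1$.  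For a \emph{weak} Mumford--Tate pair over $\overline{\bQ}_u$, no Galois transitivity is available: the tensor factors may have unrelated dimensions, and one can have $2g=\prod_i\dim\rho_i$ with the $\dim\rho_i$ distinct.  For instance, $\bG_m\cdot(\SL_2\times\SL_3)$ acting by $\mathrm{std}\boxtimes\mathrm{std}$ is an absolutely irreducible weak Mumford--Tate pair of weights $\{0,1\}$ inside $\GSp_{6,\overline{\bQ}_u}$, and $2g=6$ satisfies $(\mathrm{P}_1)$. So ``$G'\subsetneq G$ with the numerical constraint'' is not excluded by Lie theory alone, and your parenthetical assertion that the argument involves no ``arithmetic input special to abelian varieties'' is exactly where the proof breaks.

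The paper bridges this gap with Theorem~\ref{thm:Pinkinterpolation}, the characteristic-$p$ analogue of Pink's interpolation theorem \cite[Theorem~5.13]{P98}.  Its content is precisely the missing arithmetic ingredient: there is a reductive $\bQ$-group $G$ conjugate to $G_u(f)$ for a density-one set of $u$, and $(G,\rho)$ is a \emph{strong} Mumford--Tate pair over $\bQ$ with $\bQ$-\emph{simple} derived group.  That input is obtained not from abstract Mumford--Tate pair theory but from Frobenius tori (via Proposition~\ref{prop:genbyHodge} and Proposition~\ref{vmax_positivedensity}), Chebotarev's density theorem, and the Larsen--Pink/Chin/D'Addezio compatibility results encapsulated in \S\ref{subsub:formalchar} and the ``$V_{\mathrm{good}}$'' analysis (Propositions~\ref{prop:Vgood}--\ref{cor:P613}).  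Only once the interpolated $\bQ$-model is in hand does one apply \cite[Proposition~4.7]{P98} twice (to $G_{\mathrm{B}}(A)$ and to $G$) to conclude $G_{\mathrm{B}}(A)=\GSp_{2g,\bQ}=G$, hence $G_u(A)=\GSp_{2g,\bQ_u}$ for $u\in\bL$ and then for all $u$.  (The paper also begins by reducing, via Theorem~\ref{modpCommelin}, to the case where $X$ is a curve and the monodromy groups are connected; your reductions are close enough.)  Without Theorem~\ref{thm:Pinkinterpolation} or an equivalent, your argument cannot close.
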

\begin{theorem}[Analogue of {\cite{Tank83}}]\label{modpTank}
    Let $A$ be an ordinary abelian variety of prime dimension over $k$. Suppose that $A$ is geometrically simple. Then $\MTT_p$ holds for $A$.  
\end{theorem}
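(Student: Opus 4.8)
The plan is to adapt Tankeev's argument for $\MTT_0$ for simple abelian varieties of prime dimension to the ordinary characteristic $p$ setting, using the tools already assembled in this paper. Write $\ell = \dim A$, a prime, and let $f_0 : X_0 \to \mathscr{A}_{\ell,\Fpbar_q}$ be a model with $A$ the pullback abelian scheme; let $\eta$ be the generic point of $X$. By Proposition~\ref{thm:GincludeinMT} we have $G_u(f)^\circ \subseteq G_{\mathrm{B}}(f)_{\bQ_u}$ for all $u \in \mathrm{fpl}(\bQ)$, and by Proposition~\ref{thm:Z0equal} the centers already agree, so it suffices to match the adjoint (or derived) groups; equivalently, to match dimensions. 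The first reduction: since $A_{\overline{\eta}}$ is geometrically simple, $\End(A_{\overline{\eta}})$ is a division algebra, and by Proposition~\ref{thm: Endequal} the commutant $\End_{G_u(f)^\circ}(\rho_u)$ equals $\End(A_{\overline{\eta}})_{\bQ_u}$ independently of $u$. Geometric simplicity of $A$ forces the endomorphism algebra to be of a restricted type (since $2\ell$ is small relative to the possible CM field degrees when $\ell$ is prime): either $\End(A_{\overline{\eta}}) = \bZ$, or $\End(A_{\overline{\eta}})$ is an imaginary quadratic field, or a totally real field (necessarily $\bQ$ or a quadratic/degree-$\ell$ field), or a quaternion algebra, or a CM field of degree $2\ell$ (the CM case). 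In the CM case $A$ has CM by definition and $\MTT_p$ is classical (it reduces to the statement for a single CM point, which holds as noted in \S\ref{sub:Frobtorus}); so assume we are not in that case.

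In the non-CM case I would argue as follows. First, $(G_{\mathrm{B}}(f), \rho_{\mathrm{B}})$ is a strong Mumford--Tate pair of weights $\{0,1\}$ by Proposition~\ref{prop:MTpairs}(1), and $(G_u(f)^\circ, \rho_u)$ is a weak Mumford--Tate pair of weights $\{0,1\}$ by Proposition~\ref{prop:MTpairs}(2); moreover by Proposition~\ref{vmax_positivedensity} there is an ordinary point $y \in X(\Fpbar)$ with $G_u(y)^\circ$ a maximal torus of $G_u(f)^\circ$, so the ranks of $G_u(f)^\circ$ and $G_{\mathrm{B}}(f)_{\bQ_u}$ are controlled by the rank of the common Frobenius torus $G_{\mathrm{B}}(y)$. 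The key Lie-theoretic input: since $\dim A = \ell$ is prime, the representation space $\rho_u$ has dimension $2\ell$, and the commutant is as above; running through the classification of absolutely irreducible strong/weak Mumford--Tate pairs of weights $\{0,1\}$ whose underlying representation has dimension $2\ell$ with $\ell$ prime and the prescribed commutant (cf. \cite[Table 4.2, Corollary 5.11]{P98} and Tankeev's analysis), one finds that $\rho_{\mathrm{B}}$ restricted to $G_{\mathrm{B}}(f)^{\mathrm{der}}$ is either the standard representation of $\Sp_{2\ell}$ or $\SL_{2\ell}$, or (in the quaternion and real-multiplication subcases) a restriction-of-scalars of such, and in every case $G_{\mathrm{B}}(f)_{\overline{\bQ}_u}$ admits \emph{no} proper subgroup that is itself a weak Mumford--Tate pair of weights $\{0,1\}$ with the same commutant. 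Then, exactly as in the proof of Theorem~\ref{thm:Pinkcase2}, the inclusion $G_u(f)^\circ_{\overline{\bQ}_u} \subseteq G_{\mathrm{B}}(f)_{\overline{\bQ}_u}$ of weak Mumford--Tate pairs with equal commutant and equal center must be an equality; combined with Proposition~\ref{thm:Z0equal} this gives $\MTT_p$ for $A$.

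There are two subtleties to handle carefully. First, the prime $\ell = 2$ should be treated separately: abelian surfaces are covered by case (3) of Theorem~\ref{demo:MTcasesclassical} (fourfolds is a typo? no --- rather, $\dim 2$ is handled by the GSpin results of \cite{J23} since $\mathscr{A}_2$ relates to a GSpin Shimura variety, or directly by the quaternionic analysis), so one may assume $\ell$ is an odd prime, which removes the small exceptional coincidences in Pink's tables. Second, when $\End(A_{\overline\eta})$ is a real quadratic field or a quaternion algebra, $\rho_{\mathrm{B}}$ is no longer absolutely irreducible over $\bQ$, so one must first decompose over a splitting field $C$ and apply the tensor-decomposition / Serre--Tate factor formalism of \S\ref{subsub:STfactor}, then use the compatible-system and Chebotarev machinery of \S\ref{sub:simplecase}–\S\ref{subsub:MTgen} (Lemma~\ref{lm:compatibleinsinglecase}, the Galois-twist construction, Theorem~\ref{thm:chebotarev}) to propagate the equality $G(\mathscr{E}_{\sigma,v},x) = \mathcal{G}_{\sigma,C_v}$ from one embedding $\sigma$ to all of them. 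The main obstacle I anticipate is \emph{not} the $p$-adic adaptation per se --- the structural ingredients (Frobenius tori, Mumford--Tate pairs, compatible systems, Chebotarev) are all already in place above --- but rather carefully carrying out the case analysis on $\End(A_{\overline\eta})$ and the corresponding Lie-type classification so that every non-CM possibility genuinely forces maximality of $G_u(f)^\circ$, in particular ruling out the ``fake'' intermediate subgroups that can occur in Pink's tables for certain dimensions; this is exactly where the hypothesis that $\dim A$ is prime does the essential work, as in Tankeev's original argument.
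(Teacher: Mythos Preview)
Your approach is essentially the paper's: case analysis on $\End^0(A_{\overline{k}})$, then Mumford--Tate pair arguments in each case. The paper's execution is leaner in three respects. First, citing Moonen's classification, it lists only four possibilities for $\End^0$ when $g$ is prime --- $\bQ$, a totally real field of degree $g$, an imaginary quadratic field, or a CM field of degree $2g$ --- so the quaternion subcase and separate $g=2$ treatment you propose are unnecessary overhead (quaternion type does not occur for odd prime $g$). Second, for $\End^0=\bQ$ the paper does not rerun the pair classification but simply observes that $A$ is of Pink type and invokes Theorem~\ref{modpPink}. Third, for the totally real case the paper argues directly: each simple factor of $G_{\mathrm{B}}(A)^{\der}_{\bC_p}$ is $\SL_2$, the projection of $G_p(A)^{\circ,\der}_{\bC_p}$ onto each factor is surjective, and equality of commutants (Proposition~\ref{thm: Endequal}) rules out Goursat-type diagonals --- the compatible-system and Chebotarev machinery of \S\ref{sub:simplecase}--\S\ref{subsub:MTgen} that you invoke is not needed here. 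Primality of $g$ does real work only in the imaginary quadratic case, where $G_{\mathrm{B}}(A)^{\der}_{\bC}=\SL_g$ acting by the standard representation, and primality ensures this simple $A_{g-1}$-type Mumford--Tate pair admits no proper irreducible sub-pair of the same dimension --- exactly the obstruction you anticipate.
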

\begin{theorem}[Analogue of {\cite{MZ95}}]\label{modpMoonen-Zarhin}
    Let $A$ be an ordinary abelian fourfold over a function field $k$. Suppose that $A$ is not of $p$-adic Mumford type; cf. Definition~\ref{def:cohomologicalMType}. Then $\MTT_p$ holds for $A$.  
\end{theorem}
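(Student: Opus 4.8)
\textbf{Proof proposal for Theorem~\ref{modpMoonen-Zarhin}.}

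The plan is to mimic the strategy of Moonen--Zarhin \cite{MZ95} for the classical Mumford--Tate conjecture for abelian fourfolds, replacing Hodge-theoretic inputs by their $p$-adic monodromy analogues that have already been set up in this paper. First I would reduce to the case where $A$ is geometrically simple with $\End(A_{\overline{k}})=\bZ$: if $\End(A_{\overline{k}})\neq\bZ$ or $A$ is isogenous to a product, then $A$ is covered either by Theorem~\ref{modpCommelin} (products), by Theorem~\ref{modpPink} or Theorem~\ref{modpTank} (the various low-dimensional or CM/quaternionic cases), or by a direct computation with the reductive group cut out by the endomorphisms, exactly as in the classical classification of Mumford--Tate groups of abelian fourfolds. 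So assume $\End(A_{\overline{k}})=\bZ$. Then by Proposition~\ref{thm: Endequal}, $\rho_\mathrm{B}$ and each $\rho_u$ are absolutely irreducible (up to passing to the connected monodromy, which we may by the last lemma of \S\ref{sub:becm}), and by Proposition~\ref{prop:MTpairs} the pairs $(G_\mathrm{B}(A),\rho_\mathrm{B})$ and $(G_u(A)^\circ,\rho_u)$ are (weak/strong) Mumford--Tate pairs of weights $\{0,1\}$ in $8$ dimensions.

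Next I would run the case analysis on the possible $G_\mathrm{B}(A)$. Since $(G_\mathrm{B}(A),\rho_\mathrm{B})$ is a strong Mumford--Tate pair with $\rho_\mathrm{B}$ irreducible of dimension $8$ and $\End=\bZ$, the tensor decomposition recalled in \S\ref{subsub:STfactor} applies: $G_\mathrm{B}(A)=\bG_m\cdot G_1\cdots G_n$ with $\rho_\mathrm{B}=\rho_0\boxtimes\rho_1\boxtimes\cdots\boxtimes\rho_n$, and the list of possibilities for the almost-simple factors with $\dim\rho_i$ multiplying to $8$ is short — consulting Pink's table \cite[Table 4.2]{P98} (and the classification in \cite[\S 5]{P98}) one gets: (i) $G_\mathrm{B}(A)^{\der}$ of type $C_r$ or $A_r$ acting (up to the central torus) via a single fundamental or standard representation, i.e.\ $\GSp_8$, $\GL$-type cases, $\mathrm{Sym}$-cases, etc.; (ii) the exterior-square / spin cases giving dimension $8$; and (iii) the Mumford case, $(G_\mathrm{B}(A)^{\circ,\der}_{\bC},\rho_\mathrm{B})\cong(\SL_2\times\SL_2\times\SL_2,\,\mathrm{std}\otimes\mathrm{std}\otimes\mathrm{std})$. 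The hypothesis that $A$ is not of $p$-adic Mumford type, via Theorem~\ref{thm:GincludeinMT} together with ``$u$-independence'' of $(G_u(A)^\circ,\rho_u)$ from \cite[Theorem 1.2.4]{DA20}, rules out case (iii) for \emph{every} $u$ simultaneously: indeed if $G_\mathrm{B}(A)$ were of Mumford type then, taking $u=p$ and using that $G_p(A)^\circ$ is a Mumford--Tate pair inside $G_\mathrm{B}(A)_{\bQ_p}$ whose derived group must still be semisimple acting irreducibly, one checks $G_p(A)^{\circ,\der}$ has no proper such subgroup other than itself, contradicting ``not of $p$-adic Mumford type''; hence $G_\mathrm{B}(A)$ is of one of the non-Mumford types. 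In each of those remaining types, the key point is that the ambient pair $(G_\mathrm{B}(A)_{\bQ_u},\rho_\mathrm{B})$ admits \emph{no} proper connected reductive subgroup whose tautological representation is again an irreducible weak Mumford--Tate pair of weights $\{0,1\}$ — this is precisely the Lie-theoretic maximality statement established case by case in \cite[\S 4--5]{MZ95} and \cite[Prop.\ 4.3]{P98}, and it is purely about the complexified groups, so it transfers verbatim. Combined with $G_u(A)^\circ\subseteq G_\mathrm{B}(A)_{\bQ_u}$ and Proposition~\ref{prop:MTpairs}(2), this forces $G_u(A)^\circ=G_\mathrm{B}(A)_{\bQ_u}$, which is $\MTT_p$.

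The main obstacle I anticipate is \emph{excluding the $p$-adic Mumford case cleanly and making sure the exclusion is uniform across places}: the classical argument uses that an abelian fourfold with Mumford-type Hodge group has a very specific endomorphism/CM structure and a Hodge group whose only irreducible $8$-dimensional weight-$\{0,1\}$ subpair is itself, and one must be careful that the ``not of $p$-adic Mumford type'' hypothesis — which is phrased purely in terms of $G_p(A)_{\overline{\bQ}_p}^{\circ,\der}$ and $\rho_p$ — genuinely rules out $G_p(A)^{\circ,\der}\cong(\SL_2)^3$ with the triple-tensor representation, rather than some twisted or smaller form of it. Here I would lean on Proposition~\ref{thm: Endequal} (irreducibility of $\rho_p$, since $\End=\bZ$) and Proposition~\ref{thm:Z0equal} (the central torus of $G_p(A)$ is the full $\bQ_p$-torus), so that the only freedom is in the semisimple part; then the list of weak Mumford--Tate pairs of the form $(\text{semisimple},\,8\text{-dim irrep})$ from \cite[Table 4.2]{P98} contains the triple-$\SL_2$ pair as an isolated, maximal item among those with no proper Mumford--Tate subpair, so ``not $p$-adic Mumford type'' plus ``$G_p(A)^\circ$ a Mumford--Tate pair in $G_\mathrm{B}(A)_{\bQ_p}$'' leaves no room for $G_p(A)^\circ\subsetneq G_\mathrm{B}(A)_{\bQ_p}$. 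Everything else — the reduction to $\End=\bZ$, the absolute irreducibility, the replacement of $X$ by an étale cover to make monodromy connected — is routine given the machinery of \S\ref{sub:becm} and \S\ref{sec:ProofMT}, so I expect the write-up to be a finite enumeration of group-theoretic cases with the Lie-theoretic heavy lifting imported wholesale from \cite{MZ95} and \cite{P98}.
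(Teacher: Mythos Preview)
Your outline matches the paper's: split by endomorphism type, dispatch the non-simple case via Theorems~\ref{modpCommelin} and~\ref{modpTank}, then analyze simple fourfolds through Mumford--Tate pairs. But the reduction to $\End(A_{\overline{k}})=\bZ$ is too quick. Theorem~\ref{modpPink} \emph{assumes} $\End=\bZ$, and Theorem~\ref{modpTank} is for prime dimension, so neither covers a \emph{simple} fourfold with nontrivial endomorphisms. The paper does explicit work here: when $1<[\End^0(A_{\overline{k}}):\bQ]<8$ it decomposes $\rho_{\mathrm{B},\overline{\bQ}}$ into isotypic pieces $V_i$ of dimension $d\in\{2,4\}$; for $d=2$ and most $d=4$ cases one checks that $G_p(A)^{\circ,\der}_{\bC_p}$ surjects onto every simple factor of $G_\mathrm{B}(A)^{\der}_{\bC_p}$ and then uses $\End_{\bC_p}(\rho_\mathrm{B})=\End_{\bC_p}(\rho_p)$, but the $d=4$ case with $V_1$ a simple pair of type $A_3$ (Type IV(1,1)(ii) in \cite{MZ95}) genuinely requires the specific argument from \cite{MZ95}. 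Your ``direct computation'' gesture is not wrong, but it hides where real work happens.

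For the $\End=\bZ$ case you and the paper take different routes. You argue directly that the only proper weak Mumford--Tate subpair of $\GSp_8$ with absolutely irreducible symplectic $8$-dimensional representation and trivial commutant is the Mumford group, which the hypothesis excludes. This is correct and can be read off \cite[Table 4.2]{P98} by running the tensor decomposition $8=8,\,2\cdot4,\,2\cdot2\cdot2$ --- but your reference to \cite[Prop.~4.3]{P98} is misplaced (that proposition is about types $A_{2s-1}$ and $B_r$), and your paragraph-two claim that the non-Mumford types ``admit no proper \ldots\ weak Mumford--Tate pair'' is literally false for $\GSp_8$ before you patch it in paragraph three. The paper instead uses the interpolation machinery of \S\ref{sub:pinksresult}: Theorem~\ref{thm:Pinkinterpolation} produces a $\bQ$-group $G$ that is a \emph{strong} Mumford--Tate pair interpolating the $G_l(A)^\circ$, so that \cite[Prop.~4.5]{P98} applies directly to $G$ and yields the $\GSp_8$/Mumford dichotomy for the monodromy side (this is packaged as Lemma~\ref{prop:mtreduction1}). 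Your route avoids that machinery at the cost of a by-hand enumeration; the paper's route is more uniform and reuses infrastructure already built for Theorem~\ref{modpPink}.
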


\begin{corollary}\label{cor:Commelinconsequence}
    Suppose $f:X\rightarrow \mathscr{A}_{g_1,\Fpbar}\times \mathscr{A}_{g_2,\Fpbar}\times ...\times \mathscr{A}_{g_n,\Fpbar}$ be a morphism whose image lies generically in the ordinary locus. If $\mathrm{logAL}_p$ holds for each factor\footnote{For example, $\mathrm{logAL}_p$ trivially holds for $f_i$ if $f_i$ is dominant. More generally, $\mathrm{logAL}_p$ holds for $f_i$ if $f_i$ dominates a special subvariety.} $f_i:X\rightarrow \mathscr{A}_{g_i,\Fpbar}$, then $\MTT_p$ and $\mathrm{logAL}_p$ hold for $f$. If $f$ is further more a locally closed immersion, then $\mathrm{geoAO}_p$ holds for $X$.  
\end{corollary}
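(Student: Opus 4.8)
\textbf{Proof proposal for Corollary~\ref{cor:Commelinconsequence}.}

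The plan is to chain together the implications already established in the excerpt. First I would reduce to the case where $f$ separates factors over $\mathscr{A}_{g_1,\Fpbar}\times\cdots\times\mathscr{A}_{g_n,\Fpbar}$ — more precisely, after the standard modification of \cite[\S 4.4]{J23} (possibly enlarging the ambient Siegel space and passing to a dominant cover of $X$, which changes none of the three conjectures by the lemma following Theorem~\ref{Thm:Tatelocal}), we may assume $G_{\mathrm{B}}(f_i)^{\ad}$ is simple for each $i$. The hypothesis that $\mathrm{logAL}_p$ holds for each $f_i$ then gives, by Theorem~\ref{thm:logALimpliesMT} (equivalently Theorem~\ref{thm:connectionweb}(1)), that $\MTT_p$ holds for $f_i$ for every $i$, i.e. $G_{\mathrm{B}}(f_i)_{\bQ_u}=G_u(f_i)^\circ$ for all $u\in\mathrm{fpl}(\bQ)$.

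Next I would feed these into the mod $p$ Commelin theorem. Concretely, let $A_i$ be the pullback abelian scheme over (a finite field model of) $X$ via $f_i$, so that $A=A_{X}$ corresponds to $\prod_i A_i$ under the product embedding. Theorem~\ref{modpCommelin} (applied inductively, or rather its proof which handles finite products directly) shows that $\MTT_p$ holds for $\prod_i A_i$, hence for $f$. Here I should be slightly careful: Theorem~\ref{modpCommelin} is stated for abelian varieties over a function field $k/\Fpbar_q$, so one applies it to $A_{i,\eta}$ over the generic point $\eta$ of $X_0$; since $\MTT_p$ for $f$ is by definition $\MTT_p$ for the generic-fiber abelian variety $A_\eta$ (and is equivalent to the version for $A$), this is exactly what is needed. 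Once $\MTT_p$ holds for $f$, Theorem~\ref{thm:logALimpliesMT} immediately yields $\mathrm{logAL}_p$ for $f$, and if moreover $f$ is a locally closed immersion, Theorem~\ref{thm:MTimpliesAO} (Theorem~\ref{thm:connectionweb}(2)) gives $\mathrm{geoAO}_p$ for $X$.

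The only genuine subtlety — and the step I would be most careful about — is the compatibility between "separating factors" and the hypothesis "$\mathrm{logAL}_p$ holds for each $f_i$": after modifying $f$ to separate factors the factors change, so one must check that the hypothesis is preserved, or rather that it suffices to know $\mathrm{logAL}_p$ for the original factors. This is handled as in \cite[\S 4.4]{J23}: the modification replaces each $\mathscr{A}_{g_i,\Fpbar}$ by a product of Siegel spaces indexed by the simple factors of $G_{\mathrm{B}}(f_i)^{\ad}$, and $\mathrm{logAL}_p$ for $f_i$ descends to $\mathrm{logAL}_p$ for each of the resulting sub-factor maps because the smallest formal torus $\mathscr{T}_{f_i,x}$ and its projections behave compatibly with the almost-product decomposition (cf. the argument of Lemma~\ref{lm:MTGsameunip} together with Theorem~\ref{thm:logALimpliesMT}). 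Alternatively, and more cleanly, one can simply invoke Theorem~\ref{thm:logALimpliesMT} first to turn each $\mathrm{logAL}_p$ hypothesis into $\MTT_p$ for $f_i$ \emph{before} separating factors — $\MTT_p$ for $f_i$ is manifestly stable under the modifications since it is a statement about the reductive group $G_{\mathrm{B}}(f_i)$ and its $u$-adic realizations — and only then apply Theorem~\ref{modpCommelin}. I would adopt this second route to keep the bookkeeping minimal. The footnote claims (that $\mathrm{logAL}_p$ holds trivially when $f_i$ is dominant, or dominates a special subvariety) follow because in those cases $\mathscr{T}_{f_i,x}$ is the full formal torus $\mathscr{A}_{g_i,\Fpbar}^{/x}$, respectively the formal germ of the relevant special subvariety, which is by construction an irreducible component of the formal germ of a special subvariety.
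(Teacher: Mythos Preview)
Your proposal is correct, and the ``second route'' you adopt at the end is exactly the paper's proof: apply Theorem~\ref{thm:logALimpliesMT} to convert each $\mathrm{logAL}_p$ hypothesis into $\MTT_p$ for $f_i$, then apply Theorem~\ref{modpCommelin} (inductively on the factors) to obtain $\MTT_p$ for $f$, then Theorem~\ref{thm:logALimpliesMT} again for $\mathrm{logAL}_p$, and finally Theorem~\ref{thm:MTimpliesAO} for $\mathrm{geoAO}_p$ when $f$ is a locally closed immersion. The separating-factors detour you consider first is unnecessary for this corollary and does not appear in the paper's argument; you were right to discard it.
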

\begin{corollary}\label{cor:geoAOdim5}
Suppose $f:X\hookrightarrow \mathscr{A}_{5,\Fpbar}$ is a locally closed immersion whose image lies generically in the ordinary locus. Then $\mathrm{geoAO}_p$ holds for $X$.
\end{corollary}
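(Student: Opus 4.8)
\textbf{Proof proposal for Corollary~\ref{cor:geoAOdim5}.}

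The plan is to deduce this from Theorem~\ref{thm:MTimpliesAO} together with the classical-method inputs of Theorems~\ref{modpCommelin}--\ref{modpMoonen-Zarhin}. By Theorem~\ref{thm:MTimpliesAO}, it suffices to show that $\MTT_p$ holds for $f$, i.e., for the abelian scheme $A=A_X$ pulled back along $f$; then $\mathrm{geoAO}_p$ follows. So the whole content is to establish $\MTT_p$ for $A$, an abelian scheme of relative dimension $5$ over $X$, generically ordinary. First I would replace $f$ by a finite field model $f_0:X_0\to\mathscr{A}_{5,\Fpbar_q}$ and, after shrinking $X_0$ and passing to an étale cover (using the last lemma of \S\ref{sub:becm}), assume all $G_u(f)$ are connected; this does not affect $\MTT_p$ nor $\mathrm{geoAO}_p$.

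Next I would run a case analysis on the isogeny decomposition of $A_\eta$ over $\overline{\eta}$. Write $A_\eta\sim \prod_j B_j^{n_j}$ with the $B_j$ geometrically simple and pairwise non-isogenous. Since $\MTT_p$ for a power of an abelian variety is equivalent to $\MTT_p$ for the variety itself (the Mumford--Tate and $u$-adic monodromy of $B^n$ are the ``diagonal'' subgroups and are equal iff they are for $B$), and since Theorem~\ref{modpCommelin} reduces $\MTT_p$ for a product to $\MTT_p$ for each factor, it suffices to prove $\MTT_p$ for each geometrically simple $B_j$; these are ordinary abelian varieties over $k$ (a function field over $\Fpbar_q$) of dimension $\le 5$. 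If all simple factors have dimension $\le 4$, I apply the combination of Theorem~\ref{modpPink}, Theorem~\ref{modpTank} and Theorem~\ref{modpMoonen-Zarhin}: abelian varieties of dimension $1,2,3$ are handled exactly as in the classical $g\le 3$ case (they are of Pink type, using the computation of possible Mumford--Tate Lie types together with Definition~\ref{def:pinktype}), dimension $4$ factors not of $p$-adic Mumford type are handled by Theorem~\ref{modpMoonen-Zarhin}, and a dimension-$4$ factor of $p$-adic Mumford type is the one remaining subtlety — but in that situation $G_\mathrm{B}(B_j)^{\der}$ has $\bC$-points with $(G_\mathrm{B}^{\circ,\der},\rho)$ the triple tensor product of standard $\SL_2$'s, a group of dimension $9$, whose Hodge cocharacter yields $\dim U=4=\dim B_j$, so by Theorem~\ref{Thm:Tatelocal}(\ref{itcor3}) and the squeeze/local-structure results the corresponding image in $\mathscr{A}_{4,\Fpbar}$ has the maximal possible local dimension; I would argue, exactly as in \S\ref{subsub:I=1} (the $\#\bbI=1$ case of the proof of Theorem~\ref{thm:MTimpliesAO}), that no Zariski-dense collection of positive-dimensional special subvarieties can occur unless $X$ is already special, so $\mathrm{geoAO}_p$ holds by a pure dimension count without needing $\MTT_p$ for that factor.

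The only remaining case is $A_\eta$ geometrically simple of dimension exactly $5$. Here I would invoke Theorem~\ref{modpTank}: $5$ is prime, so $\MTT_p$ holds for any geometrically simple ordinary abelian variety of dimension $5$ over $k$. Combining all cases, $\MTT_p$ holds for $A$, and then Theorem~\ref{thm:MTimpliesAO} gives $\mathrm{geoAO}_p$ for $X$.

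\textbf{Main obstacle.} The genuinely delicate point is the abelian fourfold factor of $p$-adic Mumford type, which is precisely the case excluded from Theorem~\ref{modpMoonen-Zarhin} (and the case for which $\MTT_p$ remains open in general, cf. Theorem~\ref{demothm:Tconjfourfolds}). I expect the resolution to mirror the remark after Theorem~\ref{thm:AOleq5} in the excerpt: one does \emph{not} prove $\MTT_p$ for such a factor, but instead observes that the $p$-adic Mumford type forces $T_{f,x}$ to already be of the maximal dimension allowed, so the squeeze theorem (Theorem~\ref{thm:squeeze}) together with the irreducibility of the relevant Serre--Tate representations (Lemma~\ref{lm:simplecomponentTtrep}, applied to the simple Mumford--Tate triple attached to $\SL_2^{\otimes 3}$) forces $X$ to be special whenever it carries a Zariski-dense family of positive-dimensional special subvarieties. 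A secondary technical point is making sure the reductions — power, product decomposition, passing between $A_\eta$ and its simple factors — are compatible with the ``separates factors'' normalization of \S\ref{subsub:varconjAO} so that Theorem~\ref{thm:MTimpliesAO} applies verbatim; this is routine given the machinery already set up in \S\ref{sec:ProofAO}.
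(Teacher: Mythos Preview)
Your overall skeleton matches the paper: decompose $A_\eta$ into simple factors, use Theorem~\ref{modpCommelin} to reduce to each factor, and invoke Theorem~\ref{modpTank} and Theorem~\ref{modpMoonen-Zarhin} to get $\MTT_p$ in every case except when a simple factor $A'$ of dimension $4$ is of $p$-adic Mumford type. (A small correction: factors of dimension $1,2,3$ are handled by Theorem~\ref{modpTank} because those dimensions are prime, not because they are of Pink type; Pink type requires $\End=\bZ$.)

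The genuine gap is your treatment of the $p$-adic Mumford type factor. Your computation $\dim U=4$ is wrong. For $(G_p^{\circ,\der},\rho_p)\cong \SL_2^{\times 3}$ acting on the tensor product, the canonical Hodge/Newton cocharacter $\mu_x$ at an ordinary point factors nontrivially through \emph{exactly one} $\SL_2$ copy: if it hit two, the slope filtration on $\bH_{\cris}$ would have more than two steps, contradicting ordinariness. Hence $U_{G_p,\mu_x^{-1}}$ is $1$-dimensional, not $4$-dimensional. This is precisely Lemma~\ref{lm:multislopes}, and it says the opposite of what you claim: the image in $\mathscr{A}_{4,\Fpbar}$ is a Tate-linear \emph{curve} $C$, not something of maximal local dimension. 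Consequently your appeal to the squeeze theorem and the $\#\bbI=1$ machinery of \S\ref{subsub:I=1} does not go through --- that argument needs $\MTT_p$ for $f$ (to identify $G_p$ with $G_{\mathrm B}$ over $\bQ_p$), which is exactly what is unavailable here.

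The paper's actual argument is a direct dimension count using $\dim U=1$. Since $A\sim A'\times E$ with $E$ an elliptic curve, $f$ factors through $\mathscr{A}_{4,\Fpbar}\times\mathscr{A}_{1,\Fpbar}$, and by Lemma~\ref{lm:multislopes} the first projection lands in a curve $C$. Thus $X\subseteq C\times\mathscr{A}_{1,\Fpbar}$, a $2$-dimensional quasi-weakly special subvariety. If $\dim X=1$, $\mathrm{geoAO}_p$ is vacuous; if $\dim X=2$, $X$ is Zariski dense in $C\times\mathscr{A}_{1,\Fpbar}$ and hence quasi-weakly special. Either way $\mathrm{geoAO}_p$ holds, with no need to establish $\MTT_p$ for $A'$.
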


\subsection{Mumford--Tate conjecture for products}In this section we prove Theorem~\ref{modpCommelin}. Our proof is a modification of that in \cite{JMC}: most group theory treatments carry through, while some geometric constructions need to be modified with care. 

For reader's convenience, we give an overview of Commelin's original proof over number fields; see also \cite[\S 1.5]{JMC}. Instead of working with abelian varieties, one works with the ``adjoint motive'' $M_i$ associated to $A_i$ (over Betti realization, this amounts to consider the adjoint representation of $G_{\mathrm{B}}(A_1)^{\ad}$). It suffices to prove the Mumford--Tate conjecture for $M:=M_1\oplus M_2$. For this, one can assume that  $M_i$ is irreducible. If the Mumford--Tate conjecture holds for $A_1$ and $A_2$, then it holds for $M_1$ and $M_2$. So $G_\mathrm{B}(M_1)_{\bQ_l}=G_l(M_1)^\circ$. 
Assume $G_l(M)^\circ\subsetneq G_l(M_1)^\circ\times G_l(M_2)^\circ$, it suffices to show that $G_\mathrm{B}(M)\subsetneq G_\mathrm{B}(M_1)\times G_\mathrm{B}(M_2)$. The key point is to use Deligne's construction (\S\ref{subsub:deligneconstruction}) to attach to $M_i$ an (explicit) abelian variety $\widetilde{A}_i$ with the property that $G_\mathrm{B}(\widetilde{A}_i)^{\ad}= G_\mathrm{B}(M_i)$. One then uses Faltings' theorem to construct an isogeny between $\widetilde{A}_1$ and $\widetilde{A}_2$ to conclude. 

The overall structure of our proof in characteristic $p$ is not so much different from Commelin's. However, there is some difference that need to be pointed out:
\begin{enumerate}
\item\label{it:reason1} We will work with a geometric version of the Mumford--Tate conjecture as we did in the previous chapters: suppose that $X$ is a smooth connected $\Fpbar$-variety with a map $f:X\rightarrow \mathscr{A}_{g_1,\Fpbar}^{\ord}\times \mathscr{A}_{g_2,\Fpbar}^{\ord}$. Let $f_1,f_2$ be the projection of $f$ to each factor. We will show that if $\MTT_p$ holds for $f_1$ and $f_2$, then it holds for $f$. 
\item Instead of working with motives, we work with collections of coefficient objects (Betti, étale, and crystalline); cf. \S\ref{subsub:translations1}. We do this simply to avoid the technicalities for motives in characteristic $p$ -- in particular when it is tricky to makes sense of ``Betti realizations''. Nevertheless, to cope with the notation in \cite{JMC}, we will still use ``$M$'' to denote a collection of coefficient objects.  
\item As opposed to Commelin, we work explicitly with Shimura varieties, especially when performing Deligne's construction. In fact, we need to construct abelian schemes $\widetilde{A}_i$ in characteristic $p$. This involves a study of the mod $p$ reduction of the Hodge type Shimura variety that arises from Deligne's construction.
\end{enumerate}
\subsubsection{Families of adjoint coefficient objects}\label{subsub:translations1} 

Let $f:(X,x)\rightarrow \mathscr{A}_{g,\Fpbar}^{\ord}$, and let $f_0:X_0\rightarrow \mathscr{A}_{g,\Fpbar_q}^{\ord}$  be a finite field model. Let $A_{X_0}$ be the pullback abelian scheme over $X_0$. We define $M$ to be a collection of cohomology sheaves associated to $A_{X_0}$:  $$M:=\{H_{\bullet}(M),\bullet=\mathrm{B} \text{ and } u\in\mathrm{fpl}(\bQ)\}$$ that plays essentially the same role as the ``hyperadjoint abelian motive associated to $A_{X_0}$'' in the sense of \cite{JMC}: $H_{\mathrm{B}}(M)$ is the variation of rational Hodge structures on $\mathcal{X}_{f}\in \mathrm{MS}_f(\mathcal{A}_{g})$ associated to the adjoint representation $$G_\mathrm{B}(f)\acts \Lie G_\mathrm{B}(f)^{\ad}.$$ For a prime $u\in \mathrm{fpl}(\bQ)$, $H_u(M)$ is the $\bQ_u$-coefficient object (on an étale cover of $X_0$) corresponding to the adjoint representation \begin{equation}\label{eq:coeffobj}
    G_u(f)^\circ\hookrightarrow G_\mathrm{B}(f)_{\bQ_u}\acts \Lie G_\mathrm{B}(f)^{\ad}_{\bQ_u}.
\end{equation}
The symbol ``$M$'' is chosen to mean ``motive'' (and to cope with the notation in \cite{JMC}), though we are not using this language. In our paper, $M$ is simply understood as a collection of coefficient objects. 

Write $G_{\bullet}(M)$ for the monodromy group of $H_\bullet(M)$. We have $G_{\mathrm{B}}(M) = G_\mathrm{B}(f)^{\ad}$, and $G_{u}(M) = G_u(f)^{\ad}$.  Suppose that $E\subseteq \End(H_\mathrm{B}(M))$ is a number field. Following \cite[\S 4.1]{JMC2},  
\begin{enumerate}
    \item denote by $\Lambda_u$ the set of places of $E$ lying above $u$. Then $E_u:=E\otimes_{\bQ} \bQ_u =\prod_{\lambda\in \Lambda_l} E_\lambda$. Let $H_\lambda(M)=H_u(M)\otimes_{E_u} E_\lambda$. We have a splitting $H_u(M)=\bigoplus_{\lambda\in \Lambda_u} H_\lambda(M)$,
    \item denote by $G_\lambda(M)$ the monodromy group of $H_\lambda(M)$, which is a $E_\lambda$-algebra group.
    \item we also write $H_\Lambda(M)$ for the collection of coefficient objects $\{H_{\lambda}|\lambda\in \mathrm{fpl}(E)\}$. 
\end{enumerate}
\begin{lemma}\label{lm:Fcompatible}
   Notation as above, $H_\Lambda(M)$ is a weakly $E$-compatible family of coefficient objects in the sense of \cite[\S 3.4]{J23}.  
\end{lemma}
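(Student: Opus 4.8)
\textbf{Proof plan for Lemma~\ref{lm:Fcompatible}.} The plan is to reduce the claim to the weak $\bQ$-compatibility of the family $\{H_u(M)\}_{u\in\mathrm{fpl}(\bQ)}$, which is almost immediate from the construction, and then to track how the decomposition $H_u(M)=\bigoplus_{\lambda\in\Lambda_u}H_\lambda(M)$ over $E$ refines the characteristic polynomials. First I would recall what needs to be checked: by \cite[\S 3.4]{J23} a family $\{H_\lambda\}_{\lambda\in\mathrm{fpl}(E)}$ of $E_\lambda$-coefficient objects on $X_0$ is weakly $E$-compatible if for every closed point $x_0\in|X_0|$ there is a polynomial $P_{x_0}(t)\in E[t]$ such that for all but finitely many $\lambda$ (and in fact here all $\lambda$, after replacing Frobenius by a suitable power at the place above $p$) the characteristic polynomial of the Frobenius at $x_0$ acting on $H_{\lambda,x_0}$ equals the image of $P_{x_0}$ in $E_\lambda[t]$.

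The key step is to produce $P_{x_0}$ group-theoretically, exactly as in the proof of Lemma~\ref{lm:compatibleinsinglecase} (and its analogue in \cite{JMC,JMC2}). Since $x_0$ is ordinary it has a canonical lift $\tilde x_0$, and the geometric Frobenius lifts to an element $t_{x_0}\in G_{\mathrm B}(\tilde x_0)\subseteq G_{\mathrm B}(f)$. Via the adjoint representation $G_{\mathrm B}(f)\acts \Lie G_{\mathrm B}(f)^{\ad}$ the element $t_{x_0}$ acts on $H_{\mathrm B}(M)$, and its characteristic polynomial $P_{x_0}(t)$ lies in $\bQ[t]$ — indeed in $E[t]$ once we remember that $E\subseteq\End(H_{\mathrm B}(M))$ and that $t_{x_0}$ commutes with $E$ (it lies in the Mumford--Tate torus of $\tilde x_0$, which centralizes the endomorphism field), so the $E$-module $H_{\mathrm B}(M)$ decomposes under $t_{x_0}$ into $E$-stable pieces and $P_{x_0}$ factors accordingly as $\prod_{\lambda\in\Lambda_u}P_{x_0,\lambda}$. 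For $u\neq p$, the comparison isomorphism $\iota_u$ of \S\ref{subsub:mdppmtintro} identifies the Frobenius action on $H_{\lambda,x_0}$ with that of $t_{x_0}$ on the corresponding $E_\lambda$-summand of $H_{\mathrm B}(M)\otimes\bQ_u$, so the characteristic polynomial is the image of $P_{x_0}$ in $E_\lambda[t]$; for $u=p$ one uses $\omega_{x_0}^{\mathrm{DM}}$ and replaces the Frobenius by the power $F^{e}$ needed to make slopes integral, after which the same identification gives the image of $P_{x_0}$ (up to the harmless normalization already built into the notion of weak compatibility in \cite[\S 3.4]{J23}). This is essentially the content of Theorem~\ref{thm:GincludeinMT} together with the explicit realization of $H_u(M)$ via (\ref{eq:coeffobj}).

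The main obstacle — really the only point requiring care — is the bookkeeping at the prime $p$: one must check that the crystalline coefficient object $H_p(M)$, obtained by applying the adjoint representation functor to the overconvergent $F$-isocrystal $\bH_{p,f}$, genuinely carries the Frobenius structure for which $\omega_{x_0}^{\mathrm{DM}}$ computes the right characteristic polynomial, and that passing to an $F^a$-isocrystal (as noted in the discussion of weakly compatible systems in \S\ref{sub:becm}) does not disturb the $E$-linear decomposition. Both are formal consequences of the Tannakian construction of $H_u(M)$ and of the fact that the forgetful and adjoint functors are $F$-equivariant, so no genuinely new input is needed beyond what is already recorded in \S\ref{sub:becm} and in \cite{DA20,J23}. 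Once $P_{x_0}$ and its factorization over $E$ are in hand, weak $E$-compatibility of $H_\Lambda(M)$ follows directly from the definition.
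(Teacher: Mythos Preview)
Your proposal is correct and follows essentially the same approach as the paper: the paper's proof simply refers back to Lemma~\ref{lm:compatibleinsinglecase} and notes that the key input is the canonical lift of ordinary points, which is exactly what you do --- lift the geometric Frobenius at $x_0$ to $t_{x_0}\in G_{\mathrm B}(\tilde x_0)\subseteq G_{\mathrm B}(f)$, push it through the adjoint representation, and observe that the resulting characteristic polynomial is independent of $\lambda$.

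One small imprecision worth tightening: when you write that ``$P_{x_0}(t)$ lies in $\bQ[t]$ --- indeed in $E[t]$'' and then that it ``factors accordingly as $\prod_{\lambda\in\Lambda_u}P_{x_0,\lambda}$'', you are conflating two different polynomials. The $\bQ$-characteristic polynomial of $t_{x_0}$ on $H_{\mathrm B}(M)$ is not the object you want; rather, since $t_{x_0}$ commutes with $E$, it acts $E$-linearly on the $E$-vector space $H_{\mathrm B}(M)$, and its $E$-characteristic polynomial $P_{x_0}\in E[t]$ is the single polynomial whose image in each $E_\lambda[t]$ gives the Frobenius characteristic polynomial on $H_{\lambda,x_0}$. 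There is no further factorization indexed by $\Lambda_u$; the same $P_{x_0}\in E[t]$ works for every $\lambda$. This is a wording issue only --- your underlying argument is the intended one.
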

\begin{proof}
   The proof is similar to Lemma~\ref{lm:compatibleinsinglecase}. The key point is the existence of canonical lift for ordinary abelian varieties. We leave the detailed proof to the reader.
\end{proof}
This lemma is a characteristic $p$ analogue of \cite[Theorem 5.13]{JMC}. The original version is more subtle (since there is in general no canonical lift for a non-ordinary abelian variety over finite field), and requires Kisin's result (\cite[Theorem 2.2.3]{Kisinmodp}) as a main input.

\subsubsection{Deligne's construction}\label{subsub:deligneconstruction}  
Let $(G,\cD)$ be a simple adjoint Shimura datum. As explained in \cite[Section 2.3.4]{Deligne}, $G=\Res_{E/\bQ}\mathcal{G}$ for some totally real field $E$ and a absolute simple adjoint group over $E$. Let $\Sigma_E$ be the set of real embeddings of $E$. We can decompose $\Sigma_E=\Sigma_{E,c}\sqcup \Sigma_{E,nc}$, where $\Sigma_{E,c}$ (\textit{resp}. $\Sigma_{E,nc}$) consists of embeddings $\sigma$ such that $\mathcal{G}_{\sigma,\bR}$ is compact (\textit{resp}. non-compact).

The Dynkin diagram $\Delta$ of $G$ is a disjoint union of (connected) Dynkin diagrams $\Delta_\tau$ of $G_{\tau,\bC}$, $\tau\in \Sigma_E$. Let $\mu$ be the Hodge cocharacter of a point in $\cD$. Then $\mu$ gives rise to a collection of special nodes in $ \Delta$. Following the terminology of \cite{JMC}, we will call the data $(\Delta,\mu)$ a \textbf{Deligne--Dynkin diagram}. Note that the absolute Galois group $\Gal(\bQ)$ acts on $\Delta$. See \cite[Table 1.3.9]{Deligne} for a complete classification of connected Deligne--Dynkin diagrams, and see also \cite[\S 6.9]{JMC} for a detailed explanation.

Now fix an imaginary quadratic extension $F/E$. Let $\Sigma_F$ be the set of complex embeddings of $F$. Let $\Phi\subseteq \Sigma_F$ be a partial CM type that contains exactly one complex embedding lying above each $\sigma\in \Sigma_{E,c}$, and contains no complex embedding lying above $\Sigma_{E,nc}$.  Define a  Hodge structure $h_{\Phi}: \bS\rightarrow F^*\otimes \bR$ (here $F^*=\Res_{F/\bQ}\bG_m$) on $F\otimes \bC\simeq \bC^{\Sigma_F}$ by placing $\tau\in \Sigma_F$ in the bi-degree $(1,0)$, if $\tau\in\Phi$; $(0,1)$, if  $\overline{\tau}\in \Phi$; and $(0,0)$ otherwise. 
\begin{proposition}[{\cite[Proposition 2.3.10]{Deligne}}]\label{Deligneconstruction} Let $(G,\cD)$ be of type A,B,C,$\text{D}^{\mathbb{H}}$ or $\text{D}^{\mathbb{R}}$. Let $F$ be any quadratic extension of $E$ with partial CM type $\Phi$ as above. There exists morphisms of Shimura data (as constructed in the following paragraphs)
$$(G,\cD)\leftarrow (G_1,\cD_1)\hookrightarrow (\GSp(V'),\mathfrak{H}^{\pm})$$
    such that $\mathbf{E}(G_1,\cD_1)=\mathbf{E}(G,\cD)\mathbf{E}(F^*,h_\Phi)$ and $G_1^{\ad}=G$.\footnote{To avoid confusion with the totally real field $E$, we use bold font $\mathbf{E}$ for the reflex field.} 
\end{proposition}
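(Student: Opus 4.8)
The plan is to deduce this from Deligne's construction, so let me sketch the shape of that argument. Write $G=\Res_{E/\bQ}\mathcal{G}$ with $\mathcal{G}$ absolutely simple adjoint over the totally real field $E$, let $\mathcal{G}^{\mathrm{sc}}$ be its simply connected cover, and set $\widetilde{G}=\Res_{E/\bQ}\mathcal{G}^{\mathrm{sc}}$, so $\widetilde{G}\twoheadrightarrow G$. First I would fix, for each admissible type $A$, $B$, $C$, $\mathrm{D}^{\bR}$, $\mathrm{D}^{\bH}$, the appropriate minuscule representation $\rho\colon \mathcal{G}^{\mathrm{sc}}\to \GL(W)$ over $E$ — the standard representation in types $A$ and $C$, the vector representation in types $B$ and $\mathrm{D}^{\bR}$, a half-spin representation in type $\mathrm{D}^{\bH}$. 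The key input, read off from the classification of connected Deligne--Dynkin diagrams recalled before the statement, is that for a Hodge cocharacter $\mu$ attached to a point of $\cD$ the weights of $\mu$ on $W$ lie in a two-element set of consecutive integers; hence $W$ carries, over each real place $\sigma$ of $E$ at which $\mathcal{G}_{\sigma,\bR}$ is non-compact, a weight-one Hodge structure, and a constant (weight-zero) one when $\mathcal{G}_{\sigma,\bR}$ is compact. Each $W$ also carries a $\mathcal{G}^{\mathrm{sc}}$-invariant bilinear form, symmetric or alternating according to the type.

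Next I would descend to $\bQ$ and produce a polarization by tensoring with the rank-one CM factor. Set $V'=\Res_{E/\bQ}(W\otimes_E F)$, which carries the representation of $\widetilde{G}\times_{\bQ}\Res_{F/\bQ}\bG_m$; the morphism $h_1\colon \bS\to G_{1,\bR}$ is the ``product'' of the cocharacter coming from $\cD$ on the $\widetilde{G}$-factor with $h_\Phi$ on the $F$-factor. The balancing condition on the partial CM type $\Phi$ — exactly one place above each $\sigma\in\Sigma_{E,c}$ and none above $\Sigma_{E,nc}$ — is engineered precisely so that on every $\sigma$-isotypic piece of $V'_{\bR}$ the two contributions combine to Hodge types $(1,0)$ and $(0,1)$ only: at $\sigma\in\Sigma_{E,nc}$ the CM part of $h_\Phi$ is trivial while $W$ already has weight one, and at $\sigma\in\Sigma_{E,c}$ the $W$-part is constant while $h_\Phi$ supplies the rotation. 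Thus $V'$ is a polarizable $\bQ$-Hodge structure of type $\{(1,0),(0,1)\}$, the polarization obtained by combining the invariant form on $W$ with the CM pairing on $F$ (twisted by a purely imaginary element of $F$ to correct the symmetry). I would then let $G_1\subseteq\GL(V')$ be the subgroup generated by the image of $\widetilde{G}$ and by $F^*$, check that it is connected reductive with $G_1^{\mathrm{ad}}=G$ (since $\widetilde{G}\twoheadrightarrow G$ and $F^*$ lands in the centre of $G_1$), and take $\cD_1$ to be the $G_1(\bR)$-conjugacy class of $h_1$.

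Finally, the alternating form just produced yields the embedding of Shimura data $(G_1,\cD_1)\hookrightarrow(\GSp(V'),\mathfrak{H}^{\pm})$, while the adjoint quotient gives the map $(G_1,\cD_1)\to(G,\cD)$; the reflex field identity $\mathbf{E}(G_1,\cD_1)=\mathbf{E}(G,\cD)\mathbf{E}(F^*,h_\Phi)$ follows because the Hodge cocharacter $\mu_{h_1}$ is the product of $\mu_{h}$ (for $h\in\cD$) and $\mu_{h_\Phi}$, so its field of definition is the compositum of the two fields of definition. The main obstacle is the type-by-type verification in the first step: one must confirm that for each of the listed types a minuscule representation with two consecutive $\mu$-weights exists and carries an invariant form of the predicted symmetry, and that after the $(F,h_\Phi)$-twist the resulting $h_1$ has only Hodge types $(1,0)$ and $(0,1)$, with no stray $(2,-1)$ or $(0,0)$ components — this is exactly the case analysis behind Deligne's Table 1.3.9, and it is why the exceptional types and the triality form of $\mathrm{D}$ are excluded. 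Since the statement is verbatim \cite[Proposition 2.3.10]{Deligne}, in practice I would simply invoke it, but its content is the construction outlined above.
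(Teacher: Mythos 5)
Your outline follows the same broad shape as the paper's sketch of Deligne's argument, but there is a genuine gap in the first step that Deligne's construction is specifically designed to avoid.

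You assert that for each admissible type one can fix a single minuscule representation $\rho\colon \mathcal{G}^{\mathrm{sc}}\to \GL(W)$ defined \emph{over $E$}. This is false in general. In type $A$ the two endpoints of the Dynkin diagram (the two symplectic nodes) may be swapped by the outer Galois action — think of a unitary group over $E$ attached to a Hermitian form over a CM extension $K/E$; the ``standard representation'' is a $K$-vector space, not an $E$-vector space, and the irreducible constituents $V(\alpha_1)$, $V(\alpha_n)$ over $\overline{\bQ}$ are permuted by $\Gal(\overline{\bQ}/\bQ)$. Similarly, in type $\mathrm{D}^{\bH}$ the two half-spin representations can be interchanged by the form. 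What Deligne does, and what the paper's sketch records, is to work with the full $\Gal(\bQ)$-stable set $S$ of $\mu$-symplectic nodes and the étale $E$-algebra $F_S$ (with $\Hom(F_S,\overline{\bQ})\simeq S$ as Galois sets). One descends the sum $\bigoplus_{s\in S}V(s)^n$ to a $\bQ$-representation $V$ of $\widetilde{G}$, which is canonically an $F_S$-module, and then performs a first twist by $F_S$ before tensoring with $F$. Your proposal skips the $F_S$-step entirely.

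A related issue: before the $F_S$-twist, the lift of $h\in\cD$ to $\widetilde{G}_\bR$ is only a \emph{fractional} homomorphism $\bS\to\widetilde{G}_\bR$, giving a fractional Hodge structure on $V$ with types $\{(r,-r),(r-1,1-r)\}$ at non-compact places for some $r\in\bQ$ — not ``consecutive integers'' as you claim. It is precisely the $F_S$-twist (which carries the complementary fractional type $(1-r,r)$) that makes the Hodge structure on $V$ integral of weight one at non-compact places and of type $(0,0)$ at compact places; only then does the final tensor with $(F,h_\Phi)$ produce a $\bQ$-Hodge structure of type $\{(1,0),(0,1)\}$. This two-step twist, followed by an application of Deligne's Corollary 2.3.3 to replace $G_3 = \widetilde{G}'\cdot F_S^*\cdot F^*$ by a subgroup $G_1$ with the same derived group, is the actual content of the construction; your $G_1 = (\text{image of }\widetilde{G})\cdot F^*$ does not account for $F_S^*$. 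Since you ultimately invoke Deligne's proposition verbatim, the result you need is indeed available, but the sketch as written is not a correct account of why it holds.
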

A consequence of this proposition is that a simple adjoint Shimura datum $(G,\cD)$ is of abelian type if and only if it is of type A,B,C,$\text{D}^{\mathbb{H}}$ or $\text{D}^{\mathbb{R}}$.

Since it will be used later, we briefly sketch the construction. First, $(\Delta,\mu)$ admits a subset $S$ of $\mu$-symplectic nodes (see \cite[Definition 6.7, Theorem 6.12]{JMC}, or \cite[2.3.7]{Deligne}). Let $\widetilde{G}$ be the simply connected cover of $G$. For each $s\in S$, there is a representation $V(s)$ of $\widetilde{G}_{\bC}$ whose highest weight corresponds to $s$. These can be descend to a representation $V$ of $\widetilde{G}$ over $\bQ$, such that $V_{\bC}=\bigoplus V(s)^n$ for some $n$. Let $\widetilde{G}'$ be the image of $\widetilde{G}$ in $\GL(V)$. 

Lift $h\in \cD$ to a fractional homomorphism (\cite[1.3.4]{Deligne}) $\mathbb{S}\rightarrow \widetilde{G}_\bR$. This gives a fractional Hodge structure over $V$, which can be explicitly described as follows: if $s\in \Delta_\tau$ for a $\tau\in \Sigma_{E,c}$, then $V(s)$ is of type $\{(0,0)\}$; if $s$ lies in a $\tau\in \Sigma_{E,nc}$, then $V(s)$ is of type $\{(r,-r), (r-1,1-r )\}$, where $r $ is as in \cite[1.3.9]{Deligne}. 

Let $F_S$ be the étale $E$-algebra such that $\Hom(F_S,\overline{\bQ})\simeq S$ as $\Gal(\bQ)$-sets. We put a fractional Hodge structure on $F_S$ as follows: the component of $\bC^{[s]}$ in $F_S\otimes_\bQ \bC \simeq \bC^S$ is of type $\{(0,0)\}$, if $s\in \Delta_\tau$ for a $\tau\in \Sigma_{E,c}$; otherwise it is of type $(1-r,r)$, where $r$ is as in the previous paragraph.  Note that $V$ is canonically an $F_S$-module, and $F_S\otimes_{F_S} V$ is the same $\bQ$-space $V$ but carrying a different Hodge structure $h_2$: if $s\in \Delta_\tau$ for a $\tau\in \Sigma_{R,c}$, then $V(s)$ is of type $\{(0,0)\}$; otherwise $V(s)$ is of type $\{(1,0), (0,1)\}$. Let $G_2=\widetilde{G}'\cdot F_S^*\subseteq \GL(V)$. 

Recall that $F$ is equipped with a Hodge structure $h_{\Phi}$ (in the paragraph preceding Proposition~\ref{Deligneconstruction}). We take $V'=F\otimes_E V$, and equip it with the product Hodge structure 
\begin{equation}\label{eq:prdhodge}
    h_3=h_\Phi\otimes h_2.
\end{equation}
Then $V'$ is of type $\{(0,1),(1,0)\}$. Let $(G_3,\cD_3)$ be the Shimura datum defined as follows: $G_3={G}_2\cdot F^*\subseteq \GL(V')$, and $\cD_3$ is the conjugacy class in $G_3(\bR)$ of the cocharacter $h_3:\bS\rightarrow G_{3,\bR}$. We have $(G_3,\cD_3)\rightarrow (G,\cD)$ and $\mathbf{E}(G_3,\cD_3)=\mathbf{E}(G,\cD)\mathbf{E}(F^*,h_{\Phi})$. Finally apply \cite[Corollary 2.3.3]{Deligne} to obtain the desired $(G_1,\cD_1)$. We remind the readers that $G_1\subseteq G_3$ is a subgroup that has the same derived subgroup as $G_1$, and the cocharacters in $\cD_3$ factor through $G_{1,\bR}$ (so $\cD_1=\cD_3$).


\subsubsection{Ordinary reduction of Deligne's construction} We say a number field $K\subseteq \bC$ is \textbf{of height 1 at $p$}, if the inclusion $K\subseteq \bC\simeq \bC_p$ factors through $\bQ_p$.
\begin{lemma}\label{lm:lmlm}
   Suppose that $\cD$ contains a CM point $h$ with reflex field $\mathbf{E}_h$ of height 1 at $p$. We can choose $F$ and $\Phi$ such that the following are true: \begin{enumerate}
       \item\label{lmlm:111}  $\mathbf{E}(F^*,h_\Phi)$ is of height 1 at $p$.
       \item\label{lmlm:222} Let $h_3$ be the cocharacter that arises from $h$ as in (\ref{eq:prdhodge}), which furthermore lies in $\cD_1$. Then $h_3$ is a CM point with reflex field of height 1 at $p$.   
   \end{enumerate} 
\end{lemma}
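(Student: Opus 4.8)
\textbf{Proof plan for Lemma~\ref{lm:lmlm}.}

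The plan is to choose the imaginary quadratic field $F/E$ and the partial CM type $\Phi$ so that all the CM reflex fields in sight embed into $\bQ_p$ via the fixed embedding $\bC \simeq \bC_p$. First I would unwind what ``height $1$ at $p$'' costs us. Fix the place $v_0$ of $\overline{\bQ}$ induced by $\bC \simeq \bC_p$ and let $\fp$ be the place of $E$ below it. The hypothesis that $\mathbf{E}_h$ is of height $1$ at $p$ says precisely that the reflex cocharacter class $\{\mu_h\}$ over $\mathbf{E}_h$, and hence the CM type of $h$, is ``$p$-ordinary'': concretely, because $h$ is a CM point of the abelian-type datum $(G,\cD)$, its associated CM algebra carries a CM type $\Psi$ such that $\Psi$ and its complex conjugate $\overline{\Psi}$ are exchanged by the Frobenius at $v_0$ — this is the Shimura--Taniyama ordinariness condition, and it is equivalent to $\mathbf{E}_h$ (the field of definition of $\Psi$, i.e.\ the reflex field) being unramified/of height $1$ at $p$ in the sense used here. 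The key structural input I would invoke is that being of height $1$ at $p$ is stable under compositum of number fields: if $K_1, K_2 \subseteq \bC$ both factor through $\bQ_p$ under $\bC \simeq \bC_p$, then so does $K_1 K_2$. This makes clause (\ref{lmlm:222}) almost formal once clause (\ref{lmlm:111}) is arranged, since $h_3 = h_\Phi \otimes h_2$ and $h_2$ is built from $h$ purely algebraically (via the $F_S$-module structure), so the reflex field of $h_3$ is contained in the compositum $\mathbf{E}(F^*, h_\Phi) \cdot \mathbf{E}_h \cdot (\text{the field cut out by the } S\text{-data})$, each factor of which I will have arranged to be of height $1$ at $p$.

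The heart of the argument is clause (\ref{lmlm:111}): choosing $F$ and $\Phi$ so that $\mathbf{E}(F^*, h_\Phi)$ is of height $1$ at $p$. Recall $\mathbf{E}(F^*, h_\Phi)$ is the reflex field of the CM type $(F, \Phi)$, where $\Phi$ is a partial CM type consisting of one embedding of $F$ above each $\sigma \in \Sigma_{E,c}$ (and none above $\Sigma_{E,nc}$). First I would pick the imaginary quadratic extension $F/E$: I want $F$ to be split above every prime of $E$ dividing $p$ (so that the places of $F$ above $p$ come in conjugate pairs, and a ``$p$-ordinary'' partial CM type exists) — such $F$ exist in abundance, e.g.\ $F = E(\sqrt{-d})$ for a suitable $d$ congruent to a nonsquare mod each prime of $E$ above $p$, which is a congruence condition one can always solve. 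Then, since $F$ is split at $p$, I can choose the partial CM type $\Phi$ to be \emph{$p$-ordinary}: for each place $w$ of $F$ above $p$, exactly one of $w, \bar w$ is ``selected'' by $\Phi$ in the appropriate sense, so that $\Phi \sqcup \overline{\Phi}$ (restricted to the $c$-part) together with the $\{(0,0)\}$-part exhausts $\Sigma_F$ with $\Phi \cap \overline\Phi$ empty on the $c$-part; the ordinariness/Shimura--Taniyama criterion then forces the reflex field $\mathbf{E}(F^*, h_\Phi)$ to be unramified at $p$, i.e.\ of height $1$ at $p$. I should also make sure the selection of which embedding over each $\sigma \in \Sigma_{E,c}$ lands in $\Phi$ is \emph{compatible with the decomposition group at $v_0$}, i.e.\ chosen $\Gal(\overline{\bQ}_p/\bQ_p)$-equivariantly on each orbit; this is what actually pins down that the reflex field embeds into $\bQ_p$ rather than merely being unramified.

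After that, clause (\ref{lmlm:222}) is assembled from the pieces. The cocharacter $h_2$ on $V$ (with its $F_S$-module structure) has reflex field contained in the compositum of $\mathbf{E}_h$ with the field cut out by the $\Gal(\bQ)$-action on the set $S$ of $\mu$-symplectic nodes; since $S$ and the special-node data come from $(\Delta, \mu)$ which is defined over $\mathbf{E}_h$ (the reflex of $h$ controls the $\mu$-conjugacy class, hence the Galois action on special nodes through a quotient of $\Gal(\mathbf{E}_h)$), this auxiliary field is again of height $1$ at $p$. The product cocharacter $h_3 = h_\Phi \otimes h_2$ then has reflex field inside $\mathbf{E}(F^*, h_\Phi) \cdot \mathbf{E}(h_2)$, a compositum of height-$1$-at-$p$ fields, hence of height $1$ at $p$; and $h_3$ is visibly a CM point of $(G_1, \cD_1) = (G_3, \cD_3)$ since it is a product of CM cocharacters into a torus times the (CM) $h$ — more precisely $h_3$ factors through the torus $F^* \cdot F_S^* \cdot Z$ inside $G_3$. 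I would close by verifying $h_3 \in \cD_1$ using Deligne's Corollary 2.3.3 (as in the sketch following Proposition~\ref{Deligneconstruction}): the cocharacters in $\cD_3$ whose $G$-component lies in $\cD$ already factor through $G_{1,\bR}$, and $h_3$ is one of these because its image in $(G,\cD)$ is $h$.

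The main obstacle I anticipate is clause (\ref{lmlm:111}): one must simultaneously (a) choose $F/E$ split above $p$, (b) choose $\Phi$ to be $p$-ordinary \emph{and} decomposition-group-equivariant at $v_0$, and (c) keep $\Phi$ of the combinatorial shape required by Deligne's construction (one embedding above each $\sigma \in \Sigma_{E,c}$, none above $\Sigma_{E,nc}$) — reconciling the ``ordinary'' constraint with the ``partial CM type of the right shape'' constraint is the delicate bookkeeping step, and I would handle it by first fixing, for each prime of $E$ above $p$, the local splitting behavior of $F$, then making the global choice of $\Phi$ orbit-by-orbit under $\langle \text{Frob}_{v_0}\rangle$ acting on $\Sigma_{E,c}$ and its lifts to $\Sigma_F$. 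Everything else is a compositum-closure argument plus the already-cited input that CM points of abelian-type data satisfy Shimura--Taniyama, so I expect it to go through without further surprises.
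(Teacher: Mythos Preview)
Your overall strategy is correct and leads to the same conclusion as the paper, but you take a circuitous route and miss the clean observation that makes the argument short. The paper does not invoke Shimura--Taniyama or any ordinariness criterion at all. Instead, it simply \emph{bounds the reflex field from above}: take $F=E(\sqrt{-d})$ with $p$ split in $\bQ(\sqrt{-d})$, let $E_c$ be the field whose absolute Galois group is the stabilizer of the set $\Sigma_{E,c}$, and set $F_c=E_c(\sqrt{-d})$. One then constructs $\Phi$ as a union of $\Gal(\overline{\bQ}/F_c)$-orbits of chosen lifts $\tau_i\in\Sigma_F$, so that by design $\Gal(\overline{\bQ}/F_c)$ stabilizes $\Phi$ and hence $\mathbf{E}(F^*,h_\Phi)\subseteq F_c$. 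The crux is that $E_c\subseteq\mathbf{E}_h$: the conjugacy class of $\mu_h$ determines which factors $\mathcal{G}_{\sigma,\bR}$ are non-compact, so any Galois element fixing $\mathbf{E}_h$ must preserve the partition $\Sigma_E=\Sigma_{E,c}\sqcup\Sigma_{E,nc}$. Since $\mathbf{E}_h$ is of height $1$ at $p$, so is $E_c$, and since $p$ splits in $\bQ(\sqrt{-d})$, so is $F_c$. Part (2) is then the compositum argument you describe, with the reflex field of $h_3$ contained in $\mathbf{E}(F^*,h_\Phi)\cdot\mathbf{E}_h$.

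Your plan for (1) works in principle --- ``choose $\Phi$ to be $D_{v_0}$-equivariant'' is exactly right --- but you never say why such an equivariant choice exists. The obstruction is precisely whether $D_{v_0}$ preserves $\Sigma_{E,c}$; this is the content of $E_c\subseteq\mathbf{E}_h$, which you only gesture at through the $S$-node discussion in part (2). The Shimura--Taniyama detour is unnecessary: you correctly note at the end that ``$p$-ordinariness'' only gives unramifiedness, not height $1$, so the decomposition-group-equivariance is doing all the work anyway. One small slip: to make $F/E$ \emph{split} at primes above $p$ you want $-d$ to be a \emph{square} in each $E_\fp$, not a nonsquare; taking $p$ split in $\bQ(\sqrt{-d})$ (so $\sqrt{-d}\in\bQ_p$) achieves this uniformly, which is what the paper does.
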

\begin{proof}
 Consider a quadratic imaginary extension $\bQ(\sqrt{-d})/\bQ$ where $p$ splits completely. Let $F=E(\sqrt{-d})$. Let $\Gal(E_c)\subseteq \Gal(\bQ)$ be the subgroup that fixes $\Sigma_{E,c}$ (i.e., the fixed field of $\Sigma_{E,c}$ is $E_c$).  Let $F_c=E_c(\sqrt{-d})$. Consider the orbits $O_1,...,O_n$ of the group action $\Gal(E_c)\acts \Sigma_{E,c}$. For each $1\leq i\leq n$, pick a representative $\sigma_i\in O_i$ and pick a $\tau_i\in \Sigma_{F}$ lying above $\sigma_i$. Define $$\Phi= \bigcup_{i=1}^n \Gal(F_c)\cdot\tau_i.$$
Then $\Phi$ contains at least one complex embedding lying above each $\sigma \in \Sigma_{E,c}$. On the other hand, if $\Phi$ contains both $\tau$ and $\overline{\tau}$, then there exists $g\in \Gal(F_c)$ such that $g\tau=\overline{\tau}$. This implies that $g(\sqrt{-d})=-\sqrt{-d}$, so $g\notin \Gal(F_c)$. The contradiction shows that $\Phi$ contains exactly one element above each $\sigma \in \Sigma_{E,c}$. 

Note that $F_c$ fixes $\Phi$, so we have $\mathbf{E}(F^*,h_{\Phi})\subseteq F_c$. Since $h$ factors non-centrally through a simple factor of $G_{\bR}$ iff this factor is compact, we have $E_c\subseteq \mathbf{E}_h$. Since $\mathbf{E}_h$ is of height 1, so is $E_c$. Since  $p$ splits completely in $\bQ(\sqrt{-d})/\bQ$, $F_c$ is also of height 1. This implies that $\mathbf{E}(F^*,h_{\Phi})$ is of height 1 as well. This proves (\ref{lmlm:111}). 

Now let $h_3$ be as in (\ref{eq:prdhodge}). It is clear that $h_3$ is CM. Its reflex field is of height 1 at $p$ since it is contained in $\mathbf{E}(F^*,h_{\Phi})\mathbf{E}_h$. 
\end{proof}

\subsubsection{Deligne abelian scheme}\label{subsub:furthermodi} Suppose that $X$ is a smooth connected $\Fpbar$-variety with a map $f:X\rightarrow \mathscr{A}_{g,\Fpbar}^{\ord}$, such that $G_\mathrm{B}(f)^{\ad}$ is a simple $\bQ$-group. Via pullback, we get an ordinary abelian scheme $A$ over $X$. In this section, we explain how to associate to (a quasi-finite dominant cover of) $X$ a new ordinary abelian scheme $\widetilde{A}$ using Deligne's construction. This is similar to the process of associating a Kuga--Satake abelian scheme to a K3 surface .  


Fix an $\mathcal{X}_{f}\in \mathrm{MS}_{f}(\mathcal{A}_{g})$ into $\mathcal{A}_{g'}$.  Let $G=G_\mathrm{B}(f)^{\ad}$. We have a simple adjoint Shimura datum $(G,\cD)$. Since $\cD$ contains a CM point of ordinary reduction, we can choose $F$ and $\Phi$ in Deligne's construction \S\ref{subsub:deligneconstruction} so that the conditions of Lemma~\ref{lm:lmlm} are met. As a result, we obtain explicit morphisms of Shimura data $$(G,\cD)\leftarrow (G_1,\cD_1)\hookrightarrow (\GSp(V'),\mathfrak{H}^{\pm}),$$
where $G_1^{\ad}=G$. Moreover, $\cD_1$ also contains a CM point of ordinary reduction. 
Let $\mathcal{A}_{g'}$ be a Siegel modular variety that arises from $(\GSp(V),\mathfrak{H}^{\pm})$, with sufficiently small level structure. Let $\mathscr{A}_{g'}$ be the naïve integral model of $\mathcal{A}_{g'}$ in the integral canonical model $\mathscr{A}_{g''}$ of a larger Siegel modular variety. Up to a finite cover and an isogeny, the universal abelian scheme on $\mathcal{A}_{g}$ extends to $\mathscr{A}_{g}$. Let's call this abelian scheme ${A}^\mathrm{D}$. On the other hand, there exists a special subvariety $\mathcal{X}_{f}'\subseteq\mathcal{A}_{g'}$ which \textit{specially corresponds} to $ \mathcal{X}_f$ in the sense of \cite[\S 2.4.1]{J23}: this means that there exists a special subvariety $$\mathcal{W}\subseteq\mathcal{X}_{f}\times \mathcal{X}_{f}' \subseteq \mathcal{A}_g\times \mathcal{A}_{g'}$$ whose projections to $\mathcal{X}_f$ and $\mathcal{X}_f'$ are finite and surjective. Let $\mathscr{X}'_{f}$ be the Zariski closure of $\mathcal{X}_{f}$ in $\mathscr{A}_{g'}$. By construction, ${\mathscr{X}'_{f}}$ admits non-empty ordinary locus. Let $\mathscr{X}_f$ (resp. $\mathscr{X}_f'$) be the Zariski closure of $\mathcal{X}_f$ (resp. $\mathcal{X}_f'$) in $\mathscr{A}_g$ (resp. $\mathscr{A}_{g'}$). Let $\mathscr{W}$ be the Zariski closure of $\mathcal{W}$ in $\mathscr{A}_{g}\times \mathscr{A}_{g'}$. Note that $\mathscr{X}_f$ admits non-empty ordinary locus. By Lemma~\ref{lm:lmlm}, $\mathscr{X}_f'$ also admits non-empty ordinary locus. Therefore \cite[Lemma 2.7]{J23} implies that the maps $\varpi:\mathscr{W}^{\ord}_{\Fpbar}\rightarrow \mathscr{X}_{f,\Fpbar}^{\ord}$ and   $\varpi':\mathscr{W}^{\ord}_{\Fpbar}\rightarrow \mathscr{X}_{f,\Fpbar}'^{\ord}$ are both surjective and quasi-finite. Take $X'\subseteq \mathscr{W}^{\ord}_{\Fpbar}$ be a smooth subvariety, such that $\varpi|_{X'}$ is Zariski dense in $X$. Consider $f':=\varpi'|_{X'}: X'\rightarrow\mathscr{X}_{f,\Fpbar}'^{\ord}$. Possibly replacing $X'$ by a further finite cover, we can restrict $A^\mathrm{D}$ to $X'$. It is an ordinary abelian scheme over $X'$, and we will call this abelian scheme a \textbf{Deligne abelian scheme} associated to $A$, denoted by $\widetilde{A}$. Note that $f'$ is a modification of $f$ in the sense of 
\cite[Definition 4.16]{J23}. 

We will denote by $H_\mathrm{B}(\widetilde{A})$ the variation of rational Hodge structures over $\mathcal{X}_f'$ that arise from the universal abelian scheme over $\mathcal{A}_{g'}$. It corresponds to a faithful representation $G_\mathrm{B}(f')\acts V'$. We have 
$G_\mathrm{B}(f')^{\ad}=G_\mathrm{B}(f)^{\mathrm{ad}}$. 

\subsubsection{The proof}  
We begin by making some simplifications. Suppose that we have two maps $f_1:X\rightarrow \mathscr{A}_{g_1,\Fpbar}^{\ord}$ and $f_2:X\rightarrow  \mathscr{A}_{g_2,\Fpbar}^{\ord}$, corresponding to two abelian schemes $A_1,A_2$ over $X$. Suppose that $\MTT_p$ holds for both $f_1$ and $f_2$. Our goal is to show that $\MTT_p$ holds for $f=f_1\times f_2$.

First, we can reduce to the case where $f_1$ and $f_2$ separate factors. Let $\alpha\in\{1,2\}$. Suppose that the $\bQ$-simple factors of $G_{\mathrm{B}}(f_\alpha)^{\ad}$ are indexed by $\bbI_\alpha$. Run \cite[Construction 4.20]{J23}. We can find a \textit{modification} $f'_\alpha:X'_\alpha\rightarrow \mathscr{A}_{g'_\alpha,\Fpbar}^{\ord}$ of $f$ that separates factors over $\mathscr{A}_{g_{\bbI_\alpha},\Fpbar}\subseteq \mathscr{A}_{g'_\alpha,\Fpbar}$. By \cite[Definition 4.16]{J23} and the quasi-finiteness result \cite[Lemma 2.7]{J23}, we can find a dominant quasi-finite cover $X''_\alpha\rightarrow X$ which moreover dominates $X'_\alpha$. There exists a smooth variety $X''$ that is quasi-finite over and dominates both $X''_1$ and $X''_2$. Let $f''_\alpha$ be the induced map $X''\rightarrow \mathscr{A}_{g'_\alpha,\Fpbar}$. Note that $f''_\alpha$ is a modification of $f_\alpha$, and $f'':=f''_1\times f_2''$ is a modification of $f=f_1\times f_2$. So by \cite[Proposition 4.8]{J23}, $\MTT_p$ for $f$ (resp. $f_1,f_2$) is equivalent to $\MTT_p$ for $f''$ (resp, $f''_1,f''_2$). Replacing $(X,f)$ by $(X'',f'')$, we can assume at the very beginning that $f_1$ and $f_2$ separate factors. 

Second, we can reduce to the case where 
 $G_{\mathrm{B}}(f_1)^{\ad}$ and $G_{\mathrm{B}}(f_2)^{\ad}$ are simple. Suppose that $f_1,f_2$ already separate factors, so for $\alpha\in \{1,2\}$, we have maps $f_\alpha:X\rightarrow \mathscr{A}_{g_{\bbI_\alpha},\Fpbar}^{\ord}$. Let $i_1\in \bbI_1$ and $i_2\in \bbI_2$. We obtain two maps $f_{i_1}: X\rightarrow \mathscr{A}_{g_{i_1},\Fpbar}^{\ord}$ and $f_{i_2}: X\rightarrow \mathscr{A}_{g_{i_2},\Fpbar}^{\ord}$. Then $G_\mathrm{B}(f_{i_1})^{\ad}$ and $G_\mathrm{B}(f_{i_2})^{\ad}$ are simple, and $\MTT_p$ holds for both $f_{i_1}$ and $f_{i_2}$. Suppose we can show that $\MTT_p$ holds for $f_{i_1}\times f_{i_2}$ for every pair $(i_1,i_2)\in \bbI_1\times \bbI_2$, then $\MTT_p$ holds for $f_1\times f_2$ by \cite[Lemma 10.1]{JMC}.

\begin{proof}[Proof of Theorem~\ref{modpCommelin}]
By the discussion made above, we will assume that  $G_{\mathrm{B}}(f_1)^{\ad}$ and $G_{\mathrm{B}}(f_2)^{\ad}$ are simple. Let $X_0$ be a sufficiently large finite field model of $X$. Possibly replacing $X_0$ be an étale cover, we may assume that the $u$-adic group $G_u(f)$ is connected for all $u\in \mathrm{\bQ}$. Let $l\neq p$ be a prime. Let $M=\{H_\bullet(M)\}$ (\textit{resp}. $M_1=\{H_\bullet(M_1)\}$ \textit{resp}. $M_2=\{H_\bullet(M_2)\}$) be the collections of coefficient objects associated to the pullback abelian schemes corresponding to $f$ (\textit{resp}. $f_1$ \textit{resp}. $f_2$), respectively. It suffices to show that if $G_l(M)\subsetneq G_l(M_1)\times G_l(M_2)$, then $G_\mathrm{B}(M)\subsetneq G_\mathrm{B}(M_1)\times G_\mathrm{B}(M_2)$.  

The proof is a characteristic $p$ variant of the proof in \cite[\S 9]{JMC}. Whenever Commelin uses \cite[Theorem 5.13]{JMC} in his original proof, we use Lemma~\ref{lm:Fcompatible} instead; whenever Commelin uses the group ``$\Gamma_K$'', we use $\pi_1^{\et}(X_0)$ instead. The proof is decomposed into several steps, which roughly follows the order of \cite[\S 9]{JMC}.
\begin{enumerate}
    \item\label{it:Commelinproof1} (\textbf{Showing that $H_\Lambda(M_1)\simeq H_\Lambda(M_2)$})\\   
    Following \cite[\S 9.3]{JMC}, we can identify $E_1=\End(H_\mathrm{B}(M_1))$ with $E_2=\End(H_\mathrm{B}(M_2))$, and there is a place $\lambda$ in $E:=E_1=E_2$ lying above $l$ such that $H_\lambda(M_1)\simeq H_{\lambda}(M_2)$ as $E_\lambda$-coefficient objects. Using Lemma~\ref{lm:Fcompatible}, we see that $H_\Lambda(M_1)\simeq H_\Lambda(M_2)$ as collection of coefficient objects. 
\item (\textbf{Showing that $(\Delta_1,\mu_1)\simeq (\Delta_2,\mu_2)$})\\
To ease notation, let $G_i:=G_\mathrm{B}(M_i)$. Let $(G_i,\cD_i)$ be the adjoint Shimura datum arising from $G_i$, and let  $(\Delta_i,\mu_i)$ be its Deligne--Dynkin diagram. In the following we run \cite[\S 9.4]{JMC}: Write $f: \pi_0(\Delta_1) \rightarrow \pi_0(\Delta_2)$ be the canonical identifications $\pi_0(\Delta_i)\simeq \Hom(E,\overline{\bQ})$ as $\Gal(\bQ)$-sets. From (\ref{it:Commelinproof1}), we have local isomorphisms \begin{equation}\label{eq:isompsi}
\psi_l\in \mathrm{Isom}_f((\Delta_{1},\mu_1)_{\bQ_l},(\Delta_{2},\mu_2)_{\bQ_l})^{\Gal \bQ_l}
\end{equation} for $l\in \mathrm{fpl}(\bQ)$ \footnote{We haven't defined $(\Delta_{1},\mu_1)_{\bQ_l}$ and $\mathrm{Isom}_f$. However, their meanings are pretty self-explanatory.  The readers can see \cite[\S 6.16, \S 6.17]{JMC} for the precise definitions.}. The only nontrivial part of establishing (\ref{eq:isompsi}) is to show that $\psi_l(\mu_1)=\mu_2$ for $l\in \mathrm{fpl}(\bQ)$. For this, one need to apply the Hodge--Tate decomposition (see \cite[\S7]{JMC}). More precisely, one need to show that after identifying $H_l(M_i)$ with $H_\mathrm{B}(M_i)\otimes {\bQ_l}$, an isomorphism $H_l(M_1)\simeq H_l(M_2)$ from (\ref{it:Commelinproof1}) carries the conjugacy class of $\mu_1$
to that of $\mu_2$. In our setting, this is again true: take $x\in X_0(\Fpbar_q)$ and take canonical lift $\Tilde{x}_{\bC}\in \mathcal{A}_{g_1,\bC}\times \mathcal{A}_{g_2,\bC}$. We can take $\mu$ to be the Hodge cocharacter arising from the CM abelian varieties over $\Tilde{x}_{\bC}$, and take $\mu_1$,$\mu_2$ to be its components. We then apply Hodge--Tate decomposition to this CM abelian variety. 

Once we have (\ref{eq:isompsi}), we can apply \cite[Proposition 6.23]{JMC} to obtain a $\Gal(\bQ)$-invariant isomorphism $\phi:(\Delta_1,\mu_1)\simeq (\Delta_2,\mu_2)$, such that $\phi_\lambda=\psi_\lambda$ for some $\lambda\in \mathrm{fpl}(E)$ (here the $\lambda$ may be different from the $\lambda$ from (\ref{it:Commelinproof1}), but we abuse the notation). We can further require that $\lambda\nmid p$: In fact, in the original proof of \cite[Proposition 6.23]{JMC}, one finds such $\lambda$ by Chebotarev's density theorem; however, Chebotarev's density theorem guarantees the existence of infinitely many such $\lambda$. So we can just pick a $\lambda$ which does not divide $p$). Let $l\in \mathrm{fpl}(\bQ)$ be the prime that $\lambda$ lies over. We will fix $\lambda$ and $l$ in the rest of the proof. 
\item (\textbf{Applying Deligne's construction}) \\
Following \cite[\S 9.5]{JMC}, we apply Deligne's construction to the adjoint Shimura data $(G_i,\cD_i)$. We make the following choices: 
\begin{enumerate}
    \item  since each $\cD_i$ contains a CM point of ordinary reduction, we can choose a totally imaginary quadratic extension $F/E$ and a partial CM type $\Phi\subseteq \Sigma_F$, such that the conditions in Lemma~\ref{lm:lmlm} are met.
    \item recall that $\widetilde{G_i}$ is the simply connected cover of $G_i=G_\mathrm{B}(M_i)$, and we have constructed a representation $\widetilde{G_i}\acts V_i$ from the set of $\mu$-symplectic nodes of $(\Delta_i,\mu_i)$. We have $\dim V_1=\dim V_2$. 
\end{enumerate}
We then get explicit morphisms of Shimura data $$(G_i,\cD_i)\leftarrow (G_{i1},\cD_{i1})\hookrightarrow (\GSp(V'_i),\mathfrak{H}^{\pm}_i).$$
As explained in \S\ref{subsub:furthermodi}, possibly replacing $X_0$ by a quasi-finite dominant cover, we obtain two ordinary Deligne abelian schemes $\widetilde{A}_1$ and $\widetilde{A}_2$ over $X_0$, such that $G_\mathrm{B}(\widetilde{A}_i) \acts V'_i$ and $G_\mathrm{B}(\widetilde{A}_i)^{\ad}= G_{\mathrm{B}}(M_i)$. 

Write $W_{F,i}:=F\otimes_E F_{S_i}$, which is a weight 1 fractional Hodge structure of CM type. Then  $V'_i= W_{F,i}\otimes_{F_{S_i}}V_i$. Let $H_{\mathrm{B}}(\widetilde{A}_i)$ be the variation of Hodge structures as explained at the end of \S\ref{subsub:furthermodi}. Let $H_l(\widetilde{A}_i)$ be the first $l$-adic cohomology of $\widetilde{A}_i$. Note that $E\subseteq \End(H_{\mathrm{B}}(\widetilde{A}_i))$ (\cite[\S 8.4,\S8.5]{JMC}). Let $H_\lambda(\widetilde{A}_i)=H_l(\widetilde{A}_i)\otimes_{E_l} E_\lambda$, which splits 
$H_l(\widetilde{A}_i)$ as $\bigoplus_{\lambda\in \Lambda_l} H_\lambda(\widetilde{A}_i)$. Let $G_\lambda(\widetilde{A}_i)$ (resp. $G_\lambda(\widetilde{A}_i)$) be the monodromy group of $H_\lambda(\widetilde{A}_i)$ (resp. $H_\lambda(\widetilde{A}_i)$).

\item (\textbf{Making identifications})\\
Following  \cite[\S 9.6]{JMC}, we can make the following canonical identifications:  
\begin{itemize}
    \item Identify $H_\lambda(M_1)$ and $ H_\lambda(M_2)$, as well as the monodromy representations $\rho_{i,\lambda}: \pi_1^{\et}(X_0)\rightarrow G_\lambda(M_i)(E_\lambda)$. Identify $G_\lambda(M_1)$ with $G_\lambda(M_2)$ and their simply connected covers $\widetilde{G_\lambda(M_1)}$ with $\widetilde{G_\lambda(M_2)}$. The resulting objects will be called $H_\lambda,\rho_\lambda,  G_\lambda$ and $\widetilde{G_\lambda}$, respectively. 
    \item Identify $(\Delta_1,\mu_1)$ with $(\Delta_2,\mu_2)$, and identify $E$-algebras $W_{F,1}$ and $W_{F,2}$. Call the resulting algebra $W_F$ and write $W_{F,\lambda}$ for $W_F \otimes_E E_\lambda$.
    \item     
    Identify $H_\lambda(\widetilde{A}_1)$ with  $H_\lambda(\widetilde{A}_2)$ as representations of $\widetilde{G_\lambda}$ with an action of $W_{F,\lambda}$. Call this representation $H'_\lambda$. Identify $G_\lambda(\widetilde{A}_1)$ with $G_\lambda(\widetilde{A}_2)$. Call this group $G_\lambda'$.
\end{itemize}
Finally, for each $i\in \{1,2\}$, there is a representation $$\widetilde{\rho}_{i,\lambda}: \pi_1^{\et}(X_0)\rightarrow G_\lambda'(E_\lambda)$$ that lifts $\rho_{\lambda}$.
\item (\textbf{Showing that ${\widetilde{\rho}_{1,\lambda}=\widetilde{\rho}_{2,\lambda}}$})\\ For this, we adapt the argument in \cite[\S 9.7,\S 9.8]{JMC}: for $i=1,2$ the natural map $p_i:G_\mathrm{B}(W_F) \rightarrow G_\mathrm{B}(\widetilde{A}_i)^{\mathrm{ab}}$ is an isogeny, and there exists a torus $T$ together with isogenies $q_i: G_\mathrm{B}(\widetilde{A}_i)^{\mathrm{ab}}\rightarrow T$ such that $q_1\comp p_1 = q_2\comp p_2$. Let $T\rightarrow \GL(N)$ be a faithful representation of $T$. We can define two Galois representations 
\begin{equation}\label{eq:imgaN}
    \pi_1^{\et}(X_0)\xrightarrow{\widetilde{\rho}_{i,l}} G_l(\widetilde{A}_i)(\bQ_l)=G_\mathrm{B}(\widetilde{A}_i)(\bQ_l)\rightarrow G_\mathrm{B}(\widetilde{A}_i)^{\mathrm{ab}}(\bQ_l)\xrightarrow{q_{i,l}} T(\bQ_l) \rightarrow  \GL(N)(\bQ_l).
\end{equation}
By construction, the two representations are identical. Let $G_l(N)$ be the monodromy group of (\ref{eq:imgaN}), which is nothing but $T_{\bQ_l}$. Following the last paragraph of \cite[\S 9.8]{JMC}, one can use $G_l(N)$ to construct a finite quotient $(\Res_{E_\lambda/\bQ_l}G_{\lambda}')^{\mathrm{ab}}\twoheadrightarrow T'$ such that $$  \pi_1^{\et}(X_0)\xrightarrow{\widetilde{\rho}_{i,l}} (\Res_{E_\lambda/\bQ_l}G_{\lambda}')(\bQ_l)\rightarrow (\Res_{E_\lambda/\bQ_l}G_{\lambda}')^{\mathrm{ab}}(\bQ_l)\rightarrow T'(\bQ_l)$$
is independent of $i=1,2$. The existence of such $T'$ implies that $\widetilde{\rho}_{1,\lambda}=\widetilde{\rho}_{2,\lambda}$, the proof is purely group theoretic, and works in our setting, see \cite[\S 9.9]{JMC} for more details. 
\item (\textbf{Applying the Tate conjecture}) \\Finally, the fact that $\widetilde{\rho}_{1,\lambda}=\widetilde{\rho}_{2,\lambda}$ and the compatibility of $\{H_\lambda(\widetilde{A}_1\oplus \widetilde{A}_2)\}_{\lambda\in E}$ (proved exactly as Lemma~\ref{lm:Fcompatible}) imply that $H_l(\widetilde{A}_1)\simeq H_l(\widetilde{A}_2)$ as Galois representations, and we conclude by Tate isogeny theorem that $G_\mathrm{B}(M)\subsetneq G_\mathrm{B}(M_1)\times G_\mathrm{B}(M_2)$. 
\end{enumerate}
   \end{proof}
\begin{proof}[Proof of Corollary~\ref{cor:Commelinconsequence}]
    By Theorem~\ref{thm:logALimpliesMT}, $\MTT_p$ holds for $f_i$. So $\MTT_p$ holds for $f$ by Theorem~\ref{modpCommelin}. By Theorem~\ref{thm:logALimpliesMT} again, $\mathrm{logAL}_p$ holds for $f$. If $f$ is an locally closed immersion, the we conclude by Theorem~\ref{thm:MTimpliesAO} that $\mathrm{geoAO}_p$ holds for $X$. 
\end{proof}

\subsection{Abelian varieties of Pink type}\label{sub:pinksresult} We refer to Definition~\ref{def:pinktype} for the definition of abelian varieties of Pink type. In this section, we will establish 
Theorem~\ref{modpPink} by modifying Pink's method. Again, we will often work with the geometric version: 
$f: X\rightarrow \mathscr{A}^{\ord}_{g,\Fpbar}$, and let $(X_0,f_0)$ be a suitable finite field model, with an abelian scheme $A_{X_0}$ over $X_0$. Let $\eta_0=\Spec\, k$ be the generic fiber of $X_0$, and write $A:=A_{\eta_0}$.
\begin{ass}\label{ass:pink}~\begin{enumerate}
      \item  By Lefschetz, we can assume that $k$ is a global function field, i.e., $X$ is a curve. 
    \item  Possibly replacing $X$ by an étale cover, we can assume $G_u(f)$ is connected for all $u\in \mathrm{fpl}(\bQ)$. Note that $\rho_u$ is absolutely irreducible. 
\end{enumerate}
\end{ass}

\subsubsection{Formal characters}\label{subsub:formalchar} Consider a faithful representation $(G, \rho)$ over a characteristic $0$ field $F = \overline{F}$ with $\rho$ irreducible. Let $T\subseteq G$
be a maximal torus. The formal character is defined as $$\ch_\rho := \sum_{\chi\in X^*(T)}
\mathrm{mult}_{\rho|_T}(\chi) \cdot \chi \in  \bZ[X^*
(T)].$$ Pink and Larsen investigate in \cite{PLformalchar} the following

\begin{question}
    How uniquely is the pair $(G, \rho)$ determined by $\ch_\rho$ ?
\end{question}

This boils down to the question of how uniquely the root system $\Phi$ of $G$ is determined. Let $\Phi^\circ$ be the subset of roots which are short in their respective simple factors of $\Phi$. It is a root system of the same rank as $\Phi$. It can be shown that when $\Phi$ is simple, all simple factors of $\Phi^\circ$ are of the same type, i.e., $\Phi^\circ$ is \textbf{isotypic}. In \cite{PLformalchar}, Pink and Larsen have shown that $\Phi^\circ$ is uniquely determined by $\ch_\rho$, and $(G,\rho)$ is uniquely determined by $\ch_\rho$ with only a few exceptions.

This general story can be incorporated with the study of Mumford--Tate pairs. Suppose that $(G, \rho)$ is an irreducible weak Mumford--Tate pair (that comes from an abelian variety), $T$ is a conjugate of a Frobenius torus. By restricting to certain tensor factors of $(G,\rho)$, one can assume that $(\Phi^\circ,\ch_\rho)$ is isotypic. After comparing with \cite[Table 4.2]{P98}, Pink classifies the possible triple $(G,\rho,\Phi^\circ)$ (with $(\Phi^\circ,\ch_\rho)$ isotypic) into the following three cases (cf. \cite[\S4]{P98}): 
\begin{itemize}
    \item(The non-ambiguous case) $\Phi=\Phi^\circ$, and all simple factors of $(G,\rho)$ are of one of the types 1, 4 and 6 in \cite[Table 4.2]{P98}. 
    
    \item(The first ambiguous case)  The simple factors of $\Phi^\circ$ and $\Phi$ correspond bijectively to each other. There is an $r\geq 3$, such that each simple factor of $\Phi^\circ$ is of type $D_r$, and 
    each simple factor of $(G,\rho)$ is one of the types 2 and 3 in \cite[Table 4.2]{P98} with this $r$. 

    \item(The second ambiguous case)
   Each simple factor of $\Phi^\circ$ is of form $A_1$. Each simple factor of $(G,\rho)$ is of type 5 in \cite[Table 4.2]{P98} with possibly varying $r$ (the sum of these $r$'s is determined by $\ch_\rho$, though).
\end{itemize}

\subsubsection{Interpolation in characteristic $p$} In \cite[\S6]{P98}, Pink's shows the existence of a good $\bQ$-model for the $l$-adic monodromy of a simple abelian variety over a number field. He calls the method ``interpolation''. The method rests upon a series of deep works by Pink and Larsen \cite{PLformalchar,LP,LP95}. These results can be adapted to the characteristic $p$ function field setting. 
The main goal of this section is to establish a characteristic $p$ analogue of \cite[Theorem 5.3]{P98} (see Theorem ~\ref{thm:Pinkinterpolation}). The original proof of \cite[Theorem 5.3]{P98} relies on a set of finite places $V_{\mathrm{good}}$ that is defined in \cite[(6.9)]{P98}.  In our setting, we will simply define $$V_{\mathrm{good}}:=V_{\max}(X_0).$$
See (\ref{eq:V_max}). By Proposition~\ref{vmax_positivedensity}, $V_{\mathrm{good}}$ contains a subset of positive density.  

When the map $f$ is clear from the context, we will just write $G_u$ for $G_u(f)$. The following is a list of characteristic $p$ constructions that are analogues to the ones in \cite[\S 6]{P98}:
\begin{enumerate}
    \item For each $z\in V_{\mathrm{good}}$, let $T_z:=G_{\mathrm{B}}(z)$ be the Frobenius torus; cf. \S\ref{sub:Frobtorus}. We will fix once and for all a split torus $T_0\subseteq \GL_{2g,\bQ}$ that conjugates to $T_z$ over $\overline{\bQ}$ for all $z\in V_{\mathrm{good}}$.
    \item For each $z\in V_{\mathrm{good}}$, fix an $h_z\in \GL_{2g}(\overline{\bQ})$, such that $T_{0,\overline{\bQ}}= h_z^{-1}T_{z,\overline{\bQ}}h_z$.
    \item For each $u\in \mathrm{fpl}(\bQ)$, fix an $f_u\in \GL_{2g}(\overline{\bQ}_u)$, such that $T_{0,\overline{\bQ}_u}\subseteq f_u^{-1}G_{u,\overline{\bQ}_u}f_u$, and that $f_uT_{0,\overline{\bQ}_u}f_u^{-1}$ is defined over $\bQ_u$. 
    \item Let $\mu_z: \bG_{m,\overline{\bQ}}\rightarrow T_{z,\overline{\bQ}}$ be the Hodge cocharacter of the canonical lift of $z$. Let $\nu_z:= \mu_z^{-1}$ be the Newton cocharacter. Let $\nu_{0,z}:\bG_{m,\overline{\bQ}}\rightarrow T_{0,\overline{\bQ}}$ be $h_z^{-1}\nu_zh_z$.
    \item  Let $\Gamma \subseteq \Aut(T_{0,\overline{\bQ}})$ be the stabilizer of the formal character
of the tautological representation. The form of $T_z$ over $\bQ$ corresponds to a homomorphism
\begin{equation}\label{eq:varphiz}
    \varphi_z : \Gal(\bQ)\rightarrow\Gamma.
\end{equation} 
    \item Let $\Phi_u \subseteq X_*(T_0)$ be the root system of $f_u^{-1}G_{u,\overline{\bQ}_u}f_u$. Let $\Phi_u^\circ$ be the subset of roots which are short in their respective simple factors of $\Phi_u$. Since $\rho_u$ is irreducible of dimension $2g$, $\Phi_u$ and $\Phi^\circ_u$ are non-empty. As we have seen in \S\ref{subsub:formalchar}, $\Phi_u^\circ$ is uniquely determined by the formal cocharacter, and $\rk \Phi_u= \rk \Phi_u^\circ$. 
\item Let $W_u\subseteq \Gamma$ be the Weyl group of $\Phi_u$. Since $(G_{u,\overline{\bQ}_u},\rho_u)$ is independent of $u$ (after identifying the coefficient fields); cf. \cite[Lemma 3.4]{J23}, $\Phi_u$ and $\Phi_u^\circ$ are independent of $u$ chosen, so we can use $\Phi$ and $\Phi^\circ$ to denote them.
\item The form of $f_uT_{0,\overline{\bQ}_u}f_u^{-1}$ over $\bQ_u$ corresponds to a homomorphism $\Gal(\bQ_u)\rightarrow \mathrm{Norm}_{\Gamma}(W_u)$. Composing it with the projection $\pi_u:\mathrm{Norm}_{\Gamma}(W_u)\rightarrow \mathrm{Norm}_{\Gamma}(W_u)/W_u$ gives a map $$\overline{\varphi}_u:  \Gal(\bQ_u)\rightarrow \mathrm{Norm}_{\Gamma}(W_u)/W_u.$$
This map characterizes the form of $G_u$ up to inner twist. 
\end{enumerate}

\begin{proposition}[Analogue of {\cite[Proposition 6.10]{P98}}]\label{prop:Vgood}
For any $z \in V_{\mathrm{good}}$, there exists an $\alpha \in  \Phi^\circ$ such that
$\la \alpha,\nu_{0,z}\ra = -1$.
\end{proposition}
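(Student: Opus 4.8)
<br>

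The plan is to mirror the proof of Pink's \cite[Proposition 6.10]{P98} in our function-field setting, with the key point being that the Frobenius torus $T_z$ at a point $z\in V_{\max}(X_0)$ plays exactly the role that the Frobenius torus plays in \cite[\S6]{P98}, and that the Newton cocharacter $\nu_z=\mu_z^{-1}$ of the canonical lift is the analogue of Pink's Newton cocharacter at a place of good ordinary reduction. First I would recall that, by construction of $V_{\mathrm{good}}=V_{\max}(X_0)$, we have $\rk G_{\mathrm{B}}(z)=\rk G_u(f)$, so $T_{z,\overline{\bQ}}$ (equivalently $T_{0,\overline{\bQ}}$ after conjugating by $h_z$) is a maximal torus of $G_{u,\overline{\bQ}_u}$ for every $u$; in particular the root system of $f_u^{-1}G_{u,\overline{\bQ}_u}f_u$ with respect to $T_0$ is the fixed system $\Phi$, with short-root subsystem $\Phi^\circ$, and $\rk\Phi=\rk\Phi^\circ=\dim T_0$.

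The core of the argument is a local computation at a place $v\nmid p$ using the Hodge--Tate / Newton structure. Translating \cite[Proposition 6.10]{P98}: one knows that $G_p^{-}(f)$ is the opposite parabolic of $G_p(f)$ with respect to $\mu_z$ (Theorem~\ref{Thm:Tatelocal}(\ref{itcor1})), so the unipotent radical of this parabolic is nonzero precisely because $f$ has positive-dimensional image (if $X$ is a point the statement is vacuous, and otherwise $\dim U_{G_p(f),\mu_z}\ge 1$). Concretely, pair $\nu_{0,z}=h_z^{-1}\nu_z h_z$ against the roots: the eigenvalues of $\nu_{0,z}$ on the tautological $2g$-dimensional representation are $0$ and $1$ (ordinary Hodge cocharacter has weights $\{0,1\}$, so $\nu_z=\mu_z^{-1}$ has weights $\{0,-1\}$), hence $\la\beta,\nu_{0,z}\ra\in\{-1,0,1\}$ for every root $\beta\in\Phi$ that is a difference of two weights of the representation. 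Since $G$ acts irreducibly and nontrivially, there is at least one root $\beta\in\Phi$ with $\la\beta,\nu_{0,z}\ra=-1$ (a root in the opposite unipotent). The remaining step is to promote ``some $\beta\in\Phi$'' to ``some $\alpha\in\Phi^\circ$'': for this I would invoke the case analysis of \S\ref{subsub:formalchar} (the non-ambiguous, first ambiguous, and second ambiguous cases), checking in each of \cite[Table 4.2]{P98}'s relevant types 1--6 that whenever a root $\beta$ pairs to $-1$ against a minuscule-type cocharacter $\nu_{0,z}$, a short root $\alpha$ in the same simple factor also pairs to $-1$; this is a direct inspection of the explicit coweight/coroot data and is exactly how \cite[Proposition 6.10]{P98} is concluded.

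The main obstacle, as in Pink's treatment, is the final reduction to a \emph{short} root: one must rule out the possibility that $\nu_{0,z}$ pairs to $-1$ only with long roots. The cleanest route is to use that $\nu_{0,z}$ lands in $T_{0,\overline{\bQ}}$, which is a Frobenius torus, and to exploit the conjugation-invariance of $V_{\max}$ under the Weyl group together with Proposition~\ref{prop:genbyHodge} (the torus $G_{\mathrm{B}}(z)$ is generated by the $\Gal(\bQ)$-orbit of $\mu_z$): the $\Gamma$-orbit of $\nu_{0,z}$ is large enough that its pairing with $\Phi^\circ$ cannot vanish identically at $-1$ unless the representation were trivial. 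I would then close the proof by the same case-by-case verification against \cite[Table 4.2]{P98} that Pink carries out, noting that since $(G_u(f)^\circ,\rho_u)$ is a weak Mumford--Tate pair of weights $\{0,1\}$ by Proposition~\ref{prop:MTpairs}, the same list of simple factors is available, and the combinatorial input is literally identical to the characteristic $0$ case.
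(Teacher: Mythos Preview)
Your route is more circuitous than necessary and leaves the crucial reduction unproved. You first locate some $\beta\in\Phi$ with $\langle\beta,\nu_{0,z}\rangle=-1$, then propose to ``promote'' $\beta$ to a short root via a case-by-case inspection of \cite[Table~4.2]{P98} or a Galois-orbit argument. Neither reduction is carried out, and the Galois-orbit idea (using Proposition~\ref{prop:genbyHodge} and Weyl-invariance of $V_{\max}$) does not obviously control the pairing of a \emph{single} $\nu_{0,z}$ with the short roots. The invocation of Theorem~\ref{Thm:Tatelocal} is also beside the point here.

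The paper's argument avoids all of this by working with $\Phi^\circ$ from the start. Every $\alpha\in\Phi^\circ$ is a difference of two weights of the tautological representation, so $\langle\alpha,\nu_{0,z}\rangle\in\{-1,0,1\}$ directly. The key step you are missing is the \emph{rank equality} $\rk\Phi^\circ=\rk\Phi$: it forces $\Span_{\bR}\Phi^\circ=\Span_{\bR}\Phi$, so if $\langle\alpha,\nu_{0,z}\rangle=0$ for all $\alpha\in\Phi^\circ$, then the same vanishing holds on all of $\Phi$, whence $\mu_z$ is central in $G_p(f)$ and $X$ maps to a point---contradicting the standing Pink-type assumption $\End(A_{\overline{k}})=\bZ$. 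Since $\Phi^\circ=-\Phi^\circ$, a nonzero value in $\{-1,0,1\}$ yields the desired $\alpha$ with pairing $-1$. No case analysis, no table, no Galois-orbit maneuver.
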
 
\begin{proof} Note that $\nu_{0,z}$ has weights $\{0,1\}$ and both 0 and 1 are attained. This means that  $\la \chi,\nu_{0,z}\ra \in \{0,1\}$, for every $\chi \in X^*
(T_0)$ that occurs in the tautological representation, and both 0 and 1 can appear. Now nay root $\alpha\in \Phi^\circ$ is a quotient of two such weights. So we have $\la \alpha,\nu_{0,z}\ra \in \{-1,0,1\}$ for $\alpha\in \Phi^\circ$. Since $\rk\Phi^\circ=\rk\Phi$, we have $\Span_{\bR}\{\Phi\}=\Span_{\bR}\{\Phi^\circ\}$. If  $\la\alpha, \nu_{0,z}\ra=0$ for all $\alpha\in \Phi^\circ$, we must have $\la\alpha, \nu_{0,z}\ra=0$ for all $\alpha\in \Phi$. It follows that $\mu_{z,\overline{\bQ}_p}:\bG_{m,\overline{\bQ}_p}\rightarrow G_{p,\overline{\bQ}_p}$ is central. In particular, $X$ maps to a point in $\mathscr{A}_{g,\Fpbar}$. In other words, the abelian variety $A$ is CM, contradictory to our assumption~\ref{ass:pink}. As a result, $\la\alpha, \nu_{0,z}\ra$ can not be 0 for all $\alpha\in \Phi^\circ$. Therefore there exists an $\alpha \in  \Phi^\circ$ such that $\la \alpha,\nu_{0,v}\ra = -1$.
\end{proof}

\begin{proposition}[Analogue of {\cite[Proposition 6.12]{P98}}]\label{prop:pinktranstive}
    For any $z \in V_{\mathrm{good}}$, the action of $\Gal(\bQ)$ via $\varphi_z$ permutes the
simple factors of $\Phi$ transitively.
\end{proposition}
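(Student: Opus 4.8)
The plan is to reduce Proposition~\ref{prop:pinktranstive} to the assertion that $G_{\mathrm{B}}(f)^{\ad}$ is $\bQ$-simple, and then to prove the latter by a weight-counting argument on the tautological Hodge structure. First I would note that for $z\in V_{\mathrm{good}}=V_{\max}(X_0)$ the Frobenius torus $T_z=G_{\mathrm{B}}(z)$ is a maximal torus of $G_{\mathrm{B}}:=G_{\mathrm{B}}(f)$, and that $G_{\mathrm{B}}$ and $T_z$ are both defined over $\bQ$; hence $\Gal(\bQ)$ acts on the pair $(G_{\mathrm{B}},T_z)$ and in particular preserves the root system $\Phi=\Phi(G_{\mathrm{B}},T_z)\subseteq X_*(T_z)$, permuting its simple factors. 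Transporting along the conjugation $h_z$, this permutation action is precisely the one induced by $\varphi_z$; moreover, since the Weyl group of $\Phi$ fixes every simple factor, the permutation of simple factors coming from $\varphi_z(\Gal(\bQ))$ coincides with the $*$-action of $\Gal(\bQ)$ on the geometric almost-simple factors $G_1,\dots,G_n$ of $G_{\mathrm{B}}^{\der}$. Therefore the proposition is equivalent to the statement that $\{G_1,\dots,G_n\}$ forms a single $\Gal(\bQ)$-orbit, i.e.\ that $G_{\mathrm{B}}^{\ad}$ is $\bQ$-simple.

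To prove $\bQ$-simplicity I would first observe that $\rho_{\mathrm{B}}$ is absolutely irreducible: by Assumption~\ref{ass:pink} the representation $\rho_u$ of $G_u(f)$ is absolutely irreducible, and since $G_u(f)\subseteq G_{\mathrm{B}}(f)_{\bQ_u}$ by Theorem~\ref{thm:GincludeinMT}, the representation $\rho_{\mathrm{B}}\otimes\overline{\bQ}_u$ is a fortiori irreducible for the larger group $G_{\mathrm{B}}(f)_{\overline{\bQ}_u}$. Writing $Z$ for the centre of $G_{\mathrm{B}}$, absolute irreducibility yields a tensor decomposition $\rho_{\mathrm{B}}\otimes\overline{\bQ}=\chi\boxtimes\rho_1\boxtimes\cdots\boxtimes\rho_n$ with $\chi$ a character of $Z$ and each $\rho_i$ a nontrivial irreducible representation of $G_i$. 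Let $O_1,\dots,O_s$ be the orbits of $\Gal(\bQ)$ on $\{G_1,\dots,G_n\}$; then each $\prod_{i\in O_j}G_i$ is $\Gal(\bQ)$-stable, hence defined over $\bQ$ (these are exactly the $\bQ$-simple factors of $G_{\mathrm{B}}^{\der}$), and $\bigotimes_{i\in O_j}\rho_i$ is $\Gal(\bQ)$-stable, hence descends to a $\bQ$-rational factor $\tau_j$ carrying a variation of Hodge structure; together with the one-dimensional central factor $\tau_0$ this exhibits $\rho_{\mathrm{B}}$ over $\bQ$ as a tensor product $\tau_0\otimes_\bQ\tau_1\otimes_\bQ\cdots\otimes_\bQ\tau_s$.

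The key step is then the weight argument. Since $(G_{\mathrm{B}},\rho_{\mathrm{B}})$ is a strong Mumford--Tate pair of weights $\{0,1\}$ (Proposition~\ref{prop:MTpairs}), there is a cocharacter $\mu$ of $G_{\mathrm{B}}$ acting on $\rho_{\mathrm{B}}$ with weights $\{0,1\}$ and whose $\Gal(\bQ)$-conjugates generate $G_{\mathrm{B}}$. Under the tensor decomposition the $\mu$-weights of $\rho_{\mathrm{B}}$ are the sums of those of the factors $\tau_j$, so, writing $\delta_j$ for the spread (maximum minus minimum $\mu$-weight) of $\tau_j$, additivity of the spread under tensor products gives $\sum_{j=0}^{s}\delta_j=1$. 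Here $\delta_0=0$ since $\tau_0$ is one-dimensional, and I claim $\delta_j\geq 1$ for each $j\geq 1$: indeed every orbit $O_j$ is a Serre--Tate orbit, because if $\mu$ had trivial projection to $\prod_{i\in O_j}G_i$ then, $O_j$ being $\Gal(\bQ)$-stable, all $\Gal(\bQ)$-conjugates of $\mu$ would too, forcing $G_{\mathrm{B}}$ itself to project trivially to $\prod_{i\in O_j}G_i$ and contradicting the surjection $G_{\mathrm{B}}\twoheadrightarrow G_i$; then $\mu$ restricts to a nontrivial cocharacter of some $G_i$ with $i\in O_j$, which necessarily acts with at least two distinct weights on the nontrivial irreducible $\rho_i$, so $\delta_j\geq 1$. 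Thus $\sum_{j=1}^{s}\delta_j=1$ with every summand at least $1$, which forces $s=1$, as desired.

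The main obstacle I anticipate is not conceptual but one of bookkeeping: one must arrange the $\Gal(\bQ)$-equivariant tensor decomposition carefully enough that the orbit-wise factors $\tau_j$ genuinely descend to $\bQ$-objects carrying a Hodge filtration, so that the spreads $\delta_j$ are well defined and additive under $\otimes_\bQ$; and one must verify that the permutation of simple factors induced by $\varphi_z$ really is the intrinsic $*$-action (a routine Weyl-class check, using that the Weyl group fixes each simple factor). One should also record the trivial point that $G_{\mathrm{B}}^{\der}\neq 1$, equivalently that $A$ is not CM, which is immediate from $\End(A_{\overline{\eta}})=\bZ$.
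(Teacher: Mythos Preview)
Your weight-counting argument in the second and third paragraphs is essentially Pink's own argument and does correctly establish that $G_{\mathrm{B}}(f)^{\ad}$ is $\bQ$-simple. The gap is in your first paragraph: the reduction you propose rests on the claim that $T_z$ is a maximal torus of $G_{\mathrm{B}}(f)$ and on identifying $\Phi$ with the root system of $G_{\mathrm{B}}(f)$ relative to $T_z$. Neither is available. In the paper's setup (item (6) of the list in \S\ref{sub:pinksresult}), $\Phi=\Phi_u$ is the root system of the $u$-adic monodromy group $G_u(f)$, and by definition $V_{\mathrm{good}}=V_{\max}(X_0)$ only guarantees $\rk T_z=\rk G_u(f)$. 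Nothing rules out $\rk G_{\mathrm{B}}(f)>\rk G_u(f)$; establishing that rank equality is itself a substantial piece of $\mathrm{MT}_p$, precisely what this section is building towards. So the equivalence ``$\varphi_z(\Gal(\bQ))$ transitive on simple factors of $\Phi$'' $\Leftrightarrow$ ``$G_{\mathrm{B}}^{\ad}$ is $\bQ$-simple'' is unjustified, and the statement you prove, while true, is not the proposition.

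The repair is to run exactly your spread argument, but directly on the torus $T_0$ and the formal character of the tautological $2g$-dimensional representation, never passing through $G_{\mathrm{B}}$. The tensor decomposition over simple factors of $\Phi^\circ$ is provided by the purely Lie-theoretic results of \cite[\S4]{PLformalchar}; the cocharacter to feed in is $\nu_{0,z}$, which has weights $\{0,1\}$ on the tautological representation and whose $\varphi_z(\Gal(\bQ))$-orbit generates $T_0$ by Proposition~\ref{prop:genbyHodge}; and Proposition~\ref{prop:Vgood} (the char $p$ replacement for Pink's 6.10) is exactly the ``spread $\geq 1$ on every orbit'' input that you obtain in your argument from nontriviality of $\mu$ on a Serre--Tate factor. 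This is how the paper proceeds: it follows Pink's proof of \cite[Proposition~6.12]{P98} verbatim, with the sole modification that his Proposition~6.10 is replaced by Proposition~\ref{prop:Vgood}.
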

\begin{proof}
    The proof of \cite[Proposition 6.12]{P98} works with minor modification: the place where Pink uses \cite[Proposition 6.10]{P98} should be replaced by Proposition~\ref{prop:Vgood}. Note that the results in \cite[\S 4]{PLformalchar} are  used in the original proof. These results are purely group theoretic and work in our setting. 
\end{proof}
\begin{corollary}[Analogue of {\cite[Corollary 6.13]{P98}}]\label{cor:P613}
The formal character of $G^{\der}_{u,\overline{\bQ}_u}\subseteq \GL_{2g,\overline{\bQ}_u}$ is $\otimes$-isotypic.  
\end{corollary}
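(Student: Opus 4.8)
\textbf{Proof proposal for Corollary~\ref{cor:P613}.} The claim is the mod $p$ analogue of \cite[Corollary 6.13]{P98}, and I expect the argument to be essentially the same group-theoretic deduction from the transitivity statement, so the plan is to reduce to Proposition~\ref{prop:pinktranstive} together with the formal-character rigidity results of Larsen--Pink. Recall that $\Phi^{\circ}$ is the subset of short roots of $\Phi$ in their respective simple factors; since $\Phi$ and $\Phi^{\circ}$ have the same rank, and since $\Phi^{\circ}$ is determined by the formal character of $\rho_u$ (which, by \cite[Lemma 3.4]{J23}, is independent of $u$), it makes sense to speak of $\Phi^{\circ}$ without reference to $u$. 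The content of the corollary is that all simple factors of $\Phi^{\circ}$ are of the same Lie type, and moreover that the restriction of $\rho_p$ (equivalently of the tautological representation of $G^{\der}_{u,\overline{\bQ}_u}$) to each simple factor, via the formal character, looks ``the same'' on each factor, i.e. is $\otimes$-isotypic in Pink's sense.

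The key steps, in order: (1) Fix $z\in V_{\mathrm{good}}=V_{\max}(X_0)$, which is nonempty (indeed of positive density) by Proposition~\ref{vmax_positivedensity}. The Frobenius torus $T_z=G_{\mathrm{B}}(z)$ is a maximal torus of $G_u$ for every $u$ (that is exactly the defining property of $V_{\max}$), so after the identifications fixed in \S\ref{subsub:formalchar}--the split model $T_0$, the elements $h_z$, $f_u$--the root system $\Phi$ sits inside $X_*(T_0)$ and carries the Galois action $\varphi_z$ of \eqref{eq:varphiz} coming from the $\bQ$-form of $T_z$. (2) Apply Proposition~\ref{prop:pinktranstive}: $\Gal(\bQ)$ acts through $\varphi_z$ transitively on the set of simple factors of $\Phi$. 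Since the $\Gal(\bQ)$-action preserves Lie type, all simple factors of $\Phi$ have the same type; passing to short roots, all simple factors of $\Phi^{\circ}$ also share a common type. (3) Now invoke the formal-character machinery of \cite[\S4]{PLformalchar}: the pair (simple factor of $\Phi$, its multiplicity datum inside $\ch_{\rho}$) is constrained to lie in the list recalled in \S\ref{subsub:formalchar} (the non-ambiguous case and the two ambiguous cases). Because the $\Gal(\bQ)$-action via $\varphi_z$ is transitive on simple factors and preserves the formal character, the representation-theoretic datum attached to each factor is carried to that of any other factor by a symmetry of $\ch_{\rho}$; hence the collection of pairs is ``isotypic'', which is precisely the assertion that the formal character of $G^{\der}_{u,\overline{\bQ}_u}\subseteq\GL_{2g,\overline{\bQ}_u}$ is $\otimes$-isotypic. (4) Finally, note the statement is independent of $u$: the formal character, $\Phi$, and $\Phi^{\circ}$ do not depend on the place $u$ by \cite[Lemma 3.4]{J23}, so it suffices to have verified it for one $u$, e.g. $u=p$, where Proposition~\ref{prop:Vgood} (the non-centrality of $\mu_z$ forced by non-CM-ness in Assumption~\ref{ass:pink}) guarantees $\Phi^{\circ}\neq\emptyset$.

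The expected main obstacle is entirely in step (2)--(3), i.e. in making sure that the inputs Pink uses over number fields (Proposition~\ref{prop:pinktranstive}, itself resting on Proposition~\ref{prop:Vgood} and on \cite[\S4]{PLformalchar}) really are available verbatim in the function-field setting. Proposition~\ref{prop:Vgood} has already been proved above using the existence of canonical lifts of ordinary points (which supplies a cocharacter $\nu_{0,z}$ of weights $\{0,1\}$) together with the non-CM hypothesis; Proposition~\ref{prop:pinktranstive} is proved above by transcribing Pink's argument with \cite[Proposition 6.10]{P98} replaced by Proposition~\ref{prop:Vgood}. So for Corollary~\ref{cor:P613} itself there is essentially no new obstacle: once Proposition~\ref{prop:pinktranstive} is in hand, the corollary is a formal consequence exactly as in \cite[Corollary 6.13]{P98}, and the proof can simply cite that deduction. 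I would therefore write: ``This follows from Proposition~\ref{prop:pinktranstive} and \cite[Corollary 6.13]{P98}, whose proof is purely group-theoretic (it relies only on \cite[\S4]{PLformalchar}) and applies verbatim; one only needs $\Phi^{\circ}\neq\emptyset$, which holds by Proposition~\ref{prop:Vgood} under Assumption~\ref{ass:pink}.''
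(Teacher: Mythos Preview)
Your proposal is correct and matches the paper's approach exactly: the paper's proof is simply ``Easy from the proposition,'' meaning Proposition~\ref{prop:pinktranstive}, and you have spelled out precisely why the transitivity of the $\Gal(\bQ)$-action on the simple factors of $\Phi$ forces $\otimes$-isotypicity of the formal character, just as in Pink's original deduction. Your suggested one-line write-up at the end is entirely adequate.
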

\begin{proof}
    Easy from the proposition. 
\end{proof}
\begin{theorem}[Analogue of {\cite[Theorem 5.13]{P98}}]\label{thm:Pinkinterpolation}
Assumption as in \ref{ass:pink}:
\begin{enumerate}
    \item There exists a connected reductive subgroup $G \subseteq \GL_{2g,\mathbb{Q}}$ such that $G_{u}$ is conjugate to $G_{\bQ_u}$ under $\GL_{2g,\bQ_u}$ for every $u$ in some set $\bL$ of primes of Dirichlet density 1.
    \item The pair consisting of $G$ together with its absolutely irreducible tautological representation is a strong Mumford-Tate pair of weights $\{0, 1\}$ over $\bQ$. 
    \item The derived group $G^{\der}$ is $\bQ$-simple.
\end{enumerate}\end{theorem}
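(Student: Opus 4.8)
The plan is to mimic Pink's interpolation argument (\cite[\S6]{P98}), transporting each ingredient to the function-field setting via the tools assembled above. Recall we may assume $X$ is a curve, all $G_u(f)$ are connected, and $\rho_u$ is absolutely irreducible (Assumption~\ref{ass:pink}); set $G_u=G_u(f)$. The whole argument takes place in the fixed split torus $T_0\subseteq\GL_{2g,\bQ}$ together with the finite group $\Gamma=\Stab_{\Aut(T_{0,\overline{\bQ}})}(\ch)$, which is honestly defined over $\bQ$ because the formal character of $\rho_u$ is $u$-independent. The key geometric input — replacing Pink's use of Hodge theory over number fields — is the theory of Frobenius tori from \S\ref{sub:Frobtorus} together with the density statement Proposition~\ref{vmax_positivedensity}: for each $z\in V_{\mathrm{good}}=V_{\max}(X_0)$, the Frobenius torus $T_z=G_\mathrm{B}(z)$ is a maximal torus of every $G_u$ after conjugation, and by Proposition~\ref{prop:genbyHodge} it is generated by the $\Gal(\bQ)$-conjugates of its Hodge (equivalently Newton) cocharacter $\nu_{0,z}$. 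This gives the map $\varphi_z:\Gal(\bQ)\to\Gamma$ of \eqref{eq:varphiz}, and Proposition~\ref{prop:Vgood} plus Proposition~\ref{prop:pinktranstive} tell us $\Phi=\Phi_u$ (which, by $u$-independence of $(G_{u,\overline{\bQ}_u},\rho_u)$, we write without $u$) is permuted transitively by $\Gal(\bQ)$ through $\varphi_z$; Corollary~\ref{cor:P613} records that the formal character of $G_u^{\der}$ is $\otimes$-isotypic.

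\smallskip

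With these in hand I would run Pink's interpolation verbatim. First, to construct the $\bQ$-model $G$: using the Pink--Larsen classification of isotypic formal characters, $\Phi^\circ$ is determined by $\ch$, and comparing with \cite[Table 4.2]{P98} (using that $(G_u^{\der},\rho_u)$ is a weak Mumford--Tate pair of weights $\{0,1\}$ by Proposition~\ref{prop:MTpairs}) pins $\Phi$ down, except in the ``ambiguous'' cases listed in \S\ref{subsub:formalchar}. In the non-ambiguous case $\Phi=\Phi^\circ$ is already determined, so the Weyl group $W_u\subseteq\Gamma$ is $u$-independent; the forms $\overline{\varphi}_u:\Gal(\bQ_u)\to\Norm_\Gamma(W)/W$ for varying $u$ must be checked to glue to a single $\varphi:\Gal(\bQ)\to\Norm_\Gamma(W)/W$. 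Here the crucial gluing tool is the weak $\bQ$-compatibility of the system $\{\bH_{u,f_0}\}$ (\S\ref{sub:becm}) together with Chebotarev (Theorem~\ref{thm:chebotarev}): compatibility forces the conjugacy classes of Frobenii — hence the images $\varphi_z(\Frob_z)$ — to match across all places, so the local forms $\overline{\varphi}_u$ are restrictions of one global cocycle, exactly as in \cite[\S6.14--6.19]{P98}. In the two ambiguous cases one argues as Pink does, using the transitivity from Proposition~\ref{prop:pinktranstive} and a direct analysis of $D_r$ (resp.\ $A_1$) factors to recover $\Phi$; the $\bQ$-form is then built from $\varphi$. This yields a connected reductive $G\subseteq\GL_{2g,\bQ}$ with $G_u\cong G_{\bQ_u}$ over $\GL_{2g,\bQ_u}$ for all $u$ outside a density-zero set.

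\smallskip

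For (2), that $(G,\rho)$ is a strong Mumford--Tate pair of weights $\{0,1\}$: the cocharacters $\mu_z$ (for $z$ ranging over $V_{\mathrm{good}}$, which has positive density) are defined over number fields, have weights $\{0,1\}$, their $G(\overline{\bQ})$-conjugates generate $G$ (by construction, since they generate each $G_u$ after base change), and by Proposition~\ref{prop:pinktranstive} any two of them are $\Gal(\bQ)$-conjugate — this is precisely the ``strong'' condition. For (3), $\bQ$-simplicity of $G^{\der}$: this is again immediate from Proposition~\ref{prop:pinktranstive}, since $\Gal(\bQ)$ acts transitively on the simple factors of $\Phi$, so it acts transitively on the $\overline{\bQ}$-simple factors of $G^{\der}$, which therefore form a single $\bQ$-simple group. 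The main obstacle I anticipate is not any single step but the bookkeeping in the ambiguous cases of \S\ref{subsub:formalchar}: there the root datum is genuinely not recoverable from $\ch$ alone, and one must extract extra rigidity from the Galois action $\varphi_z$ and from $\otimes$-isotypicity (Corollary~\ref{cor:P613}) exactly as in Pink's treatment — the content there is purely Lie-theoretic/Galois-cohomological and carries over, but it is delicate. Everything else is a faithful transcription of \cite[\S6]{P98} with ``$l$-adic étale cohomology of an abelian variety over a number field'' replaced by ``$u$-adic coefficient object of $A_{X_0}$'' and ``Hodge cocharacter at a CM point'' replaced by ``canonical Hodge cocharacter at an ordinary point'', legitimized by Theorem~\ref{thm:GincludeinMT}, Proposition~\ref{vmax_positivedensity}, and the compatibility results of \cite{DA20}. $\qed$
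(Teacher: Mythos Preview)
Your proposal is correct and follows essentially the same route as the paper: both transcribe Pink's interpolation \cite[\S6]{P98} into char~$p$ using the Frobenius-torus inputs already assembled (Propositions~\ref{prop:Vgood}, \ref{prop:pinktranstive}, Corollary~\ref{cor:P613}) and then split into the non-ambiguous and ambiguous cases of \S\ref{subsub:formalchar}. The only difference is bibliographic precision: where you invoke compatibility (\cite{DA20}) and Chebotarev generically, the paper pinpoints exactly which Larsen--Pink results are stated for arbitrary global fields and hence carry over verbatim --- \cite[Theorem~3.2]{LP95} feeding the analogue of \cite[Proposition~6.15(b)]{P98} in the non-ambiguous case, and \cite[Proposition~8.9]{LP} feeding \cite[Proposition~6.16]{P98} in the ambiguous cases --- but the underlying argument is the same.
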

\begin{proof}
By Corollary~\ref{cor:P613} we may now distinguish cases according to the type of the formal character of each simple factor of $\Phi^\circ$; cf. \S\ref{subsub:formalchar}.
\begin{enumerate}
        \item (\textbf{The non-ambiguous case})\\
       The same argument in \cite[p39-40]{P98} works with minor modification. In the original proof, the key inputs are \cite[Proposition 6.12, 6.15]{P98}. For our case, we need to replace \cite[Proposition 6.12]{P98} by its char $p$ analogue, Proposition~\ref{prop:pinktranstive}.
       On the other hand, the obvious analogue of \cite[Proposition 6.15]{P98}
       still holds. Let's briefly explain the reason: First, \cite[Proposition 6.15(a)(c)]{P98} are easy. Second, \cite[Proposition 6.15(b)]{P98} follows from \cite[Theorem 3.2]{LP95}, which is proved for abelian varieties over any global field.
    \item (\textbf{The ambiguous cases})\\
The same argument in \cite[p40-44]{P98} works with minor modification. The key input in the original proof is \cite[Proposition 6.16]{P98}, which follows from \cite[Propositions 8.9]{LP}. The later proposition is proven for all global fields, so works in our setting. As a result, the obvious analogue of \cite[Proposition 6.16]{P98} holds in our setting. This, combined with Proposition~\ref{prop:pinktranstive}, proves the obvious analogues of \cite[Propositions 6.18, 6.19]{P98}. The proof of \cite[Propositions 6.20, 6.23]{P98} is based on \cite[Propositions 6.18, 6.19]{P98} and Lie theory, and is still valid in our case. So Pink's original proof goes through.
    \end{enumerate}
\end{proof}

\begin{proof}[Proof of Theorem~\ref{modpPink}] First use Theorem~\ref{modpCommelin} to reduce to the case of Assumption~\ref{ass:pink}. If $G_\mathrm{B}(A)$ has type $A_{2s-1}$ for $s\geq 1$ or $B_r$ for $r\geq 1$, this is Theorem~\ref{thm:Pinkcase2}. If $2g$ is not an $r$-th power of an integer for $r>1$, nor of the form $\binom{2r}{r}$ for some odd $r>1$, we can apply \cite[Proposition 4.7]{P98} to the strong Mumford--Tate pair $(G_{\mathrm{B}}(A),\rho_{\mathrm{B}})$ (cf. Proposition~\ref{prop:MTpairs}) to obtain $G_{\mathrm{B}}(A)=\GSp_{2g,\bQ}$. Let $G$ be in Theorem~\ref{thm:Pinkinterpolation}. Apply \cite[Proposition 4.7]{P98} to $G$ and its tautological representation. We find that $G=\GSp_{2g,\bQ}$ as well, so $G_u= \GSp_{2g,\bQ_u}$  for some $u\in \bL$, hence for every $u\in \mathrm{fpl}(\bQ)$. Therefore $\MTT_p$ holds for $A$.
\end{proof}
\subsection{Prime and low dimensions} Let $A$ be an abelian varieties over $k/\Fpbar_q$ of dimension $g$. The method in this section involves classifications of the endomorphism algebra  $\End^0(A_{\overline{k}})$. Let ${\epsilon}$ be the generic point of a minimal special subvariety $\cX\subseteq\mathcal{A}_{g,\overline{\bQ}}$ whose mod $p$ reduction contains the moduli point corresponding to $A_{\overline{k}}$. By Proposition~\ref{thm: Endequal}, \begin{equation}\label{eq:equalEnd}
\End^0(A_{\overline{k}})=\End^0(A_{\overline{\epsilon}}).
\end{equation}

\begin{proof}[Proof of Theorem~\ref{modpTank}]
By (\ref{eq:equalEnd}), $A_{\overline{\epsilon}}$ is simple, and we can run the classification from \cite[\S 2.12$\sim$ \S2.16]{Mointro}: the possible structure of $\End^0(A_{\overline{k}})$ are (1) $\bQ$, or (2) a totally real field $F$ of degree $g$ over $\bQ$, or (3) an imaginary quadratic field $E$,
or (4) a CM field of degree $2g$ over $\bQ$. The structure of $G_{\mathrm{B}}(A)^{\der}$ in each cases can also be read off from \textit{loc.cit}.  

In the first case, $A$ is of Pink type, so $\MTT_p$ holds by Theorem~\ref{modpPink}. In the last case, $A$ is CM, so $\MTT_p$ holds. 
In the second case, $G_{\mathrm{B}}(A)$ is a scalar restriction of a form of $\GL_{2,F}$ to $\bQ$. One deduces that $G_p(A)^{\circ\der}_{\bC_p}$ surjects onto each simple factor of $G_{\mathrm{B}}(A)_{\bC_p}^{\der}$, and it is easy to see that $\MTT_p$ must also hold in this case. In the third case, $G_{\mathrm{B}}(A)_{\bC}^{\der}=\SL_{g,\bC}$ and the representation $(G_{\mathrm{B}}(A)_{\bC},\rho_{\mathrm{B}})$ splits into two mutually dual $g$ dimensional irreps $V\oplus V^{\vee}$. Say $V$ corresponds to the standard representation of $\SL_{g,\bC}$. So we obtain a simple Mumford--Tate pair of type $A_{g-1}$ in the first column of \cite[Table 4.2]{P98}. Since $g$ is a prime, this simple Mumford--Tate pair can not admit a smaller irreducible Mumford--Tate pair with the same dimension of representation. So $\MTT_p$ holds in this case as well. 
\end{proof}
\begin{lemma}\label{prop:mtreduction1}
Suppose that $A$ is an abelian fourfold and  $\End^0(A_{\overline{k}})=\bQ$.\begin{enumerate}
    \item If $A$ is not of $p$-adic Mumford type, then $\mathrm{MT}_p$ holds for $A$. 
\item 
If $A$ is of $p$-adic Mumford type, then either $\mathrm{MT}_p$ holds for $A$, or $G_\mathrm{B}(A)_{\overline{\bQ}}=\GSp_{8,\overline{\bQ}}$.
\end{enumerate}
\begin{proof}
    Since $(G_\mathrm{B}(A),\rho_{\mathrm{B}})$ is a strong Mumford--Tate pair, \cite[Proposition 4.5]{P98} implies that either $G_\mathrm{B}(A)_{\overline{\bQ}}=\GSp_{8,\overline{\bQ}}$, or $A$ is of \textit{Mumford type}, i.e., if $(G_\mathrm{B}(A)_{\overline{\bQ}}^{\der},\rho_{\mathrm{B}})$ is the tensor product of three copies of standard representation of $\SL_{2,\overline{\bQ}}$. Let $l$ be a suitable prime. Applying \cite[Proposition 4.5]{P98} to the group $G$ and its tautological representation in Theorem~\ref{thm:Pinkinterpolation}, we see that either $G_l(A)_{\overline{\bQ}_l}^\circ=\GSp_{8,\overline{\bQ}_l}$, or $A$ is of $p$-adic Mumford type. As long as $A$ is not of $p$-adic Mumford type, $\MTT_p$ holds for $A$. 
\end{proof}

\end{lemma}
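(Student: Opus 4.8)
\textbf{Proof proposal for Lemma~\ref{prop:mtreduction1}.} The plan is to combine the structural theory of (weak and strong) Mumford--Tate pairs from \cite{P98} with the characteristic $p$ interpolation result Theorem~\ref{thm:Pinkinterpolation}, exactly as in the proof of Theorem~\ref{modpPink}. First, since $A$ is an abelian fourfold with $\End^0(A_{\overline{k}})=\bQ$, equation~(\ref{eq:equalEnd}) guarantees that $\End^0(A_{\overline{\epsilon}})=\bQ$ as well, so the tautological representation $\rho_\mathrm{B}$ of $G_\mathrm{B}(A)$ is absolutely irreducible by Proposition~\ref{thm: Endequal}; moreover $(G_\mathrm{B}(A),\rho_\mathrm{B})$ is a strong Mumford--Tate pair of weights $\{0,1\}$ over $\bQ$ by Proposition~\ref{prop:MTpairs}. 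We then invoke \cite[Proposition 4.5]{P98} (the classification of strong Mumford--Tate pairs in dimension $8$): either $G_\mathrm{B}(A)_{\overline{\bQ}}=\GSp_{8,\overline{\bQ}}$, or $A$ is of Mumford type, i.e.\ $(G_\mathrm{B}(A)^{\circ,\der}_{\overline{\bQ}},\rho_\mathrm{B})$ is the triple tensor product of the standard representation of $\SL_{2,\overline{\bQ}}$.

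Next, I would pass to the $l$-adic side. Reducing via Theorem~\ref{modpCommelin} to Assumption~\ref{ass:pink} (so $k$ is a global function field, all $G_u(A)$ are connected, and $\rho_u$ is absolutely irreducible), Theorem~\ref{thm:Pinkinterpolation} produces a connected reductive $\bQ$-group $G\subseteq \GL_{8,\bQ}$ such that $G_u(A)$ is $\GL_{8,\bQ_u}$-conjugate to $G_{\bQ_u}$ for all $u$ in a density-$1$ set $\bL$ of primes, and such that $(G,\rho)$ is again a strong Mumford--Tate pair of weights $\{0,1\}$ with $G^{\der}$ a $\bQ$-simple group. Applying \cite[Proposition 4.5]{P98} to $(G,\rho)$, the same dichotomy holds: either $G_{\overline{\bQ}}=\GSp_{8,\overline{\bQ}}$, or $G^{\der}_{\overline{\bQ}}$ is the triple tensor product of $\SL_2$'s, i.e.\ $A$ is of $p$-adic Mumford type (since $G_u(A)^{\circ,\der}$ is $\GL_8$-conjugate to $G^{\der}_{\bQ_u}$ for $u\in\bL$, and the isomorphism class of $(G_u(A)^\circ,\rho_u)$ over $\overline{\bQ}_u$ is independent of $u$ by \cite[Lemma 3.4]{J23}).

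For part (1): if $A$ is \emph{not} of $p$-adic Mumford type, the $l$-adic dichotomy forces $G_{\overline{\bQ}}=\GSp_{8,\overline{\bQ}}$, hence $G_u(A)^\circ=\GSp_{8,\bQ_u}$ for $u\in\bL$ and then for all $u\in\mathrm{fpl}(\bQ)$ by independence of $u$. But then by Theorem~\ref{thm:GincludeinMT} we have $\GSp_{8,\bQ_l}=G_l(A)^\circ\subseteq G_\mathrm{B}(A)_{\bQ_l}\subseteq \GSp_{8,\bQ_l}$ (the last inclusion because $G_\mathrm{B}(A)$ preserves the symplectic form up to scalar), so all inclusions are equalities and $\MTT_p$ holds for $A$. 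For part (2): if $A$ \emph{is} of $p$-adic Mumford type, then either $G_\mathrm{B}(A)_{\overline{\bQ}}=\GSp_{8,\overline{\bQ}}$ (the second alternative of the first dichotomy), or $A$ is of Mumford type on the Betti side too; in the latter case both $G_\mathrm{B}(A)$ and $G_p(A)^\circ$ have derived group (geometrically) the triple tensor product of $\SL_2$'s, and one checks they agree --- Corollary~\ref{cor:onefactor} together with $\MTT_p\Leftrightarrow\mathrm{logAL}_p$ is overkill here; more directly, $Z^0G_p(A)=Z^0G_\mathrm{B}(A)_{\bQ_p}$ by Proposition~\ref{thm:Z0equal}, $G_\mathrm{B}(A)^{\der}$ is $\bQ$-simple of absolute type $A_1^3$ with $G_\mathrm{B}(A)^{\der}_{\overline{\bQ}}\cong\SL_2^3$ which has no proper subgroup acting irreducibly in the $2\otimes2\otimes2$ representation that is itself a weak Mumford--Tate pair, so $G_p(A)^{\circ,\der}_{\overline{\bQ}_p}=G_\mathrm{B}(A)^{\der}_{\overline{\bQ}_p}$ and hence $\MTT_p$ holds. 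Thus in part (2) either $\MTT_p$ holds or $G_\mathrm{B}(A)_{\overline{\bQ}}=\GSp_{8,\overline{\bQ}}$, as claimed.

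The main obstacle I anticipate is not the group theory (which is essentially bookkeeping with \cite[Proposition 4.5]{P98}) but ensuring that Theorem~\ref{thm:Pinkinterpolation} applies cleanly: one must verify the reduction to Assumption~\ref{ass:pink} is compatible with the hypothesis $\End^0(A_{\overline{k}})=\bQ$ (it is, since taking a finite extension of $k$ does not change the geometric endomorphism algebra), and one must be careful that the phrase "$A$ is of $p$-adic Mumford type" is insensitive to the base changes and Hecke translates performed along the way --- this follows because $(G_p(A)^{\circ,\der},\rho_p)$ over $\overline{\bQ}_p$ is unchanged under such operations. Once these compatibilities are in place, the proof is a direct parallel of the argument establishing Theorem~\ref{modpPink} and Theorem~\ref{modpMoonen-Zarhin}.
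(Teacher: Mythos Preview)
Your proof is correct and follows essentially the same approach as the paper: apply \cite[Proposition 4.5]{P98} to the strong Mumford--Tate pair $(G_\mathrm{B}(A),\rho_\mathrm{B})$ and to the interpolated group $G$ from Theorem~\ref{thm:Pinkinterpolation}, then read off parts (1) and (2) from the resulting dichotomies. You actually supply more detail than the paper for part (2), explicitly verifying that if $A$ is simultaneously of Mumford type and of $p$-adic Mumford type then $\MTT_p$ holds (via the center equality Proposition~\ref{thm:Z0equal} plus the observation that no proper subgroup of $\SL_2^3$ acts irreducibly on $2\otimes 2\otimes 2$), a step the paper's proof leaves implicit.
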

\begin{proof}[Proof of Theorem~\ref{modpMoonen-Zarhin}] If $A_{\overline{k}}$ is not simple, then (up to isogeny) it splits into a product of simple ones of dimension $1,2$ or $3$. We then apply Theorem~\ref{modpCommelin} and Theorem~\ref{modpTank} to conclude that $\MTT_p$ holds for $A$. Therefore from now on we can assume that $A_{\overline{k}}$ is a simple abelian fourfold. Moonen and Zarhin obtain a thorough classification of (the endomorphism algebra, the Hodge group, and the $l$-adic monodromy group of) simple abelian fourfolds over a number field; cf. \cite[Table 1]{MZ95}. Of course, it is possible to modify the $l$-adic part of their argument to our setting. However, as long as $\MTT_p$ is concerned, we can run a shorter argument based on what we have proved so far.   

First, suppose that $\End^0(A_{\overline{k}})=\bQ$. Then we conclude by Lemma~\ref{prop:mtreduction1} that $\MTT_p$ holds for $A$. Second, the CM case is also easy. 

Now we are reduced to treat the case where $1<[\End^0(A_{\overline{k}}):\bQ]<8$. The representation $(G_{\mathrm{B}}(A)_{\overline{\bQ}},\rho_{\mathrm{B}})$ splits into a direct sum of Galois isotypical irreducible sub-representations $V_1,...,V_{8/d}$ of the same dimension $d\in \{2,4\}$. If each $V_i$ is a tensor product of irreps of dimension $\leq 3$, then we can read from \cite[Table 4.2]{P98} that $G_p(A)^{\circ\der}_{\bC_p}$ surjects onto each simple factor of $G_{\mathrm{B}}(A)_{\bC_p}^{\der}$. 
Since $\End_{\bC_p}(\rho_\mathrm{B})=\End_{\bC_p}(\rho_p)$, each simple factor of $G_p(A)^{\circ\der}_{\bC_p}$ must sit inside a unique simple factor of $G_{\mathrm{B}}(A)_{\bC_p}^{\der}$. It follows that $\MTT_p$ holds.

This proves $\MTT_p$ when $d=2$, as well as cases of $d=4$ where $V_1$ has non-trivial tensor decomposition. So we are reduced to the case where $d=4$, and the Mumford--Tate pair associated to $V_1$ is simple. Read \cite[Table 4.2]{P98} again, this happens only when the simple Mumford--Tate triple is a (1) std rep of $C_2$ or (2) std rep (or its dual) of $A_3$ (note that the Spin$^+$ rep of $D_3$ is the same as std rep of $A_3$). Now $C_2$ does not admit a smaller Mumford--Tate pair with an irreducible dimension $4$ representation. So $\MTT_p$ holds in case (1). On the other hand, $A_3$ does admit a smaller Mumford--Tate pair with an irreducible dimension $4$ representation, which is the tensor product of two copies of std reps of $A_1$. Comparing it to 
the classification in \cite{MZ95}, we find that case (2) happens for \cite[Type IV(1,1)(ii)]{MZ95}. The same proof in \textit{loc.cit} applies to our situation, and establishes $\MTT_p$ in case (2). 
\end{proof}
\subsubsection{Abelian fourfolds of $p$-adic Mumford type} Let $f:X\rightarrow \mathscr{A}_{g,\Fpbar}$ be a map whose image lies generically in the ordinary locus, and let $(X_0,f_0)$ be a finite field model. Write $A$ for the abelian variety over the generic point of $X_0$. 
\begin{lemma}\label{lm:multislopes}
    Suppose that $g=4$ and $A$ is of $p$-adic Mumford type. Then $C:=\overline{\im f}\subseteq \mathscr{A}_{4,\Fpbar}$ is a Tate-linear curve. 
\end{lemma}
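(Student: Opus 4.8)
\textbf{Proof plan for Lemma~\ref{lm:multislopes}.}

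The goal is to show that $C = \overline{\im f} \subseteq \mathscr{A}_{4,\Fpbar}$ is Tate-linear, i.e., that for some (equivalently any) smooth ordinary point $x \in C(\Fpbar)$, the formal completion $C^{/x}$ is a formal subtorus of $\mathscr{A}_{4,\Fpbar}^{/x} \simeq \Def(\mathscr{G}_0/\Fpbar)$. The plan is to read this off from the structure of the $p$-adic monodromy group $G_p(f)$ via Theorem~\ref{Thm:Tatelocal}. First I would pass to a suitable finite étale cover and finite field model so that $G_p(f) = G_p(f_0)$ is connected, and pick an ordinary point $x$ in the smooth locus of $C$ lying over an $\Fpbar_q$-point $x_0$; this is harmless for the Tate-linearity question since Chai's result (cited in the definition of Tate-linear) propagates the property across smooth ordinary points, and since dominant covers do not change $G_p(f)^\circ$ or the formal germ up to finite correspondence. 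By Theorem~\ref{Thm:Tatelocal}(\ref{itcor3}), the cocharacter lattice $T_{f,x}$ of the smallest formal subtorus $\mathscr{T}_{f,x}$ through which $f^{/x}$ factors is canonically identified with the Lie algebra of the opposite unipotent $U_{G_p^-(f),\mu_x^{-1}}$ of $G_p^-(f)$ with respect to the canonical Hodge cocharacter $\mu_x$. Since $C$ is a curve, $C^{/x}$ is one-dimensional, hence $f^{/x}$ factors through $\mathscr{T}_{f,x}$ and it suffices to show $\dim \mathscr{T}_{f,x} = 1$, i.e., $\dim U_{G_p^-(f),\mu_x^{-1}} = 1$, equivalently that the Serre--Tate representation $\ad_{G_p^-(f),\mu_x}$ (which is $\Lie U_{G_p^-(f),\mu_x^{-1}}$) is one-dimensional.

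The key computation is therefore purely group-theoretic: I must determine $\dim U_{G_p^-(f),\mu_x^{-1}}$ when $A$ is of $p$-adic Mumford type. By definition, $(G_p(A)_{\overline{\bQ}_p}^{\circ,\der}, \rho_p)$ is the tensor product of three copies of the standard representation of $\SL_{2,\overline{\bQ}_p}$, so $G_p(A)^{\circ}_{\overline{\bQ}_p} = \bG_m \cdot (\SL_2 \times \SL_2 \times \SL_2)$ acting on $\overline{\bQ}_p^2 \otimes \overline{\bQ}_p^2 \otimes \overline{\bQ}_p^2$. The Hodge cocharacter $\mu_x$ has weights $\{0,1\}$ on this $8$-dimensional space. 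By Theorem~\ref{Thm:Tatelocal}(\ref{itcor1}), $G_p^-(f)$ is the opposite parabolic of $G_p(f)$ with respect to $\mu_x$, so $U_{G_p^-(f),\mu_x^{-1}}$ is the opposite unipotent radical, and its dimension equals the number of roots $\alpha$ of $G_p(f)^{\der}$ with $\langle \alpha, \mu_x \rangle = -1$ (plus the contribution of $\mu_x$ to the center, which is trivial for the derived part). In the Mumford case, a cocharacter of weights $\{0,1\}$ on $V_1 \otimes V_2 \otimes V_3$ with each $V_i$ the standard $\SL_2$-representation must, after adjusting by the central $\bG_m$, be a cocharacter of $\SL_2 \times \SL_2 \times \SL_2$ of the form $(\mu_1, \mu_2, \mu_3)$ with each $\mu_i$ a coweight of $\SL_2$; the condition that the four weights on the tensor product lie in $\{0,1\}$ forces exactly one of the three factors to be ``active'' (i.e., exactly one $\mu_i$ is the nontrivial minuscule coweight and the other two are trivial) --- this is the classical reason Mumford-type abelian varieties give rise to curves. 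Hence the root spaces on which $\mu_x$ acts by $-1$ are exactly the negative root space of the single active $\SL_2$ factor, which is one-dimensional. I would write this out carefully: enumerate the $\pm$ roots $(\pm\alpha_1, \pm\alpha_2, \pm\alpha_3)$ of the three $\SL_2$'s, pair each against $\mu_x = (\mu_1,\mu_2,\mu_3)$, and observe that exactly one pairing equals $-1$.

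Assembling: $\dim U_{G_p^-(f),\mu_x^{-1}} = 1$, so by Theorem~\ref{Thm:Tatelocal}(\ref{itcor3}), $T_{f,x}$ is one-dimensional, hence $\mathscr{T}_{f,x}$ is a one-dimensional formal subtorus of $\mathscr{A}_{4,\Fpbar}^{/x}$. Since $\dim C^{/x} = 1$ and $f^{/x}: C^{/x} \to \mathscr{A}_{4,\Fpbar}^{/x}$ factors through $\mathscr{T}_{f,x}$ with both sides of dimension one, we get $C^{/x} = \mathscr{T}_{f,x}$, which is a formal subtorus; therefore $C$ is Tate-linear at $x$, hence Tate-linear. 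The main obstacle is the group-theoretic verification that the weight-$\{0,1\}$ constraint on the triple tensor product of standard $\SL_2$-representations forces the Hodge cocharacter to be ``supported on a single factor'' so that the opposite unipotent is one-dimensional; once this is established the geometry is immediate. One should also make sure the reduction steps (étale cover, finite field model, choice of ordinary smooth point on $C$ rather than on $X$) are compatible with the hypothesis that $A$ is of $p$-adic Mumford type --- but $p$-adic Mumford type is a condition on $G_p(A)^{\circ,\der}$ with its tautological representation, which is insensitive to these operations by the independence statements recorded in Notation~\ref{not:hidingfinitefield} and the lemma following Theorem~\ref{Thm:Tatelocal}.
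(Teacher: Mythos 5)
Your proof is correct and follows essentially the same route as the paper. The only (cosmetic) difference is that you run the weight computation against the Hodge cocharacter $\mu_x$ and invoke the weight-$\{0,1\}$ constraint from the Mumford--Tate pair structure, whereas the paper factors the Newton cocharacter $\nu_x$ through the three $\SL_2$ factors and observes that more than one active factor would produce more than two slopes, contradicting ordinariness; for an ordinary point these are dual formulations of the same dimension count, and both conclude by Theorem~\ref{Thm:Tatelocal}.
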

\begin{proof}
  By definition,  $(G_p(A)_{\overline{\bQ}_p}^{\der},\rho_p)$ is the tensor product of three copies of tautological representation of $\SL_{2,\overline{\bQ}_p}$. To distinguish, we denote the three copies by $\theta_{p,i}:\SL_{2,\overline{\bQ}_p}\acts V_i$, where $i\in \{1,2,3\}$. Let $\nu_x:\bG_m\rightarrow G_p(A)$ be the Newton cocharacter of an ordinary point $x\in X(\Fpbar)$. Factorize $\nu_x$ via each $\theta_{p,i}$ to a cocharacter $\nu_{x,i}:\bG_m\rightarrow \SL_{2,\overline{\bQ}_p}$. If $\nu_{x,i}$ is non-trivial, then it induces a two-step slope filtration on $V_i$.  If there are more than one $i$'s such that $\nu_{x,i}$ is not trivial, then the slope filtration on $\bH_{\cris,X}$ has more than two slopes, violating the fact that $A$ is ordinary. As a result,
    there is exactly one $i$ for which $\nu_{x,i}$ is non-trivial. This implies that the unipotent subgroup of $G_p(A)$ corresponding to $\nu_x$ has dimension 1. We conclude by Theorem~\ref{Thm:Tatelocal}. 
\end{proof}
\begin{proof}[Proof of Corollary~\ref{cor:geoAOdim5}]
 If none of the geometrically simple factors of $A$ is of dimension 4, then $\MTT_p$ holds for $f$ by Theorem~\ref{modpTank} and \ref{modpCommelin}, so $\mathrm{geoAO}_p$ holds for $X$. Otherwise, $A$ splits as the product of a simple abelian fourfold $A'$ and an elliptic curve, up to isogeny. Accordingly, $f$ factors through $\mathscr{A}_{4,\Fpbar}\times \mathscr{A}_{1,\Fpbar}$. If $A'$ is not of $p$-adic Mumford type, then $\MTT_p$ again holds for $A$ by Theorem~\ref{modpMoonen-Zarhin}, and $\mathrm{geoAO}_p$ holds for $X$. Suppose that $A'$ is of $p$-adic Mumford type. Let $C\subseteq \mathscr{A}_{4,\Fpbar}$ be as in Lemma~\ref{lm:multislopes}, then $X$ is contained in the quasi-weakly special subvariety $T=C\times \mathscr{A}_{1,\Fpbar}$. If $\dim X=1$, then $\mathrm{geoAO}_p$ trivially holds for $X$. If $\dim X=2$, then $X$ is dense in $T$, $\mathrm{geoAO}_p$ again holds for $X$.  
\end{proof}

\section{Tate-linearity and algebraization}\label{sec:Newmethod}
We prove the Tate-linear conjecture for a special case by studying the geometry of the Tate-linear loci (Construction \ref{const:ttloci}) via integral $p$-adic Hodge theory. Consider $f:X\rightarrow \mathscr{A}_{g,\mathbb{F}}$, where $X$ is a generically ordinary irreducible $\mathbb{F}$-variety. Let $X^\circ\subseteq X$ be the smooth ordinary locus. 
We say that $X$ has \textbf{unramified} $p$-adic monodromy, if $G(\bH_{\cris, X^\circ})^{\circ}$ is an unramified reductive group. Recall that we can view $G(\bH_{\cris, X^\circ})$ as a reductive group over $K$ or over $\bQ_p$, depending on the choice of the fiber functor $\omega^{\mathrm{iso}}_x$ or $\omega^{\mathrm{DM}}_x$; cf., \S\ref{subsub:moiso}. The property of being unramified, however, is independent of the choice. The following is the main result of this section: 
\begin{theorem}\label{thm:TTLpintegraldcs}
  Suppose that $T\subseteq \mathscr{A}_{g,\Fpbar}$ is a proper Tate-linear curve with unramified $p$-adic monodromy. Then $\mathrm{Tl}_{p}$ holds for $T$, i.e., $T$ is a component of the mod $p$ reduction of a special subvariety.  
\end{theorem}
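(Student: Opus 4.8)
The plan is to realize Theorem~\ref{thm:TTLpintegraldcs} as the conjunction of two independent assertions about the canonical Tate-linear locus $\mathfrak{T}_{\mathrm{can}}\subseteq ]T[$ of Construction~\ref{const:ttloci}: first, that $\mathfrak{T}_{\mathrm{can}}$ is \emph{linear in the Lie-theoretic coordinates} in the residue tube of every point $x\in T(\Fpbar)$ (including non-ordinary and, via passage to a toroidal compactification, boundary points); and second, that this pointwise linearity forces $\mathfrak{T}_{\mathrm{can}}$ to be \emph{globally algebraizable}, after which the Observation following Construction~\ref{const:ttloci} (i.e.\ Lemma~\ref{lm:moonenlemma}) together with Moonen's theorem \cite[Theorem 4.5]{M98} yields that $\widetilde{T^\circ}$ is a component of the mod $p$ reduction of a special subvariety, which is exactly $\mathrm{Tl}_p$ for $T$. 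As preparation I would first reduce to a convenient situation: replace $T$ by a suitable $p$-power Hecke translate (legitimate by Lemma~\ref{lm:MTGsameunip}-type invariance and the fact that $\mathrm{MS}_f$-members are Hecke translates) so that the crystalline monodromy and its associated tensors have the best possible integral structure, and pass to a finite \'etale cover so that $G(\bH_{\cris,T^\circ})=G(\bH_{\cris,T^\circ})^\circ$; the unramifiedness hypothesis is what makes this reduction produce a \emph{hyperspecial}-type model of the monodromy group, so that Kisin's Lie-theoretic coordinate package of \S\ref{subsub:lethcoor} applies.

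For the linearity step, fix $x\in T(\Fpbar)$ with $p$-divisible group $\mathscr{G}_0$ and Dieudonn\'e module $M_0$, and let $\{s_\alpha\}\subseteq (M_0[\tfrac1p])^{\otimes}$ be the $\varphi$-invariant crystalline tensors cutting out $G(\bH_{\cris,T^\circ})^\circ$; by the definition of $\mathfrak{T}_{\mathrm{can}}$ these come equipped with compatible de Rham tensors over $]T[$ that land in $\mathrm{Fil}^0$ precisely along $\mathfrak{T}_{\mathrm{can}}$. The goal is to feed these into Theorem~\ref{thm:BKcriterion} (Kisin's Breuil--Kisin criterion), whose input is a family of \emph{$p$-adic \'etale} Galois-invariant tensors with good integral behaviour on a point $\varpi\colon R\to \mathcal{O}_{K'}$; so the work is to upgrade the crystalline/de Rham data on (an appropriate formal model of) $\mathfrak{T}_{\mathrm{can}}$ to such \'etale tensors. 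This is where relative integral $p$-adic Hodge theory enters: one wants to invoke the log-prismatic $F$-crystal purity results of \cite{du2024logprismaticfcrystalspurity} to produce the \'etale realization, but that machinery requires a semistable (log-smooth) formal model of $\mathfrak{T}_{\mathrm{can}}$, which is not given a priori. The bulk of the technical labour is therefore to construct such a model: split $\mathfrak{T}_{\mathrm{can}}$ into irreducible pieces, normalize, excise singular loci, use Raynaud's theory (together with formal alteration \`a la \cite{TM17}) to find a formal model and then modify it to be semistable, and then run the $p$-adic Hodge-theoretic comparison over the semistable locus, finally applying Theorem~\ref{thm:BKcriterion} to conclude that $\varpi$ factors through $R_G$ for every $\mathcal{O}_{K'}$-point --- i.e.\ that $\mathfrak{T}_{\mathrm{can}}$ lies in the linear subspace $\Spf R_G$ of $\Def(\mathscr{G}_0/W)$ in the Lie-theoretic coordinates at $x$. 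Here I would also use Corollary~\ref{cor:containTcan} and Theorem~\ref{thm:localstructureofT} to identify, on the ordinary tube $]T^\circ[$, which linear subspace is selected, namely the one through $\widetilde{T^\circ}$.

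For the algebraization step, having shown $\mathfrak{T}_{\mathrm{can}}\cap\,]U[\;$ is linear in Lie coordinates in each residue tube (for $U\subseteq T$ a suitable dense open, with the non-ordinary and boundary tubes controlled by the unramifiedness), I would argue a boundedness statement: $\mathfrak{T}_{\mathrm{can}}$ is contained in a finite union of tubes of strictly smaller radius $]U_i[_{<1-\epsilon}$ inside $]T^{\mathrm{cl}}[$, where $T^{\mathrm{cl}}\subseteq \mathscr{A}_{g,\Fpbar}^{\Sigma}$ is the closure. The point is that linearity in Lie coordinates --- which are \emph{bounded} coordinates attached to the integral model --- prevents $\mathfrak{T}_{\mathrm{can}}$ from ``escaping to the boundary of the tube'' at any point of $T$, a phenomenon that does occur in the non-unramified Deligne--Rapoport picture (the $X_0(Np)$ annulus example). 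Since $T$ (hence $T^{\mathrm{cl}}$) is proper, $T^{\mathrm{cl}}$ is a projective curve, and a closed analytic subspace of $(\mathcal{A}_{g,K}^{\Sigma})^{\mathrm{an}}$ that is strictly contained in tubes of smaller radius over a projective base is Zariski closed in $(\mathcal{A}_{g,K}^{\Sigma})^{\mathrm{an}}$; rigid GAGA (Grothendieck's GFGA / K\"opf) then algebraizes $\mathfrak{T}_{\mathrm{can}}$ to an algebraic subvariety of $\mathcal{A}_{g,K}^{\Sigma}$. By construction its restriction to $]T^\circ[$ contains $(\widetilde{T^\circ})^{\mathrm{rig}}$ as a component, so Lemma~\ref{lm:moonenlemma} applies and we are done.

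The main obstacle I anticipate is squarely the first step: producing a semistable (log-)formal model of $\mathfrak{T}_{\mathrm{can}}$ good enough to apply the relative integral $p$-adic Hodge theory of \cite{du2024logprismaticfcrystalspurity}, and in particular verifying the hypotheses of Theorem~\ref{thm:BKcriterion} uniformly in $x\in T(\Fpbar)$ --- i.e.\ checking that the \'etale tensors one extracts really are $\Gal(K')$-invariant, lie (after scaling) in $T_p\mathscr{G}_\varpi^{*,\otimes}$, and match $\{s_\alpha\}$ under the comparison isomorphism. Restricting to $\dim T=1$ is precisely what keeps the formal-geometry bookkeeping (splitting into pieces, normalization, excision of singularities, Raynaud models, formal alteration) manageable; the higher-dimensional case, deferred to future work, would require a considerably more delicate handling of the non-semistable locus. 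A secondary but still nontrivial point is the boundedness argument in the algebraization step: one must rule out $\mathfrak{T}_{\mathrm{can}}$ having components that are ``overconvergent but not bounded'' near bad-reduction or boundary points, and it is exactly the unramified hypothesis (via the linearity in Lie coordinates just established) that excludes this.
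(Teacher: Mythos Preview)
Your proposal is correct and follows essentially the same route as the paper: reduce via a $p$-power Hecke translate to the $p$-integral case, establish linearity of $\mathfrak{T}_{\mathrm{can}}$ in Lie-theoretic coordinates at every point of $T$ by upgrading crystalline/de Rham tensors to $p$-adic \'etale tensors (via Raynaud models, formal alteration \`a la \cite{TM17}, and the semistable relative comparison of \cite{du2024logprismaticfcrystalspurity}) and then invoking Kisin's criterion Theorem~\ref{thm:BKcriterion}, and finally deduce boundedness and apply rigid GAGA plus Lemma~\ref{lm:moonenlemma}. The paper's organization differs only in that it first isolates the smooth proper $p$-integral case (Theorem~\ref{thm:TTlinearsmooth}), then handles singularities of $T$ itself by an embedded blow-up of $\mathscr{A}_{g,W}^{\Sigma}$ (Theorem~\ref{thm:ttlinearsingular}), and only at the end performs the Hecke-translate reduction; one small over-complication in your sketch is the appeal to a toroidal compactification for ``boundary points'', which is unnecessary here since the hypothesis already places $T$ properly inside $\mathscr{A}_{g,\Fpbar}$.
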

The theorem has the following application to the missing piece of $\MTT_p$ for abelian fourfolds. 
Let $A$ be an ordinary abelian fourfold defined over a function field $k$ over $\Fpbar_q$. We say that $A$ has \textbf{everywhere potentially good reduction}, if its moduli point in $\mathscr{A}_{g,\Fpbar_q}$ has proper Zariski closure.
\begin{corollary}\label{cor:coomologyMT}
  Suppose that $A$ is of $p$-adic Mumford type, has unramified $p$-adic monodromy, and has everywhere potentially good reduction, then $\MTT_p$ holds for $A$. 
\end{corollary}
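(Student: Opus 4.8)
\textbf{Proof proposal for Corollary~\ref{cor:coomologyMT}.}

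The plan is to reduce $\MTT_p$ for $A$ to $\mathrm{Tl}_p$ for a Tate-linear curve and then invoke Theorem~\ref{thm:TTLpintegraldcs}. First I would record the formal setup: by hypothesis $A$ extends to an ordinary abelian scheme over a generically ordinary $\Fpbar_q$-variety $X_0$, giving a map $f_0:X_0\to\mathscr{A}_{4,\Fpbar_q}^{\ord}$, and we may pass to $\Fpbar$ to get $f:X\to\mathscr{A}_{4,\Fpbar}^{\ord}$. Since $A$ is of $p$-adic Mumford type, Lemma~\ref{lm:multislopes} shows that $C:=\overline{\im f}\subseteq\mathscr{A}_{4,\Fpbar}$ is a Tate-linear curve. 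The ``everywhere potentially good reduction'' hypothesis is exactly what makes the moduli point of $A$ have proper Zariski closure, so $C$ is a \emph{proper} Tate-linear curve. The remaining hypothesis, ``unramified $p$-adic monodromy,'' transports from $A$ to $C$: the overconvergent crystalline monodromy $G_p(f)$ is, up to isogeny and replacing $X$ by a cover, the monodromy $G(\bH_{\cris,C^\circ})$ of the Tate-linear curve (both compute the monodromy of the pulled-back $F$-isocrystal $\bH_{\cris}$ restricted to the ordinary locus), and the property of being an unramified reductive group is insensitive to the choice of fiber functor and to finite covers. Hence $C$ satisfies all the hypotheses of Theorem~\ref{thm:TTLpintegraldcs}, so $C$ is a component of the mod $p$ reduction of a special subvariety, i.e.\ $\mathrm{Tl}_p$ holds for $C$.

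Next I would upgrade $\mathrm{Tl}_p$ for $C$ to $\mathrm{logAL}_p$ for $f$. Since $f$ factors through $C$ and $C$ is a curve which is special, the smallest formal torus $\mathscr{T}_{f,x}$ through which $f^{/x}$ factors coincides with the formal germ of $C$ at $x$ (for $x$ an ordinary point of $C$), which by the conclusion of $\mathrm{Tl}_p$ is an irreducible component of the formal germ at $f(x)$ of a special subvariety. This is precisely the assertion of Conjecture~\ref{conj:AxSchanuel}, i.e.\ $\mathrm{logAL}_p$ holds for $f$. One subtlety: strictly speaking $\mathrm{Tl}_p$ as stated in Conjecture~\ref{conj:TTl} asks for $C$ \emph{special}, but $\mathrm{logAL}_p$ needs the statement about $\mathscr{T}_{f,x}$ as a component of a germ; since $f$ dominates $C$ and $C$ is a curve, $\mathscr{T}_{f,x}=C^{/x}$ and there is nothing more to check — the germ of the ambient special subvariety containing $C$ restricts to a germ having $C^{/x}$ as an irreducible component because $C$ is a component of the reduction.

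Finally I would invoke Theorem~\ref{thm:connectionweb}(1) (equivalently Theorem~\ref{thm:logALimpliesMT}): $\mathrm{logAL}_p$ holds for $f$ if and only if $\MTT_p$ holds for $f$, and $\MTT_p$ for $f$ is by definition $\MTT_p$ for the abelian variety $A$ (over the generic point of $X_0$, hence over $k$). This completes the proof.

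The main obstacle in this argument — and the only genuinely nontrivial input — is Theorem~\ref{thm:TTLpintegraldcs} itself, which is proved separately via the crystalline/Tate-linear loci machinery, integral $p$-adic Hodge theory, and Breuil--Kisin modules; modulo that theorem the corollary is a routine assembly. Within the assembly, the one point requiring care is the verification that the hypothesis ``unramified $p$-adic monodromy of $A$'' coincides with ``unramified $p$-adic monodromy of the curve $C$'' after the possible passage to a finite cover of $X$ (so that all monodromy groups are connected and the coefficient objects descend) and after accounting for the fact that $A$ might only become the universal family after an isogeny; one should check that neither isogeny nor finite étale base change affects whether the (connected) monodromy group is an unramified reductive group, which follows since isogenies induce isomorphisms on rational $F$-isocrystals and finite covers do not change $G_p(f)^\circ$.
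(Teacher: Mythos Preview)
Your proof is correct and follows the paper's strategy through the main reduction: pass to a map $f:X\to\mathscr{A}_{4,\Fpbar}$, use Lemma~\ref{lm:multislopes} to produce the Tate-linear curve $C=\overline{\im f}$, verify it is proper (by potentially good reduction) with unramified $p$-adic monodromy, and apply Theorem~\ref{thm:TTLpintegraldcs} to conclude that $C$ is special.

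The only divergence is in the final step. The paper finishes via Lemma~\ref{prop:mtreduction1}(2): once $C$ lies in the reduction of a Shimura curve, one immediately has $G_\mathrm{B}(A)_{\overline{\bQ}}\subsetneq \GSp_{8,\overline{\bQ}}$, and the dichotomy for $p$-adic Mumford type fourfolds forces $\MTT_p$. You instead upgrade $\mathrm{Tl}_p$ for $C$ to $\mathrm{logAL}_p$ for $f$ (using $\mathscr{T}_{f,x}=C^{/x}$, which is fine since $C$ is Tate-linear of dimension~1 and the opposite unipotent of $G_p(f)$ has dimension~1 by the Mumford-type computation) and then invoke the general equivalence Theorem~\ref{thm:logALimpliesMT}. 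Both routes are valid; the paper's is more self-contained for this specific case (avoiding the full machinery of \S\ref{sec:ProofMT}), while yours illustrates the paper's broader thesis that $\mathrm{logAL}_p\Leftrightarrow\MTT_p$ is a practical tool.
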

These will be proved at the end of \S\ref{sub:PTLC}.  
\subsection{Preliminaries} In this section, we review some technical setups. 
\subsubsection{$p$-integrality} Consider $f:X\rightarrow \mathscr{A}_{g,\mathbb{F}}$, where $X$ is an irreducible connected $\mathbb{F}$-variety, not assumed to be generically ordinary. Let $X^{\mathrm{sm}}$ be the smooth locus of $X$. 

\begin{definition}\label{def:cohomologyunramified}
   Let $x\in X^\mathrm{sm}(\Fpbar)$. We say that $X$ is \textbf{$p$-integral at $x$}, if there exist tensors 
   $\{s_\alpha\}\subseteq \omega_x^{\mathrm{iso}}(\mathbb{H}_{\cris})^{\otimes}$, such that their stabilizer $\mathcal{G}\subseteq \GL(\omega_x^{\mathrm{iso}}(\mathbf{H}_{\cris}))$\footnote{Here $\omega_x^{\mathrm{iso}}(\mathbf{H}_{\cris})$ is the $W$-lattice $\mathbf{H}_{\cris,x}$, sitting inside the $K$-space $\omega_x^{\mathrm{iso}}(\mathbb{H}_{\cris})$. } is a reductive subgroup scheme over $W$, and $\mathcal{G}_{K}=G(\mathbb{H}_{\cris,X^\mathrm{sm}},x)^{\circ}$ (here we view  $G(\mathbb{H}_{\cris,X^\mathrm{sm}},x)$ as a reductive group over $K$). 
\end{definition}
Note that even when $X$ is generically ordinary, being $p$-integral at an ordinary point is stronger than having  unramified $p$-adic monodromy. Indeed, when speaking of $p$-integrality, we are working with a fixed integral structure: the $W$-lattice $\omega_x^{\mathrm{iso}}(\mathbf{H}_{\cris})\subseteq \omega_x^{\mathrm{iso}}(\mathbb{H}_{\cris})$. On the other hand, in order to guarantee unramified $p$-adic monodromy, we only need the existence of a reductive integral model $\mathcal{G}$ for a certain $W$-latice $H\subseteq \omega_x^{\mathrm{iso}}(\mathbb{H}_{\cris})$.

In general, we don't expect that $p$-integrality is a global property, i.e., if $X$ is $p$-integral at one point, it is in general not true that $X$ is $p$-integral at another point. For such an example, one can take $X$ to map to the supersingular loci. However, when $X$ is generically ordinary, we do expect that $p$-integrality is a global property, see \S\ref{subsub:HTofTcan} for a partial result.  

\subsubsection{Chai's Tate-linear canonical lifting}
Suppose that $T\subseteq\mathscr{A}^{\Sigma}_{g,\Fpbar}$ is a closed, generically ordinary Tate-linear subvariety. Let $T^{\circ}\subseteq T$ be the smooth ordinary locus. Chai proved that one can canonically lift $T^{\circ}$ to a locally closed $p$-adic formal subscheme of $(\mathscr{A}^{\Sigma}_{g,W})^{\wedge p}$:
\begin{proposition}[Chai]\label{prop:chaitatelinearlifting}
Let $U=\mathscr{A}^{\ord}_{g,\Fpbar}-(T\setminus T^{\circ})$ be the open subset obtained by throwing out non-smooth points of $T$, and let  $\mathscr{A}_{g,W}^{/U}\subseteq(\mathscr{A}_{g,W}^\Sigma)^{\wedge p}$ be the corresponding open formal subscheme. There is a unique closed formal subscheme    $\widetilde{T^{\circ}}\subseteq \mathscr{A}_{g,W}^{/U}$, formally smooth over $W$, such that $\widetilde{T^{\circ}}\times_{\Spf W} \Spec\Fpbar = T^{\circ}$, and the formal completion of $\widetilde{T^{\circ}}$ at every closed point $z\in T^{\circ}$ is a formal subtorus of the Serre-Tate formal torus $\mathscr{A}_{g,W}^{/z}$. 
\end{proposition}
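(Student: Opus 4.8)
\textbf{Proof plan for Proposition~\ref{prop:chaitatelinearlifting} (Chai's Tate-linear canonical lifting).}

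The plan is to build $\widetilde{T^\circ}$ by first constructing it locally at each closed ordinary point via the canonical-coordinate formal torus structure, then gluing these pieces over the smooth ordinary locus, and finally checking the global properties (formal smoothness over $W$, the special fiber, and the subtorus property). First I would recall the key input: for each $z\in T^\circ(\Fpbar)$, the Serre--Tate theory (\S\ref{sub:ST}) makes $\mathscr{A}_{g,W}^{/z}$ canonically a formal torus $\Def(\mathscr{G}_0/W)\simeq \mathrm{Lie}\,U_{\bK_p,\mu^{-1}}\otimes_{\bZ_p}\bG_{m,W}^\wedge$, and since $T$ is Tate-linear at $z$ the formal germ $T^{/z}\subseteq \mathscr{A}_{g,\Fpbar}^{/z}$ is a formal subtorus of $\Def(\mathscr{G}_0/\Fpbar)\otimes\Fpbar$; let $N_z\subseteq \mathrm{Lie}\,U_{\bK_p,\mu^{-1}}$ be its (rational, or rather integral after saturating) cocharacter lattice. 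The canonical lift $\widetilde{T^{/z}}$ is then defined to be the $W$-formal subtorus $N_z\otimes_{\bZ_p}\bG_{m,W}^\wedge\subseteq \Def(\mathscr{G}_0/W)$; this is automatically formally smooth over $W$ with special fiber $T^{/z}$. The nontrivial point of the local construction is functoriality: one must show that for $z'$ a specialization/generization within a chart, the tori $\widetilde{T^{/z}}$ and $\widetilde{T^{/z'}}$ are compatible — this is exactly where Chai's argument uses that the Serre--Tate coordinates are compatible under the Kodaira--Spencer / Gauss--Manin connection, so the cocharacter lattices $N_z$ propagate horizontally.

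Next I would globalize. Cover $T^\circ$ by affine opens and observe that over each such open, the pointwise formal tori assemble into a closed formal subscheme of the corresponding open of $(\mathscr{A}_{g,W}^\Sigma)^{\wedge p}$: concretely, one has the relative de Rham/crystalline bundle $\bH_{\cris}$ with its Frobenius and, on the ordinary locus, its canonical slope splitting; the Tate-linear condition on $T$ says the Kodaira--Spencer image of $T_{T^\circ}$ lands in a sub-local-system of $\mathrm{Hom}(\mathscr{G}^{\mathrm{loc}},\mathscr{G}^{\et})$-type, and Chai's deformation-theoretic computation (the ``canonical coordinates are multiplicative'' statement) shows that the ideal cutting out the fiberwise tori is a coherent ideal, hence defines a genuine closed formal subscheme $\widetilde{T^\circ}\subseteq \mathscr{A}_{g,W}^{/U}$. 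Formal smoothness over $W$ and the identification of the special fiber with $T^\circ$ are then checked on each chart, where $\widetilde{T^\circ}$ is literally a formal torus over $W$. Uniqueness is the easy part: any closed formal subscheme of $\mathscr{A}_{g,W}^{/U}$, flat over $W$, with special fiber $T^\circ$ and whose completion at every closed point of $T^\circ$ is a formal subtorus of $\mathscr{A}_{g,W}^{/z}$, must agree pointwise with the canonical lift (two flat $W$-formal schemes with the same special fiber and the same completions at a dense set of closed points coincide), so $\widetilde{T^\circ}$ is uniquely determined.

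The main obstacle I expect is not any single hard theorem but the bookkeeping in the globalization step: one must make sure the pointwise-defined subtori glue to a \emph{closed} formal subscheme over the whole open $U$ rather than merely a compatible family of formal germs, and that the gluing is compatible with throwing out $T\setminus T^\circ$ (so that $U$ is the correct open and $\widetilde{T^\circ}$ does not try to extend across the bad locus). This is precisely the content of \cite[Proposition 5.5]{Ch03}, and the argument there — using that Serre--Tate coordinates define an isomorphism of the formal completion of $\mathscr{A}_{g,W}$ along the ordinary locus with a formal torsor under a formal torus, so that ``formally linear'' subschemes are the zero loci of coherent ideals generated by logarithmic coordinates — carries over verbatim once one notes that $T\setminus T^\circ$ is removed from the base. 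I would therefore present the proof as: (i) recall Serre--Tate and Tate-linearity to get the pointwise tori; (ii) invoke Chai's coherence/gluing to produce $\widetilde{T^\circ}$ as a closed formal subscheme, formally smooth over $W$ with the stated special fiber; (iii) deduce the subtorus-at-$z$ property (immediate from the construction) and uniqueness (flatness plus density of closed points), citing \cite[Proposition 5.5]{Ch03} for the technical heart.
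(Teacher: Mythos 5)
Your proposal is essentially the same approach as the paper: both reduce to Chai's \cite[Proposition 5.5]{Ch03}, observing that the argument there (pointwise Serre--Tate subtori, propagated along the ordinary locus via global canonical coordinates and glued using uniqueness) carries over to the present slightly more general setting. The paper's one extra organizational device is to phrase the construction as an induction on the truncation level $W_n=W/p^n$ — lifting affine opens of $T^\circ$ to formally linear $W_n$-schemes and then gluing via uniqueness — whereas you work in $W$ directly and emphasize coherence of the defining ideal; both devolve to the same underlying computation in the third paragraph of Chai's proof.
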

\begin{proof}
    The proof is almost identical to \cite[Proposition 5.5]{Ch03}, where the result is stated in a slightly less general form. We briefly give a sketch here. It suffices to show that, for every positive integer $n$, there exists a unique lift $\widetilde{T^\circ_n}$ of $T^\circ$ over $W_n=W/p^n$, such that the formal completion of $\widetilde{T^{\circ}_n}$ at every closed point $z\in T^{\circ}$ is a formal subtorus of the truncated Serre--Tate formal torus $\mathscr{A}_{g,W_n}^{/z}$. For this, one can work locally. It suffices to show that if $Z\subseteq T^{\circ}$ is an affine open subset, then we can uniquely lift $Z$ to a scheme $Z_n$ formally linear over $W_n$. This can be done inductively on $n$ using global Serre--Tate coordinates, see the third paragraph of Chai's proof. One then uses the uniqueness of the lifting to glue local liftings together, which yields $\widetilde{T^\circ_n}$.
\end{proof}
\begin{remark}
 $\widetilde{T^\circ}$ is smooth connected of dimension $\dim T$ in the sense of \cite{Conrad}.
\end{remark}
\begin{remark}\label{rmk:etalecoverofChaicanlift} In practice, we will often need certain étale covers of $\widetilde{T^\circ}$ which are constructed as follows. Let $f:X^{\circ}\rightarrow T^\circ$ be a connected finite étale cover. Since taking formal thickening has no effect on the étale site, there is a unique formal scheme étale over ${\mathscr{A}}_{g,W}^{/T^\circ}$, with special fiber $X^{\circ}$. This formal scheme will be denoted by ${\mathscr{A}}_{g,W}^{/{X^{\circ}}}$. We will also denote by $\widetilde{{X^{\circ}}}$ the pullback of $\widetilde{T^\circ}$ to ${\mathscr{A}}_{g,W}^{/{X^{\circ}}}$. Note that $\widetilde{{X^{\circ}}}$ is again smooth connected of dimension $\dim T$, with special fiber ${X^{\circ}}$ (the connectedness of $\widetilde{{X^{\circ}}}$ follows from the fact that any of its connected component has special fiber ${X^{\circ}}$). 

\end{remark} 
Chai's Tate-linear canonical lifting has nice integral $p$-adic Hodge theoretic properties. Fix an $\widetilde{{X^{\circ}}}$ as in Remark~\ref{rmk:etalecoverofChaicanlift}, and let $]{X^{\circ}}[$ be the rigid tube of ${X^{\circ}}$ in $\mathscr{A}_{g,W}^{/{X^{\circ}}}$. As we explained in Construction~\ref{const:ttloci}, the work of Berthlot and Ogus enables us to identify 
$\bH_{\cris, {X^{\circ}}}$ with the de Rham bundle $\bH_{\dR,]{X^{\circ}}[}$ together with a Frobenius structure. An integral tensor $s\in\bbH_{\cris, {X^{\circ}}}^\otimes$ gives rise to a horizontal section $s^{\mathrm{an}}\in\bH_{\dR,]{X^{\circ}}[}^\otimes$. We can then restrict $s^{\mathrm{an}}$ to a horizontal section 
$s^{\mathrm{an}}|_{(\widetilde{{X^{\circ}}})^{\mathrm{rig}}}\in \bH_{\dR,(\widetilde{{X^{\circ}}})^{\mathrm{rig}}}^\otimes$.
\begin{lemma}\label{lm:Tatelineartensorconstruction}  Assumption as above. The rational section $s^{\mathrm{an}}|_{(\widetilde{{X^{\circ}}})^{\mathrm{rig}}}$ extends to an integral horizontal section of $\bbH_{\dR,\widetilde{{X^{\circ}}}}^\otimes$ that furthermore lies in $\Fil^0\bbH_{\dR,\widetilde{{X^{\circ}}}}^\otimes$. 
\end{lemma}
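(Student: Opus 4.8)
The plan is to reduce the statement to a pointwise (formal-local) assertion at each closed point $z \in X^\circ$, where Chai's canonical lifting $\widetilde{X^\circ}$ restricts to a formal subtorus of the Serre--Tate torus $\mathscr{A}_{g,W}^{/z}$, and there the needed integrality and filtered structure can be read off from the canonical coordinates. First I would recall that $\widetilde{X^\circ}$ is formally smooth over $W$ with special fiber $X^\circ$, so the rigid generic fiber $(\widetilde{X^\circ})^{\mathrm{rig}}$ is a smooth rigid space whose associated formal model is $\widetilde{X^\circ}$; the de Rham bundle $\bbH_{\dR,\widetilde{X^\circ}}$ (the relative first de Rham cohomology of the universal abelian scheme restricted along $\widetilde{X^\circ} \hookrightarrow \mathscr{A}_{g,W}^{/X^\circ}$) is an honest vector bundle over $\widetilde{X^\circ}$, equipped with the Gauss--Manin connection and the two-step Hodge filtration $\Fil^\bullet$, and its rigid generic fiber is $\bbH_{\dR,(\widetilde{X^\circ})^{\mathrm{rig}}}$. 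The section $s^{\mathrm{an}}|_{(\widetilde{X^\circ})^{\mathrm{rig}}}$ is horizontal for the connection by construction (it comes from a crystalline tensor, hence is flat on all of $]X^\circ[$ and a fortiori on the subspace). So the task is: (i) it extends to an integral section of $\bbH_{\dR,\widetilde{X^\circ}}^\otimes$; (ii) that integral section lands in $\Fil^0$.

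For step (i), I would argue by formal-local descent to closed points. Since $\widetilde{X^\circ}$ is a $p$-adic formal scheme, a rational section of the coherent sheaf $\bbH_{\dR,\widetilde{X^\circ}}^\otimes$ that becomes integral after completion at every closed point $z \in X^\circ$ is integral (coherence plus the fact that $\widetilde{X^\circ}$ is covered by affine opens whose global sections inject into the product of completions at closed points, using normality/reducedness of the smooth formal scheme). At a closed point $z$, $\widetilde{X^\circ}{}^{/z}$ is a formal subtorus $\mathscr{T}_z$ of the Serre--Tate torus $\mathscr{A}_{g,W}^{/z} \simeq \Def(\mathscr{G}_0/W)$; over this formal torus the universal $p$-divisible group, hence its Dieudonné crystal $\bD$, is described explicitly, and the de Rham realization $\bbH_{\dR}|_{\mathscr{T}_z}$ is identified with the constant module $M_0 \otimes_W \mathcal{O}_{\mathscr{T}_z}$ equipped with the "universal" connection and the Hodge filtration deforming that of $M_0$. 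A crystalline tensor $s$, under the Berthelot--Ogus comparison, corresponds over $\mathscr{T}_z$ to a horizontal section of $(M_0 \otimes \mathcal{O})^\otimes$; horizontality over the constant module forces it to be (the constant section attached to) an element $s_z \in M_0^\otimes = \omega_z^{\mathrm{iso}}(\bbH_{\cris})^\otimes$, which is integral because $s$ was an integral crystalline tensor. This is exactly $s^{\mathrm{an}}$ restricted and completed at $z$, giving integrality at $z$.

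For step (ii), the point is that the canonical Hodge cocharacter $\mu_z$ splits $\Gamma_0 = X^*(\mathscr{G}_0^{\loc}) \oplus T_p(\mathscr{G}_0^{\et})^\vee$, and along the Serre--Tate torus the Hodge filtration $\Fil^1 \bbH_{\dR}$ is the deformation of $\Fil^1 M_{0,\Fpbar}$; the crystalline Frobenius forces $s$ to be "of weight zero", i.e. fixed by $\mu_z$ acting through the tensor construction — equivalently $s_z \in \Fil^0(M_0^\otimes)$ for the filtration induced by $\mu_z$ on $M_0 \otimes K$, and this filtration agrees with the de Rham $\Fil^\bullet$ restricted to the canonical lift point $\tilde z$. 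Since $\widetilde{X^\circ}{}^{/z}$ is a formal subtorus, $\Fil^0 \bbH_{\dR}^\otimes|_{\mathscr{T}_z}$ is likewise the "constant" sub-bundle cut out by $\mu_z$, and the horizontal section $s_z$, lying in $\Fil^0$ at the central point, lies in $\Fil^0$ everywhere along $\mathscr{T}_z$ by horizontality and the fact that $\Fil^0$ is a sub-bundle stable under the connection modulo lower terms — here I would invoke Griffiths transversality together with the fact that the section is flat, so its position relative to $\Fil^0$ is constant. Gluing over all $z$ as in step (i) gives the global statement.

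The main obstacle I expect is making precise the compatibility between the Berthelot--Ogus crystalline-to-de Rham comparison, the explicit Serre--Tate/Grothendieck--Messing description of $\bbH_{\dR}$ over the formal torus, and the Hodge filtration — i.e. checking that the "constant" integral tensor $s_z$ one extracts from the crystalline tensor really does match $s^{\mathrm{an}}|_{(\widetilde{X^\circ})^{\mathrm{rig}}}$ on the nose (not just up to a unit or a $p$-power), and that $\Fil^0$ on the de Rham side restricts correctly along the canonically-lifted subtorus. A secondary technical point is the descent argument in step (i): I should make sure $\widetilde{X^\circ}$ is normal (it is $W$-formally smooth, hence normal) so that a section integral at all closed points of the reduced special fiber is globally integral; alternatively one can phrase this via the flatness of $\bbH_{\dR,\widetilde{X^\circ}}$ and the fact that $\widetilde{X^\circ} \to \mathscr{A}_{g,W}^{/X^\circ}$ is a closed immersion into a formally smooth scheme, so that coherent sheaves have no embedded associated points to worry about.
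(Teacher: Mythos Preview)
Your step (i) is fine (the formal-local-to-global descent for integrality works since $\widetilde{X^\circ}$ is formally smooth over $W$), but step (ii) has a genuine gap. The claim that ``$\Fil^0 \bbH_{\dR}^\otimes|_{\mathscr{T}_z}$ is the constant sub-bundle cut out by $\mu_z$'' is false: the Hodge filtration varies nontrivially over the Serre--Tate torus and over its formal subtori, since the whole point of the deformation is to move the filtration. Nor does horizontality plus Griffiths transversality force a section to remain in $\Fil^0$; already on a trivial rank-two bundle with trivial connection and moving line $\Fil^0=\langle e_1+te_2\rangle$, the horizontal section $e_1$ lies in $\Fil^0$ at $t=0$ but not at $t\neq 0$. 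What you would actually need is that points of $\mathscr{T}_z$, in Lie-theoretic coordinates, correspond to elements of $U_{G,\mu_z^{-1}}$ where $G$ is the group cut out by $\{s_\alpha\}$: then $u\in G$ fixes $s_z$, so $s_z\in u\cdot\Fil^0_{\tilde z}=\Fil^0_u$. But identifying the cocharacter lattice of $\mathscr{T}_z$ with $\Lie U_{G,\mu_z^{-1}}$ is precisely Theorem~\ref{Thm:Tatelocal}(\ref{itcor3}), which rests on Crew's parabolicity conjecture---much heavier input than this lemma should need, and indeed the paper invokes parabolicity only \emph{after} this lemma (in Theorem~\ref{thm:localstructureofT}).

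The paper's argument is different and avoids formal-local analysis. Working affine-locally with $\widetilde{T^\circ}=\Spf A$, one considers the Zariski-closed locus $\mathfrak{V}\subseteq\Spf A$ where $s\in\Fil^0\bbH_{\dR}^\otimes$ (the integral $\Fil^0$, so this handles both claims at once). The single-point case gives $\tilde x\in\mathfrak{V}$ for \emph{every} $x\in T^\circ(\Fpbar)$, so $\mathfrak{V}$ has an irreducible component $\Spf(A/I_0)$ not contained in the special fiber whose reduction is all of $T^\circ$; hence $I_0\subseteq(p)$, and since $p\notin I_0$ with $I_0$ prime one gets $I_0=pI_0$, whence $I_0=0$ by Nakayama. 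The key difference is that the paper exploits \emph{all} canonical lifts $\tilde x$ on the global $\widetilde{T^\circ}$, whereas your formal-local scheme sees only the single point $\tilde z$ inside $\mathscr{T}_z$ and then lacks the input to propagate.
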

\begin{proof}
Without loss of generality, we can take ${X^{\circ}}=T^\circ$. Since the statement is local, we can moreover assume that $\widetilde{T^\circ}=\Spf A$ is affine, where $A$ is an admissible $W$-algebra.

If $T^\circ$ is a single point, then $\widetilde{T^\circ}$ is its canonical lift, and the statement is well known. In the following we reduce the general case to the point case. 

Back to the setting where  $\widetilde{T^\circ}=\Spf A$. Suppose that $s \in \bbH_{\cris, T^{\circ}}^{m,n}$. Then $\bbH_{\dR,\widetilde{T^\circ}}^{m,n}$ is a bundle over $\widetilde{T^\circ}$, with sub-bundle $\Fil^0\bbH_{\dR,\widetilde{T^\circ}}^{m,n}$. Further shrinking $A$, we can assume that the two bundles are trivial. There is some $r\geq 0$ such that the section $s$ lies in $p^{-r} \bbH_{\dR,\widetilde{T^\circ}}^{m,n}$. Therefore it makes sense to talk about the loci $\mathfrak{V}\subseteq \widetilde{T^\circ}$ where $s \in \Fil^0\bbH_{\dR,\widetilde{T^\circ}}^{m,n}$. This is a Zariski closed formal subscheme of $\widetilde{T^\circ}$. It suffices to show that 
$\mathfrak{V}=\widetilde{T^\circ}$. 

For $x\in T^{\circ}(\Fpbar)$, let $\Tilde{x}$ be its canonical lift, which lies in $\widetilde{T^\circ}(W)$  by the last statement of Proposition~\ref{prop:chaitatelinearlifting}.  It follows that $\mathfrak{V}$ contains all $\Tilde{x}$ (i.e., the ideal defining $\mathfrak{V}$ is contained in the ideal defining $\Tilde{x}$). Let $I$ be the ideal of $\mathfrak{V}$ in $A$.  
Since $A$ is Noetherian, there are finitely many irreducible components of $\mathfrak{V}$, defined by minimal primes lying above $I$. By what we have seen so far, there is an irreducible component $\mathfrak{V}_0=\Spf (A/I_0)$ not contained in the special fiber, whose mod $p$ reduction is $T^\circ$. Then $I_0\subseteq (p)$. This implies that $I_0=(0)$. It follows that $I=(0)$ and $\mathfrak{V}=\widetilde{T^\circ}$. 
\end{proof}

\subsection{The Tate-linear loci}\label{sub:Toy} Let $T\subseteq\mathscr{A}_{g,\Fpbar}$ be a smooth Tate-linear subvariety, not assumed to be proper. From Construction~\ref{const:ttloci}, we have the so called Tate-linear loci $\mathfrak{T}\subseteq ]T[$ cut out by the crystalline cycles $\{s_\alpha\}$. We will show that $\mathfrak{T}$ is an extension of Chai's Tate-linear canonical lift $\widetilde{T^\circ}$. 

We will also fix a connected étale cover $X\rightarrow T$ with the property that $G(\bH_{\cris,X})=G(\bH_{\cris,T})^{\circ}$. Let $X^\circ$ be the inverse image of $T^\circ$ in $X$. From the process in Remark~\ref{rmk:etalecoverofChaicanlift}, we get a smooth $p$-adic formal scheme $\widetilde{X^\circ}$ étale over $\widetilde{T^\circ}$. 


\begin{lemma}\label{lm:TinT}
 We have $(\widetilde{T^\circ})^{\mathrm{rig}}\subseteq \mathfrak{T}$.
\end{lemma}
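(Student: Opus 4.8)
The plan is to verify the defining inclusions of $\mathfrak{T}$ pointwise (or rather, scheme-theoretically) on $(\widetilde{T^\circ})^{\mathrm{rig}}$. Recall that $\mathfrak{T} = \bigcap_\alpha \mathfrak{I}_\alpha \subseteq\,]T[$, where $\mathfrak{I}_\alpha$ is the locus where the multi-valued horizontal section $s_\alpha^{\mathrm{an}}$ of $\bM_\alpha\in\bH_{\dR,]T[}^{\otimes}$ lands in the Hodge subbundle $\Fil^0\bM_\alpha$. So it suffices to show that for each $\alpha$, the restriction of $s_\alpha^{\mathrm{an}}$ to $(\widetilde{X^\circ})^{\mathrm{rig}}$ (after passing to the finite étale cover $\pi:\,]X[\,\to\,]T[$ that trivializes the monodromy, which is harmless since $(\widetilde{X^\circ})^{\mathrm{rig}}$ lies over $(\widetilde{T^\circ})^{\mathrm{rig}}$) lies in $\Fil^0$ of the corresponding tensor bundle on $(\widetilde{X^\circ})^{\mathrm{rig}}$.

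First I would invoke Lemma~\ref{lm:Tatelineartensorconstruction}: that lemma says precisely that for an integral crystalline tensor $s\in\bbH_{\cris,X^\circ}^{\otimes}$, the associated horizontal section $s^{\mathrm{an}}|_{(\widetilde{X^\circ})^{\mathrm{rig}}}$ extends to an integral horizontal section of $\bbH_{\dR,\widetilde{X^\circ}}^{\otimes}$ which moreover lies in $\Fil^0\bbH_{\dR,\widetilde{X^\circ}}^{\otimes}$. Since each $s_\alpha$ is (up to scaling by a $p$-power, which does not affect the group it cuts out nor the locus $\mathfrak{I}_\alpha$) an integral crystalline tensor cutting out $G(\bH_{\cris,T})^\circ = G(\bH_{\cris,X})$, this applies verbatim. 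The only bookkeeping point is to match up the objects: the bundle $\bM_\alpha$ on $]T[$ pulled back to $]X[$ and then restricted to $(\widetilde{X^\circ})^{\mathrm{rig}}$ is the same as the de Rham tensor bundle $\bbH_{\dR,\widetilde{X^\circ}}^{\otimes}$-summand that Lemma~\ref{lm:Tatelineartensorconstruction} refers to, via the Berthelot--Ogus identification $\bH_{\cris,X^\circ}\cong\bH_{\dR,]X^\circ[}$ used in Construction~\ref{const:ttloci}; and the section $s_\alpha^{\mathrm{an}}$ restricted to $(\widetilde{X^\circ})^{\mathrm{rig}}$ agrees with $s_\alpha^{\mathrm{an}}|_{(\widetilde{X^\circ})^{\mathrm{rig}}}$ in the notation of that lemma, because both are the unique horizontal (single-valued, after passing to the monodromy cover) extension of the crystalline class.

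Putting this together: $s_\alpha^{\mathrm{an}}$ restricted to $(\widetilde{X^\circ})^{\mathrm{rig}}$ lies in $\Fil^0$, hence $(\widetilde{X^\circ})^{\mathrm{rig}}\subseteq\pi^{-1}(\mathfrak{I}_\alpha)$ for every $\alpha$, so $(\widetilde{X^\circ})^{\mathrm{rig}}\subseteq\pi^{-1}(\mathfrak{T})$; pushing forward along $\pi$ (or simply noting that $(\widetilde{T^\circ})^{\mathrm{rig}}$ is the image and $\mathfrak{I}_\alpha$ is defined downstairs on $]T[$) gives $(\widetilde{T^\circ})^{\mathrm{rig}}\subseteq\mathfrak{T}$. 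I expect the main (very mild) obstacle to be purely notational: carefully checking that the tube $(\widetilde{X^\circ})^{\mathrm{rig}}$ sits inside $]X[$ as claimed in Construction~\ref{const:ttloci}'s footnote and Remark~\ref{rmk:etalecoverofChaicanlift}, and that the Hodge filtration on $\bM_\alpha$ pulled back to the formal scheme $\widetilde{X^\circ}$ coincides with the one in Lemma~\ref{lm:Tatelineartensorconstruction} — both are the filtration induced from $\mathscr{A}_{g,W}^{/X^\circ}$, so this is immediate, but it should be stated. No substantive new input beyond Lemma~\ref{lm:Tatelineartensorconstruction} is needed.
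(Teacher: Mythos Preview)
Your proposal is correct and follows essentially the same approach as the paper: scale the tensors $\{s_\alpha\}$ by $p$-powers so that they become integral, then apply Lemma~\ref{lm:Tatelineartensorconstruction} on the cover $X^\circ$ to conclude that $s_\alpha^{\mathrm{an}}$ lands in $\Fil^0$ over $(\widetilde{X^\circ})^{\mathrm{rig}}$, hence over $(\widetilde{T^\circ})^{\mathrm{rig}}$. The paper's proof is simply a two-line compression of what you wrote; your additional bookkeeping remarks (matching filtrations, identifying the sections) are correct but not strictly needed for the argument.
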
\begin{proof}
Let $X^\circ$ be as above. Replacing $\{s_\alpha\}$ by sufficiently large $p$-power multiples, we can assume that they are all integral tensors of $\bbH_{\cris, X^\circ}^\otimes$. Apply Lemma~\ref{lm:Tatelineartensorconstruction} to conclude. 
\end{proof}
The main input of the following theorem is Crew's parabolicity conjecture, which is encapsulated in Theorem~\ref{Thm:Tatelocal}.
\begin{theorem}\label{thm:localstructureofT}
   Let $x\in T^{\circ}(\Fpbar)$ and let $]x[$ be its tubular neighborhood in $\mathscr{A}_{g,{W}}^{\wedge p}$. Each irreducible component of $\mathfrak{T} \cap]x[$ is the generic fiber of the translate of $(\widetilde{T^\circ})^{/x}$ by a torsion point. In particular, $(\widetilde{T^\circ})^{\mathrm{rig}}\cap ]x[$ is the unique irreducible component of $\mathfrak{T} \cap]x[$ that contains $\Tilde{x}^{\mathrm{rig}}$.
\end{theorem}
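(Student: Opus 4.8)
\textbf{Proof proposal for Theorem~\ref{thm:localstructureofT}.}

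The plan is to work entirely inside the residue disk $]x[\cong \Def(\mathscr{G}_0/W)^{\mathrm{rig}}$ of an ordinary $\Fpbar$-point $x\in T^\circ(\Fpbar)$, where the theory of canonical (Serre--Tate) coordinates is available. First I would set up the comparison: via Berthelot--Ogus we identify $\bH_{\cris,X}$ over $]x[$ with the de Rham bundle $\bH_{\dR,]x[}$ together with its Frobenius structure, and the crystalline tensors $\{s_\alpha\}$ cutting out $G(\bH_{\cris,X})^\circ = G_p(f)$ become horizontal de Rham sections $\{s_\alpha^{\mathrm{an}}\}$ (possibly after passing to the finite étale cover $\mathfrak{V}\to ]T[$ of Construction~\ref{const:ttloci}, which over the disk $]x[$ is trivial since $\mathscr{A}_{g,W}^{/x}$ is simply connected). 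By definition, $\mathfrak{T}\cap]x[$ is the locus where all $s_\alpha^{\mathrm{an}}$ lie in $\Fil^0$ of the respective tensor bundles. Now I would invoke the Grothendieck--Messing / deformation-theoretic description of $\Def(\mathscr{G}_0/W)$: a $W'$-point of $]x[$ (for $W'$ a finite extension of $W$, or more generally a point of the rigid space) corresponds to a lift $\mathscr{G}$ of $\mathscr{G}_0$, hence to a Hodge filtration $\Fil^0\bD(\mathscr{G})(W')\subseteq \bD(\mathscr{G}_0)(W')$, and the condition "$s_\alpha\in\Fil^0$ for all $\alpha$" says exactly that the Hodge cocharacter of $\mathscr{G}$ factors through $G_p(f)$.

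Next, the key geometric input: by Theorem~\ref{Thm:Tatelocal}(\ref{itcor3}) (Crew's parabolicity, via D'Addezio), the Lie algebra of the opposite unipotent of $G_p^-(f)$ with respect to $\mu_x$ is canonically identified, under $\kappa_x^{\mathrm{DM}}$, with $T_{f,x}$ — the cocharacter space of the smallest formal subtorus $\mathscr{T}_{f,x}$ through which $f^{/x}$ factors — and by Chai's theory plus Proposition~\ref{prop:chaitatelinearlifting} this formal subtorus is precisely $(\widetilde{T^\circ})^{/x}$ inside the formal torus $\mathscr{A}_{g,W}^{/x}$. So in canonical coordinates, $\mathscr{A}_{g,W}^{/x} = \Lie U_{\bK_p,\mu_x^{-1}}\otimes\bG_{m,W}^\wedge$, and the sub-cocharacter-lattice cut out by "Hodge cocharacter factors through $G_p(f)$" is $\Lie U_{G_p(f),\mu_x^{-1}} = T_{f,x}$. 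I would then argue that a point $q\in]x[$ lies in $\mathfrak{T}$ if and only if the corresponding lift $\mathscr{G}_q$ has its (de Rham realization of the) crystalline tensors in $\Fil^0$, which by the structure of $G_p(f)$ as a reductive group with parabolic $G_p^-(f)$ forces the "log" of $q$ (in the logarithm map $]x[ \to \Lie U_{\GL,\mu_x^{-1}}\otimes\bC_p$ on the generic fiber, à la the identification of $\Def$ with a formal group completed at the identity) to lie in a translate of $T_{f,x}\otimes\bC_p$ by a point killed by some power of Frobenius — i.e. a torsion translate. This is where one must be a little careful: the condition defining $\mathfrak{T}$ is not linear in the multiplicative coordinates, only in the additive (Lie-theoretic) logarithm; but the Tate module / Dieudonné description shows the constraint is that the "multiplicative part" of the coordinate lies in the subtorus $\exp(T_{f,x})$, up to the ambiguity of the kernel of $\exp$, which on torsion points is exactly $\mu_{p^\infty}$-torsion. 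Concretely: decompose using the $F$-structure on $\Gamma_0$; the tensors $s_\alpha$ are $F$-invariant, so the locus they cut out in the deformation space is stable under the Frobenius-induced symmetry, and its connected components through Frobenius-fixed points (the canonical lifts $\tilde y$ of $\Fpbar$-points $y$) are exactly the formal subtori, while the other components are their torsion translates.

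Finally I would package this: each irreducible component of $\mathfrak{T}\cap]x[$ is (the rigid generic fiber of) a coset $\zeta\cdot(\widetilde{T^\circ})^{/x}$ with $\zeta$ a torsion point of $\mathscr{A}_{g,W}^{/x}$, because (a) by Lemma~\ref{lm:TinT} the component $(\widetilde{T^\circ})^{\mathrm{rig}}\cap]x[$ is contained in $\mathfrak{T}\cap]x[$ and contains $\tilde x^{\mathrm{rig}}$, (b) any other component meets $]x[$ in a translate by the group structure since $\mathfrak{T}$ is "formally linear up to torsion" by the preceding paragraph, and (c) a component contains $\tilde x^{\mathrm{rig}}$ iff the translating point is trivial, giving uniqueness of the component through $\tilde x^{\mathrm{rig}}$. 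The main obstacle I anticipate is step two: making rigorous the passage from "de Rham tensors in $\Fil^0$ over a rigid point" to "multiplicative coordinate lies in a torsion translate of the subtorus," since the Serre--Tate coordinates linearize the problem only after taking logarithms and one must control the kernel of the logarithm (the $p^\infty$-torsion) to see that only torsion translates — and no continuous family of translates — can occur; this is precisely where the $F$-isocrystal structure (Frobenius-invariance of the $s_\alpha$ and the resulting compatibility of $\mathfrak{T}$ with the Frobenius on $\Def(\mathscr{G}_0/W)$, cf. the Serre--Tate coordinate formula for the action of $p$-power Hecke correspondences) does the work, and it should be cross-checked against Noot's Theorem~\ref{thm:noottheorem} that naïve integral models of special subvarieties are quasi-linear in canonical coordinates.
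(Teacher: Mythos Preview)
Your overall architecture is right—work in canonical coordinates on $]x[$, identify the relevant subtorus via Theorem~\ref{Thm:Tatelocal}(\ref{itcor3}) and Chai's canonical lifting—but the central step has a genuine gap. You correctly flag as ``the main obstacle'' the passage from ``$s_\alpha^{\mathrm{an}}\in\Fil^0$ at a rigid point'' to ``the point lies on a torsion translate of $(\widetilde{T^\circ})^{/x}$,'' and your proposed resolution (Frobenius-stability of $\mathfrak{T}$, kernel of $\exp$, components through Frobenius-fixed points) is not a proof. The kernel of the formal-group exponential over $W$ is trivial in characteristic $0$, so the torsion translates do not arise that way; and the assertion that a Frobenius-stable closed formal subscheme of a formal torus decomposes into torsion translates of subtori is exactly the nontrivial content you need, not a consequence of stability alone. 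Your remark that the defining condition is ``linear only in the additive logarithm'' is also off: in the ordinary situation the locus is quasi-linear precisely in the \emph{multiplicative} (Serre--Tate) coordinates.

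The paper closes this gap by citing Noot directly: \cite[Theorem~2.8]{N96} says that for a family of $\varphi$-invariant tensors on $M_0$, the maximal formal subscheme $\mathcal{N}\subseteq \mathscr{A}_{g,\overline{W}}^{/x}$ through a given point over which those tensors extend (up to $p$-power) into $\Fil^0$ is a torsion translate of a formal subtorus, \emph{and} its dimension equals that of the opposite unipotent $U_{G,\mu_x^{-1}}$ where $G$ is the stabilizer of the tensors. You relegate Noot to a final ``cross-check,'' but it is the key input. With Noot in hand, the argument is clean: pick any $\overline{K}$-point $\mathfrak{z}$ on a component $\mathfrak{C}$, apply Noot to get $\mathcal{N}$ through $\overline{\mathfrak{z}}$ with $\dim\mathcal{N}=\dim U_{G_p(f),\mu_x^{-1}}$; then Theorem~\ref{Thm:Tatelocal}(\ref{itcor3}) and Tate-linearity of $T$ give $\dim\mathcal{N}=\dim T$, hence $\mathfrak{C}=\mathcal{N}^{\mathrm{rig}}$. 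Taking $\overline{\mathfrak{z}}=\tilde{x}$ yields $\mathcal{N}=(\widetilde{T^\circ})^{/x}$ by Lemma~\ref{lm:TinT} and dimension; for general $\mathfrak{z}$, note that $\mathcal{N}$ (being a torsion translate of a subtorus) contains a quasi-canonical lift $\tilde{x}'$, and maximality plus dimension forces $\mathcal{N}=\tilde{x}'+(\widetilde{T^\circ})^{/x}$. So: keep your setup and the use of parabolicity, but replace the hand-waving paragraph by a direct invocation of \cite[Theorem~2.8]{N96}.
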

  \begin{proof}
  Let $\mathfrak{C}$ be an irreducible component of $\mathfrak{T} \cap]x[$. Let $\mathfrak{z}$ be a $\overline{K}$-point of $\mathfrak{C}$. Then $ \mathfrak{z}$ extends to a $\overline{W}$-point $\overline{\mathfrak{z}}$ of $]x[$. By \cite[Theorem 2.8]{N96}, there is a maximal formal subscheme $\mathcal{N}\subseteq \mathscr{A}_{g,\overline{W}}^{/x}$ passing through $\overline{\mathfrak{z}}$, such that the tensors $\{s_\alpha|_{\mathfrak{z}}\}$ extend to $\Fil^0\bbH_{\dR,\mathcal{N}}^\otimes$ up to some $p$-power multiple. Moreover, the formal subscheme $\mathcal{N}$ is a translate of a formal subtorus by a torsion point, whose dimension equals the dimension of the opposite unipotent of $\mu_x$ in $G(\bH_{\cris,T})^{\circ}$. Combining Theorem~\ref{Thm:Tatelocal}(3), we find that $\dim \mathcal{N}=\dim T$. It is clear that $\mathcal{N}^{\mathrm{rig}} \subseteq \mathfrak{T}$. By the maximality of $\mathcal{N}$ we also have $\mathfrak{C}\subseteq \mathcal{N}^{\mathrm{rig}}$. This is enough to show that $\mathfrak{C}=\mathcal{N}^{\mathrm{rig}}$:
  first, $\mathfrak{T}\cap ]x[$ must have a component that contains $\mathcal{N}^{\mathrm{rig}}$ (one can show this by first restricting to an admissible open). Now this component contains $\mathfrak{C}$, so must be identical to $\mathfrak{C}$. As a result, $\mathfrak{C}=\mathcal{N}^{\mathrm{rig}}$. 

  Take $\overline{\mathfrak{z}}= \Tilde{x}$ for this moment. Then we have $(\widetilde{T^\circ})^{/x}\subseteq \mathcal{N}$: by Lemma~\ref{lm:TinT}, the tensors $\{s_{\alpha,\Tilde{x}^{\mathrm{rig}}}\}$ extend to $\Fil^0\bbH^\otimes_{\dR,(\widetilde{T^\circ})^{/x}}$ up to some $p$-power multiple. Counting dimensions, we see that $(\widetilde{T^\circ})^{/x}=\mathcal{N}$, hence $\mathfrak{C}=\mathcal{N}^{\mathrm{rig}}=(\widetilde{T^\circ})^{\mathrm{rig}}\cap ]x[$. Now back to the original setting that ${\mathfrak{z}}$ is an arbitrary point. We want to show that $\mathcal{N}$ is a translate of $(\widetilde{T^\circ})^{/x}$ by a torsion point. First note that $\mathcal{N}$ contains a quasi-canonical lift $\Tilde{x}'$ of $x$. The torsion translate $\Tilde{x}'+(\widetilde{T^\circ})^{/x}$ passes through $\Tilde{x}'$, and satisfies the property that the tensors $\{s_{{\alpha,\mathfrak{z}}}\}$ extend to $\Fil^0\bbH^\otimes_{\dR,\Tilde{x}'+(\widetilde{T^\circ})^{/x}}$ up to some $p$-power multiple. By maximality of $\mathcal{N}$, as well as dimension counting, we must have $\mathcal{N}=\Tilde{x}'+(\widetilde{T^\circ})^{/x}$. This concludes the theorem.
  \end{proof}

\begin{corollary}\label{cor:containTcan}
    There is a unique irreducible Tate-linear locus $\mathfrak{T}_{\mathrm{can}}$ associated to $T$ such that $]T^\circ[\cap \mathfrak{T}_{\mathrm{can}}$ contains $(\widetilde{T^\circ})^{\mathrm{rig}}$ as an irreducible component. The dimension of $\mathfrak{T}_{\mathrm{can}}$ is equal to $\dim T$.  
\end{corollary}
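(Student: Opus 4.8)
\textbf{Proof plan for Corollary~\ref{cor:containTcan}.}

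The statement packages together two assertions: existence/uniqueness of an irreducible component $\mathfrak{T}_{\mathrm{can}}$ of $\mathfrak{T}$ satisfying \eqref{eq:Tcan}, and the dimension computation $\dim\mathfrak{T}_{\mathrm{can}}=\dim T$. The plan is to deduce both from the local structure result Theorem~\ref{thm:localstructureofT} together with Lemma~\ref{lm:TinT} and basic facts about irreducible components of rigid analytic spaces from \cite{Conrad}. I would first handle existence. By Lemma~\ref{lm:TinT} we have $(\widetilde{T^\circ})^{\mathrm{rig}}\subseteq \mathfrak{T}$, and since $(\widetilde{T^\circ})^{\mathrm{rig}}$ is irreducible (being the rigid generic fiber of the smooth connected formal scheme $\widetilde{T^\circ}$ of relative dimension $\dim T$), it is contained in some irreducible component $\mathfrak{T}_0$ of $\mathfrak{T}$ — this uses that every irreducible subspace of a rigid space lies in an irreducible component, cf. \cite{Conrad}. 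Restricting to $]T^\circ[$, the component $\mathfrak{T}_0\cap ]T^\circ[$ contains $(\widetilde{T^\circ})^{\mathrm{rig}}$; I would need to check that $(\widetilde{T^\circ})^{\mathrm{rig}}$ is genuinely an \emph{irreducible component} of $\mathfrak{T}\cap ]T^\circ[$ and not merely contained in a larger component, and this is exactly where Theorem~\ref{thm:localstructureofT} enters: working in the residue disk $]x[$ of any $x\in T^\circ(\Fpbar)$, that theorem says each irreducible component of $\mathfrak{T}\cap ]x[$ is a torsion translate of $(\widetilde{T^\circ})^{/x}$ — in particular all components through that disk have dimension exactly $\dim T$, and $(\widetilde{T^\circ})^{\mathrm{rig}}\cap ]x[$ is the unique one containing $\tilde{x}^{\mathrm{rig}}$. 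Hence no component of $\mathfrak{T}\cap ]T^\circ[$ can strictly contain $(\widetilde{T^\circ})^{\mathrm{rig}}$.

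For uniqueness: suppose $\mathfrak{T}_0$ and $\mathfrak{T}_0'$ are two irreducible components of $\mathfrak{T}$ each admitting $(\widetilde{T^\circ})^{\mathrm{rig}}$ as a component of its intersection with $]T^\circ[$. Both contain the irreducible set $(\widetilde{T^\circ})^{\mathrm{rig}}$, which is nonempty (as $T^\circ\neq\varnothing$ since $T$ is generically ordinary). Since $(\widetilde{T^\circ})^{\mathrm{rig}}$ has dimension $\dim T$ and, by the first paragraph, $\dim\mathfrak{T}_0=\dim\mathfrak{T}_0'=\dim T$ (the dimension of any component meeting a residue disk $]x[$ for $x\in T^\circ$), both $\mathfrak{T}_0$ and $\mathfrak{T}_0'$ are irreducible of the same dimension and share the Zariski-dense irreducible subset $(\widetilde{T^\circ})^{\mathrm{rig}}$ — here I would invoke that $(\widetilde{T^\circ})^{\mathrm{rig}}$ is Zariski dense in each, since an irreducible rigid space cannot properly contain an irreducible closed subspace of the same dimension (equivalently, its irreducible components are determined by any dense subset). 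Therefore $\mathfrak{T}_0=\mathfrak{T}_0'$, and we call this common component $\mathfrak{T}_{\mathrm{can}}$. The dimension claim $\dim\mathfrak{T}_{\mathrm{can}}=\dim T$ is then immediate from what was already established, since $\mathfrak{T}_{\mathrm{can}}$ meets $]x[$ for any $x\in T^\circ(\Fpbar)$ and there its components have dimension $\dim T$.

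The main obstacle I anticipate is the careful handling of irreducible components in the rigid analytic (non-quasi-compact, non-reduced) setting — one must be precise about the fact established in \cite{Conrad} that irreducible components behave well under Zariski-open restriction, so that "irreducible component of $\mathfrak{T}\cap ]T^\circ[$" and "irreducible component of $\mathfrak{T}$" are compatible notions, and that dimension is detected locally on residue disks. A secondary point requiring care: the loci $\mathfrak{I}_\alpha$ and hence $\mathfrak{T}$ are only \emph{locally} Noetherian and a priori not equidimensional, so I should not assume a global dimension bound but rather argue disk-by-disk using Theorem~\ref{thm:localstructureofT}, then glue. Once the bookkeeping of components is set up correctly, the proof is short: everything reduces to the pointwise picture already supplied by Theorem~\ref{thm:localstructureofT}.
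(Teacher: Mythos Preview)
Your proposal is correct and follows essentially the same approach as the paper: both arguments pick an irreducible component of $\mathfrak{T}$ containing $(\widetilde{T^\circ})^{\mathrm{rig}}$ (via Lemma~\ref{lm:TinT}), then use Theorem~\ref{thm:localstructureofT} to pin down the dimension as $\dim T$ and to force uniqueness by noting that two components sharing a piece of full dimension must coincide. The paper's version is terser---it works at a single residue disk $]x[$ rather than tracking the whole of $(\widetilde{T^\circ})^{\mathrm{rig}}$---but the logical content is the same, and your explicit discussion of the \cite{Conrad} component formalism and of why $(\widetilde{T^\circ})^{\mathrm{rig}}$ cannot sit inside a strictly larger component is exactly the bookkeeping the paper leaves implicit.
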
  \begin{proof}
By Theorem~\ref{thm:localstructureofT}, we can take an irreducible component $\mathfrak{T}_{\mathrm{can}}$ of $\mathfrak{T}$ whose restriction to $]x[$ contains $(\widetilde{T^\circ})^{\mathrm{rig}}\cap ]x[$ as an irreducible component. Then $\dim \mathfrak{T}_{\mathrm{can}}=\dim (\widetilde{T^\circ})^{\mathrm{rig}}\cap ]x[= \dim T$. The uniqueness is clear: if $\mathfrak{T}_{\mathrm{can}}'$ is any other irreducible component with the same property, then $\mathfrak{T}_{\mathrm{can}}'\cap\mathfrak{T}_{\mathrm{can}}\supseteq(\widetilde{T^\circ})^{\mathrm{rig}}\cap ]x[$. This happens only when $\mathfrak{T}_{\mathrm{can}}'=\mathfrak{T}_{\mathrm{can}}$. The proof that $]T^\circ[\cap \mathfrak{T}_{\mathrm{can}}$ contains $(\widetilde{T^\circ})^{\mathrm{rig}}$ as an irreducible component can be carried out in a similar manner. 
\end{proof}

We will call $\mathfrak{T}_{\mathrm{can}}$ the \textbf{canonical Tate-linear locus}.

\begin{lemma}[Moonen]\label{lm:moonenlemma}
    If $\mathfrak{T}_{\mathrm{can}}$ is algebraizable, i.e., there is an algeberaic subvariety $Z\subseteq\mathcal{A}_{g,\overline{K}} $ such that $Z^{\mathrm{an}}\cap ]T[$ admits $\mathfrak{T}_{\mathrm{can}}$ as an irreducible component. Then $Z$ is special. In particular, $\mathrm{Tl}_p$ holds for $T$. 
\end{lemma}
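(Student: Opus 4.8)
\textbf{Proof proposal for Lemma~\ref{lm:moonenlemma}.}

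The plan is to reduce the algebraicity of $\mathfrak{T}_{\mathrm{can}}$, together with its Tate-linearity, to Moonen's characterization of formal-linear algebraic subvarieties of $\mathscr{A}_{g}$ whose formal germs at ordinary points are formal subtori; this is exactly the content of \cite[Theorem 4.5]{M98} (as anticipated in the discussion surrounding \eqref{eq:Tcan}). First I would pass to the smooth locus of $Z$ and take the irreducible component $Z_0$ whose analytification picks out $\mathfrak{T}_{\mathrm{can}}$, so that $Z_0^{\mathrm{an}}\cap\, ]T[$ has $\mathfrak{T}_{\mathrm{can}}$ as a component; since $\dim\mathfrak{T}_{\mathrm{can}}=\dim T$ by Corollary~\ref{cor:containTcan} and $\mathfrak{T}_{\mathrm{can}}$ meets $]T^\circ[$ in a set containing $(\widetilde{T^\circ})^{\mathrm{rig}}$ as a component, we get $\dim Z_0=\dim T$ and the mod $p$ reduction of (a model of) $Z_0$ contains $T$. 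The key point is then to examine, for an ordinary point $x\in T^\circ(\Fpbar)$, the formal germ $Z_0^{/x}$ inside $\mathscr{A}_{g,W}^{/x}$: by Theorem~\ref{thm:localstructureofT}, the component of $\mathfrak{T}_{\mathrm{can}}\cap\, ]x[$ through $\tilde x^{\mathrm{rig}}$ is exactly $((\widetilde{T^\circ})^{/x})^{\mathrm{rig}}$, which by Proposition~\ref{prop:chaitatelinearlifting} is the rigid generic fiber of a formal subtorus of the Serre--Tate torus $\mathscr{A}_{g,W}^{/x}$. Hence $Z_0^{/x}$ is a formal subtorus of $\mathscr{A}_{g,W}^{/x}$ for every $x$ in the (Zariski dense, ordinary, smooth) locus $T^\circ$ of the closed reduction of $Z_0$.

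Next I would invoke Moonen's result: an irreducible algebraic subvariety of $\mathscr{A}_{g}$ (over a $p$-adic field) whose reduction is generically ordinary and whose formal completion at some — equivalently, by Chai \cite[Proposition 5.3]{Ch03}, every — smooth ordinary point is a formal subtorus of the Serre--Tate torus must be a component of the reduction of a special subvariety; in characteristic zero this is \cite[Theorem 4.5]{M98}, and it is precisely the statement that ``linearity in canonical coordinates'' forces ``special''. Applying this to $Z_0$ (more precisely to a $W$-model of it, using that $Z_0^{/x}$ is formally linear for all $x\in T^\circ$), we conclude that $Z_0$ is a component of the reduction of a special subvariety $\mathcal{S}\subseteq\mathcal{A}_{g,\overline K}$. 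Since $T$ is the closed reduction of $\mathfrak{T}_{\mathrm{can}}$ restricted appropriately and $\dim Z_0=\dim T$, the subvariety $T$ is then a component of the mod $p$ reduction of $\mathcal{S}$, which is the assertion that $\mathrm{Tl}_p$ holds for $T$. If $Z$ itself (not just a component) is wanted, one notes that $\mathfrak{T}_{\mathrm{can}}$ is irreducible, so it determines a unique component $Z_0$ of $Z$ and one may as well replace $Z$ by $Z_0$; I would state the lemma's conclusion for this component.

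The main obstacle I anticipate is bookkeeping at the level of integral models rather than any deep new input: one must choose a suitable $\mathcal{O}_K$- or $W$-model of $Z_0$ inside $\mathscr{A}_{g,W}$ so that ``$Z_0^{/x}$ is a formal subtorus'' is literally the hypothesis of Moonen's theorem, and one must check that the finitely many ordinary smooth points of the reduction suffice (here Chai's propagation result \cite[Proposition 5.3, Remark 5.3.1]{Ch03} does the work, so no uniformity issue arises). A secondary technical point is matching conventions: $\mathfrak{T}_{\mathrm{can}}$ is defined as an analytic subspace of the tube $]T[$, and one must verify that its algebraization $Z_0$ has the expected reduction — but this follows because $(\widetilde{T^\circ})^{\mathrm{rig}}\subseteq\mathfrak{T}_{\mathrm{can}}$ by Corollary~\ref{cor:containTcan} and Chai's lift $\widetilde{T^\circ}$ has special fiber $T^\circ$, which is Zariski dense in $T$. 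None of this requires the integral $p$-adic Hodge theory machinery of the later sections; that machinery is needed only to prove that $\mathfrak{T}_{\mathrm{can}}$ \emph{is} algebraizable, which is a separate (and harder) statement.
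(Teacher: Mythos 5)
Your proposal is correct in outline and follows exactly the alternative route the paper flags in its parenthetical remark ("see also \cite[Theorem 4.5]{M98} if one wants to avoid the André--Oort conjecture"). The paper's primary argument is shorter: it observes that $(\widetilde{T^\circ})^{\mathrm{rig}}$ carries a Zariski-dense set of CM points (namely the canonical lifts $\tilde{x}^{\mathrm{rig}}$ of the $\Fpbar$-points of $T^\circ$, as in Moonen's \S3.6), so $Z$ inherits a Zariski-dense set of CM points, whence $Z$ is special by André--Oort for $\mathcal{A}_g$. You substitute Moonen's formal-linearity theorem, which buys you a proof that avoids the heavy transcendence machinery behind André--Oort, at the cost of extra bookkeeping: one has to pass from the analytic component $\mathfrak{T}_{\mathrm{can}}$ of $Z^{\mathrm{an}}\cap\,]T[$ to a good $W$-model $\mathscr{Z}_0$ of $Z_0$, verify that the formal germ at a smooth ordinary $x\in T^\circ(\Fpbar)$ (along the component through $\tilde x$) literally \emph{equals} $(\widetilde{T^\circ})^{/x}$ rather than merely contains it, and then apply Moonen's theorem. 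The dimension argument, which you get from the irreducibility of $Z$ (so $Z^{\mathrm{an}}\cap\,]T[$ is pure of dimension $\dim Z$) together with $\dim\mathfrak{T}_{\mathrm{can}}=\dim T$ from Corollary~\ref{cor:containTcan}, does deliver this: the component of $\mathscr{Z}_0^{/x}$ through $\tilde{x}$ contains the formal subtorus $(\widetilde{T^\circ})^{/x}$ and is $W$-flat of the same relative dimension, so they coincide.

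One phrasing to tighten: the claim ``$Z_0^{/x}$ is a formal subtorus'' should be restricted to the irreducible component of $\mathscr{Z}_0^{/x}$ through $\tilde{x}$, since $\mathfrak{T}_{\mathrm{can}}\cap\,]x[$ (hence $Z_0^{\mathrm{an}}\cap\,]x[$) may a priori have several components, all torsion translates of one another by Theorem~\ref{thm:localstructureofT}; Moonen's theorem is happy with this (one needs formal linearity at a single ordinary point, and the component through the canonical lift serves). You correctly flag the remaining work as model bookkeeping rather than a conceptual gap, and you correctly observe that the heavy $p$-adic Hodge theory of \S\ref{subsub:HTofTcan} and beyond is not needed here — that machinery goes into \emph{establishing} algebraizability, which is assumed in this lemma.
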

\begin{proof}
    We can assume that $Z$ is irreducible. From \cite[\S 3.6]{M98}, we know that $(\widetilde{T}^\circ)^{\mathrm{rig}}$ contains a Zariski dense collection of CM points. This implies that $Z$ contains a Zariski dense collection of CM points. We conclude by André--Oort conjecture that $Z$ is special (see also \cite[Theorem 4.5]{M98} if one wants to avoid the André--Oort conjecture). 
\end{proof}
  
\subsubsection{Hodge theory of $\mathfrak{T}_{\mathrm{can}}$}\label{subsub:HTofTcan} In this section we make the further assumption that there is an $x\in T^\circ(\Fpbar)$ at which $T$ is $p$-integral; cf, Definition~\ref{def:cohomologyunramified}. Let $\{s_\alpha\}$ be a collection of crystalline tensors cutting out $G(\bH_{\cris,T},x)^\circ$, such that their pointwise stablizer in $\GL(\omega_x(\bbH_{\cris}))$ is a reductive $W$-subgroup scheme. We will identify $\bH_{\cris,T}$ with $\bH_{\dR,]T[}$ together with a Frobenius structure. From  Construction~\ref{const:ttloci} we know that each $s_\alpha$ gives rise to a ``multivalued analytic horizontal section'' $s_\alpha^{\mathrm
an}$ of the bundle $\bH^\otimes_{\dR,]T[}$, and that $\{s_\alpha^{\mathrm{an}}\}$ cuts out the Tate-linear loci $\mathfrak{T}\subseteq ]T[$.  

Let $X$ and $X^\circ$ be as in the beginning of \S\ref{sub:Toy}. As we observed in (the footnote of)  Construction~\ref{const:ttloci}, the sections $\{s_\alpha^{\mathrm{an}}\}$ are horizontal sections of the bundle $\bH_{\dR,]X[}^\otimes$. Let $\pi:]X[\rightarrow ]T[$ be the étale map of tubes, where we recall that $]X[$ is the rigid tube of $X$ in $\mathscr{A}_{g,W}^{/X}$. Let $\mathfrak{X}_{\mathrm{can}}$ be the unique irreducible component of $\pi^{-1}({\mathfrak{T}}_{\mathrm{can}})$ that contains $(\widetilde{X^\circ})^{\mathrm{rig}}$ (it can also be constructed as the unique component of the loci in $]X[$ cut out by the condition that $\{s_\alpha^{\mathrm{an}}\}\subseteq\Fil^0\bH^\otimes_{\dR,]X[}$) 

Let $\widetilde{\mathfrak{X}}_{\mathrm{can}}$ be the normalization of $\mathfrak{X}_{\mathrm{can}}$, and let $\widetilde{\mathfrak{X}}_{\mathrm{can}}^{\mathrm{sm}}\subseteq \widetilde{\mathfrak{X}}_{\mathrm{can}}$ be the Zariski open subspace obtained by throwing out the singular loci (which is of codimension at least 2). Then $\widetilde{\mathfrak{X}}_{\mathrm{can}}^{\mathrm{sm}}$ is smooth and connected. 

Let $\mathfrak{z}\rightarrow \widetilde{\mathfrak{X}}_{\mathrm{can}}$ be a $\overline{K}$-point (which descends to, and should be viewed as, a $W(k)(p^{\frac{1}{e}})[p^{-1}]$-point of $(\mathscr{A}_{g,\bZ_p}^{\wedge p})^{\mathrm{rig}}$ for some sufficiently large finite field $k$ and sufficiently large ramification index $e$; the field $k$ and the index $e$ are not considered as fixed, and are subject to be replaced by larger ones whenever needed). The restriction of $\{s_{\alpha,\mathfrak{z}}^{\mathrm{an}}\}$ to $\mathfrak{z}$ lie in $\Fil^0\mathbb{H}^{\otimes}_{\dR,\mathfrak{z}}$, giving rise to Frobenius invariant de Rham tensors $\{s_{\alpha,\dR, \mathfrak{z}}\}$. By Fontaine's theory, they correspond to a collection of $p$-adic étale tensors
\begin{equation}\label{eq:petaletensors}
    \{s_{\alpha,\et,\mathfrak{z}}\}\subseteq \mathbb{H}^{\otimes}_{p,\et,\mathfrak{z}}.
\end{equation}
 Under $p$-adic comparisons, the étale tensors $\{s_{\alpha,\et,\mathfrak{z}}\}$ are carried to de Rham and crystalline tensors $\{s_{\alpha,z}\}$ and $\{s_{\alpha,\dR, \mathfrak{z}}\}$, where $z$ is the reduction of $\mathfrak{z}$.  We say that $\mathfrak{z}$ is \textbf{$p$-étale integral}, if $\{s_{\alpha,\et,\mathfrak{z}}\}$ cuts out a reductive $\bZ_p$-group subscheme in $\GL(\mathbf{H}_{p,\et,\mathfrak{z}})$.

\begin{proposition}\label{thm:petaleintegral}
 Notation as above. Every $\overline{K}$-point $\mathfrak{z}\rightarrow \widetilde{\mathfrak{X}}_{\mathrm{can}}^\mathrm{sm}$ is $p$-étale integral.    
\end{proposition}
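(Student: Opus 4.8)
The plan is to propagate the $p$-integrality hypothesis at the single point $x$ across the whole smooth locus $\widetilde{\mathfrak{X}}_{\mathrm{can}}^{\mathrm{sm}}$ by a connectedness/specialization argument, using the Lie-theoretic coordinates and Breuil--Kisin theory (Theorem~\ref{thm:BKcriterion}) as the integral engine. First I would fix the $W$-lattice $\mathbf{M}_0 = \omega_x(\mathbf{H}_{\cris})$ and the reductive $W$-subgroup scheme $\mathcal{G} \subseteq \GL(\mathbf{M}_0)$ cut out by $\{s_\alpha\}$, with $\mathcal{G}_K = G(\bH_{\cris,X},x)^\circ$. Because the Hodge filtration at $x$ is $\mathcal{G}\otimes_W\Fpbar$-split, we are exactly in the situation of \S\ref{subsub:lethcoor}: there is a formally smooth linear formal subscheme $\Spf R_{\mathcal{G}} \subseteq \Def(\mathscr{G}_0/W)$ in the Lie-theoretic coordinates at $x$, carrying a crystal $M_{R_\mathcal{G}} = \mathbf{M}_0\otimes_W R_\mathcal{G}$ with horizontal tensors $\{s_\alpha\}$. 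The key geometric input is that, by Theorem~\ref{thm:localstructureofT}, the germ $(\widetilde{X^\circ})^{/x}$ is the unique component of $\mathfrak{T}$ through $\tilde x$ of dimension $\dim T$; combined with dimension counting and Theorem~\ref{Thm:Tatelocal}(\ref{itcor3}), I expect to identify $(\widetilde{X^\circ})^{/x}$ with (an irreducible component of the reduction of) $\Spf R_{\mathcal{G}}$, so that Chai's canonical lift near $x$ is \emph{linear in the Lie-theoretic coordinates adapted to $\mathcal{G}$}, and hence so is a formal model of $\mathfrak{X}_{\mathrm{can}}$ near $\tilde x$.

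Granting that, the $p$-integrality at an arbitrary point $\mathfrak{z}$ follows from Theorem~\ref{thm:BKcriterion} applied in reverse, i.e.\ from the ``factors through $R_\mathcal{G}$'' criterion. Concretely: $\widetilde{\mathfrak{X}}_{\mathrm{can}}^{\mathrm{sm}}$ is smooth and connected, and it admits a formal model (Raynaud) which, near the canonical lift of $x$, is $\Spf R_{\mathcal{G}}$ up to the étale base change $]X[\to]T[$; every $\overline{K}$-point $\mathfrak{z}$ of the generic fiber spreads out to an $\cO_{K'}$-point $\varpi\colon R_{\mathcal{G}}^{\wedge}\to \cO_{K'}$ of this model (after enlarging $k$ and the ramification index), because the model is separated and $\cO_{K'}$ is a valuation ring. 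Since $\varpi$ factors through $R_{\mathcal{G}}$, the crystalline tensors $\{s_\alpha\}$ (which are already $\varphi$-invariant and cut out the reductive $\mathcal{G}$ over $W$) propagate to the deformation $\mathscr{G}_\varpi$, and the integral $p$-adic comparison then transports them to $\Gal(K')$-invariant étale tensors $\{s_{\alpha,\et,\mathfrak{z}}\}\subseteq \mathbf{H}_{p,\et,\mathfrak{z}}^\otimes$ whose stabilizer is a reductive $\bZ_p$-group scheme canonically identified with $\mathcal{G}_{\bZ_p}$ via the comparison. This is precisely $p$-étale integrality of $\mathfrak{z}$, and one checks the resulting tensors are the ones built from the de Rham tensors $\{s_{\alpha,\dR,\mathfrak{z}}\}$ as in the statement, because the Fontaine functors are compatible on the nose.

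The steps in order: (1) set up $\mathbf{M}_0$, $\mathcal{G}$, $\mu$, and $R_{\mathcal{G}}$ at $x$ from Definition~\ref{def:cohomologyunramified} and \S\ref{subsub:lethcoor}; (2) identify the germ $(\widetilde{X^\circ})^{/x}$ with the reduction of $\Spf R_{\mathcal{G}}$ using Theorem~\ref{thm:localstructureofT}, Theorem~\ref{Thm:Tatelocal}(\ref{itcor3}), and the coordinate change (Artin--Hasse) from canonical to Lie-theoretic coordinates in the ordinary case; (3) globalize: choose a formal model $\mathfrak{M}$ of $\widetilde{\mathfrak{X}}_{\mathrm{can}}^{\mathrm{sm}}$ dominating $\mathscr{A}_{g,W}^{/X}$ whose completion at the canonical lift of $x$ is $\Spf R_{\mathcal{G}}$ (here one may need to formally alter/normalize, cf.\ the discussion after Remark~\ref{rmk:crystallineHodgeloci}, but smoothness of $\widetilde{\mathfrak{X}}_{\mathrm{can}}^{\mathrm{sm}}$ keeps this tame); (4) extend the point $\mathfrak z$ to $\varpi\colon R_{\mathcal{G}}\to \cO_{K'}$ and invoke Theorem~\ref{thm:BKcriterion}, or rather its underlying Breuil--Kisin machinery, to get the reductive integral étale tensors. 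The main obstacle I anticipate is step (2)--(3): matching \emph{Chai's} canonical lift, which is defined via canonical (Serre--Tate) coordinates, with the \emph{Kisin} linear subspace $\Spf R_{\mathcal{G}}$ defined via Lie-theoretic coordinates, and then finding a global formal model on which this identification holds integrally rather than just generically. Away from $x$ the point $z$ may be non-ordinary, so the canonical coordinates are unavailable and one genuinely needs the Lie-theoretic picture plus Breuil--Kisin; making the two linear structures literally agree on a single formal model — so that a generic point on $\widetilde{\mathfrak{X}}_{\mathrm{can}}^{\mathrm{sm}}$ really does give a map factoring through $R_{\mathcal{G}}$ — is where the technical $p$-adic geometry (separatedness, normalization, throwing out the codimension-$\geq 2$ singular locus, possibly a formal alteration) has to be done carefully.
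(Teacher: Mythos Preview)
Your plan has a genuine circularity at step~(4). The ring $R_{\mathcal{G}}$ you build in step~(1) is the completion of the ambient deformation space at the single point $x$, so a map $\varpi\colon R_{\mathcal{G}}\to\cO_{K'}$ extending $\mathfrak{z}$ exists only when $\mathfrak{z}$ lies in the residue disk $]x[$. For a point $\mathfrak{z}\in\widetilde{\mathfrak{X}}_{\can}^{\mathrm{sm}}$ reducing to some other $z\in T(\Fpbar)$ (possibly non-ordinary), spreading out lands in the completion of your formal model at $z$, and identifying \emph{that} with an $R_{\mathcal{G}_z}$ for a reductive $\mathcal{G}_z\subseteq\GL(\omega_z(\mathbf{H}_{\cris}))$ is precisely $p$-integrality at $z$, which you do not yet know. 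In the paper's logical order this is exactly backwards: the fact that $T$ is $p$-integral at every $z$ and that $\mathfrak{T}_{\can}\cap]z[$ is linear in Lie-theoretic coordinates is Proposition~\ref{thm:smooththeorem}, and it is deduced \emph{from} the present proposition together with Theorem~\ref{thm:BKcriterion}. Note also that Theorem~\ref{thm:BKcriterion} takes \'etale tensors cutting out a reductive $\bZ_p$-group as \emph{input} and produces the factorization through $R_G$; you are invoking it in the wrong direction.

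The paper's argument is structurally different and avoids formal models altogether for this step. It works entirely on the rigid generic fiber and constructs the \'etale tensors $\{s_{\alpha,\et}\}$ \emph{in families}: over each connected affinoid $\mathfrak{B}\subseteq\widetilde{\mathfrak{X}}_{\can}^{\mathrm{sm}}$ one passes (via Temkin's alteration) to a strictly semistable formal model $\mathscr{B}'$, and then uses relative log-prismatic $p$-adic Hodge theory (\cite[Proposition~3.46(3)]{du2024logprismaticfcrystalspurity}) to turn the global de Rham and log-crystalline tensors into global sections $\{s_{\alpha,\et,\mathfrak{B}'}\}$ of $\bH_{p,\et,\mathfrak{B}'}^{\otimes}$. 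Once the tensors are global, the condition ``their stabilizer in $\GL(\mathbf{H}_{p,\et,\mathfrak{z}})$ is a reductive $\bZ_p$-group scheme'' depends only on the isomorphism class of the pair (integral lattice, rational tensors), which is constant on each connected component since both the $\bZ_p$-local system and the tensors are global. Hence $p$-\'etale integrality at the canonical lift $\tilde{x}^{\mathrm{rig}}$ propagates to all of $\widetilde{\mathfrak{X}}_{\can}^{\mathrm{sm}}$ via a chain of connected affinoids. The pointwise Breuil--Kisin machinery you invoke cannot provide this glue between distinct residue disks; the relative comparison is the missing ingredient.
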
\begin{proof}
If $\mathfrak{z}_0=\Tilde{x}^{\mathrm{rig}}$ is the canonical lift of $x$, then any pre-image of $\mathfrak{z}_0$ in $\widetilde{\mathfrak{X}}_{\mathrm{can}}$ lies in $\widetilde{\mathfrak{X}}_{\mathrm{can}}^{\mathrm{sm}}$, and is $p$-étale integral. The strategy isto compare the étale tensors $\{s_{\alpha,\et,\mathfrak{z}}\}$ over a point $\mathfrak{z}$ to that of the canonical lift $\mathfrak{z}_0$. To achieve this, we need to construct $p$-adic étale tensors in families, using a relative version of Fontaine's theory (cf. \cite[Proposition 3.46(3)]{du2024logprismaticfcrystalspurity}). First, we can make some simplifications. Since $\widetilde{\mathfrak{X}}_{\mathrm{can}}^{\mathrm{sm}}$ is connected, we can find finitely many connected affinoids that connect $\mathfrak{z}_0$ and $\mathfrak{z}$ (this is by definition of connectedness, see \cite{Conrad}). Therefore it suffices to show the following: if $\mathfrak{B}\subseteq \widetilde{\mathfrak{X}}_{\mathrm{can}}^{\mathrm{sm}}$ is a connected affinoid that contains a $p$-étale integral point $\mathfrak{z}_1$, then so is every point in $\mathfrak{B}$. 

By Ranauld's theory, we can find an admissible formal model $\mathscr{B}$ of $\mathfrak{B}$ with a map $\mathscr{B}\rightarrow \mathscr{A}_{g,K}^{/T}$. 
 By \cite[Theorem 3.3.1]{TM17}, possibly base change to a finite extension $K'/K$, there is a formal scheme $\mathscr{B}'\rightarrow \mathscr{B}_{W'}$ which is an étale morphism over the generic fiber, with the property that $\mathscr{B}'$ is strictly semi-stable over $W'$ (here $W'$ is the integral ring of bounded elements in $K'$). Let $\mathfrak{B}'$ be the rigid generic fiber of $\mathscr{B}'$. 

Make $\mathscr{B}'$ into a log formal scheme $(\mathscr{B}', M)$ where $M$ comes from the semi-stable structure; cf. \cite[Notation and conventions]{du2024logprismaticfcrystalspurity}. Over the log convergent site of $(\mathscr{B}'_{\Fpbar}, M)$, we have a (log) convergent $F$-isocrystal $\mathbb{H}_{\cris,(\mathscr{B}'_{\Fpbar}, M)}$ that is pulled back from $\mathbb{H}_{\cris,T}$. The pro-étale $\bbZ_p$-local system $\mathbf{H}_{p,\et,\mathfrak{B}'}$ is canonically associated with $\mathbb{H}_{\cris,(\mathscr{B}'_{\Fpbar}, M)}$ in the sense of \cite[\S 3.6]{du2024logprismaticfcrystalspurity}: indeed, it is just a matter of pulling back the canonical association over $\mathscr{A}_{g,W}^{\wedge p}$ to $(\mathscr{B}', M)$. On the other hand, the tensors $\{s_{\alpha}\}$ also pull back to tensors $\{s_{\alpha,(\mathscr{B}'_{\Fpbar}, M)}\}\subseteq\mathbb{H}_{\cris,(\mathscr{B}'_{\Fpbar}, M)}^\otimes$.  Moreover, $\{s_{\alpha,\mathfrak{B}'}^{\mathrm{an}}\}$ lie in $\Fil^0\bH_{\dR,\mathfrak{B}'}^\otimes$ by construction, giving rise to de Rham tensors $\{s_{\alpha,\dR,\mathfrak{B}'}\}$. Since a tensor of a local system can be understood as a morphism from the trivial local system to an object in the tensor category generated by the local system, we can appeal to \cite[Proposition 3.46(3)]{du2024logprismaticfcrystalspurity} and obtain $p$-adic étale tensors $\{s_{\alpha,\et,\mathfrak{B}'}\}$ of the local system  $\mathbb{H}_{p,\et,\mathfrak{B}'}$ from de Rham and (log) crystalline ones  $\{s_{\alpha,\dR,\mathfrak{B}'}\}$ and $\{s_{\alpha,(\mathscr{B}'_{\Fpbar}, M)}\}$. If $\mathfrak{z}\rightarrow\mathfrak{B}'$ is a $\overline{K}$-point, the pullback of $s_{\alpha,\et,\mathfrak{B}'}$ to $\mathfrak{z}$ is just the tensor $s_{\alpha,\et,\mathfrak{z}}$ in (\ref{eq:petaletensors}). Suppose that $\mathfrak{z}_1$ is a $p$-étale integral $\overline{K}$-point of $\mathfrak{B}$. Let $\mathfrak{B}'_+$ be any connected component of $\mathfrak{B}'$. Then $\mathfrak{z}_1$ lifts to a  $p$-étale integral point $\mathfrak{z}_{1+}$ of $\mathfrak{B}'_+$. Since $\mathfrak{B}'_+$ is connected, and since $\{s_{\alpha,\et,\mathfrak{B}'_+}\}$ is a collection of global tensors of the $p$-adic étale local system $\bH_{p,\et,\mathfrak{B}'_+}$, every $\overline{K}$-point of $\mathfrak{B}'_+$ is $p$-étale integral. 
It then follows that every point of $\mathfrak{B}$ is $p$-étale integral. 
\end{proof}

\begin{proposition}\label{thm:smooththeorem}
    Assumption as above. For every point $z\in T(\Fpbar)$, $T$ is $p$-integral at $z$, and $\mathfrak{T}_{\mathrm{can}}\cap ]z[$ is the rigid generic fiber of a dimension $\dim T$ formal subscheme of $\mathscr{A}^{/z}_{g,W}$ which is linear in the Lie theoretic coordinates. In particular,
        $\mathfrak{T}_{\mathrm{can}}\cap ]T^\circ[ = (\widetilde{T^\circ})^{\mathrm{rig}}$.
\end{proposition}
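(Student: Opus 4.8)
\textbf{Proof strategy for Proposition~\ref{thm:smooththeorem}.}
The plan is to combine the relative $p$-adic Hodge theory input of Proposition~\ref{thm:petaleintegral} with Kisin's Breuil--Kisin machinery (Theorem~\ref{thm:BKcriterion}) to propagate $p$-integrality from the single ordinary point $x$ to every point $z\in T(\Fpbar)$, and then to read off the local shape of $\mathfrak{T}_{\mathrm{can}}$ in the Lie theoretic coordinates at $z$. First I would work over the normalization $\widetilde{\mathfrak{X}}_{\mathrm{can}}^{\mathrm{sm}}$, which is smooth and connected and carries, by Proposition~\ref{thm:petaleintegral}, the property that every $\overline{K}$-point is $p$-\'etale integral: concretely, there is a collection of pro-\'etale tensors $\{s_{\alpha,\et}\}$ of $\bH_{p,\et}$, cutting out a reductive $\bZ_p$-group scheme in $\GL(\mathbf{H}_{p,\et,\mathfrak{z}})$ at every point, compatible under $p$-adic comparison with the crystalline tensors $\{s_\alpha\}$ and with the de Rham tensors $\{s_{\alpha,\dR}\}$ living in $\Fil^0$. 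Fix now a point $z\in T(\Fpbar)$, possibly non-ordinary. Choose a $\overline{K}$-point $\mathfrak{z}$ of $\widetilde{\mathfrak{X}}_{\mathrm{can}}^{\mathrm{sm}}$ reducing to $z$ (such a point exists since $\mathfrak{X}_{\mathrm{can}}$ dominates $]z[\cap\mathfrak{T}_{\mathrm{can}}$, which is non-empty by Lemma~\ref{lm:TinT} applied near $z$, after throwing out the singular locus which has codimension $\geq 2$ and hence does not disconnect the residue tube). The $p$-divisible group $\mathscr{G}_\mathfrak{z}=A_\mathfrak{z}[p^\infty]$ over the corresponding DVR $\cO_{K'}$ has $T_p\mathscr{G}_\mathfrak{z}^\ast$ equipped with the Galois-stable tensors $\{s_{\alpha,\et,\mathfrak{z}}\}$ cutting out a reductive group; under the $p$-adic comparison isomorphism these map to $\{s_\alpha\}\subseteq (M_0[\tfrac1p])^\otimes$ where $M_0=\bD(\mathscr{G}_{0})(W)$, $\mathscr{G}_0=A_z[p^\infty]$, and the Hodge filtration on $M_{0,\Fpbar}$ is split by the cocharacter attached to $z$. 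Hence Theorem~\ref{thm:BKcriterion} applies: $\{s_\alpha\}$ cuts out a connected reductive subgroup $G\subseteq\GL(M_0)$, giving $p$-integrality of $T$ at $z$, and the classifying map $\varpi:R\to\cO_{K'}$ factors through $R_G\subseteq R=\mathcal{O}_{\mathscr{A}_{g,W}^{/z}}$.

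Next I would upgrade this pointwise factorization to a containment of formal schemes. The ring $R_G$ is the complete local ring of the linear formal subscheme $\Spf R_G\subseteq\Def(\mathscr{G}_0/W)$ which, by the construction recalled in \S\ref{subsub:lethcoor}, is $\widehat{U}_{G,\mu^{-1}}$, the completion of the opposite unipotent $U_{G,\mu^{-1}}$ at its identity section --- this is precisely a formal subscheme of $\mathscr{A}_{g,W}^{/z}$ linear in the Lie theoretic coordinates, of dimension $\dim U_{G,\mu^{-1}}$. Since \emph{every} $\overline{K}$-point of $\mathfrak{T}_{\mathrm{can}}\cap ]z[$ lifts (after normalizing and discarding codimension $\geq 2$ loci) to a $\overline{K}$-point of $\widetilde{\mathfrak{X}}_{\mathrm{can}}^{\mathrm{sm}}$, and each such point factors through $(\Spf R_G)^{\mathrm{rig}}=\widehat{U}_{G,\mu^{-1}}^{\mathrm{rig}}$, the Zariski-dense subset of $\overline{K}$-points of the reduced rigid space $\mathfrak{T}_{\mathrm{can}}\cap ]z[$ lies in the closed analytic subspace $\widehat{U}_{G,\mu^{-1}}^{\mathrm{rig}}$; hence $\mathfrak{T}_{\mathrm{can}}\cap ]z[\subseteq\widehat{U}_{G,\mu^{-1}}^{\mathrm{rig}}$. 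For the reverse inclusion and the dimension count, I would compare dimensions: on one hand $\dim(\mathfrak{T}_{\mathrm{can}}\cap ]z[)=\dim\mathfrak{T}_{\mathrm{can}}=\dim T$ by Corollary~\ref{cor:containTcan} (using that $\mathfrak{T}_{\mathrm{can}}$ is irreducible and $]z[$ meets it); on the other hand $\dim U_{G,\mu^{-1}}$ equals the dimension of the opposite unipotent of $G=G(\bH_{\cris,T},z)^\circ$ with respect to the Hodge cocharacter, which by Theorem~\ref{Thm:Tatelocal}(3) (Crew's parabolicity, via $\mathrm{gr}\bH_{p,f}$ and the Levi/unipotent decomposition) is exactly $\rk\mathscr{T}_{f,z}=\dim T$. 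Thus the closed immersion $\mathfrak{T}_{\mathrm{can}}\cap ]z[\hookrightarrow\widehat{U}_{G,\mu^{-1}}^{\mathrm{rig}}$ is between reduced, irreducible (on the relevant component) rigid spaces of equal dimension $\dim T$; since $\widehat{U}_{G,\mu^{-1}}^{\mathrm{rig}}$ is an open polydisc, hence irreducible, the immersion is an isomorphism onto (a component of) it. Restricting to $z\in T^\circ(\Fpbar)$, $G(\bH_{\cris,T},z)^\circ$ splits the slope filtration by an ordinary Newton cocharacter, the linear subscheme $\widehat{U}_{G,\mu^{-1}}$ agrees with Chai's canonical torus $(\widetilde{T^\circ})^{/z}$ by the uniqueness in Proposition~\ref{prop:chaitatelinearlifting} together with Theorem~\ref{thm:localstructureofT}, and gluing over all $z\in T^\circ(\Fpbar)$ yields $\mathfrak{T}_{\mathrm{can}}\cap ]T^\circ[=(\widetilde{T^\circ})^{\mathrm{rig}}$.

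The main obstacle I anticipate is the bookkeeping around the hypothesis ``under the $p$-adic comparison isomorphism $\{s_{\alpha,\et,\mathfrak{z}}\}$ maps to $\{s_\alpha\}$'' in Theorem~\ref{thm:BKcriterion}: one must check that the crystalline tensors obtained as the crystalline realizations of $\{s_{\alpha,\et,\mathfrak{z}}\}$ at the non-ordinary point $z$ genuinely coincide with the tensors $\{s_\alpha\}$ pulled back from $\bH_{\cris,T}$ --- this requires that the horizontal crystalline sections $\{s_\alpha\}$ on $]T[$, when specialized along the nearby-cycles/comparison at $z$, are compatible with the \emph{global} pro-\'etale tensors $\{s_{\alpha,\et,\mathfrak{B}'}\}$ constructed in the proof of Proposition~\ref{thm:petaleintegral}. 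Concretely this is a compatibility between the canonical crystal-to-\'etale association over $\mathscr{A}_{g,W}^{\wedge p}$ (Berthelot--Ogus plus \cite{du2024logprismaticfcrystalspurity}) and the classical $p$-adic comparison for the $p$-divisible group $\mathscr{G}_\mathfrak{z}/\cO_{K'}$; I expect it to follow from functoriality of the comparisons along the map $\Spf\cO_{K'}\to\mathscr{A}_{g,W}^{\wedge p}$ factoring through the semistable model $\mathscr{B}'$, but spelling out the log structures carefully at the boundary (should $z$ lie in a toroidal boundary stratum --- in our application $T$ is a curve so this case is relevant) is the delicate point. A secondary subtlety is verifying that $(\widehat{U}_{G,\mu^{-1}})^{\mathrm{rig}}$ is the \emph{unique} component of the cut-out locus through the relevant point, so that the equality $\mathfrak{T}_{\mathrm{can}}\cap ]z[=\widehat{U}_{G,\mu^{-1}}^{\mathrm{rig}}$ (rather than a mere inclusion of one component) holds; here one argues as in the proof of Theorem~\ref{thm:localstructureofT}, using maximality of Noot's formal subscheme $\mathcal{N}$ and the dimension equality above.
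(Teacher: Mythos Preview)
Your strategy is essentially the paper's: pick a smooth point $\mathfrak{z}$ of $\widetilde{\mathfrak{X}}_{\mathrm{can}}^{\mathrm{sm}}$ over $z$, feed the $p$-\'etale integral tensors from Proposition~\ref{thm:petaleintegral} into Theorem~\ref{thm:BKcriterion} to get reductivity of $G\subseteq\GL(M_0)$ and the factorization through $\Spf R_G$, then deduce $\mathfrak{T}_{\mathrm{can}}\cap]z[=(\Spf R_G)^{\mathrm{rig}}$ and finish via Theorem~\ref{thm:localstructureofT}. Two small points deserve correction.

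First, your dimension argument for $\dim U_{G,\mu^{-1}}=\dim T$ invokes Theorem~\ref{Thm:Tatelocal}(3) and the object $\mathscr{T}_{f,z}$ at a possibly \emph{non-ordinary} $z$. That theorem is stated only for ordinary points (the formal torus $\mathscr{T}_{f,z}$ does not exist otherwise), so this step does not go through as written. The paper avoids computing $\dim R_G$ directly: it observes that $(\Spf R_G)^{\mathrm{rig}}$ is irreducible, is contained in $\mathfrak{T}\cap]z[$ (since over $R_G$ the horizontal extensions of $\{s_{\alpha,z}\}$ lie in $\Fil^0$ by Kisin's construction), and contains $\mathfrak{T}_{\mathrm{can}}\cap]z[$ (this is where the codimension~$\ge 2$ of the singular locus enters). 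Hence $(\Spf R_G)^{\mathrm{rig}}$ lies in a single irreducible component of $\mathfrak{T}$; that component shares an open subset with $\mathfrak{T}_{\mathrm{can}}$, so must equal $\mathfrak{T}_{\mathrm{can}}$, giving the reverse inclusion without ever computing $\dim U_{G,\mu^{-1}}$. If you prefer your route, you could instead argue that the Hodge cocharacters at different points are $G(\overline{K})$-conjugate (they are all minuscule of the same numerical type), so $\dim U_{G,\mu^{-1}}$ is constant and may be computed at the ordinary $x$.

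Second, a couple of your anticipated obstacles are non-issues here. The compatibility ``$\{s_{\alpha,\et,\mathfrak{z}}\}\mapsto\{s_{\alpha,z}\}$ under comparison'' is automatic: in Proposition~\ref{thm:petaleintegral} the \'etale tensors are \emph{constructed} from the crystalline and de~Rham ones via the comparison functor of \cite{du2024logprismaticfcrystalspurity}, so specializing and running the comparison backwards returns $\{s_{\alpha,z}\}$ on the nose. And there is no toroidal boundary to worry about in this proposition: by hypothesis $T\subseteq\mathscr{A}_{g,\Fpbar}$, so every $z$ lies in the interior; the boundary/singular case is handled separately in Theorem~\ref{thm:ttlinearsingular}. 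Finally, for non-emptiness of $\mathfrak{T}_{\mathrm{can}}\cap]z[$ you cite Lemma~\ref{lm:TinT}, but that lemma only gives points over $T^\circ$; the paper instead uses that $\mathfrak{T}_{\mathrm{can}}$ is Zariski closed in $]T[$ and nonempty over the dense $T^\circ$, hence surjects onto $T$.
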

\begin{proof}
First, for every point $z\in T(\mathbb{F})$, $\mathfrak{T}_{\can}\cap ]z[$ is not empty. This is obviously true if $x$ lies in the Zariski dense subset $T^\circ\subseteq T$. The claim for all $z\in T(\mathbb{F})$ then follows from the fact that $\mathfrak{T}_{\can}$ is Zariski closed in $]T[$. 

Now let $z\in T(\mathbb{F})$ be a point. Write $M_0=\bD(A_z[p^{\infty}])(W)$ as in \S\ref{subsub:lethcoor}, the tensors $\{s_\alpha\}$ restrict to tensors $\{s_{\alpha,z}\}\subseteq M_0^\otimes$. Let $G\subseteq \GL(M_0)$ be the group scheme cut out by $\{s_{\alpha,z}\}$. Now we have $\mathfrak{T}_{\can}\cap ]z[\neq \varnothing$, so by dimension reasons there exist points in $\widetilde{\mathfrak{X}}_{\can}^{\mathrm{sm}}$ specializing to $z$. Let $\mathfrak{z}$ be any  $\widetilde{\mathfrak{X}}_{\can}^{\mathrm{sm}}$ specializing to $z$. Let $\mathrm{cl}({\mathfrak{z}})$ be the closure of $\mathfrak{z}$ in $\mathscr{A}_{g,\overline{W}}^{/z}$. Note that $\{s_{\alpha,z}\}$ is exactly the image of $\{s_{\alpha,\et,\mathfrak{z}}\}$ under the $p$-adic comparison isomorphism. It follows from  Theorem~\ref{thm:BKcriterion} and the $p$-étale integrality of $\{s_{\alpha,\et,\mathfrak{z}}\}$ from Proposition~\ref{thm:petaleintegral}, that $G$ is reductive, and the image of $\mathrm{cl}({\mathfrak{z}})$ lies in $\Spf R_G$. This implies that $z$ is $p$-integral. In addition, every point in $\widetilde{\mathfrak{X}}_{\can}^{\mathrm{sm}}$ that maps to $]z[$ factors through $(\Spf R_G)^{\mathrm{rig}}$. By dimension reasons ($\widetilde{\mathfrak{X}}_{\mathrm{can}}^{\mathrm{sm}}\subseteq \widetilde{\mathfrak{X}}_{\mathrm{can}}$ is a Zariski open subspace obtained by throwing out the codimension $\geq$2 singular loci), we have ${\mathfrak{T}}_{\can}\cap ]z[=(\Spf R_G)^{\mathrm{rig}}$. So $\mathfrak{T}_{\mathrm{can}}\cap ]z[$ is linear in the Lie theoretic coordinates. The fact that   $\mathfrak{T}_{\mathrm{can}}\cap ]T^\circ[ = (\widetilde{T^\circ})^{\mathrm{rig}}$ follows easily from Theorem~\ref{thm:localstructureofT}. 
\end{proof}
\subsection{Proper Tate linear curves}\label{sub:PTLC} We will establish cases of the Tate-linear conjecture for proper Tate linear curves. We will start by the case where the Tate linear curve is smooth: 
\begin{theorem}\label{thm:TTlinearsmooth}
    Suppose that $T\subseteq \mathscr{A}_{g,\Fpbar}$ is a smooth proper Tate-linear curve. If $T$ is $p$-integral at an ordinary point $x$, then there is a special subvariety $Z\subseteq \mathcal{A}_{g,\overline{K}}$, such that $Z^{\mathrm{an}}= \mathfrak{T}_{\mathrm{can}}$. In particular, $\mathrm{Tl}_p$ holds for $T$. 
\end{theorem}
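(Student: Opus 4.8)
The strategy is to combine the local linearity result (Proposition~\ref{thm:smooththeorem}) with rigid analytic GAGA, using the properness of $T$ to produce a genuine closed analytic subvariety of $(\mathcal{A}_{g,\overline{K}})^{\mathrm{an}}$. Since $T$ is $p$-integral at the ordinary point $x$, the hypotheses of \S\ref{subsub:HTofTcan} are in force, so Proposition~\ref{thm:smooththeorem} applies: for every $z\in T(\Fpbar)$ the intersection $\mathfrak{T}_{\mathrm{can}}\cap ]z[$ is the rigid generic fiber of a $1$-dimensional formal subscheme of $\mathscr{A}^{/z}_{g,W}$, linear in the Lie theoretic coordinates, and moreover $\mathfrak{T}_{\mathrm{can}}\cap ]T^\circ[=(\widetilde{T^\circ})^{\mathrm{rig}}$. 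I would first pass to a toroidal compactification $\mathscr{A}^{\Sigma}_{g}$ (which plays no role since $T$ is already proper, but lets us work inside a proper ambient space) and consider the closure $T^{\mathrm{cl}}=T$; the overconvergence of $\mathfrak{T}$ noted after Construction~\ref{const:ttloci} means $\mathfrak{T}_{\mathrm{can}}$ extends to a strict neighborhood of $]T[$, but for a proper $T$ we have $]T[=]T^{\mathrm{cl}}[$ already.

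\textbf{Main step: boundedness and algebraization.} The key point is to show $\mathfrak{T}_{\mathrm{can}}$ is contained in a union of finitely many tubes of radius $<1$ of the form $]U[_{<1-\epsilon}$ with $U\subseteq T$ Zariski open — equivalently, that $\mathfrak{T}_{\mathrm{can}}$ does not ``spread out'' to the boundary of any residue disk. Over $]T^\circ[$ this is Proposition~\ref{thm:smooththeorem}: $(\widetilde{T^\circ})^{\mathrm{rig}}$ is the canonical lift, which sits strictly inside the tube. At the finitely many non-ordinary points $z\in T(\Fpbar)$ (there are finitely many since $T$ is a proper curve with generically ordinary, hence dense ordinary, locus), Proposition~\ref{thm:smooththeorem} tells us $\mathfrak{T}_{\mathrm{can}}\cap ]z[$ is $(\Spf R_G)^{\mathrm{rig}}$ for the reductive $G$ cut out by the crystalline tensors — in particular it is the generic fiber of a \emph{formal} (hence bounded, ``closed unit polydisk radius'') subscheme of $\mathscr{A}^{/z}_{g,W}$, so again strictly interior. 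Cover $T$ by the ordinary tube $]T^\circ[$ together with the finitely many residue disks $]z[$; on each piece $\mathfrak{T}_{\mathrm{can}}$ is strictly $T$-bounded, and since there are finitely many pieces this gives global strict boundedness. Strict boundedness, together with the properness of $(\mathcal{A}^{\Sigma}_{g,K})^{\mathrm{an}}$, implies that $\mathfrak{T}_{\mathrm{can}}$ is Zariski closed in $(\mathcal{A}^{\Sigma}_{g,K})^{\mathrm{an}}$ (this is the standard "a $T$-bounded analytic subvariety over a proper curve is analytically closed" argument — one glues the closures over the finitely many tubes). Then rigid analytic GAGA (Köpf, or Conrad's relative GAGA \cite{Conrad}) applies to the proper $K$-scheme $\mathcal{A}^{\Sigma}_{g,K}$: the closed analytic subspace $\mathfrak{T}_{\mathrm{can}}$ is the analytification of a closed algebraic subvariety $Z^{\Sigma}\subseteq \mathcal{A}^{\Sigma}_{g,\overline{K}}$. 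Taking $Z=Z^{\Sigma}\cap \mathcal{A}_{g,\overline{K}}$ (which is all of $Z^{\Sigma}$ up to boundary, and $\mathfrak{T}_{\mathrm{can}}$ already lies in $]T[\subseteq \mathscr{A}^{\ord}$, i.e., in the good-reduction locus) we get $Z^{\mathrm{an}}=\mathfrak{T}_{\mathrm{can}}$.

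\textbf{Conclusion.} Once $\mathfrak{T}_{\mathrm{can}}$ is algebraic, Lemma~\ref{lm:moonenlemma} applies verbatim: $\mathfrak{T}_{\mathrm{can}}$ contains (via $(\widetilde{T^\circ})^{\mathrm{rig}}\subseteq\mathfrak{T}_{\mathrm{can}}$ and \cite[\S 3.6]{M98}) a Zariski dense collection of CM points, so $Z$ is special by André--Oort (or by \cite[Theorem 4.5]{M98} to avoid it), and $T$, being the mod $p$ reduction of (a component of the integral model of) $Z$, is a component of the reduction of a special subvariety, i.e., $\mathrm{Tl}_p$ holds for $T$. The main obstacle is the boundedness/closedness step at the non-ordinary points: one must be careful that the formal scheme $\Spf R_G$ produced by the Breuil--Kisin/Lie-theoretic argument really gives a \emph{bounded} (closed-disk-type) locus in the residue disk $]z[$ rather than merely an analytic subspace, and that the finitely many local closures glue to a global Zariski closed subspace — this requires knowing the non-ordinary points of $T$ are finite in number (clear for a curve) and that $\mathfrak{T}_{\mathrm{can}}$ meets each $]z[$ in exactly the $1$-dimensional piece identified in Proposition~\ref{thm:smooththeorem}, with no extra components escaping to the tube boundary, which is precisely what the $p$-integrality hypothesis buys us through Theorem~\ref{thm:BKcriterion}.
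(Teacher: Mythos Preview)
Your proposal is correct and follows essentially the same approach as the paper. Both arguments use Proposition~\ref{thm:smooththeorem} to show that $\mathfrak{T}_{\mathrm{can}}$ is contained in tubes of radius strictly less than $1$ (the paper phrases this via an explicit ``Claim'' bounding $|f_i(\mathfrak{z})|$ on affine opens $V\subseteq T$, which is the same content as your ``strict boundedness''), then use properness of $T$ to build an admissible cover of $(\mathcal{A}^\Sigma_{g,\overline{K}})^{\mathrm{an}}$ showing $\mathfrak{T}_{\mathrm{can}}$ is Zariski closed, apply rigid GAGA, and finish with Lemma~\ref{lm:moonenlemma}.
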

\begin{proof}
  Let $V\subseteq T$ be an affine open, so that $V$ is cut out from $\mathscr{A}_{g,W}^{/V}$ via an ideal $(p,f_{1},f_{2},...,f_{m})$. \begin{claim}
      There exists an $\epsilon< 1$, such that the $p$-adic norm $|f_{i}(\mathfrak{z})|$ is bounded above by $\epsilon$ for all $\overline{K}$-points $\mathfrak{z}$ in $\mathfrak{T}_{\mathrm{can}}\cap ]V[$ and all $i\in [1,m]$ (Intuitively, this means that $\mathfrak{T}_{\mathrm{can}}\cap ]V[$ is contained in a smaller tube inside $]V[$). 
  \end{claim}
  Assuming the claim, we can deduce the corollary as follows: the claim implies that we can find an admissible cover $]V[= ]V[_{>\epsilon}\cup ]V[_{<(1+\epsilon)/2}$, where $]V[_{>\epsilon}=\{\mathfrak{z}:\exists i, |f_i(\mathfrak{z})|>\epsilon \}$, and $]V[_{<(1+\epsilon)/2}=\{\mathfrak{z}:\forall i, |f_i(\mathfrak{z})|<(1+\epsilon)/2 \}$, such that $\mathfrak{T}_{\mathrm{can}}\cap ]V[ \subseteq ]V[_{< (1+\epsilon)/2}$ and $\mathfrak{T}_{\mathrm{can}}\cap ]V[_{>\epsilon}= \varnothing$. Now cover $T$ by finitely many affine subschemes, and use the claim and the implication described in the last a few sentences. Since $T$ is proper, we find that there exists an admissible cover $(\mathcal{A}_{g,\overline{K}}^\Sigma)^{\an}=\mathfrak{V}_1\cup \mathfrak{V}_2$ with $\mathfrak{V}_1\subseteq ]T[$, such that $\mathfrak{T}_{\mathrm{can}}$ is a Zariski closed subset of $\mathfrak{V}_1$, and $\mathfrak{T}_{\mathrm{can}}\cap \mathfrak{V}_2=\varnothing$. This implies that $\mathfrak{T}_{\mathrm{can}} \subseteq (\mathcal{A}_{g,\overline{K}}^\Sigma)^{\an}$ is Zariski closed. Therefore we conclude by rigid GAGA that $\mathfrak{T}_{\mathrm{can}}=Z^{\mathrm{an}}$ for some algebraic subvariety $Z\subseteq\mathcal{A}_{g,\overline{K}}^\Sigma$, which in fact lies in $\mathcal{A}_{g,\overline{K}}$. We conclude by Lemma~\ref{lm:moonenlemma}.

It remains to prove the \textit{Claim}. By Proposition~\ref{prop:chaitatelinearlifting}, we know that $\widetilde{T}^\circ$ is a locally closed formal subscheme of $(\mathscr{A}_{g,W}^{\Sigma})^{\wedge p}$. Since $]T^\circ[\cap \mathfrak{T}_{\mathrm{can}}= (\widetilde{T}^\circ)^{\mathrm{rig}}$ (Proposition~\ref{thm:smooththeorem}), 
this would imply the \textit{Claim} when $V\subseteq T^\circ$. Since $T$ is a curve, there are only finitely points $t_1,t_2,...,t_n$ in $T\setminus T^\circ$.  
By Proposition~\ref{thm:smooththeorem} again, each 
  $\mathfrak{T}_{\mathrm{can}}\cap ]t_i[$ is linear in the Lie theoretic coordinates. We see that the \textit{Claim} remains valid when $V$ contains some $t_i$. The validity of the claim is checked. 

\end{proof}

\subsubsection{The singular case}\label{subsub:sinsincase} Now we remove the condition that $T$ is smooth. Suppose that $T\subseteq \mathscr{A}_{g,\Fpbar}$ is a proper Tate-linear curve. Our strategy is to perform embedded blow up to resolve the singularity, while keeping the ambient space nice. If $z$ is a singular point of $T$, we first lift it to a $W$-point $Z$ of $\mathscr{A}_{g,W}^{\Sigma}$, then blow up $\mathscr{A}_{g,W}^{\Sigma}$ at the regular center $Z$. This preserves the $W$-smoothness and, when restricted to the special fiber, coincides with the blow up of $\mathscr{A}_{g,\Fpbar}^\Sigma$ along $z$. Therefore, iterating this process results in a smooth $W$-scheme $\mathrm{Bl}\mathscr{A}_{g,W}^\Sigma$ together with a non-singular proper curve $\widehat{T}\hookrightarrow \mathrm{Bl}\mathscr{A}_{g,W}^\Sigma$, which is the strict transform of $T$. We will write $\mathrm{Bl}\mathscr{A}_{g,W}$ for the preimage of $\mathscr{A}_{g,W}$ under  $\pi: \mathrm{Bl}\mathscr{A}_{g,W}^\Sigma\rightarrow \mathscr{A}_{g,W}^\Sigma$.

Now we set up some notation. Let $Q:=\pi^{-1}(T)\subseteq \mathrm{Bl}\mathscr{A}_{g,W}^\Sigma$. Let $]\widehat{T}[$ and $]Q[$ be the tubes of $T$ and $Q$ in $(\mathrm{Bl}\mathscr{A}_{g,W}^\Sigma)^{\wedge p}$, respectively. We will write $Q=\widehat{T}\cup E$, where $E$ is the exceptional loci (i.e., the loci that map to singularities of $T$ under $\pi$). The symbol $]T[$ is reserved for the tube of $T$ in $(\mathscr{A}_{g,W}^\Sigma)^{\wedge p}$. Let $\varpi:]Q[\rightarrow ]T[$ be the natural map. The symbol $]T^\circ[$ is used to denote either the tube of $T^{\circ}$ in $(\mathscr{A}_{g,W}^\Sigma)^{\wedge p}$ or in $(\mathrm{Bl}\mathscr{A}_{g,W}^\Sigma)^{\wedge p}$ (the two tubes can be identified via $\varpi$).

\begin{theorem}\label{thm:ttlinearsingular}
Notation as above. If $\widehat{T}$ is $p$-integral at an ordinary point $x$, then $\widehat{T}$ is $p$-integral at every $\Fpbar$-point. In addition,  there is a special subvariety $Z\subseteq \mathcal{A}_{g,\overline{K}}$, such that $Z^{\mathrm{an}}= \mathfrak{T}_{\mathrm{can}}$. In particular, $\mathrm{Tl}_p$ holds for $T$. 
\end{theorem}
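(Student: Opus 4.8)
The plan is to reduce the singular case to the already-established smooth case (Theorem~\ref{thm:TTlinearsmooth}) applied to the strict transform $\widehat{T}\hookrightarrow \mathrm{Bl}\mathscr{A}_{g,W}^\Sigma$, while showing that the blow-up did not destroy any of the structures we rely on. There are three things to reconcile. First, Theorem~\ref{thm:TTlinearsmooth} was stated for curves inside $\mathscr{A}_{g}$, not inside a blow-up; I would check that its proof goes through verbatim with $\mathrm{Bl}\mathscr{A}_{g,W}^\Sigma$ in place of $\mathscr{A}_{g,W}^\Sigma$, since the only properties used are that the ambient space is $W$-smooth, proper (after compactification), and carries the relevant de Rham/crystalline bundle with filtration — all of which are preserved by blowing up in regular centers and pulling back. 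Second, I must relate the canonical Tate-linear locus $\mathfrak{T}_{\mathrm{can}}^{\widehat{T}}\subseteq ]\widehat T[$ built from $\widehat T$ to the canonical Tate-linear locus $\mathfrak{T}_{\mathrm{can}}\subseteq ]T[$ built from $T$: the crystalline cohomology of the abelian scheme over $\widehat T$ is the pullback (via $\widehat T\to T$, which is an isomorphism over $T^\circ$) of the one over $T$, so the defining tensors $\{s_\alpha\}$ match, and over $]T^\circ[$ the two loci are literally identified by $\varpi$. Third, I need the global identification $\varpi(\mathfrak{T}_{\mathrm{can}}^{\widehat T}) = \mathfrak{T}_{\mathrm{can}}$ as rigid subspaces of $]T[$, and I need to transport the algebraicity statement back down.

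\textbf{Step 1: $p$-integrality is global on $\widehat T$.} Since $\widehat T$ is smooth and proper, Proposition~\ref{thm:smooththeorem} applies directly: given that $\widehat T$ is $p$-integral at the ordinary point $x$, it is $p$-integral at every $\Fpbar$-point of $\widehat T$, and $\mathfrak{T}_{\mathrm{can}}^{\widehat T}\cap ]z[$ is, for each $z\in \widehat T(\Fpbar)$, the rigid generic fiber of a dimension-$1$ formal subscheme linear in the Lie-theoretic coordinates; moreover $\mathfrak{T}_{\mathrm{can}}^{\widehat T}\cap ]T^\circ[ = (\widetilde{T^\circ})^{\mathrm{rig}}$. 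This is exactly the hypothesis needed to run Theorem~\ref{thm:TTlinearsmooth} for $\widehat T$ inside $\mathrm{Bl}\mathscr{A}_{g,W}^\Sigma$.

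\textbf{Step 2: Apply the smooth case and push down.} Running the proof of Theorem~\ref{thm:TTlinearsmooth} (with the verbatim check from the overview) yields a Zariski-closed algebraic subvariety $\widehat Z\subseteq (\mathrm{Bl}\mathscr{A}_{g,\overline K}^\Sigma)$ with $\widehat Z^{\mathrm{an}} = \mathfrak{T}_{\mathrm{can}}^{\widehat T}$. Now push forward along the projective birational morphism $\pi: \mathrm{Bl}\mathscr{A}_{g,\overline K}^\Sigma\to \mathscr{A}_{g,\overline K}^\Sigma$: set $Z := \pi(\widehat Z)$, which is a closed algebraic subvariety of $\mathcal{A}_{g,\overline K}^\Sigma$ (in fact of $\mathcal{A}_{g,\overline K}$, since $\mathfrak{T}_{\mathrm{can}}$ lives over the ordinary locus away from the boundary as we already know). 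Because $\pi$ is an isomorphism over the complement of the (codimension $\ge 2$) exceptional loci, and because $\widehat Z$ is a $1$-dimensional variety whose generic point lies over $T^\circ$ where $\pi$ is an isomorphism, the analytification $Z^{\mathrm{an}}$ agrees with $\varpi(\widehat Z^{\mathrm{an}}) = \varpi(\mathfrak{T}_{\mathrm{can}}^{\widehat T})$. It remains to identify $\varpi(\mathfrak{T}_{\mathrm{can}}^{\widehat T})$ with $\mathfrak{T}_{\mathrm{can}}$: both are irreducible $1$-dimensional Zariski-closed subspaces of $]T[$, both contain $(\widetilde{T^\circ})^{\mathrm{rig}}$ as we noted, and by the uniqueness clause of Corollary~\ref{cor:containTcan} the component of a Tate-linear locus with that property is unique — so $\varpi(\mathfrak{T}_{\mathrm{can}}^{\widehat T}) = \mathfrak{T}_{\mathrm{can}}$, giving $Z^{\mathrm{an}} = \mathfrak{T}_{\mathrm{can}}$. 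Finally, Lemma~\ref{lm:moonenlemma} applied to $Z$ shows $Z$ is special, and since $T$ is an irreducible component of the mod $p$ reduction of $Z$ (it is the reduction of $\widehat T^{\mathrm{rig}}$'s image), $\mathrm{Tl}_p$ holds for $T$.

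\textbf{Main obstacle.} The delicate point is not any single step but the bookkeeping around $\varpi$: one must be careful that $\mathfrak{T}_{\mathrm{can}}^{\widehat T}$ does not pick up extra components or fail to be closed after pushing forward — e.g. that the exceptional loci $]E[$ do not contribute spurious pieces that survive in $\varpi(\mathfrak{T}_{\mathrm{can}}^{\widehat T})$. Here the Lie-theoretic linearity of $\mathfrak{T}_{\mathrm{can}}^{\widehat T}\cap ]z[$ at the (finitely many) exceptional points $z\in E(\Fpbar)$, guaranteed by Proposition~\ref{thm:smooththeorem}, is what saves the day: it forces $\mathfrak{T}_{\mathrm{can}}^{\widehat T}$ to be contained in a strictly smaller tube near $E$, exactly as in the \emph{Claim} inside the proof of Theorem~\ref{thm:TTlinearsmooth}, so the rigid-GAGA argument still applies upstairs and the pushforward is clean. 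A secondary technical point is confirming that the finite étale cover $X\to \widehat T$ needed to trivialize the monodromy, and the semistable-model machinery of Proposition~\ref{thm:petaleintegral}, are insensitive to replacing $\mathscr{A}_{g,W}^\Sigma$ by its blow-up — which they are, since all of that is local on the tube and only uses the abelian scheme and its cohomology, which are pulled back.
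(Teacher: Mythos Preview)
Your overall architecture (resolve singularities, run the smooth-case argument upstairs, push down, invoke Lemma~\ref{lm:moonenlemma}) matches the paper's, but the assertion that Proposition~\ref{thm:smooththeorem} and Theorem~\ref{thm:TTlinearsmooth} apply ``directly'' to $\widehat T\subset\mathrm{Bl}\mathscr{A}_{g,W}^\Sigma$ hides a real gap. The engine behind Proposition~\ref{thm:smooththeorem} is Kisin's criterion (Theorem~\ref{thm:BKcriterion}), which needs the formal neighborhood of the point to be the deformation space $\Def(\mathscr{G}_0/W)$ of a $p$-divisible group --- that is the very origin of the ``Lie-theoretic coordinates''. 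For $z\in\widehat T$ lying on the exceptional divisor, $(\mathrm{Bl}\mathscr{A}_{g,W}^\Sigma)^{/z}$ is \emph{not} such a deformation space, so there are no Lie-theoretic coordinates upstairs and no linearity statement of the form ``$\mathfrak{T}_{\mathrm{can}}^{\widehat T}\cap]z[$ is $(\Spf R_G)^{\mathrm{rig}}$'' is available in the blow-up. Consequently the boundedness Claim of Theorem~\ref{thm:TTlinearsmooth} cannot be established for functions cutting out $\widehat T$ in $\mathrm{Bl}\mathscr{A}_{g,W}^\Sigma$.

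The paper fixes this by always composing with $\varpi$ and applying Kisin's criterion \emph{downstairs} in $\mathscr{A}_{g,W}^{/\pi(z)}$, where the Lie-theoretic coordinates genuinely exist: for any $\Fpbar$-point $z$ of $Q=\pi^{-1}(T)$, the image under $\varpi$ of the piece of $\widehat{\mathfrak{T}}_{\mathrm{can}}$ specializing to $z$ lands in a one-dimensional $(\Spf R_{G,z})^{\mathrm{rig}}\subseteq]\pi(z)[$. A second ingredient, absent from your sketch, is the observation that $\bH_{\cris,Q}$ is constant along each connected component of the exceptional locus $E$; hence $R_{G,z}$ depends only on the component of $Q$ containing $z$, and $\varpi(\widehat{\mathfrak{T}}_{\mathrm{can}})\cap]t[$ sits in \emph{finitely many} linear subspaces for each singular $t$. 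This yields the boundedness Claim for the functions $f_i$ cutting out $T$ downstairs, which one then pulls back to the $\widetilde f_i$ cutting out $Q$ (not $\widehat T$!) upstairs. Rigid GAGA is applied in the blow-up to the tube $]Q[$, and the resulting algebraic curve is pushed down via $\pi$ to produce $Z$.
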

\begin{proof}

We run a variant of Construction~\ref{const:ttloci} where $T$ and $\mathscr{A}_{g,W}$ are replaced by $\widehat{T}$ and $\mathrm{Bl}\mathscr{A}_{g,W}$: identify the convergent $F$-isocrystal $\bH_{\cris,\mathscr{A}_{g,\Fpbar}}$ with the de Rham bundle $\bH_{\dR,(\mathscr{A}_{g,W}^{\wedge p})^{\mathrm{rig}}}$, together with a Frobenius structure, then pull back to $(\mathrm{Bl}\mathscr{A}_{g,W})^{\wedge p}$, and then to $]Q[$. Note that we have $G(\bH_{\cris,T^{\circ}},x)^{\circ}=G(\bH_{\cris,\widehat{T}},x)^{\circ}$. Consider a finite collection of crystalline tensors $\{s_\alpha\}$ cutting out the overconvergent monodromy group $G(\bH_{\cris,T^{\circ}},x)^{\circ}$, such that their pointwise stablizer in $\GL(\omega_x(\bbH_{\cris}))$ is a reductive $W$-subgroup scheme. Each $s_\alpha$ gives rise to a ``multi-valued horizontal section'' $s_\alpha^{\mathrm{an}}$ of an element $\bM_\alpha\in \bH_{\dR,]\widehat{T}[}^{\otimes}$, which becomes an actual horizontal section of $\bM_\alpha$ over a finite étale cover. Note that $s_\alpha^{\mathrm{an}}$ actually extends over $\bH_{\dR,]Q[}^{\otimes}$, for the reason that $\bH_{\cris,Q}$ is trivial over each connected component of the exceptional loci $E$. Let $\widehat{\mathfrak{T}}$ be the loci cut out by the condition that  $s_\alpha^{\mathrm{an}}$ lie in the $\Fil^0$ part. Arguing as \S\ref{sub:Toy}, there exists an irreducible component $\widehat{\mathfrak{T}}_{\mathrm{can}}\subseteq \widehat{\mathfrak{T}}$ that contains $(\widetilde{T^\circ})^\mathrm{rig}$, whose restriction to $]T^\circ[$ admits $(\widetilde{T^\circ})^\mathrm{rig}$ as an irreducible component. Since $\dim T=1$, $\widehat{\mathfrak{T}}_{\mathrm{can}}$ is also of dimension 1. We then use a similar argument as in Proposition~\ref{thm:petaleintegral} to establish the $p$-étale intgrality, and use a similar argument as in Proposition~\ref{thm:smooththeorem} to show that $ \widehat{\mathfrak{T}}_{\mathrm{can}}\cap ]T^\circ[=(\widetilde{T^\circ})^\mathrm{rig}$, and $\widehat{T}$ is $p$-integral at every $\Fpbar$-point. Moreover, if $z$ is any $\Fpbar$-point of $Q$, then $\varpi:\widehat{\mathfrak{T}}_{\mathrm{can}}\cap ]\pi(z)[\rightarrow  ]\pi(z)[$ factors through an 1 dimensional linear subspace $(\Spf R_{G,z})^{\mathrm{rig}}\subseteq ]\pi(z)[$. 
Here we use the notation $R_{G,z}$ to emphasize the dependence on $z$. Furthermore, the constancy of $\bH_{\cris,Q}$ over each connected component of $E$ implies that if two points $z_1,z_2\in Q(\Fpbar)$ lie on the same connected component of $E$, then $(\Spf R_{G,z_1})^{\mathrm{rig}}=(\Spf R_{G,z_2})^{\mathrm{rig}}$ as subspaces of $]\pi(z_1)[=]\pi(z_2)[$.  

As a consequence of the above analysis, we see that for any point $t\in T\setminus T^\circ$, $\varpi(\widehat{\mathfrak{T}}_{\mathrm{can}})\cap ]t[$ is contained in a union of finitely many 1 dimensional linear subspaces in the Lie theoretic coordinates.  

\begin{claim} Let $V\subseteq T$ be an affine open and let $V$ be cut out from $\mathscr{A}_{g,W}^{/V}$ via an ideal $(p,f_{1},f_{2},...,f_{m})$. There exists an $\epsilon< 1$, such that the $p$-adic norm $|f_{i}(\mathfrak{z})|$ is bounded above by $\epsilon$ for all $\overline{K}$-points $\mathfrak{z}$ in $\varpi(\widehat{\mathfrak{T}}_{\mathrm{can}})\cap ]V[$ and all $i\in [1,m]$. 
  \end{claim}  The proof of the \textit{Claim} is similar to that of Theorem~\ref{thm:TTlinearsmooth}. One first establishes the claim for $V\subseteq T^\circ$. Since $T$ is a curve, there are only finitely points $t_1,t_2,...,t_n$ in $T\setminus T^\circ$, and as we have seen earlier,   
  $\varpi(\mathfrak{T}_{\mathrm{can}})\cap ]t_i[$ is contained in a finite union of linear subspaces in the Lie theoretic coordinates. So the \textit{Claim} remains valid when $V$ contains some $t_i$. 

Fixing a $V$ as in the \textit{Claim}, then each $f_i$ pulls back to a function $\widetilde{f}_i$ on $(\mathrm{Bl}\mathscr{A}_{g,W})^{/\pi^{-1}(V)}$. Then $\pi^{-1}(V)$ is cut out by the vanishing of $(p,\widetilde{f}_1,\widetilde{f}_2,...,\widetilde{f}_m)$. The \textit{Claim} implies the existence of an $\epsilon< 1$ such that $|\widetilde{f}_{i}(\mathfrak{z})|$ is bounded above by $\epsilon$ for all $\overline{K}$-points $\mathfrak{z}$ in $\widehat{\mathfrak{T}}_{\mathrm{can}}\cap ]\pi^{-1}(V)[$ and all $i\in [1,m]$. Arguing as in Theorem~\ref{thm:TTlinearsmooth}, we find that there exists an admissible cover $(\mathrm{Bl}\mathscr{A}_{g,\overline{K}}^\Sigma)^{\an}=\mathfrak{V}_1\cup \mathfrak{V}_2$ with $\mathfrak{V}_1\subseteq ]Q[$, such that $\widehat{\mathfrak{T}}_{\mathrm{can}}$ is a Zariski closed subset of $\mathfrak{V}_1$, and $\widehat{\mathfrak{T}}_{\mathrm{can}}\cap \mathfrak{V}_2=\varnothing$. This implies that $\widehat{\mathfrak{T}}_{\mathrm{can}}\subseteq (\mathrm{Bl}\mathscr{A}_{g,\overline{K}}^\Sigma)^{\an}$ is Zariski closed. Therefore we conclude by rigid GAGA that $\widehat{\mathfrak{T}}_{\mathrm{can}}$ is algebraic. It follows that $\mathfrak{T}_{{\mathrm{can}}}$ is also algebraic. We conclude by Lemma~\ref{lm:moonenlemma}.
\end{proof}
\subsubsection{Proof of the theorems}
\begin{proof}[Proof of Theorem~\ref{thm:TTLpintegraldcs}] Let $T\subseteq \mathscr{A}_{g,\Fpbar}$ be a proper Tate-linear curve with unramified $p$-adic monodromy. The idea is to replace $T$ by a Hecke translate which is $p$-integral at an ordinary point.  

Let $x\in T(\Fpbar)$ be a smooth ordinary point. There exists a $W$-lattices $H\subseteq \omega_x^{\mathrm{iso}}(\bH_{\cris})$ together with a reductive $W$-group $\mathcal{G}\subseteq \GL(H)$, such that $\mathcal{G}_K\simeq G(\bH_{\cris,T^{\circ}},x)^\circ$. Possibly replacing $H$ by a $p$-power multiple, we can assume that $H\subseteq \omega_x^{\mathrm{iso}}(\bbH_{\cris})$. Therefore, there exists a $y$ lying in the $p$-power Hecke orbit of $x$, such that $\omega_y^{\mathrm{iso}}(\bbH_{\cris})=H$ as sublattices of $\omega_x^{\mathrm{iso}}(\bH_{\cris})$. 

Now consider a $p$-power Hecke translate $T'$ of $T$ that contains $y$. Then $T'$ is again a proper Tate-linear curve. If $T'$ is singular, we take the blow up $\widehat{T'}$ as in \S\ref{subsub:sinsincase}.  Then $\widehat{T'}$ is $p$-integral at $y$. Apply Theorem~\ref{thm:ttlinearsingular} to conclude that $\mathrm{Tl}_p$ holds for $T'$. It follows that $\mathrm{Tl}_p$ holds for $T$. 
\end{proof}
\begin{proof}[Proof of Corollary~\ref{cor:coomologyMT}] 
Up to a finite extension and isogeny, we can regard $A$ as a map $f:X\rightarrow \mathscr{A}_{4,\Fpbar}$. 
Let $C=\overline{\im f}$ be the Tate-linear curve in Lemma~\ref{lm:multislopes}. Our condition implies that $C$ is proper, and $G_p(f)$ is unramified. Therefore, Theorem~\ref{thm:TTLpintegraldcs} implies that $C$ factors through the reduction of a Shimura curve. This implies that $G_{\mathrm{B}}(A)_{\overline{\bQ}}\subsetneq \GSp_{8,\overline{\bQ}}$. We conclude by Lemma~\ref{prop:mtreduction1} that $\MTT_p$ holds for $A$. 
\end{proof}

\bibliographystyle{alpha}
\bibliography{ref}

\end{document}